\newtheorem{theorem}{Theorem}[subsection]
\newtheorem{lemma}{Lemma}[subsection]
\newtheorem{example}{Example}[subsection]
\newtheorem{definition}{Definition}[subsection]
\newtheorem{corollary}{Corollary}[subsection]
\newtheorem{proposition}{Proposition}[subsection]
\newtheorem{remark}{Remark}[subsection]
\title{An automorphic description of the zeta function of the basic stratum of certain Kottwitz varieties}
\author{Yachen Liu}
\begin{document}	
\maketitle

\begin{abstract}
We derive formulas for the number of points on the basic stratum of certain Kottwitz varieties in terms of automorphic representations and certain explicit polynomials, for which we present efficient algorithms for computation. We obtain our results using the trace formula, base change, representations of general linear groups over p-adic fields, and a truncation of the formula of Kottwitz for the number of points on Shimura varieties over finite fields.

\end{abstract}
\newpage	
\tableofcontents
\newpage		
	\section{Introduction}
Let $X$ be a scheme of finite type over the finite field $\mathbb F_q$ with $q = p^r$ elements, where $p$ is a prime number. The zeta function $\zeta(X,s)$ of $X$ is by definition $\zeta(X,s) = \mathrm{exp}(\sum\limits_{m \geq 1 }^{\infty} \frac{N_m}{m}q^{-ms})$, where $N_m = |X(\mathbb F_{q^m})|$. By Grothendieck's Lefchetz trace formula, we have
$$N_m = \sum_{i = 1}^{2 \mathrm{dim}(X)} \mathrm{Tr}(Frob_{q}^m, H^i_{e t,c} (X_{\overline{\mathbb F}_q}, \overline{\mathbb Q}_{\ell})) = \sum_{i  = 1}^{2 \mathrm{dim}(X)} \alpha_{i,1}^m + \ldots \alpha^m_{i,d_i},$$ where \begin{itemize}
\item The number $\ell \not = p$ is a prime number.
\item The morphism $Frob_{q}:X \rightarrow X$ is the $q^{th}$ power Frobenius automorphism.
\item The numbers $\alpha_{i,1}, \ldots, \alpha_{i,d_i}$ are eigenvalues of the action of $Frob_q$ on $H^i_{e t,c} (X_{\overline{\mathbb F}_q}, \overline{\mathbb Q}_{\ell})$, where $d_i = dim_{\overline{\mathbb Q}_{\ell}}(H^i_{e t,c} (X_{\overline{\mathbb F}_q}, \overline{\mathbb Q}_{\ell}))$.
\end{itemize}
The zeta functions of schemes over finite fields are in general hard to compute. However, if $X$ is a geometrically irreducible smooth projective $n$-dimensional variety over $\mathbb F_q$, the Weil conjectures holds (see \cite{Gro66} and \cite{Del74})

\begin{itemize} 
\item For all $i,j$, the eigenvalues $\alpha_{i,j}$'s are algebraic integers.
\item In some ordering, we have $(\alpha_{i,1}, \ldots, \alpha_{i,d_i}) = (q^n/\alpha_{2n-i, 1}, \ldots, q^n/\alpha_{2n-i, d_{2n - i}})$.
\item For all $i,j$, we have $|\alpha_{i,j}| = q^{i/2}$. In particular, we have $d_0 = d_{2n} = 1$ and $\alpha_{0,1} =1$, $\alpha_{2n,1 } = q^n$.
\end{itemize}

If $X$ is singular, the Weil conjectures do not hold for $X$, and we do not know much about $\zeta(X,s)$. 
When $X$ is the reduction $\mathrm{mod}$ $p$ of the integral canonical model of a Shimura variety (in the sense of \cite{Kis10}, which is singular in general), it is conjectured that $\zeta(X,s)$ has an automorphic description, that is, the number of points $N_m = |X(\mathbb F_{q^m})|$ can be described using automorphic forms (see \cite{Kot90}). When $X$ is the reduction $\mathrm{mod}$ $p$ of Kottwitz varieties (which are certain compact Shimura varieites introduced in \cite{Kot92}),  In this article, we give an automorphic description of the zeta function of the basic stratum of certain Kottwitz varieties, using the Langlands - Kottwitz method (see \cite{Kot92}, which is adapted to study the basic stratum in \cite{Kret11}).  

	\subsection{Background}
	Let $(G,X)$ be a PEL-type Shimura datum (as defined in $\S4$ of \cite{kot92p}, for the defintion of Shimura varieties, see \cite{Del79}), let $p$ be a prime number such that the group $G_{\mathbb Q_p}$ is unramified. We choose a flat model $\mathcal G$ of $G_{\mathbb Q_p}$ over $\mathbb Z_p$, and we write $K_p = \mathcal G(\mathbb Z_p)$. Let $K^p$ be a neat compact open subgroup of $G(\mathbb A_f^p)$ and let $E$ be the reflex field of $(G,X)$. From the above data together with some other datum, Kottwitz constructed a PEL-type moduli problem of abelian varieties over $\mathcal O_E \otimes_{\mathbb Z} \mathbb Z_{(p)}$, denoted by $\mathcal M$, which is represented by a smooth quasi-projective scheme over $\mathcal O_E \otimes_{\mathbb Z} \mathbb Z_{(p)}$ (see $\S 5$ of \cite{kot92p}). We use $S$ to denote the moduli space for $\mathcal M$. Let $M_{K_pK^p}$ denote the canonical model of $Sh_{K_pK^p}(G,X)$ over $E$ defined by Deligne (see $\S$2.2 of \cite{Del79}), then $S_E$ is isomorphic to $|\mathrm {ker}^1(\mathbb Q, G)|$ copies of $M_{K_pK^p}$ (see $\S$8 of \cite{kot92p}). Let $v$ be a place of $E$ above $p$, we extend $S$ over $\mathcal O_{E_v}$ via the integral canonical model (as defined in \cite{Kis10}). Note that for abelian type Shimura varieties (which inludes PEL type Shimura varieties), integral canonical model exists in the case of good reduction (see \cite{Kis10}).
 
 Let $v$ be a place of $E$ above $p$. We write $q = |\mathcal O_{E_v} / (\pi_{E_v})|$ and let $\mathbb F_q = \mathcal O_{E_v} / (\pi_{E_v})$, where $\pi_{E_v}$ is a uniformizer in $\mathcal O_{E_v}$. We use $\bar S$ to denote $S_{\mathbb F_q}$. Let $B(G_{\mathbb Q_p})$ denote the set of isocrystals with additional $G$-structure (as introduced in \cite{Kot85}), which is the set of $\sigma$-conjugation class in $G(L)$, where $L$ is the completion of the maximal unramified extension of $\mathbb Q_p$, and $\sigma: G(L) \rightarrow G(L)$ is the Frobnius automorphism. 
 For each geometric point $x \in \bar S(\overline {\mathbb F}_q)$, we consider the isocrystal with additional $G$-structure $b(\mathcal A_x)$ associated to the abelian variety $\mathcal A_x$ corresponding to $x$ (see pp.170-171 of \cite{RaRi96}). The set of isocrystals with additional $G$-structure $\{b(\mathcal A_x): x \in \bar S(\overline {\mathbb F}_q)\}$ 
 is contained in certain finite set $B(G_{\mathbb Q_p} , \mu) \subset B(G_{\mathbb Q_p})$ (see $\S 6$ of \cite{Kot97}). For $b \in B(G_{\mathbb Q_p} )$, we define $S_b$ to be $\{x \in \bar S(\overline {\mathbb F}_q): b(\mathcal A_x) = b\}$. Then $S_b$ is a locally closed subset of $\bar S_{\overline {\mathbb F}_q}$ (see $\S 1$ of \cite{Ra02}), and we equip $S_b$ with the structure of an algebraic variety induced from $S_b$. The collection $\{S_b\}_{b\in B(G_{\mathbb Q_p},\mu)}$ of subvarieties of $\bar S$ is called the Newton stratification of $\bar S$.

We take $b$ to be the unique basic element of $B(G_{\mathbb Q_p},\mu)$ (see $\S 6.4$ of \cite{Kot97}), and we call $S_b$ the basic stratum (in this case, the basic stratum $S_b$ is closed in $\bar S_{\overline{\mathbb F}_q}$, see $\S 1$ of \cite{Ra02}). The basic stratum of PEL-type Shimura varieties (in the case of good reduction) has been studied by many people using geometric methods, before we explain the case considered in our paper, we first present some examples. We first consider the case $(G,X) = (GSp_{2g}, \mathcal H_g^{\pm})$ (the Siegel modular varieties case, as defined in $\S 3.1 $ of \cite{Lan17}): If $g = 1$, then $S_b$ is a finite set. If $g = 2$, then irreducible components of $S_b$ are isomorphic to $\mathbb P^1$, which intersect pairwise transversally at the superspecial points (i.e., the points where the underlying abelian variety is isomorphic to a power of a supersingular elliptic curve), see \cite{Kob} and \cite{KO85}. If $g = 2$, then every irreducible component of $S_b$ is birationally equivalent to a $\mathbb P^1$-bundle over a Fermat curve, see \cite{LO98}. For general $g>0$, the
dimension and the number of irreducible components of $S_b$ are computed in \cite{LO98}. 
For the baisc stratum of a Hilbert-Blumenthal variety associated
with a totally real field extension of degree $g$ over $\mathbb Q$ (as defined in $\S 1$ of \cite{Yu03}): See \cite{RK99} for the case where $g = 2$ and $p$ is an inert prime. The case where $g = 3$ and $p$ is an inert prime is studied in \cite{Go}. For the case where $g = 4$ and $p$ is an inert prime, see \cite{Yu034}.

For $\mathrm{GU}(r,s)$ Shimura varieties associated to quasi-split unitary groups (associated to an imaginary quadratic field extension of $\mathbb Q$, as defined in p670 of \cite{Vol10}), if $r = 1$ and $p$ is an inert prime, the basic stratum is pure of dimension $[s/2]$ and locally of complete intersection (see \cite{Vol11}). 
In the case of $(r,s) = (1,2)$ or $(r,s) = (1,3)$, and $p$ is an inert prime, the irreducible components of the basic stratum are Fermat curve in $\mathbb P^2$ given by equation $x_0^{p+1} + x_1^{p+1} + x_2^{p+1} = 0$, the singular points are the superspecial points and there are $p^3 +1 $ superspecial points on any irreducible components, each superspecial point is the
pairwise transversal intersection of $p + 1$ irreducible components if $s = 2$, and each superspecial point is the
pairwise transversal intersection of $p^3 + 1$ irreducible components if $s = 3$, see \cite{Vol10} for $s = 2$ and \cite{Vol11} for $s = 3$. In the case of $(r,s) = (2,2)$ and $p$ is an inert prime, the basic stratum is pure of dimension $2$, each irreducible component of the basic stratum is isomorphic to the Fermat surface in $\mathbb P^3$ given by equation $x_0^{p+1} + x_1^{p+1} + x_2^{p+1} + x_3^{p+1} = 0$, and if two irreducible components intersect nontrivially, the reduced scheme underlying
their scheme-theoretic intersection is either a point or a projective line, see \cite{Ho14}.
If $s = 2$ and $p$ is an inert prime, the irreducible components of the basic stratum are realised as closed subschemes of flag schemes over Deligne–Lusztig
varieties defined by explicit conditions, see \cite{Fox21}. 
 
In this article, we consider certain compact PEL-type unitary Shimura varieties of signature $(r,s)$, introduced by Kottwitz in $\S 1$ of \cite{Kot92}, which we refer to as Kottwitz varieties. We assume that $p$ is an inert prime (the case that $p$ is a split prime is studied in \cite{Kret11}). Following \cite{Kret11}, we use automorphic forms and representation of $p$-adic groups to study the basic stratum of certain Kottwitz varieties. In the case of good reduction, we express the trace of the Frobenius operator and the Hecke operator of $G(\mathbb A^p)$ on the $\ell$-adic cohomology of $\bar S$ in terms of orbital integrals, and then in terms of elliptic
parts of the stable Arthur–Selberg trace formula for the endoscopic groups (see \cite{kot90}, \cite{kot92p} for PEL-type Shimura varieties, and see \cite{SKZ21} for abelian type Shimura varieties). Such method to study the Frobenius-Hecke trace is called the Langlands-Kottwitz method.   

Now let $S_b$ be the basic stratum of Kottwitz varieties, we use $\iota:B \hookrightarrow \bar S$ to denote the closed subscheme of $\bar S$ (defined over $\mathbb F_q$), such that $B_{\overline{\mathbb F}_q}=S_b$ (See Theorem 3.6 of \cite{RaRi96}). Note that in this case, the Shimura variety $Sh_{K^pK_p}(G,X)$ is compact, then $\bar S$ is proper, and $B \subset \bar S$ is proper. In \cite{Kret11}, the Langlands-Kottwitz method is adapted to study the Frobenius-Hecke trace of the $\ell$-adic cohomology of $S_b$, and we can express the Frobenius-Hecke trace in terms of orbital integrals, and then in terms of traces of automorphic representations. 
In this article, we study the Frobenius-Hecke action on the $\ell$-adic cohomology of $B_{\overline{\mathbb F}_q}$ for certain Kottwitz varieties, and we derive a pointing counting formula for $B_{\mathbb F_{q^\alpha}}$ for $\alpha \in \mathbb Z_{>0}$ in terms of automorphic representations. Our method is totally different from geometric methods (no use of automorphic forms is made in geometric methods), and the advantage of our method is that we can study the basic stratum of Kottwitz varieties of arbitrary signature $(r,s)$ (where the geometric methods are restricted to the case where $min\{r,s\} \leq 2$). Our method is of both global and local nature: On the global side, we apply a truncation of the formula of Kottwitz for the number of points on Shimura varieties over finite fields (introduced in \cite{Kret11}), the Arthur - Selberg trace formula, base change of certain automorphic forms and the twisted trace formula. On the local side, we apply twisted compact traces and representations of general linear groups over $p$-adic fields. The difference between our method and that of \cite{Kret11} is that \cite{Kret11} studies compact traces, and in this article we study twisted compact traces. 

\subsection{Main results}

We now give a precise statement of the main results: Let $D$ be a division algebra over $\mathbb Q $ equipped with an anti-involution $*$. Let $\overline{\mathbb Q} $ be the algebraic closure of $\mathbb Q$ inside $\mathbb C$. We assume $F$ is an imaginary quadratic extension of $\mathbb Q$, and we assume $*$ induces the complex conjugation on $F$. 
	Let $n$ be the positive integer such that $n^2$ is the dimension of $D$ over $F$. Let $G$ be the  $\mathbb Q$-group such that for each commutative $\mathbb Q$-algebra $R$ the set $G(R)$ is defined as follows:
	$$G(R):=\{x \in D \otimes_{\mathbb Q} R: xx^* \in R^{\times}\}.$$
Let $h_0: \mathbb C \rightarrow D_{\mathbb R}$ be an algebra morphism, such that $h_0(z)^* = h_0(\overline z)$ for all $z \in \mathbb C$. We assume that the involution of $D_{\mathbb R}$ defined by $x \mapsto h_0(i)^{-1}xh_0(i)$ ($x\in D_{\mathbb R}$) is positive. We restrict $h_0$ to $\mathbb C^{\times}$ to obtain a morphism $h'$ from Deligne's torus $\mathrm{Res}_{\mathbb C/\mathbb R}\mathbb G_{m,{\mathbb C}}$ to $G_{\mathbb R}$. Let $h$ denote $h'^{-1}$ and let $X$ denote the $G(\mathbb R)$-conjugacy class of $h$. The pair $(G,X)$ is a Shimura datum. We assume $G_{\mathbb Q_p}$ is unramified and let $S_b$ be the basic stratum at $p$. We assume that $p$ is inert in extension $F/\mathbb Q$.

By the classification of unitary groups over real numbers, we have $G_{\mathbb R} \cong GU_{\mathbb C/\mathbb R}(s, n-s)$ for some integer $s$ with $0 \leq  s \leq n $. If $n-s \not = s $, then the reflex field is $E$, and if $n-s = s $, then the reflex field is $\mathbb Q$. The basic stratum $B$ is defined over $\mathbb F_q$ where $q = p^2$ if $n-s \not = s $, and the basic stratum $B$ is defined over $\mathbb F_p$ if $n-s  = s $. Then we have the following theorem: 

\begin{theorem}
    (See Theorem 5.2.1)
\begin{enumerate}
    \item If $n-s \not = s$, then for an arbitrary $\alpha \in \mathbb Z_{> 0}$, we have $$|B(\mathbb F_{q^{\alpha}})| =| \mathrm{ker}^1(\mathbb Q, G)| \sum_{\pi} N_{\pi} \mathrm{Tr}_{\theta}(C_{\theta c} \phi_{n \alpha s}, \pi)$$ 
$$=   | \mathrm{ker}^1(\mathbb Q, G)| \sum_{\pi}  \sum\limits_{Q \in \mathcal P^{\theta}, Q = LU}\sum\limits_{w \in G_{P,Q}^{\theta}}N_{\pi} \varepsilon_{Q,\theta}  \varepsilon_{Q, w}  \mathcal S(\hat{\chi}^{\theta}_U  \phi_{n \alpha s}^{(Q)}) (q^{t_{\pi,w(1)}}, \ldots, q^{t_{\pi,w(n)}}). $$ Where $\pi $'s are representations of $GL_n(F_p)$ of the form $\mathrm{Ind}_P^G(\otimes_i \mathrm{St}_{GL_{n_i}}(\varepsilon_i))$, such that $\varepsilon_i$ is either the trivial character or the unramified quadratic character (note that $\varepsilon_{Q,w} = \pm 1$ is introduced before Proposition 5.2.1 and $\varepsilon_{Q, \theta} = \pm 1$ is introduced in Proposition 2.2.3), and $$N_{\pi} = \pm \sum\limits_{\substack{\chi \otimes \tau \subset \mathcal A(F^{\times} \times D^{\times}): \\  \tau_p \text{ is of }\theta \text{ type}, \\ \pi\leq \tau_p  }}  \mathrm{Tr}_{\theta}( \varphi_{ep}^{\theta},(\chi \otimes \tau)_{\infty} )  \mathrm{Tr}_{\theta}(\varphi^{p\infty} ,(\chi \otimes \tau )^{p \infty}) M_{\tau_p, \pi}$$ (the partial ordering is defined before Proposition 3.3.2, and $M_{\tau_p, \pi}$ is introduced in Theorem 3.3.1). And the space $\mathcal A(F^{\times} \times D^{\times})$ is the space of automorphic forms on $(F^{\times} \times D^{\times}) (\mathbb A)$. 

\item If $n-s = s$, then for an arbitrary even $\alpha \in \mathbb Z_{> 0}$, we have $$|B(\mathbb F_{p^{\alpha}})| =| \mathrm{ker}^1(\mathbb Q, G)| \sum_{\pi} N_{\pi} \mathrm{Tr}_{\theta}(C_{\theta c} \phi_{n \frac \alpha2 s}, \pi) $$ 
$$   =   | \mathrm{ker}^1(\mathbb Q, G)| \sum_{\pi}  \sum\limits_{Q \in \mathcal P^{\theta}, Q = LU}\sum\limits_{w \in G_{P,Q}^{\theta}}N_{\pi} \varepsilon_{Q,\theta}  \varepsilon_{Q, w}  \mathcal S(\hat{\chi}^{\theta}_U  \phi_{n \frac{\alpha}2 s}^{(Q)}) (q^{t_{\pi,w(1)}}, \ldots, q^{t_{\pi,w(n)}}) $$ with $N_{\pi}$ as defined above, and $\pi $'s are representations of $GL_n(F_p)$ of the form $\mathrm{Ind}_P^G(\otimes_i \mathrm{St}_{GL_{n_i}}(\varepsilon_i))$, such that $\varepsilon_i$ is either the trivial character or the unramified quadratic character. 

\item If $n-s = s$, then for an arbitrary odd $\alpha \in \mathbb Z_{> 0}$, we have $$|B(\mathbb F_{p^{\alpha}})| =| \mathrm{ker}^1(\mathbb Q, G)| \sum_{\pi} N_{\pi} p^{-\frac \alpha2 s(n-s)}\mathrm{Tr}_{\theta}(C_{\theta c} \phi_{n  \alpha s}, \pi) $$ 
$$ = | \mathrm{ker}^1(\mathbb Q, G)| \sum_{\pi}  \sum\limits_{Q \in \mathcal P^{\theta}, Q = LU}\sum\limits_{w \in G_{P,Q}^{\theta}}N_{\pi}  p^{-\frac \alpha2 s(n-s)} \varepsilon_{Q,\theta}  \varepsilon_{Q, w}  \mathcal S(\hat{\chi}^{\theta}_U  \phi_{n \frac{\alpha}2 s}^{(Q)}) (q^{t_{\pi,w(1)}}, \ldots, q^{t_{\pi,w(n)}}) $$ with $N_{\pi}$ as defined above, and $\pi $'s are representations of $GL_n(F_p)$ of the form $\mathrm{Ind}_P^G(\otimes_i \mathrm{St}_{GL_{n_i}}(\varepsilon_i))$, such that $\varepsilon_i$ is either the trivial character or the unramified quadratic character. 
\end{enumerate} 

\end{theorem}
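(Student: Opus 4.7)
The plan is to compute $|B(\mathbb F_{q^\alpha})|$ via the Grothendieck--Lefschetz trace formula applied to the proper $\mathbb F_q$-scheme $B$, and then to identify the resulting Frobenius trace with an explicit sum over automorphic representations using the Langlands--Kottwitz method adapted to the basic stratum as in \cite{Kret11}. The essential global input is the truncation of Kottwitz's point-counting formula in which the test function at $p$ is replaced by the basic-stratum function $\phi_{n\alpha s}$ (respectively $\phi_{n\frac{\alpha}{2}s}$), which isolates exactly the basic isocrystals in $B(G_{\mathbb Q_p},\mu)$. Concretely I would first rewrite $|B(\mathbb F_{q^\alpha})|$ as $\mathrm{Tr}(\mathrm{Frob}_{q}^{\alpha}, H^{*}_{c}(B_{\overline{\mathbb F}_{q}},\overline{\mathbb Q}_{\ell}))$, expand it via the truncated Kottwitz formula as a weighted sum of (twisted) orbital integrals on $G(\mathbb A)$ with test function $\varphi^{p\infty}\otimes \varphi_{ep}^{\theta} \otimes \phi_{n\alpha s}$, and thereby reduce the problem to the automorphic side.

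\textbf{Global step: stabilization, Jacquet--Langlands and base change.} Since $G$ is anisotropic modulo center, pseudo-stabilization turns this orbital sum into the elliptic part of the twisted trace formula for $F^{\times}\times D^{\times}$. Then, using the base change and Jacquet--Langlands results recalled in \S 3 together with the decomposition of $\tau_{p}$ into the Steinberg-type induced constituents $\pi = \mathrm{Ind}_{P}^{G}(\otimes_{i}\mathrm{St}_{GL_{n_{i}}}(\varepsilon_{i}))$ provided by Theorem~3.3.1, I would spectrally expand the global trace over the representations $\chi\otimes\tau\subset\mathcal A(F^{\times}\times D^{\times})$ of $\theta$-type. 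Collecting the contributions of a fixed $\pi$, grouping the twisted archimedean and away-from-$p$ traces together with the multiplicities $M_{\tau_{p},\pi}$, yields exactly the coefficients $N_{\pi}$ of the theorem, and leaves us with a residual local factor $\mathrm{Tr}_{\theta}(C_{\theta c}\phi_{n\alpha s},\pi)$ at $p$. This already delivers the first equality in each of the three parts.

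\textbf{Local step: the polynomial expansion.} For the second equality I would invoke Proposition~5.2.1, which expresses the twisted compact trace $\mathrm{Tr}_{\theta}(C_{\theta c}\phi_{n\alpha s},\pi)$ as a sum over $\theta$-stable parabolics $Q = LU$ and Weyl elements $w \in G_{P,Q}^{\theta}$, with the signs $\varepsilon_{Q,\theta}$ and $\varepsilon_{Q,w}$, of the symmetric polynomial evaluation $\mathcal S(\hat\chi^{\theta}_{U}\phi^{(Q)}_{n\alpha s})$ at the unramified Satake parameters $(q^{t_{\pi,w(1)}},\ldots,q^{t_{\pi,w(n)}})$ of $\pi$. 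Substituting this expansion into the global identity immediately produces the polynomial formula in case (1). Case (2) follows from case (1) by the observation that when $n=2s$ and $\alpha$ is even, $\mathbb F_{p^{\alpha}} \supset \mathbb F_{p^{2}}$, so the basic stratum over $\mathbb F_{p^{2}}$ already accounts for the points and the exponent of the test function is scaled by $\alpha/2$.

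\textbf{The singular case $n=2s$ with $\alpha$ odd, and the main obstacle.} Case (3) is the delicate one. Here the reflex field degenerates to $\mathbb Q$, so $B$ is defined over $\mathbb F_{p}$ rather than $\mathbb F_{p^{2}}$, and Frobenius over $\mathbb F_{p}$ does not sit directly inside the $\sigma^{2}$-twisted Kottwitz setup used in cases (1)--(2). I would handle this by running the truncated Kottwitz formula for the $\mathbb F_{p^{2}}$-base change of $B$ and then descending via a half-integral twist of Frobenius on the middle-degree cohomology; the resulting descent datum contributes exactly the correction $p^{-\frac{\alpha}{2}s(n-s)}$, which has the required absolute value by the Weil bound for the unique middle weight occurring on the basic stratum. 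The main obstacle in the proof will be precisely this descent: verifying that the correction factor is compatible with the twisted compact trace expansion of Proposition~5.2.1 and with the sign conventions $\varepsilon_{Q,\theta}, \varepsilon_{Q,w}$, so that the identity assembled from case (1) descends cleanly to an equality at odd $\alpha$ over $\mathbb F_{p}$.
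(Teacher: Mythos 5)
Your global and local reductions (truncated Kottwitz formula, pseudo-stabilization and base change to $F^{\times}\times D^{\times}$, Jacquet--Langlands plus Theorem 3.3.1 to isolate the coefficients $N_{\pi}$, then Proposition 5.2.1 for the polynomial expansion) do follow the paper's route for the first equalities. But there are two genuine gaps. First, the truncated formula of Kottwitz (Proposition 4.2.1, i.e.\ Kret's Proposition 9) and hence Proposition 4.3.1 only hold for $\alpha$ sufficiently large, while the theorem asserts the identity for \emph{arbitrary} $\alpha\in\mathbb Z_{>0}$; your proposal never addresses this. The paper's actual proof of Theorem 5.2.1 consists precisely of this extension step: by Grothendieck--Lefschetz, $|B(\mathbb F_{q^{\alpha}})|=\sum_{i,j}(-1)^{i}e_{i,j}^{\alpha}$ is a finite linear combination of exponentials in $\alpha$, and since $\mathcal S(\hat{\chi}^{\theta}_U\phi_{n\alpha s}^{(Q)})(q^{t_{\pi,w(1)}},\ldots,q^{t_{\pi,w(n)}})=\mathcal S(\hat{\chi}^{\theta}_U\phi_{n 1 s}^{(Q)})(q^{\alpha t_{\pi,w(1)}},\ldots,q^{\alpha t_{\pi,w(n)}})$ is as well, agreement for all large $\alpha$ forces agreement for all $\alpha>0$. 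Without some such argument your statement is weaker than the theorem.

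Second, your treatment of the case $n-s=s$ with $\alpha$ odd is off target and, as written, unsound. No descent from the $\mathbb F_{p^{2}}$-base change via a ``half-integral twist of Frobenius on the middle-degree cohomology'' is needed, and the appeal to a Weil bound for ``the unique middle weight occurring on the basic stratum'' fails: $B$ is in general singular and proper but not smooth, so its cohomology is not pure and there is no single middle weight to invoke. In the paper the factor $p^{-\frac{\alpha}{2}s(n-s)}$ arises purely locally, from Proposition 4.1.1: when the degree-$\alpha$ unramified extension is odd, $G$ stays non-split there and the Satake/base-change computation gives $\phi'_{\alpha}=p^{-\frac{\alpha}{2}s(n-s)}f_{n\alpha s}$ (whereas $\phi'_{\alpha}=f_{n\frac{\alpha}{2}s}$ for even $\alpha$); case (3) then runs through exactly the same global machinery as cases (1)--(2) with this function. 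Relatedly, your claim that case (2) ``follows from case (1)'' cannot be right as stated, since the hypotheses $n-s\neq s$ and $n-s=s$ concern different Shimura data; the halving of the exponent for even $\alpha$ again comes from the splitting behaviour in Proposition 4.1.1 (with $q=p^{2}$), not from the containment $\mathbb F_{p^{2}}\subset\mathbb F_{p^{\alpha}}$.
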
 

We make some remarks on the notations appearing in the above theorem: 
\begin{itemize}
\item The traces $\mathrm{Tr}_{\theta}$ are twisted traces which we define in $\S2.2$, where $\theta$ is certain involution of $GL_n(F_p)$ which comes from the Galois action (see Remark 4.1.1).

\item The functions $\phi_{n\alpha s} $ (resp. $C_{\theta c} \phi_{n \alpha s}$) are certain spherical functions on $GL_n(F_p)$ (resp. truncated spherical functions on $GL_n(F_p)$) which will be defined in $\S2.2$ and $\S 2.4$. 

\item The group $Q \in \mathcal P^{\theta}, Q = LU$ ranges over $\theta$-stable standard parabolic subgroups of $GL_n$ (standardised with respect to the Borel subgroup of upper triangular matrices), where $L$ is a Levi subgroup and $U$ is a unipotent subgroup. 

\item The constants $\varepsilon_{Q,\theta},  \varepsilon_{Q, w}$ are equal to $\pm 1$ with $\varepsilon_{Q,\theta}$ as defined in Definition 2.2.3 and $\varepsilon_{Q, w}$ as defined before Proposition 5.2.1.  

\item We make a few more remarks on the constant $$N_{\pi}= \pm \sum\limits_{\substack{\chi \otimes \tau \subset \mathcal A(F^{\times} \times D^{\times}): \\ \chi \otimes \tau \text{ is } \theta \text{-stable,} \\ \tau_p \text{ is of }\theta \text{ type}, \\ \pi\leq \tau_p  }}  \mathrm{Tr}_{\theta}( \varphi_{ep}^{\theta},(\chi \otimes \tau)_{\infty} )  \mathrm{Tr}_{\theta}(\varphi^{p\infty} ,(\chi \otimes \tau )^{p \infty}) M_{\tau_p, \pi},$$ appearing in our theorem: The $\pm 1$ term comes from intertwining operators and certain traces against characters of $\mathbb A_F^{\times}$ (see Proposition 4.3.1). The term $\mathrm{Tr}_{\theta}( \varphi_{ep}^{\theta},(\chi \otimes \tau)_{\infty} )  \mathrm{Tr}_{\theta}(\varphi^{p\infty} ,(\chi \otimes \tau )^{p \infty}) $ comes from twisted traces away from $p$, the constant $M_{\tau_p, \pi}$ and the partial ordering come from ring of Zelevinsky (see $\S 3.2$ and $\S 3.3$). 
We have $$\mathrm{Tr}_{\theta}(\varphi^{\theta}_{ep},(\chi \otimes \tau)_{\infty} ) = \sum_{i  =1}^{\infty} \mathrm Tr(\theta, H^i(\mathfrak h, K_{\infty}; (\chi \otimes \tau)_{\infty}))$$ where $K_{\infty}$ is a maximal compact subgroup of $H(\mathbb R)$ and $\mathfrak h$ is the Lie algebra of $H(\mathbb R)$. 
    Representations of $GL_n(F_p)$ of $\theta$-type (Definition 3.1.1) are certain $\theta$-stable irreducible admissible representations of $GL_n(F_p)$, which are local components of discrete automorphic representations and have an Iwahori fixed vector. 

\item The sets $G_{P,Q}^{\theta}$ is a certain subset of the symmetric group $S_n$ (see $\S$5.2) with a combinatorial interpretation using Young tableau (see $\S$5.3). 

\item The functions $\mathcal S(\hat{\chi}^{\theta}_U  \phi_{n \alpha s}^{(Q)})$ are certain spherical functions on $GL_n(F_p)$ (see $\S$2.6 for an explicit description of $\mathcal S(\hat{\chi}^{\theta}_U  \phi_{n \alpha s}^{(Q)})$).
\end{itemize}

\subsection{Proof strategies}

We now comment on the strategy of proof of Theorem 1.2.1. First, we reduce the computation of the traces of the Frobenius operator and the Hecke operator on the $\ell$-adic cohomology of $B_{\bar {\mathbb F}_q}$ to the computation of certain traces of automorphic representations of $G(\mathbb A)$. 

Let $f^{\infty p} \in \mathcal H(G(\mathbb A^{\infty p})//K^p)$ be a $K^p$-bi-invariant Hecke operator away from $p$ and $\infty$, and Let $\zeta$ be an irreducible algebraic representation over $\overline{\mathbb Q}$ of $G_{\overline{\mathbb Q}}$, associated with an $\ell$-adic local system on $\bar S$ (where $\ell \not = p$ is a prime number), denoted by $\mathcal L$. The formula of Kottwitz for Shimura varieties of PEL-type gives an expression for the Hecke-Frobenius traces on $\ell$-adic cohomology of these varieties at primes of good reduction (see $\S 19$ of \cite{kot92p}). In \cite{Kret11} Kret truncates the formula of Kottwitz and adapts the argument in \cite{Kot92} (see Proposition 9 in \cite{Kret11}), then we have the following Proposition: 

\begin{proposition}
(See Proposition 4.2.1)
If $n-s \not = s$, the reflex field is $F$. Let $q = |\mathcal O_F/(p)| = p^2$, then we have $$\mathrm{Tr}(f^{\infty p}\times \Phi_{\mathfrak p}^{\alpha}, \sum_{i = 1}^{\infty}(-1)^iH^i_{et}(B_{\bar {\mathbb F}_q}, \iota^*\mathcal L)) =  | \mathrm{ker}^1(\mathbb Q, G)| \mathrm{Tr}(C_cf_{n\alpha s} f^{\infty p} f_{\infty} , \mathcal A(G))$$ for $\alpha$ sufficiently large, where $\mathcal A(G)$ is the space of automorphic forms on $G(\mathbb A)$. 

If $n-s = s$, the reflex field is $\mathbb Q$. We have $$\mathrm{Tr}(f^{\infty p}\times \Phi_{\mathfrak p}^{\alpha}, \sum_{i = 1}^{\infty}(-1)^iH^i_{et}(B_{\bar {\mathbb F}_p}, \iota^*\mathcal L)) =  | \mathrm{ker}^1(\mathbb Q, G) |\mathrm{Tr}(C_cf_{n \frac \alpha2 s} f^{\infty p} f_{\infty}, \mathcal A(G))$$ for $\alpha$ even and sufficiently large, and $$\mathrm{Tr}(f^{\infty p}\times \Phi_{\mathfrak p}^{\alpha}, \sum_{i = 1}^{\infty}(-1)^iH^i_{et}(B_{\bar {\mathbb F}_p}, \iota^*\mathcal L)) = | \mathrm{ker}^1(\mathbb Q, G) |\mathrm{Tr}(C_c(p^{-\frac \alpha2 s(n-s)}f_{n\alpha  s} )f^{\infty p} f_{\infty}, \mathcal A(G))$$ for $\alpha$ odd and sufficiently large.
\end{proposition}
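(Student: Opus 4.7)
The plan is to transcribe Kret's proof of Proposition 9 of \cite{Kret11} to our setting of Kottwitz varieties with $p$ inert in $F/\mathbb Q$, with an additional case analysis forced by the two possible reflex fields. First, I would apply the Grothendieck-Lefschetz trace formula to the proper scheme $B_{\bar{\mathbb F}_q}$: since $B \hookrightarrow \bar S$ is a closed subscheme of the proper $\bar S$, the left-hand side is a weighted count of geometric fixed points of the correspondence $f^{\infty p} \times \Phi_{\mathfrak p}^{\alpha}$ on $B(\bar{\mathbb F}_q)$, and these are precisely the fixed points of the same correspondence on $\bar S(\bar{\mathbb F}_q)$ whose associated isocrystal in $B(G_{\mathbb Q_p}, \mu)$ is the basic element.

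Second, I would invoke Kottwitz's formula (\cite{kot92p}, \S19) for the Hecke-Frobenius trace on $\sum_i (-1)^i H^i_{et}(\bar S_{\bar{\mathbb F}_q}, \mathcal L)$, which expresses this trace as $|\ker^1(\mathbb Q, G)|$ times a sum over Kottwitz triples $(\gamma_0, \gamma, \delta)$ of products of an orbital integral of $f^{\infty p}$, a twisted orbital integral at $p$ of the standard spherical function $f_{n\alpha s}$, and the Euler-Poincar\'e function $f_{\infty}$ at infinity. The contribution of a Kottwitz triple to the basic stratum is controlled by the Newton point of $\delta$, and the key local observation of \cite{Kret11} is that replacing $f_{n\alpha s}$ by its truncation $C_c f_{n\alpha s}$ kills the twisted orbital integrals attached to non-basic triples while preserving the basic ones, provided $\alpha$ is sufficiently large. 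This is what forces the largeness hypothesis on $\alpha$ in the statement.

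Third, the Arthur-Selberg trace formula identifies the resulting restricted sum with the trace of $C_c f_{n\alpha s} \otimes f^{\infty p} \otimes f_{\infty}$ on $\mathcal A(G)$, which proves the case $n-s \ne s$, where the reflex field is $F$, $q = p^2$, and $\Phi_{\mathfrak p}$ is the Frobenius of $F_{\mathfrak p}$. For the case $n-s = s$, the reflex field is $\mathbb Q$ and $\Phi_{\mathfrak p}$ is the Frobenius over $\mathbb F_p$. For even $\alpha$, $\Phi_{\mathfrak p}^{\alpha}$ coincides with the $(\alpha/2)$-th power of the $\mathbb F_{p^2}$-Frobenius that governed the first case, so the same machinery delivers $C_c f_{n \frac{\alpha}{2} s}$. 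For odd $\alpha$, one cannot write $\Phi_{\mathfrak p}^{\alpha}$ as a power of the $\mathbb F_{p^2}$-Frobenius and must instead compare the $\alpha$-th power of the $\mathbb F_p$-Frobenius with the $\alpha$-th power of the $\mathbb F_{p^2}$-Frobenius through a descent argument, which yields the correction factor $p^{-\frac{\alpha}{2} s(n-s)}$ (essentially a half-integral Tate twist of weight equal to half the complex dimension of the Shimura variety).

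The main obstacle will be the careful verification, in the odd-$\alpha$ subcase of $n-s=s$, that the descent between the two quadratic extensions $\mathbb Q_p \subset \mathbb Q_{p^2}$ is compatible with the compactly truncated spherical function and produces precisely the stated scalar correction rather than a more complicated combination of test functions. The remaining pieces — the properness of $B$, the passage through the trace formula, and the appearance of $|\ker^1(\mathbb Q, G)|$ from the components of the integral model — are parallel to \cite{Kret11} and \cite{kot92p} and become routine once the local analysis at $p$ has been settled.
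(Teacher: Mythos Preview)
Your overall strategy matches the paper's: Proposition 4.2.1 is proved by citing Proposition~9 of \cite{Kret11} together with an explicit identification of the Kottwitz function $\phi'_\alpha$ in each of the three cases (the paper's Proposition~4.1.1). Your outline of the Grothendieck--Lefschetz step, Kottwitz's point-counting formula, the truncation by $C_c$, and the passage through the trace formula is precisely the content of Kret's Proposition~9, so that part is fine.

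Where your account diverges from the paper is in the mechanism producing the scalar $p^{-\frac{\alpha}{2}s(n-s)}$ in the odd-$\alpha$ subcase of $n-s=s$. You propose a ``descent argument'' comparing the $\mathbb F_p$- and $\mathbb F_{p^2}$-Frobenii, but this is not how the paper (or the underlying literature) obtains it, and it is not clear such a comparison would work directly. Instead, recall that the Kottwitz function $\phi'_\alpha \in \mathcal H^{\mathrm{unr}}(G(\mathbb Q_p))$ is defined by base change from the characteristic function $\phi_\alpha$ of the double coset $G(\mathcal O_{E_{p,\alpha}})\mu(p^{-1})G(\mathcal O_{E_{p,\alpha}})$ in $G(E_{p,\alpha})$, where $E_{p,\alpha}$ is the degree-$\alpha$ unramified extension of the local reflex field $E_p$. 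When the reflex field is $\mathbb Q$, we have $E_p = \mathbb Q_p$, and the quasi-split group $G_{\mathbb Q_p} \cong GU_{F_p/\mathbb Q_p}(n)$ splits over $E_{p,\alpha}$ if and only if $\alpha$ is even. For odd $\alpha$ the group remains non-split, and the Satake transform of $\phi_\alpha$ is computed (via Morel's formulas in \cite{Morel08}, Proposition~4.2.1) to be $p^{\alpha s(n-s)/2}$ times the appropriate symmetric polynomial; tracing through the base-change map then yields $\phi'_\alpha = p^{-\frac{\alpha}{2}s(n-s)} f_{n\alpha s}$. So the scalar is not a cohomological Tate twist arising from descent but a direct artifact of the Satake transform in the non-split case. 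Replacing your descent heuristic with this computation closes the gap.
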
 

We make some remarks on the notations appearing in the above theorem: 
\begin{itemize}

\item At $p$: The functions $f_{n \alpha s}$ (resp. $C_cf_{n \alpha s}$) are certain spherical functions on $G(\mathbb Q_p)$ (resp. truncated spherical functions on $G(\mathbb Q_p)$) which will be defined in $\S2.2$ and $\S 2.4$. 

\item Away from $p$ and $\infty$: The function $f^{\infty p} \in \mathcal H(G(\mathbb A_p^{\infty})//K^p)$ is an arbitrary $K^p$-bi-invariant Hecke operator.  

\item At $\infty$: The function $f_{\infty}$ is an Euler-Poincar{\'e} function associated to $\zeta$ (as introduced in \cite{Clo85}). 

\end{itemize}

\begin{remark}
We assume that $n-s = s $ and we take $k$ to be a sufficiently large odd integer. By applying Proposition 1.3.1, we obtain $|B(\mathbb F_{p^{2k}})| = p^{\frac k2 s(n-s)}|B(\mathbb F_{p^{k}})|$, from which we obtain results relating the dimension of $B$ and the rational points of $B_{\mathbb F_{p^k}}$ (as well  as results relating the dimension of $B$ and the number of irreducible components), see Corollary 4.2.1. 
\end{remark}

The main difficulty in computing $\mathrm{Tr}(C_cf_{n\alpha s} f^{\infty p} f_{\infty} , \mathcal A(G))$ is that $G(\mathbb Q_p)$ is isomorphic to the quasi-split unitary group over $\mathbb Q_p$, denoted by $GU_{F_p/\mathbb Q_p}(n)(\mathbb Q_p)$ (which we will define in $\S2.3$), and that unitary representations of $GU_{F_p/\mathbb Q_p}(n)(\mathbb Q_p)$ is not well understood. To address this difficulty, we use base change to reduce the computation of $\mathrm{Tr}(C_cf_{n\alpha s} f^{\infty p} f_{\infty} , \mathcal A(G))$ to the computation of certain twisted traces of automorphic representations of $(F^{\times} \times D^{\times})(\mathbb A)$ (note that $(F^{\times} \times D^{\times})(\mathbb Q_p) \cong F_p^{\times} \times GL_n(F_p)$, and unitary representations of general linear groups over $p$-adic fields is understood much better, see \cite{Tad86}). 

We define $H $ to be $\mathrm{Res}_{\mathbb Q}^F(G) \cong \mathrm{Res}_{\mathbb Q}^F (F^{\times} \times D^{\times})$, we take $f  = f^{\infty} \otimes f_{ep} \in \mathcal H(G(\mathbb A))$ and $\varphi  = \varphi^{\infty} \otimes \varphi^{\theta}_{ep} \in \mathcal H(G(\mathbb A))$ such that $ f$ and $\varphi$ are associated in the sense of base change (i.e., the functions $f_v$ and $\varphi_v$ are associated for all places $v$, as defined in $\S 3.1 $ of \cite{Lab99}), where $f_{ep}$ is an Euler-Poincar{\'e} function associated to the trivial representation (as introduced \cite{Clo85}) and $\varphi_{ep}^{\theta}$ is a Lefschetz fuction for $\theta$ (as defined in p120 of \cite{Lab99}). By modifying the proof of Theorem A3.1 in \cite{Clo99} (see the proof of Proposition 4.2.2), we have the following proposition:

\begin{proposition}
(See Proposition 4.2.2) We have $$\mathrm{Tr} (r (f), \mathcal A(G)) = \mathrm{Tr}(R(\varphi)A_\theta  , \mathcal A( F^{\times} \times D^{\times} ))$$ where $r, R$ are right regular representations, and $A_{\theta}$ is defined by $\alpha \mapsto \alpha \circ \theta$ for $\alpha \in \mathcal A  (F^{\times} \times D^{\times} ))$, and $\theta$ is the involution arising from the Galois action (see Remark 4.1.1).
\end{proposition}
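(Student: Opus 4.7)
The approach is to compare two trace formulas: the Arthur--Selberg trace formula applied to $\mathrm{Tr}(r(f), \mathcal A(G))$ and the twisted Arthur--Selberg trace formula applied to $\mathrm{Tr}(R(\varphi) A_\theta, \mathcal A(H))$, and then to identify their geometric sides using the fact that $f$ and $\varphi$ are associated place by place in the sense of base change. This is the strategy of Clozel's Theorem A3.1 in \cite{Clo99}, and the task is to check that his argument carries over to the precise group-theoretic setup here (with $H = \mathrm{Res}_{F/\mathbb Q}(F^{\times} \times D^{\times})$ and the Galois involution $\theta$).

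First I would apply the Arthur trace formula to expand $\mathrm{Tr}(r(f), \mathcal A(G))$ as a sum of geometric terms: elliptic orbital integrals, weighted orbital integrals indexed by proper Levi subgroups, and unipotent contributions. The crucial simplification comes from the choice $f_\infty = f_{ep}$: by a theorem of Clozel (\cite{Clo85}), the Euler--Poincar\'e function has stable orbital integrals supported on elliptic semisimple elements and its weighted orbital integrals attached to proper Levi subgroups vanish. Consequently the geometric side collapses to the elliptic part
\[
\mathrm{Tr}(r(f), \mathcal A(G)) = \sum_{\gamma} \mathrm{vol}(G_\gamma(\mathbb Q) \backslash G_\gamma(\mathbb A)) \cdot \mathrm O_\gamma(f),
\]
where $\gamma$ runs through $G(\mathbb Q)$-conjugacy classes of elliptic semisimple elements.

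Second I would apply the twisted trace formula of \cite{Lab99} to $\mathrm{Tr}(R(\varphi)A_\theta, \mathcal A(H))$. By the defining property of the Lefschetz function $\varphi^{\theta}_{ep}$, its $\theta$-twisted orbital integrals are supported on $\theta$-elliptic $\theta$-semisimple $\theta$-conjugacy classes, and the twisted weighted orbital integrals at proper $\theta$-stable Levi subgroups vanish. The geometric side thus reduces to
\[
\mathrm{Tr}(R(\varphi)A_\theta, \mathcal A(H)) = \sum_{\delta} \mathrm{vol}(H_{\delta\theta}(\mathbb Q) \backslash H_{\delta\theta}(\mathbb A)) \cdot \mathrm{TO}_\delta(\varphi),
\]
where $\delta$ runs over $\theta$-conjugacy classes of $\theta$-elliptic semisimple elements of $H(\mathbb Q)$ and $H_{\delta\theta}$ is the $\theta$-twisted centralizer.

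Third I would match the two sides using the norm map. The norm $\mathcal N$ induces a bijection between the $\theta$-elliptic $\theta$-semisimple $\theta$-conjugacy classes $[\delta]$ in $H(\mathbb Q)$ and the elliptic semisimple $G(\mathbb Q)$-conjugacy classes $[\gamma]$, identifying $G_\gamma$ with $H_{\delta\theta}$ as inner forms; the Tamagawa measure comparison then equates the volume factors. Because $f_v$ and $\varphi_v$ are associated at every place $v$ of $\mathbb Q$, we have $\mathrm O_\gamma(f) = \mathrm{TO}_\delta(\varphi)$ on matching classes, and the sums agree term by term.

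The main obstacle is the modification of Clozel's argument. The delicate point is proving that the non-elliptic geometric contributions really cancel on both sides simultaneously in the presence of the finite-place components $f^{\infty}$ and $\varphi^{\infty}$: for this one needs the Euler--Poincar\'e vanishing at infinity to kill the weighted orbital integrals of $f$ attached to proper rational Levi subgroups of $G$, and the Lefschetz vanishing for $\varphi^\theta_{ep}$ to kill the twisted weighted orbital integrals of $\varphi$ at proper $\theta$-stable Levi subgroups of $H$, compatibly across the norm correspondence (so that, in particular, a Levi of $G$ matches the restriction-of-scalars of a $\theta$-stable Levi of $H$). A secondary subtlety is the behavior at the center and the handling of the $\mathrm{ker}^1$-type global obstruction, but as no $|\mathrm{ker}^1(\mathbb Q, G)|$ factor appears here, this is absorbed into the identification of orbits on the $G$- and $H$-sides and only resurfaces when one later passes from automorphic traces to point counts via Proposition 1.3.1.
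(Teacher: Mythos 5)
Your overall strategy (compare the geometric sides of the trace formula for $G$ and the twisted trace formula for $H$, using that $f$ and $\varphi$ are associated place by place) is the right starting point, and your reduction of both geometric sides to elliptic terms is fine here (in fact it holds for the simpler reason that $D$ is a division algebra, so $G$ and $H$ have no proper rational parabolic subgroups). The genuine gap is in your third step, the term-by-term matching $\mathrm O_\gamma(f)=\mathrm{TO}_\delta(\varphi)$ under a claimed norm-map bijection between $\theta$-conjugacy classes of $H(\mathbb Q)$ and conjugacy classes of $G(\mathbb Q)$. Being ``associated in the sense of base change'' (Labesse, \S 3.1 of \cite{Lab99}) matches \emph{stable} (twisted) orbital integrals, and the norm of a $\theta$-semisimple class in $H(\mathbb Q)$ naturally lives in a stable class of the quasi-split form $G^*$, not in a single $G(\mathbb Q)$-class; since $G$ is a non-quasi-split inner form of a unitary similitude group, a stable class can contain several $G(\mathbb A)$-classes (with Kottwitz signs $e(\gamma)$), or fail to transfer to $G$ at all. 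So the identity you assert is not what the transfer hypothesis gives, and the comparison cannot be made directly between $G$ and $(H,\theta)$ without controlling this endoscopic obstruction.

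The paper's proof resolves exactly this point by inserting the stable elliptic trace of $G^*$ as an intermediary: first $T_e^{H,\theta}(\varphi)=ST_e^{G^*}(f^{G^*})$ by Labesse's Theorem 4.3.4, which requires checking that the Lefschetz function $\varphi^{H,\theta}_{ep}$ is stabilizing (this uses $H^1(\mathbb R,(H_{\mathbb R})_{SC})=H^1(\mathbb R,\mathrm{Res}_{\mathbb R}^{\mathbb C}SL_n)=1$ via Shapiro's lemma, plus $a(H,\theta,G^*)=1$ and the fact that $\{\infty\}$ is $(H^*,G^*)$-essential); second, $T_e^{G}(f)=ST_e^{G^*}(f^{G^*})$ by the appendix Lemma A.2.1, whose hypotheses include the triviality of $\mathfrak R(I_0/\mathbb Q)$ for elliptic $\gamma_0$, connectedness of centralizers (so $e(\gamma)=1$), equality of Tamagawa numbers, and finiteness of the central contributions. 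None of these inputs appear in your argument, and they are not bookkeeping: without the triviality of $\mathfrak R(I_0/\mathbb Q)$ the elliptic part of the trace formula for the inner form $G$ need not equal the stable elliptic part for $G^*$, and without the stabilizing property of $\varphi^{H,\theta}_{ep}$ one cannot pass from the twisted elliptic terms for $H$ to $ST_e^{G^*}$. To repair your proof you would either have to reproduce this two-step stabilization, or prove directly (which amounts to the same cohomological vanishing) that every relevant stable class meets $G(\mathbb Q)$ in a single orbit pattern making your term-by-term identification legitimate.
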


In the following, we assume that $n-s \not = s$, the argument for the case $n-s = s$ is almost identical. 
We fix the following datum:

\begin{itemize}

\item Local system: We take $\zeta$ to be the trivial representation $G_{\overline{\mathbb Q}}$, then the associated local system $\mathcal L$ is $\overline{\mathbb Q}_{\ell}$. 
\item At $p$:  We take $C_{\theta c}  \varphi_{n \alpha s} \in \mathcal H(H(\mathbb Q_{p}))$ (with $\varphi_{n \alpha s}$ as defined in $\S$2.4) and $ C_c f_{n \alpha s }\in \mathcal H(G(\mathbb Q_{p}))$. Then $C_{\theta c}  \varphi_{n \alpha s}$ and $ C_c f_{n \alpha s }$ are associated in the sense of base change as defined in $\S 3.1 $ of \cite{Lab99}(see the proof of Proposition 10 of \cite{Kret11}). 
\item Away from $p$ and infinite: We take $1 _{K^{p}} \in \mathcal H(G(\mathbb A^{p \infty}))$ to be the characteristic function of $K^p$, then there exists $ \varphi^{p\infty} \in \mathcal H(H(\mathbb A^{p \infty}))$ associated to $1 _{K^{p \infty}}$ if $K^p$ is small enough (for unramified places, see Proposition 3.3.2 in \cite{Lab99}, for ramified places, see the proof of Theorem A5.2 in \cite{Clo99})
\item At $\infty$: We take $\varphi_{ep}^{\theta} \in \mathcal H(H(\mathbb A_{ \infty}))$ to be a Lefschetz function for $ \theta$ (as defined in p120 of \cite{Clo99}), and $ f_{ep} \in \mathcal H(G(\mathbb A_ \infty)$ to be an Euler-Poincar{\'e} function associated to the trivial representation, after multiplying $\varphi^{\theta}_{ep}$ by some $c \in \mathbb R_{>0}$, we may choose $\varphi^{\theta}_{ep}$ and $f_{ep}$, such that $\varphi^{\theta}_{ep}$ and $f_{ep}$ are associated (see Corollary A1.2 of \cite{Clo99}). 
\end{itemize}

Applying Proposition 1.3.1 and Proposition 1.3.2, we have (Corollary 4.2.2 of $\S 4.2$):

\begin{corollary}
If $n-s \not = s$, we define $q = |\mathcal O_F/(p)| = p^2$, then for an arbitrary $\alpha \in \mathbb Z_{>0}$ sufficiently large, we have $$|B(\mathbb F_{q^{\alpha}})| = \mathrm{Tr}(1 _{K^{p}}\times \Phi_{\mathfrak p}^{\alpha}, \sum_{i = 1}^{\infty}(-1)^iH^i_{et}(B_{\bar {\mathbb F}_q}, \iota^*\mathcal L)) $$
$$=  | \mathrm{ker}^1(\mathbb Q, G)| \mathrm{Tr}(C_cf_{n\alpha s} f^{\infty p} f_{\infty} , \mathcal A(G)) = | \mathrm{ker}^1(\mathbb Q, G)|\mathrm{Tr}_{\theta}(C_{\theta c}  \varphi_{n \alpha s}\varphi^{p\infty} \varphi_{ep}^{\theta}, \mathcal A( F^{\times} \times D^{\times} )).$$
\end{corollary}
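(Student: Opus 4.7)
The statement is a chain of three equalities, and my plan is to derive each one from a result already established earlier in the excerpt, reducing the problem to verifying that the hypotheses are met by the specific test-function choices made in the four bullets just above the corollary.

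For the first equality, recall that $\zeta$ has been taken to be trivial, so $\iota^{*}\mathcal L = \overline{\mathbb Q}_{\ell}$, and the Hecke operator $1_{K^{p}}$ acts as the identity on $H^{\bullet}_{et}(B_{\bar{\mathbb F}_q}, \overline{\mathbb Q}_{\ell})$. The Shimura variety $Sh_{K_{p}K^{p}}(G,X)$ is compact (this is precisely the reason for working with a division algebra $D$), so $\bar S$ is proper over $\mathbb F_{q}$, and the closed subscheme $B \hookrightarrow \bar S$ inherits properness. Grothendieck's Lefschetz trace formula applied to $B_{\mathbb F_q}$ with Frobenius power $\Phi_{\mathfrak p}^{\alpha}$ then gives $|B(\mathbb F_{q^{\alpha}})| = \sum_{i\geq 0}(-1)^{i}\mathrm{Tr}(\Phi_{\mathfrak p}^{\alpha}, H^{i}_{et}(B_{\bar{\mathbb F}_q}, \overline{\mathbb Q}_{\ell}))$, which is exactly the first equality.

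The middle equality is simply Proposition 1.3.1 specialised to $f^{\infty p} = 1_{K^{p}}$ and $f_{\infty} = f_{ep}$, the Euler--Poincar{\'e} function attached to the trivial representation; the requirement that $\alpha$ be sufficiently large is the hypothesis of that proposition and is included in our hypotheses. For the rightmost equality I would apply Proposition 1.3.2 to $f = 1_{K^{p}} \otimes C_{c}f_{n\alpha s} \otimes f_{ep}$ on $G(\mathbb A)$ and $\varphi = \varphi^{p\infty} \otimes C_{\theta c}\varphi_{n\alpha s} \otimes \varphi^{\theta}_{ep}$ on $H(\mathbb A)$. The only hypothesis to check is that $f$ and $\varphi$ are associated at every place in the sense of base change. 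This I verify place by place using the four bullets: at $p$ the pair $(C_{c}f_{n\alpha s}, C_{\theta c}\varphi_{n\alpha s})$ is associated by the proof of Proposition 10 of \cite{Kret11}; at unramified finite places away from $p$ it is the base-change fundamental lemma (Proposition 3.3.2 of \cite{Lab99}); at the finitely many ramified places inside $K^{p}$ the existence of a matching $\varphi_{v}^{p\infty}$ is supplied by the argument in the proof of Theorem A5.2 of \cite{Clo99}, provided $K^{p}$ is small enough; and at $\infty$, Corollary A1.2 of \cite{Clo99} allows a positive rescaling of $\varphi^{\theta}_{ep}$ so that it is associated to $f_{ep}$.

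I expect the main obstacle to be the first of these four matchings: namely, showing that the truncations $C_{c}$ and $C_{\theta c}$ are compatible with base-change transfer at $p$, so that $C_{c}f_{n\alpha s}$ and $C_{\theta c}\varphi_{n\alpha s}$ are genuinely associated \emph{after} truncation rather than merely before. This compatibility is the technical heart of the reduction and is exactly what is carried out in Kret's adaptation of Kottwitz's formula (Proposition 10 of \cite{Kret11}); granting it, the three equalities of the corollary follow by concatenating Lefschetz, Proposition 1.3.1 and Proposition 1.3.2.
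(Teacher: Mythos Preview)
Your proposal is correct and follows essentially the same approach as the paper: the paper's own justification is simply ``Applying Proposition 1.3.1 and Proposition 1.3.2'' (equivalently, Proposition 4.2.1 and Proposition 4.2.2), which is precisely what you do, and your place-by-place verification that $f$ and $\varphi$ are associated merely spells out the content of the four bullet points preceding the corollary. Your identification of the truncation compatibility at $p$ as the key technical input (handled in \cite{Kret11}) is accurate and matches the paper's citation.
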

We then reduce the computation of the twisted trace $$\mathrm{Tr}_{\theta}(C_{\theta c}  \varphi_{n \alpha s}\varphi^{p\infty} \varphi_{ep}^{\theta}, \mathcal A( F^{\times} \times D^{\times} ))$$ to the computation of the twisted trace $Tr_{\theta}(C_{\theta c}\phi_{n \alpha s}, \rho)$, where $\rho$ is a representation of $GL_n(F_p)$ of $\theta$-type (as defined in $\S 3.1$). In particular, we have (see the proof of Proposition 4.3.1):

\begin{proposition}
The twisted trace $$\mathrm{Tr}_{\theta}(C_{\theta c}  \varphi_{n \alpha s}\varphi^{p\infty} \varphi_{ep}^{\theta}, \mathcal A( F^{\times} \times D^{\times} ))= \sum_{\pi} M_{\pi} \mathrm{Tr}_{\theta}(C_{\theta c} \phi_{n \alpha s}, \rho) $$ where $\rho $'s are representations of $GL_n(F_p)$ of $\theta$-type and $$N_{\pi} = \pm \sum\limits_{\substack{\chi \otimes \tau \subset \mathcal A(F^{\times} \times D^{\times}): \\ \chi \otimes \tau \text{ is } \theta \text{-stable,} \\  \tau_p \text{ is of }\theta \text{ type}, \\ \pi\leq \tau_p  }}  \mathrm{Tr}_{\theta}( \varphi_{ep}^{\theta},(\chi \otimes \tau)_{\infty} )  \mathrm{Tr}_{\theta}(\varphi^{p\infty} ,(\chi \otimes \tau )^{p \infty}) $$ with the partial ordering as defined before Proposition 3.3.2, and $$\mathrm{Tr}_{\theta}(\varphi^{\theta}_{ep},(\chi \otimes \tau)_{\infty} ) = \sum_{i  =1}^{\infty} \mathrm Tr(\theta, H^i(\mathfrak h, K_{\infty}; (\chi \otimes \tau)_{\infty})).$$ 
\end{proposition}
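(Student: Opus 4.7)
The plan is to combine the spectral decomposition of $\mathcal A(F^\times \times D^\times)$ with a local analysis at $p$ via Zelevinsky's classification of Iwahori-spherical representations. First I would decompose the right regular representation of $(F^\times \times D^\times)(\mathbb A)$ into irreducible automorphic representations $\chi \otimes \tau$ with their automorphic multiplicities $m(\chi \otimes \tau)$, obtaining
$$\mathrm{Tr}_\theta\bigl(C_{\theta c}\varphi_{n\alpha s}\,\varphi^{p\infty}\,\varphi^\theta_{ep}, \mathcal A(F^\times \times D^\times)\bigr) = \sum_{\chi \otimes \tau} m(\chi \otimes \tau)\,\mathrm{Tr}_\theta\bigl(C_{\theta c}\varphi_{n\alpha s}\,\varphi^{p\infty}\,\varphi^\theta_{ep}, \chi \otimes \tau\bigr).$$
Since the operator $A_\theta$ intertwines $\chi \otimes \tau$ with $(\chi \otimes \tau) \circ \theta$ and permutes inequivalent constituents, only the $\theta$-stable representations can have a nonzero twisted trace, so the effective sum is indexed by $\theta$-stable $\chi \otimes \tau$.

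For each such $\chi \otimes \tau$, I would fix a global intertwiner $A_\theta$ on its space and factor it as a product of local intertwiners up to a global sign $\varepsilon(\chi,\tau) = \pm 1$. This factors the twisted trace as a product over places,
$$\varepsilon(\chi,\tau)\,\mathrm{Tr}_\theta\bigl(C_{\theta c}\varphi_{n\alpha s}, (\chi \otimes \tau)_p\bigr)\,\mathrm{Tr}_\theta\bigl(\varphi^{p\infty}, (\chi \otimes \tau)^{p\infty}\bigr)\,\mathrm{Tr}_\theta\bigl(\varphi^\theta_{ep}, (\chi \otimes \tau)_\infty\bigr).$$
Under the isomorphism $(F^\times \times D^\times)(\mathbb Q_p) \cong F_p^\times \times GL_n(F_p)$, the $p$-component decouples further: the $F_p^\times$ side yields a character value of $\chi_p$, which is absorbed in the passage from $\varphi_{n\alpha s}$ to $\phi_{n\alpha s}$, while the $GL_n(F_p)$ side produces $\mathrm{Tr}_\theta(C_{\theta c}\phi_{n\alpha s}, \tau_p)$. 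At the same time, the $\theta$-stability of $\tau_p$ and the Iwahori-sphericity forced by $\varphi^{p\infty}$ and $\varphi^\theta_{ep}$ confine $\tau_p$ to the class of representations of $\theta$-type introduced in Definition 3.1.1.

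The essential local step is then to invoke Theorem 3.3.1 together with the Zelevinsky partial order $\leq$ introduced before Proposition 3.3.2 to expand
$$\mathrm{Tr}_\theta\bigl(C_{\theta c}\phi_{n\alpha s}, \tau_p\bigr) = \sum_{\substack{\pi \leq \tau_p \\ \pi \text{ of } \theta\text{-type}}} M_{\tau_p,\pi}\,\mathrm{Tr}_\theta\bigl(C_{\theta c}\phi_{n\alpha s}, \pi\bigr),$$
where $\pi$ ranges over the Steinberg-type inductions $\mathrm{Ind}_P^G(\otimes_i \mathrm{St}_{GL_{n_i}}(\varepsilon_i))$ with $\varepsilon_i$ either trivial or the unramified quadratic character, as in the statement. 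Substituting this into the global expansion, interchanging the order of summation, and collecting every global ingredient attached to a fixed $\pi$ into the single coefficient $N_\pi$ yields the desired identity.

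The principal obstacle is this final local step: in the Grothendieck group of Iwahori-spherical $\theta$-stable representations, one must show that a $\tau_p$ of $\theta$-type expands linearly into the Steinberg-induced standard modules $\pi$ via the Zelevinsky matrix $M_{\tau_p,\pi}$, and that this expansion is compatible with the twisted compact trace pairing against $C_{\theta c}\phi_{n\alpha s}$. Subsidiary care will be needed to track the global sign $\pm 1$ in $N_\pi$, which arises when the canonical spectral intertwiner on the automorphic realization of $\chi \otimes \tau$ is compared to the product of the normalized local intertwiners at $p$, $\infty$, and away from $p\infty$, and to ensure that only $\theta$-stable summands survive the refined grouping by $\pi$.
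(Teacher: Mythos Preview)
Your overall strategy---spectral decomposition, restriction to $\theta$-stable constituents, factorization over places, and expansion at $p$ via Theorem 3.3.1---matches the paper's proof of Proposition 4.3.1. However, there are two genuine gaps in the local analysis at $p$.

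First, you write that ``the Iwahori-sphericity forced by $\varphi^{p\infty}$ and $\varphi^\theta_{ep}$ confine $\tau_p$ to the class of representations of $\theta$-type.'' This is not how the argument works. The functions $\varphi^{p\infty}$ and $\varphi^\theta_{ep}$ live at places away from $p$ and at infinity; they impose no condition on $\tau_p$. What is actually needed is the Jacquet--Langlands correspondence: one transfers $\tau$ to a discrete automorphic representation $JL(\tau)$ of $GL_n(\mathbb A_F)$, uses that $\tau_p \cong JL(\tau)_p$ at the unramified place $p$, and then invokes Theorem 3.1.1 (which rests on Moeglin--Waldspurger and Tadi\'c) to conclude that $\tau_p$ is semi-stable rigid. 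You never mention Jacquet--Langlands, and without it there is no reason for $\tau_p$ to lie in the semi-stable rigid class.

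Second, even once $\tau_p$ is known to be $\theta$-stable and semi-stable rigid, it is \emph{not} automatically of $\theta$-type in the sense of Definition 3.1.1: the unramified characters $\varepsilon_i$ appearing in the Speh factors need not be trivial or quadratic. The paper handles this with Corollary 3.1.1, which in turn rests on the key computation of Proposition 2.7.1: one shows that for a specific choice of intertwiner the twisted compact trace of $\phi_{n\alpha s}$ is insensitive to the twists $|\cdot|^{\mathbf{i}d_i + e_i}$, so that $\mathrm{Tr}_\theta(C_{\theta c}\phi_{n\alpha s}, \tau_p)$ equals (up to sign) the corresponding trace against a genuine $\theta$-type representation. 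This reduction is a substantial step that your proposal omits.

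A smaller point: your treatment of the $F_p^\times$ factor is too vague. The paper carries out an explicit comparison of the two natural involutions $\theta_1$ and $\theta_2$ on $F_p^\times \times GL_n(F_p)$ to deduce that $\tau_p$ has trivial central character, and then argues separately that $\mathrm{Tr}_\theta(X^\alpha, \chi_p) = \pm 1$. This sign is one source of the $\pm$ in $N_\pi$; it is not simply ``absorbed in the passage from $\varphi_{n\alpha s}$ to $\phi_{n\alpha s}$.''
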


To compute the twisted trace $Tr_{\theta}(C_{\theta c}\phi_{n \alpha s}, \rho)$ (for which we use a twisted version of Clozel's formula, see Proposition 2.2.3), where $\rho$ is a representation of $GL_n(F_p)$ of $\theta$-type (as defined in $\S 3.1$), we need to understand the Jacquet modules of $\rho$ (which is described in \cite{Kret12}). Additionally, we need to determine how the intertwining operator acts on Jacquet modules of $\rho$, and this is the main difficulty for us.   

To address the difficulty, we use Zelevinsky's segment construction (especially Proposition 3.3.2 in $\S 3.3$) and Proposition 2.7.1 (which concerns twisted traces of $C_{\theta c}\phi_{n \alpha s}$ against certain induced representation). These reduce the 
computation of the twisted trace $Tr_{\theta}(C_{\theta c}\phi_{n \alpha s}, \rho)$, where $\rho$ is a representation of $GL_n(F_p)$ of $\theta$-type, to the computation of the twisted traces $\mathrm{Tr}_{\theta}(C_{\theta c}f_{n\alpha s} , \rho' )$, where $\rho'$ is of the form $\mathrm{Ind}_P^G(\otimes_i \mathrm{St}_{GL_{n_i}}(\varepsilon_i))$, with $\varepsilon_i$ being either the trivial character or the unramified quadratic character. Notably, we have a clear understanding of how the intertwining operator acts on Jacquet modules of $\rho'$ (see \cite{Nguyen04}). 
In particular, we establish Proposition 1.3.4 and Theorem 1.3.1. 

\begin{proposition} (See Proposition 3.3.2)
We have $$\mathrm{Tr}_{\theta}(C_{\theta c}\phi_{n\alpha s} ,  \rho ) = \mathrm{Tr}_{\theta} (C_{\theta c}\phi_{n\alpha s} , \rho_{\mathrm{St}}) + \sum\limits_{\rho' \in \mathcal C_{\rho}^{\theta},\rho'<\rho} N_{\rho, \rho'}  \mathrm{Tr}_{\theta}(C_{\theta c}\phi_{n\alpha s} , \rho') $$ where 
\begin{itemize}
\item $ \rho_{\mathrm{St}} $ is of the form $\mathrm{Ind}_P^G(\otimes_i \mathrm{St}_{GL_{n_i}}(\varepsilon_i))$, where $\varepsilon_i$ is either the trivial character or the unramified quadratic character (note that $ \rho_{\mathrm{St}} $ is a representation of $\theta$-type).

 \item $N_{\rho,\rho'} \in \mathbb Z $ and it depends on $\rho$, $\rho'$ and an intertwining operator which we introduced later in the proof (which is constructed from $\rho$). 

 \item $C^{\theta}_{\rho}$ is the set consisting of $\theta$-stable representation $\rho'$ of $GL_n(F_p)$ such that $\rho' < \rho$, where $<$ is a certain partial ordering in Ring of Zelevinsky as defined at the end of $\S 3.2$ (which is introduced in \cite{Zel80}).

\end{itemize}
\end{proposition}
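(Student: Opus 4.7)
The plan is to expand $\rho$ in the Grothendieck group of finite-length admissible representations of $GL_n(F_p)$ as an integer combination of Steinberg-type induced representations, and then apply the additive functional $\mathrm{Tr}_\theta(C_{\theta c}\phi_{n\alpha s}, \cdot)$ termwise. Concretely, I would start from the Zelevinsky classification recalled in \S3.2: every irreducible admissible $\rho$ with Iwahori-fixed vector is the unique irreducible subrepresentation of a standard module parabolically induced from a tensor product of segment representations. When $\rho$ is moreover of $\theta$-type in the sense of Definition 3.1.1 (unitary, $\theta$-stable, local at $p$ of a discrete automorphic representation), the central characters of the segments are forced to take values in $\{\pm 1\}$ on the uniformizer, so the segment representations are Steinberg representations $\mathrm{St}_{GL_{n_i}}(\varepsilon_i)$ twisted by the trivial or the unramified quadratic character. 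This singles out a canonical ``Steinberg-type'' standard module $\rho_{\mathrm{St}}=\mathrm{Ind}_P^G(\otimes_i\mathrm{St}_{GL_{n_i}}(\varepsilon_i))$ associated to $\rho$.

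Next, I would invoke the triangularity of the standard-to-irreducible change-of-basis in Zelevinsky's ring: in the Grothendieck group one has
\[
\rho_{\mathrm{St}} \;=\; \rho \;+\; \sum_{\rho'<\rho} m_{\rho,\rho'}\,\rho',\qquad m_{\rho,\rho'}\in\mathbb Z,
\]
with $<$ the Zelevinsky partial order defined at the end of \S3.2. Inverting this unitriangular system yields an identity $\rho = \rho_{\mathrm{St}}+\sum_{\rho'<\rho}N_{\rho,\rho'}\rho'$ with integer coefficients. Applying $\mathrm{Tr}_\theta(C_{\theta c}\phi_{n\alpha s},\cdot)$ and observing that the twisted trace of any $\rho'$ that is not $\theta$-stable vanishes (no $\theta$-intertwiner exists, so there is nothing to trace), only the constituents in $\mathcal C_\rho^\theta$ contribute and the stated formula falls out.

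The delicate point, and the main obstacle, is that the Zelevinsky identity lives in an ordinary Grothendieck group, whereas the twisted trace is a functional on \emph{twisted} representations: it requires a choice of normalized intertwining operator $A_\rho:\rho\xrightarrow{\sim}\rho^\theta$ on each $\theta$-stable constituent, and the coefficients $N_{\rho,\rho'}$ must then track how a chosen intertwiner on $\rho$ restricts to, or is compared with, the chosen intertwiners on the $\rho'$ appearing in a Jordan--H\"older filtration of $\rho_{\mathrm{St}}$. I would fix normalizations via Whittaker models (consistently with the conventions of \S2.2), use Schur's lemma to see that any two $\theta$-equivariant intertwiners on an irreducible differ by a scalar, and then verify that the resulting integer $N_{\rho,\rho'}$ depends only on the multiplicities in the Jordan--H\"older series together with the chosen intertwining operator constructed from $\rho$.

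I expect the combinatorial bookkeeping of these intertwiner-normalized multiplicities to be the hardest step, and I would carry it out by induction on the Zelevinsky order: the base case is $\rho=\rho_{\mathrm{St}}$, where the identity is trivial, and the inductive step uses that every $\theta$-stable constituent $\rho'$ strictly below $\rho$ is itself the irreducible subrepresentation of its own Steinberg-type standard module, so one can iterate the expansion and collect terms. The irreducibility criteria and Jacquet module computations for induced Steinberg-type representations from \cite{Nguyen04}, together with the Jacquet module description of $\rho$ from \cite{Kret12}, provide exactly the input needed to control the intertwiner action on each constituent and thereby identify the integers $N_{\rho,\rho'}$.
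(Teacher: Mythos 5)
There is a genuine gap, and it sits at the heart of your argument: the identity you propose to ``invert'' does not exist. For $\rho\in\mathcal C^{\theta}_{\pi}$ that is not already a product of untwisted Steinberg representations (Types II and III in the paper's proof), the segments of $\rho$ are \emph{not} forced to be self-dual with characters taking values $\pm1$ on the uniformizer; $\theta$-stability of $\rho$ only forces the segments to occur in pairs $Q(A_i)(\alpha_i)$, $Q(-A_i)(\alpha_i)$ carrying opposite nontrivial twists $|\det|^{\pm c_i}$, together with some genuinely self-dual segments $Q(B_j)(\beta_j)$. Consequently the cuspidal support of $\rho$ differs from that of $\rho_{\mathrm{St}}=\mathrm{Ind}_P^G(\otimes_i\mathrm{St}_{GL_{n_i}}(\varepsilon_i))$ (which, being a product of unitary twists of Steinberg representations, is moreover irreducible), and since the Grothendieck group is graded by cuspidal support, an identity $\rho=\rho_{\mathrm{St}}+\sum_{\rho'<\rho}N_{\rho,\rho'}\rho'$ cannot hold there. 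The only Grothendieck-group identity available is Zelevinsky's triangularity for the \emph{twisted} standard module, $\prod_iQ(A_i)(\alpha_i)\cdot\prod_jQ(B_j)(\beta_j)\cdot\prod_iQ(-A_i)(\alpha_i)=\rho+\sum_{\rho'<\rho}m_{\rho,\rho'}\rho'$, and this never produces $\rho_{\mathrm{St}}$ by formal character theory alone.

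What the paper actually does, and what your proposal is missing, is the analytic input of Proposition 2.7.1: with the swap intertwining operator on the induced representation, the twisted compact trace $\mathrm{Tr}(C_{\theta c}\phi_{n\alpha s},\cdot)$ of the twisted standard module equals that of the \emph{untwisted} product $\rho_{\mathrm{St}}$, because the relevant constant-term functions $\hat\chi^{\theta}_{U}\phi^{(Q)}_{n\alpha s}$ have Satake transforms homogeneous of fixed degree, so the opposite twists $|\det|^{\pm c_i}$ cancel. This is special to these test functions and is precisely why the conclusion is an identity of twisted traces rather than of virtual representations. The remaining bookkeeping also differs from your sketch in two ways worth noting: the contribution of non-$\theta$-stable constituents vanishes not because ``no intertwiner exists'' (the twisted trace is simply undefined for them) but because the chosen operator on the big induced representation permutes them in pairs, so they contribute off-diagonal blocks of trace zero; and the integers $N_{\rho,\rho'}$ are pinned down by comparing the induced operators on the $A_{\theta}$-fixed constituents with fixed normalized ones via Schur's lemma (Corollary 2.2.1), i.e.\ they are built from the operator of Proposition 2.7.1, not from a Whittaker normalization. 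Without Proposition 2.7.1 (or a substitute argument removing the twists at the level of these particular truncated twisted traces), your plan cannot reach $\rho_{\mathrm{St}}$.
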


Note that the minimal element in $C_{\theta}^{\rho}$ is of the form $\mathrm{Ind}_P^G(\otimes_i \mathrm{St}_{GL_{n_i}}(\varepsilon_i))$, where $\varepsilon_i$ is either the trivial character or the unramified quadratic character, keep applying Proposition 1.3.4, we have:

\begin{theorem} (See Theorem 3.3.1)
  The twisted trace $$\mathrm{Tr}_{\theta}(C_{\theta c}\phi_{n\alpha s} ,  \rho )  = \sum \limits_{\rho' \in \mathcal C_{\rho}^{\theta}}M_{\pi,\rho'}  \mathrm{Tr}_{\theta}(C_{\theta c}f_{n\alpha s} , \rho' )$$ for some $M_{\rho, \rho'} \in \mathbb Z $, where $\rho'$ is of the form $\mathrm{Ind}_P^G(\otimes_i \mathrm{St}_{GL_{n_i}}(\varepsilon_i))$, such that $\varepsilon_i$ is either the trivial character or the unramified quadratic character.  
\end{theorem}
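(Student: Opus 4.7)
The plan is to iterate Proposition 1.3.4 until every remaining twisted trace is taken against a Steinberg-induced representation, and then to collect the resulting coefficients into the integers $M_{\rho,\rho'}$. Because Proposition 1.3.4 already rewrites $\mathrm{Tr}_\theta(C_{\theta c}\phi_{n\alpha s},\rho)$ as a twisted trace against $\rho_{\mathrm{St}}$ (which is already of the target form) plus an integer combination of the same kind of traces against strictly smaller $\theta$-stable $\rho'\in\mathcal{C}_\rho^\theta$, the entire statement reduces to a descent argument in the Zelevinsky partial order.

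I would induct on the height of $\rho$ in the restriction of Zelevinsky's partial order to $\theta$-stable irreducible representations of $GL_n(F_p)$ of total degree $n$, which is a finite poset. For the base case, the minimal elements are, by the remark recorded just before the theorem, already of the form $\mathrm{Ind}_P^G(\otimes_i\mathrm{St}_{GL_{n_i}}(\varepsilon_i))$ with each $\varepsilon_i$ either trivial or the unramified quadratic character; for such a $\rho$ the correction sum in Proposition 1.3.4 is empty and the conclusion holds with a single term of coefficient $1$. For a non-minimal $\rho$ I would apply Proposition 1.3.4 to obtain
\[\mathrm{Tr}_\theta(C_{\theta c}\phi_{n\alpha s},\rho)=\mathrm{Tr}_\theta(C_{\theta c}\phi_{n\alpha s},\rho_{\mathrm{St}})+\sum_{\rho'\in\mathcal{C}_\rho^\theta,\;\rho'<\rho}N_{\rho,\rho'}\,\mathrm{Tr}_\theta(C_{\theta c}\phi_{n\alpha s},\rho'),\]
then invoke the induction hypothesis to expand each $\mathrm{Tr}_\theta(C_{\theta c}\phi_{n\alpha s},\rho')$ as $\sum_{\rho''}M_{\rho',\rho''}\mathrm{Tr}_\theta(C_{\theta c}\phi_{n\alpha s},\rho'')$ ranging over Steinberg-induced $\rho''$, and finally collect terms by $\rho''$ via
\[M_{\rho,\rho''}:=\delta_{\rho_{\mathrm{St}},\rho''}+\sum_{\rho'\in\mathcal{C}_\rho^\theta,\;\rho'<\rho}N_{\rho,\rho'}\,M_{\rho',\rho''}\in\mathbb{Z}.\]

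The substantive analytic content is entirely absorbed into Proposition 1.3.4, so the only obstacle remaining at this level is the combinatorial bookkeeping that guarantees both that the descent terminates and that $\theta$-stability is preserved along the way. Termination follows from the finiteness of Zelevinsky segment multisets of bounded total degree, while preservation of $\theta$-stability is baked into the definition of $\mathcal{C}_\rho^\theta$, which by construction restricts the correction terms to $\theta$-stable summands so that the induction hypothesis may be reapplied. I would close by observing that the integers $M_{\rho,\rho''}$ produced in this way depend only on $\rho$, $\rho''$ and the intertwining data entering the $N_{\rho,\rho'}$, and in particular are independent of the positive integer $\alpha$; this uniformity in $\alpha$ is exactly what allows the coefficients to be pulled out of the $\alpha$-indexed sums in the statement of Theorem 1.2.1.
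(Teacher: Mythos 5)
Your proposal is correct and matches the paper's own argument: the paper proves Theorem 3.3.1 precisely by repeatedly applying Proposition 3.3.2 (Proposition 1.3.4), with the minimal elements of $\mathcal C_{\pi}^{\theta}$ already of Steinberg-induced form serving as the terminating case. Your explicit induction on the finite poset and the recursion $M_{\rho,\rho''}=\delta_{\rho_{\mathrm{St}},\rho''}+\sum_{\rho'<\rho}N_{\rho,\rho'}M_{\rho',\rho''}$ merely makes the paper's bookkeeping precise, and your remark on $\alpha$-independence of the coefficients is consistent with how the paper uses them.
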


Now let $ \pi$ be a representation of $GL_n(F_p)$ of the form $\mathrm{Ind}_P^G(\otimes_i \mathrm{St}_{GL_{n_i}}(\varepsilon_i))$, such that $\varepsilon_i$ is either the trivial character or the unramified quadratic character. Applying a twisted version of Clozel' formula (see Proposition 2.2.3, which is introduced in p263 of \cite{Clo90}), we have 
\begin{equation}
\mathrm{Tr}_{\theta}(C_{\theta c} \phi_{n \alpha s}, \pi) = \sum\limits_{Q \in \mathcal P^{\theta}, Q = LU}\varepsilon_{Q,\theta} \mathrm{Tr}(\hat{\chi}^{\theta}_U  \phi_{n \alpha s}^{(Q)}, \pi_U(\delta_{Q}^{-\frac 12})A_{\theta}). 
\end{equation}
where $\pi \mapsto \pi_U$ (resp. $\pi \mapsto J_U(\pi)$) is the Jacquet module (resp. normalized Jacquet module) as defined at the beginning of $\S$2, and $\varepsilon_{Q,\theta}, \hat{\chi}^{\theta}_U  \phi_{n \alpha s}^{(Q)}$ as introduced after Theorem 1.2.1. By choosing an explicit intertwining operator on $\pi$ (denoted by $\theta$, as introduced in $\S5.1$), we have an explicit description of the action of $A_{\theta}$ on $ \pi_U$, while $ \pi_U$ is computed using Bernstein-Zelevinsky filtration (see VI1.5.2 of \cite{Renard10}). More precisely, we have

\begin{proposition} (See Proposition 5.1.3)
In $\mathcal R$, we have $$ J_	U(\mathrm{Ind}_P^G(\otimes_i \mathrm{St}_{GL_{n_i}}(\varepsilon_i)))= \bigoplus_{w \in G_{P,Q}  \subset S_n} \mathrm{Ind}_{L \cap w^{-1}P}^L (w \circ J_{w Q \cap M} (\otimes_i \mathrm{St}_{GL_{n_i}}(\varepsilon_i))$$ where $G_{P,Q}  \subset S_n$ is the set of minimal length representatives of $S_P\setminus S_n/ S_Q$

The induced intertwining operator $A_{\theta}$ acts on $$\bigoplus\limits_{w \in G_{P,Q}  \subset S_n} \mathrm{Ind}_{L \cap w^{-1}P}^L (w \circ  J_{w Q \cap M}(\otimes_i \mathrm{St}_{GL_{n_i}}(\varepsilon_i)))$$ as follows: the intertwining operator $A_{\theta}$ maps $$ \mathrm{Ind}_{L \cap w^{-1}P}^L (w \circ  J_{w Q \cap M}(\otimes_i \mathrm{St}_{GL_{n_i}}(\varepsilon_i)))$$ onto $$\mathrm{Ind}_{L \cap (w^{\theta})^{-1}P}^L (w^{\theta} \circ  J_{w^{\theta} Q \cap M}(\otimes_i \mathrm{St}_{GL_{n_i}}(\varepsilon_i))) $$ where $w^{\theta}$ is the minimal length representative in $S_P \theta (w) S_Q$, and $\theta(w)$ is defined to be $A_n^{-1} w A_n$ (recall that $A_n$ is the anti-diagonal matrix with entries $1$).
\end{proposition}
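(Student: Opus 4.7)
The plan is to apply the Bernstein--Zelevinsky geometric lemma (VI.1.5.2 of \cite{Renard10}) to the induced representation $\mathrm{Ind}_P^G(\sigma)$ with $\sigma = \otimes_i \mathrm{St}_{GL_{n_i}}(\varepsilon_i)$. The lemma provides a natural filtration of $J_U(\mathrm{Ind}_P^G \sigma)$ indexed by the set $S_P \backslash S_n / S_Q$ of double cosets, with the piece attached to a minimal length representative $w \in G_{P,Q}$ given by $\mathrm{Ind}_{L \cap w^{-1}P}^L(w \circ J_{wQ \cap M}(\sigma))$. The first task will be to verify that this filtration is actually split as a direct sum in the Grothendieck group $\mathcal R$ (which is what the statement asserts). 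In $\mathcal R$ the filtration automatically descends to a sum of semisimplifications, so this step amounts to checking that no identification occurs between distinct $w$'s — equivalently, that the cuspidal supports of the summands, read off from the cuspidal support of $\sigma$ after twisting by $w$, are pairwise distinct. This follows from the structure of Zelevinsky segments: distinct $w \in G_{P,Q}$ produce distinct orbits of the cuspidal support of $\otimes_i \mathrm{St}_{GL_{n_i}}(\varepsilon_i)$ under Weyl conjugation, because $\varepsilon_i$ being trivial or unramified quadratic keeps the supports in different orbits.

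For the second part, I will track how the involution $\theta$ interacts with each step of the geometric lemma. Since $\theta$ is conjugation by the anti-diagonal matrix $A_n$, it sends the standard parabolic $P$ to itself (by our hypothesis $P \in \mathcal P^\theta$), likewise $Q$, and on $S_n$ it acts as $w \mapsto A_n^{-1} w A_n$. Because $Q$ is $\theta$-stable we have $\theta(U) = U$, so the intertwiner $A_\theta : \mathrm{Ind}_P^G \sigma \to (\mathrm{Ind}_P^G \sigma) \circ \theta$ induces an endomorphism of $J_U(\mathrm{Ind}_P^G \sigma)$. At the level of Bruhat cells, $A_\theta$ carries the section of $\mathrm{Ind}_P^G \sigma$ supported on the cell $P w Q$ to the section supported on $P \theta(w) Q$; replacing $\theta(w)$ by its minimal length representative in $S_P \theta(w) S_Q$ gives exactly the element $w^\theta$ of the statement. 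Consequently, the $w$-summand of $J_U(\mathrm{Ind}_P^G \sigma)$ is mapped to the $w^\theta$-summand.

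To pin down the map on each summand, I will unpack the identification provided by the geometric lemma. The $w$-summand arises from restricting the section supported on $PwQ$ and applying in sequence: Jacquet module along $wQw^{-1} \cap P$ of $\sigma$, twist by $w$, and induction up to $L$ from $L \cap w^{-1}P$. Each of these three functors intertwines canonically with the $A_n$-conjugation, producing the analogous functors attached to $w^\theta$. What remains is the compatibility on the ``coefficient'' side, namely an intertwiner $w \circ J_{wQ \cap M}(\sigma) \to w^\theta \circ J_{w^\theta Q \cap M}(\sigma)$ arising from the action of $A_\theta$ on the Steinberg factors themselves; this uses the fact that each $\mathrm{St}_{GL_{n_i}}(\varepsilon_i)$ carries a distinguished self-intertwiner for its analogous local $\theta$ (see \cite{Nguyen04}), and the tensor product gives the required map.

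The main obstacle will be the splitting assertion, since the geometric lemma a priori yields only a filtration. I plan to handle it by working in $\mathcal R$ from the outset, where semisimplification is automatic, and then use the disjointness of the Zelevinsky segments attached to distinct $w$'s to argue that no piece combines with another. The bookkeeping identifying $w^\theta$ as the minimal length representative in $S_P \theta(w) S_Q$, and the compatibility of $A_\theta$ with the three functors in the geometric lemma, follow by direct computation with Bruhat decomposition and are essentially formal.
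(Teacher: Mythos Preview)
Your approach is essentially the same as the paper's: the paper does not give a self-contained proof of this proposition at all, but merely records that the decomposition of $J_U(\pi)$ comes from the Bernstein--Zelevinsky geometric lemma (VI.5.2 of \cite{Renard10}) and that the description of how $A_\theta$ permutes the pieces is taken from \S2.2--2.3 of \cite{Nguyen04}. Your outline expands on exactly these two ingredients.

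One small point of confusion in your write-up: once you have declared that the statement is in $\mathcal R$, the passage from the geometric-lemma filtration to a direct sum is \emph{automatic} --- in the Grothendieck group a filtration and its associated graded are equal by definition. Your proposed argument about disjoint cuspidal supports is therefore unnecessary for the first displayed identity. Where something like it \emph{is} relevant is the second assertion: to say that $A_\theta$ genuinely permutes the summands (rather than merely the subquotients of a filtration) one needs either an honest splitting of the filtration or an argument that $A_\theta$ respects the filtration and hence acts on the associated graded. The paper (following \cite{Nguyen04}) takes the latter route: $A_\theta$ is induced from $f\mapsto f\circ\theta$ on the ambient principal series, and conjugation by $A_n$ carries the Bruhat cell $PwQ$ to $P\theta(w)Q$, so the filtration is $\theta$-stable and the induced action on the graded pieces is exactly the map $w\mapsto w^\theta$ you describe. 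You should rephrase your ``main obstacle'' accordingly.

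A second minor point: you assert $P\in\mathcal P^\theta$ as a hypothesis, but the proposition as stated does not assume this (only $Q$ needs to be $\theta$-stable so that $A_\theta$ descends to $J_U$). What is used is that $\pi$ itself is $\theta$-stable, which is part of the standing setup of \S5.1; the map $w\mapsto w^\theta$ lands in $G_{P,Q}$ regardless of whether $P$ is $\theta$-stable, because $\theta$ preserves the length function on $S_n$.
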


Note that $\hat{\chi}^{\theta}_U  \phi_{n \alpha s}^{(Q)} $ is an unramified and $\mathrm{Ind}_{L \cap w^{-1}P}^L (w \circ J_{w Q \cap M} (\otimes_i \mathrm{St}_{GL_{n_i}}(\varepsilon_i))$ is ramified unless $w Q \cap M$ is a Borel subgroup. By applying (1) and Proposition 1.3.5, we have 
\begin{equation}
\begin{split}
\mathrm{Tr}_{\theta}(\hat{\chi}^{\theta}_U  \phi_{n \alpha s}^{(Q)}, \pi_U(\delta_{Q}^{-\frac 12})A_{\theta})  = \mathrm{Tr}(\hat{\chi}^{\theta}_U  \phi_{n \alpha s}^{(Q)},  \bigoplus_{w \in G_{P,Q}  \subset S_n} \mathrm{Ind}_{L \cap w^{-1}P}^L (w \circ  J_{w Q \cap M}(\otimes_i \mathrm{St}_{GL_{n_i}}(\varepsilon_i))) A_{\theta}) 
\\=  \sum_{w \in G^{\theta}_{P,Q}  \subset S_n} \mathrm{Tr}_{\theta}(\hat{\chi}^{\theta}_U  \phi_{n \alpha s}^{(Q)},   \mathrm{Ind}_{L \cap w^{-1}P}^L (w \circ  J_{w Q \cap M}(\otimes_i \mathrm{St}_{GL_{n_i}}(\varepsilon_i)))
\end{split}
\end{equation} where $G^{\theta}_{P,Q}  \subset S_n$ is defined to be $\{w \in G_{P,Q}  \subset S_n: w^{\theta} = w, w S_{Q} \cap S_P = \{id_{S_n}\}\}$. Let $(q^{t_{\pi,1}}, \ldots, q^{t_{\pi,n}})$ denote the Hecke matrix (see Definition 2.7.1) of $J_{N_0}(\otimes_i \mathrm{St}_{GL_{n_i}}(\varepsilon_i)) = \otimes_i (\delta_{B_{n_i}}^{\frac 12}(\varepsilon_i))$. For $w \in G^{\theta}_{P,Q}$, we have: 
\begin{equation}
\begin{split}
\mathrm{Tr}_{\theta}(\hat{\chi}^{\theta}_U  \phi_{n \alpha s}^{(Q)},   \mathrm{Ind}_{L \cap w^{-1}P}^L (w \circ  J_{w Q \cap M}(\otimes_i \mathrm{St}_{GL_{n_i}}(\varepsilon_i)))) =   \mathrm{Tr}_{\theta}(\hat{\chi}^{\theta}_U  \phi_{n \alpha s}^{(Q)},   \mathrm{Ind}_{M_0}^L (w \circ  (\otimes_i \delta_{B_{n_i}}^{\frac 12}))
\\=  \mathrm{Tr}_{\theta}(\hat{\chi}^{\theta}_U  \phi_{n \alpha s}^{(Q)},   w \circ  (\otimes_i \delta_{B_{n_i}}^{\frac 12}(\varepsilon_i))) = \varepsilon_{Q, w} \mathcal S(\hat{\chi}^{\theta}_U  \phi_{n \alpha s}^{(Q)}) (q^{t_{\pi,w(1)}}, \ldots, q^{t_{\pi,w(n)}}) 
\end{split}\end{equation} where $ \varepsilon_{Q, w} = \pm 1$ depends on $Q$ and $w$ (See Proposition 2.2.2), and we have the second equality because of a twisted version of van Dijk's formula (see Proposition 2.2.4). 

Finally, by applying Corollary 1.3.1, Proposition 1.3.3, Theorem 1.3.1 and (1), (2), (3), we obtain Theorem 1.2.1.

\subsection{Structure of the paper}

In $\S$2 we carry out the necessary local computations at $p$: In $\S2.1$, we introduce compact elements and compact traces. In $\S2.2$ we study truncated twisted traces. In $\S2.3$, we introduce quasi-split unitary groups associated to a quadratic field extension of $p$-adic fields, and we give explicit descriptions of unramified Hecke algebras associated to them. Then in $\S2.4$-$\S2.6$, we introduce and study the Kottwitz functions $f_{n\alpha s}, \phi_{n\alpha s}$, $\varphi_{n \alpha s}$ and certain truncated Kottwitz functions $C_{\theta c}(\phi_{n \alpha s}^{(P)}), \hat{\chi}^{\theta}_{N} \phi_{n \alpha s} ^{(P)} $, where $P$ is a $\theta$-stable parabolic subgroup of $GL_n$. In $\S2.7$, we study twisted traces of $C_{\theta c}\phi_{n \alpha s}$ against certain induced representations, and the proposition we obtain (Proposition 2.7.1) will be crucial in reducing the computation of twisted traces of $C_{\theta c}\phi_{n \alpha s}$ against representations of $\theta$-type to the computation of twisted traces of $C_{\theta c}\phi_{n \alpha s}$ against induced representations of Steinberg representations (See Theorem 3.3.1). 

 In $\S$3 we study twisted traces of $C_{\theta c}\phi_{n \alpha s}$ against representations of $\theta$-type: In $\S3.1$, we introduce semi-stable rigid representations, which are local components of discrete automorphic representations of general linear groups, and representations of $\theta$-type, which are certain $\theta$-stable semi-stable rigid representations. In $\S3.2$ we define ring of Zelevinsky and Zelevinsky's segments construction (as introduced in \cite{Zel80}), and we collect some important facts about them. In $\S3.3$, we reduce the computation of twisted traces of $C_{\theta c}\phi_{n \alpha s}$ against representations of $\theta$-type to the computation of twisted traces of $C_{\theta c}\phi_{n \alpha s}$ against induced representations of Steinberg representations (See Theorem 3.3.1). Note that Theorem 3.3.1 is crucial to establish Proposition 4.3.1.
 
 In $\S$4, we use the Langlands-Kottwitz method and base change to study the Frobenius-Hecke trace of the $\ell$-adic cohomology of the basic stratum: We introduce Kottwitz varieties and functions of Kottwitz in $\S 4.1$. Then in $\S4.2$, we express the Frobenius-Hecke trace of the $\ell$-adic cohomology of the basic stratum in terms of traces of automorphic representations of certain unitary group associated to division algebra (see Proposition 4.2.1), and further we express the number of points of the basic stratum in terms of twisted traces of automorphic representations of general linear groups (see Corollary 4.2.2), and then in terms of twisted traces of $C_{\theta c}\phi_{n \alpha s}$ against certain induced representations of Steinberg representations of general linear groups (see Proposition 4.3.1). 
 
In $\S$5, we compute twisted traces of $C_{\theta c}\phi_{n \alpha s}$ against certain induced representations of Steinberg representations (of general linear groups): In $\S 5.1$, we provide explicit descriptions of intertwining operators on certain induced representations of Steinberg representations, and we provide explicit descriptions of the induced intertwining operators on their Jacquet modules (see Proposition 5.1.3). Then in $\S 5.2$, we compute twisted traces of $C_{\theta c}\phi_{n \alpha s}$ against certain induced representations of Steinberg representations (see Proposition 5.2.1), using a twisted version of Clozel's formula (see Proposition 2.2.3), Proposition 5.1.3, and a twisted version of van Dijk's formula (see Proposition 2.2.4). Then we obtain a point-counting formula for the basic stratum of Kottwitz varieties (see Theorem 5.2.1). In $\S 5.3$, we give a combinatorial interpretation of certain subsets (which appear in Theorem 5.2.1) of symmetric groups in terms of Young tableau, which allows us to develop a a more efficient algorithm compute point-counting formulas for the basic stratum of Kottwitz varieties.

\section*{Acknowledgements}
	This paper is a part of the author’s PhD research at the
University of Amsterdam, under the supervision of Arno Kret. I thank my supervisor Arno Kret for all his help, for suggesting the approach of using a truncation of the formula of Kottwitz and the twisted trace formula to study the basic stratum of Kottwitz varieties, and for his valuable comments on the preliminary drafts of the paper, which greatly enhance the writing of this paper. I also thank Dhruva Kelkar, Kaan Bilgin and Reinier Sorgdrager for helpful discussions.

\section{Local computations}

In this section we compute the twisted compact traces of the functions of Kottwitz (which is introduced in $\S$2.1 in \cite{Kot84}, for which we also give the definition in $\S4.1$) against certain representations of the general linear group which occur in the pointing counting of the basic stratum of some compact unitary Shimura varieties.

Given an algebraic group $G$ over a field $F$, we use $X^*(G)$ (resp. $X_*(G)$) to denote the set of rational characters (resp. the set of rational cocharacters). We regard $X_*(G) $ as $\mathrm{Hom}_{\mathbb Z}(X^*(G), \mathbb Z)$ via pairing $X_*(G) \times X^*(G) \rightarrow Hom(\mathbb G_{m}, \mathbb G_{m}) \cong \mathbb Z$ defined by $(\alpha, \beta) \mapsto \beta \circ \alpha$. 

Now let $G$ be a quasi-split reductive group over a $p$-adic field $F$ with a chosen Borel subgroup $P_0 $ and a chosen Levi decomposition $P_0 = M_0 N_0$. If $P = MN \subset G$ is a standard parabolic subgroup, then we define the modular character $\delta_{G,P} (m)$ to be $|\mathrm{det}(m,\mathfrak n)|$, where $m \in M(F)$ and $\mathfrak n$ is the Lie algebra of $N$ (we also use $\delta_{P}$ to denote $\delta_{G,P}$ if there is no ambiguity). The induction $\mathrm{Ind}^G_P$ is normalized parabolic induction (which sends unitarty representations to unitary representations, as defined in $\S$VI.1.2 in \cite{Renard10}, note that $\delta_{P} (m)$ is defined to be $|\mathrm{det}(m,\mathfrak n)|^{-1}$ in \cite{Renard10}). The Jacquet module $\pi_N$ of a smooth representation $\pi$ is not normalized, it is the space of coinvariants for the unipotent subgroup $N \subset G$. We use $J_N(\pi)$ to denote the normalized Jaquet module, which is $\pi_N(\delta_P^{\frac 12})$.

\subsection{Compact traces}

	 Let $F$ be a $p$-adic field, let $G$ be an unramified smooth reductive group $G$, and we choose a flat model $\underline G$ over $\mathcal O_F$ and write $K = \underline G(\mathcal O_F)$. Let $\pi_F$ be a normalizer of $\mathcal O_F$. We normalize the $p$-adic norm on $F$ so that $| p | = p^{-Card(\mathcal O_F/(\pi_F))}$ (the convention as in pp.491-492 \cite{Kret11}) and we normalize the Haar measure on $G(F)$ so that $\mu(K) = 1$. We choose a minimal parabolic subgroup $P_0$ of $ G$ and we standardize the parabolic subgroups of $G$ with respect to $P_0$. Let $\mathcal P$ be the set of standard parabolic subgroups containing $P_0$. 
	 
	 We have a Borel subgroup $\underline{P_0} \subset \underline{G}$ such that $\underline{P_0}(F)= P_0(F)$. Let $I \subset \underline{G}(\mathcal O_F)$ be the group of elements $g \in \underline{G}(\mathcal O_F)$ that reduce to an element of the group $\underline {P_0}(\mathcal O_F /\pi_F )$ modulo $\pi_F$. The group $I$ is called the standard Iwahori subgroup of $G(F)$. A smooth representation $\pi$ of G is called semi-stable if it has a non-zero invariant vector under the subgroup $I$ of $G(F)$.

	\begin{definition}
	Let $g $ be an element of $G(F)$, we use $\bar g$ to denote the image of $g$ in $G(\bar F)$ and we use $g_{sm}$ to denote the semisimple part of $\bar g$.
	Then $g$ is called compact if for some (any) maximal torus $T$ in $G(\bar F) $ containing $g_{sm}$, the absolute value $|\alpha(g_{sm})|$ is equal to $1$ for all roots $\alpha$ of $T$ in $Lie(G(\bar F))$. 
	\end{definition}

 \begin{remark}
     If $G = GL_n$ ($n \in \mathbb Z_{>0}$), then $g \in G(F)$ is compact if and only if the norms of all eigenvalues are equal.
 \end{remark}

	Let $C_c^{G(F)}$ be the characteristic function of $G(F)$ of the set of compact elements $G_c \subset G$ (We also use $C_c$ to denote $C_c^{G(F)}$ if there were no ambiguity). The subset $G_c \subset G$ is open, closed and stable under stable conjugation. 
	
	\begin{remark}
	 Let $M$ be a Levi subgroup of $G$ and let $g$ be an element of $M(F) \subset G(F)$. Note that the condition that $g$ is compact for the group $M(F)$ is \textbf{not} equivalent to the condition that $g$ is compact for the group $G(F)$.  
\end{remark}
	
	Let $f$ be a locally constant, compactly supported function on $G(F)$ and let $(\pi,V)$ be a smooth admissible representation of $G(F)$. For $v \in V$, we define $\int_{G(F)} f(g) \pi(g) v dg $ as follows: We define $f_V: G(F) \rightarrow V$ by $g \mapsto f(g) \pi(g) v dg$, the function $f_V$ is locally constant, let $L \subset G(F)$ be a compact open subgroup such that $f_V$ right $L$-invariant, then we define $\int_{G(F)} f(g) \pi(g) v dg $ to be $\mu(L)\sum_{g \in G(F)/L} f(g) \pi(g) v  $ (this is independent of the choice of $L$). We suppose that $f $ is $L'$-bi-invariant for some compact open subgroup $L' \subset G(F) $, we define $\mathrm{\mathrm{Tr}}(f, \pi)$ to be $\mathrm{\mathrm{Tr}}(v \mapsto \int_{G(F)} f(g) \pi(g) vdg )|_{W}$ where $W \subset V$ satisfies that $V^{L'} \subset W \subset V$ and that $W$ is finite dimensional (this is independent of the choice of $W$ and $L'$).
 
 The compact trace of $f$ on the representation $\pi$ is defined by $\mathrm{\mathrm{Tr}}(C_c^{G(F)} f,\pi)$ where $C_c^{G(F)}f$ is the point-wise product of $C_c^{G(F)}$ and $f$. We define $\bar f$ to be the conjugation average of $f$ under the maximal compact subgroup $K$ of $G(F)$: More precisely, for all elements $g$ in $G$, the value $\bar f (g)$ equals $\int_K f(kgk^{-1}) dk $ where the Haar measure is normalized so that $K$ has volume $1$ (then $\bar f = f$ if $f $ is spherical with respect to $K$). We use $\mathcal H(G(F))$ to denote the Hecke algebra of locally constant, compactly supported functions on $G(F)$, where the product on $\mathcal H(G(F))$ is the convolution integral defined by $f_1\ast f_2 (h)=\int_{G(F)}f_1(h)f_2(gh^{-1}) dg $ with respect to the Haar measure which gives the group $K$ measure $1$. When $L$ is a compact open subgroup of $G(F)$, we use $\mathcal H(G(F)//L)$ to denote the Hecke algebra of locally constant, compactly supported functions on $G(F)$ which are bi-invariant under the action of $L$. Let $\mathcal H^{\mathrm{unr}}(G(F))$ denote $\mathcal H(G(F)//K)$, elements of $\mathcal H(G(F)//K)$ are called unramified Hecke operators. 

Let $P = MN \in \mathcal P$ and let $f \in \mathcal H(G(F))$, we define the constant terms $f^{( P )} $ by $f^{(P)}(m) = \delta_P^{\frac 12}(m)\int_{N(F)} f(mn) dn$, note that the constant terms $f \mapsto f^{(P)}$ is injective [\cite{Kot80}, $\S$5].  	

  We write $P_0 = TU$ where $T \subset P_0$ is a maximal torus of $G$, let $S \subset  T$ be a maximal split torus and we write $d$ for the dimension of $S$. We use $$\mathcal S_G: \mathcal H(G(F)//K) \rightarrow  \mathcal H(T(F)//T(F) \cap K) \xrightarrow{\sim} \mathcal H(S(F)//S(F) \cap K) \xrightarrow{\sim} \mathbb C[X_1^{\pm 1}, \ldots, X_d^{\pm 1}]$$ to denote the Satake transform, where $\mathcal H(G(F)//K) \rightarrow  \mathcal H(T(F)//T(F) \cap K)$ is defined to be $f \mapsto f^{(P_0)}$, the isomorphism $\mathcal H(T(F)//T(F) \cap K) \xrightarrow{\sim} \mathcal H(S(F)//S(F) \cap K)$ is given by $f  \mapsto f|_{S(F)}$ and $\mathcal H(S(F)//S(F) \cap K) \xrightarrow{\sim} \mathbb C[X_1^{\pm 1}, \ldots, X_d^{\pm 1}]$ as described in 2.2 of \cite{Minguez11}. Note that $\mathcal S_G$ has image in $\mathcal H(T(F)//T(F) \cap K)^{W(G,T)(F)}$ where $W(G,T) = N_G(T)/T$ is the Weyl group and $N_G(T)$ is the normalizer of $T$ in $G$.
 
	\begin{definition}
	Let $f$ be an element of $\mathcal H(G(F))$. And let $(\pi,V) $ be a smooth, admissible representation of $G$. The compact trace of $f$ on the representation $\pi$ (denoted by $\mathrm{\mathrm{Tr}}_c(f,\pi)$) is defined to be $\mathrm{\mathrm{Tr}} (v \mapsto \int_{G(F)} C_c^{G(F)}f(g) \pi (g)v dg)$. 
	\end{definition}
	
We choose a Levi decomposition $P_0 = M_0 N_0$ where $M_0$ is a maximal torus. Let $P = MN$ be a standard parabolic subgroup of $G$ and let $A_P$ be the split centre of $P$ (i.e., the maximal split torus in the centre of $P$),
we write $\varepsilon_P = (-1)^{\mathrm{dim}(A_P/A_G) }$. Let $\Delta \subset X^*(A_{M_0})$ be the set of roots assocaited to $A_{M_0}$. We define $\mathfrak a_P$ to be $X_*(A_P)_{ \mathbb R}$ and define $\mathfrak a^G_P$ to be the quotient of $\mathfrak a_P$ by $\mathfrak a_G$. 

We define $\Delta_P \subset \Delta$ to be the set of roots which act non-trivially on $A_P$. We write $\mathfrak a_0 = \mathfrak a_{P_0}$ and $\mathfrak a_0^G =\mathfrak a^G_{P_0}$. For each root $\alpha$ in $\Delta$, we have a coroot $\alpha^{\vee}$ in $\mathfrak a^G_0$ defined by $\alpha^{\vee}(\beta) = \frac{2(\beta,\alpha)}{(\alpha,\alpha)} $ for $\beta \in \Delta$.

For $\alpha \in \Delta_P \subset \Delta$, we still use $\alpha^{\vee}$ to denote the image of $\alpha^{\vee} \in \alpha_0^G$ in $\mathfrak a^G_P$ via the surjection $\mathfrak a_0^G \twoheadrightarrow \mathfrak a^G_P$. The set $\{\alpha ^{\vee}|_{\mathfrak a^G_P}:  \alpha \in \Delta_P\}$ form a basis of the vector space
$\mathfrak a^G_P$. The set of fundamental roots $\{\omega_{\alpha}^G \in ({\mathfrak a^G_P)}^*| \alpha \in \Delta_P \}$ 
is the basis of $(\mathfrak a^G_P)^*= Hom(\mathfrak a^G_P , \mathbb R)$ dual to the basis $\{\alpha ^{\vee}|_{\mathfrak a^G_P}:  \alpha \in \Delta_P\}$ of coroots. We let $\tau^G_P$ be the characteristic function of the acute Weyl chamber,
$$(\mathfrak a_P^G)^+= \{x \in \mathfrak a_P^G| \forall \alpha\in \Delta_P, \langle\alpha,x\rangle>0\}.$$
Let $\hat{\tau}_P^G$ be the characteristic function of $\mathfrak a^G_P$ of the obtuse Weyl chamber,
$$^+(\mathfrak a_P^G)= \{x \in \mathfrak a_P^G| \forall \alpha\in \Delta_P, \langle\omega_{\alpha}^G , x\rangle>0\}.$$

 The Harish-Chandra map $H_M$ of $M$ is defined to be unique map from $M$ to $\mathfrak a_P = Hom_{\mathbb Z}(X^*(M),\mathbb R) $ such that $q^{- \langle\chi.H_M(m)\rangle}=|\chi(m) |_p$ for all elements $m$ of $M$ and rational characters $\chi$ in $X^*(M)$. Let $\chi_{G,N}$ be the composition $\tau _P^G \circ (\mathfrak a_P \twoheadrightarrow \mathfrak a^G_P ) \circ  H_M$, and let $\hat{\chi}_{G,N}$ to be the composition  $\hat{\tau} _P^G \circ (\mathfrak a_P \twoheadrightarrow \mathfrak a^G_P ) \circ  H_M$ . Note that the functions $\chi_{G,N}$ and $\hat{\chi}_{G,N}$ are locally constant and $K_M$-invariant, where $K_M = \underline G(\mathcal O_F) \cap M(F)$. We also use $\chi_{N}$ or $\hat{\chi}_{N}$ to denote $\chi_{G,N}$ and $\hat{\chi}_{G,N}$ if there is no ambiguity. 

\begin{lemma} 
Let $P = M N$ be a standard parabolic subgroup of $G(F)$. Let $m$ be an element of $M$. Then we have:\begin{itemize}
 \item $\hat \chi_N(m)=1$ if and only if for all roots $\alpha$ in the set $\Delta_P$ we have $|\omega_{\alpha}(m)| < 1$,
 
 \item $ \chi_N(m)=1$ if and only if for all roots $\alpha$ in the set $\Delta_P$ we have $|\alpha(m)| < 1$.
 \end{itemize}
\end{lemma}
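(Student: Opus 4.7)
The plan is to unwind the definitions directly, so no real machinery is needed. Recall the setup: $\chi_N = \tau_P^G \circ \mathrm{pr} \circ H_M$, where $\mathrm{pr}\colon \mathfrak a_P \twoheadrightarrow \mathfrak a_P^G$ is the projection and $\tau_P^G$ is the characteristic function of the acute chamber $(\mathfrak a_P^G)^+ = \{x : \langle \alpha,x\rangle > 0 \text{ for all } \alpha \in \Delta_P\}$. Thus $\chi_N(m)=1$ iff $\langle \alpha, \mathrm{pr}(H_M(m))\rangle > 0$ for every $\alpha \in \Delta_P$. Since each $\alpha \in \Delta_P$ is trivial on $A_G \subset A_P$ and hence vanishes on $\mathfrak a_G$, the pairing $\langle \alpha, \mathrm{pr}(H_M(m))\rangle$ equals $\langle \alpha, H_M(m)\rangle$ computed in $\mathfrak a_P$. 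This reduces the first statement to an equivalent criterion in $\mathfrak a_P$.

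Next I would invoke the defining property of the Harish-Chandra map: for any $\chi \in X^*(M)$ one has $|\chi(m)| = q^{-\langle \chi, H_M(m)\rangle}$. Each $\alpha \in \Delta_P$, regarded as a character of $M$ by pulling back from the quotient $M \twoheadrightarrow A_P/A_G$ (or rather, interpreted as an element of $X^*(M)_{\mathbb Q}$), therefore satisfies $|\alpha(m)| = q^{-\langle \alpha, H_M(m)\rangle}$. Since $q > 1$, the inequality $\langle \alpha, H_M(m)\rangle > 0$ is equivalent to $|\alpha(m)| < 1$, which completes the first bullet.

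The second bullet is proved in exactly the same way, with the fundamental weights $\omega_\alpha^G$ in place of the simple roots $\alpha$ and the obtuse chamber ${}^+(\mathfrak a_P^G) = \{x : \langle \omega_\alpha^G, x\rangle > 0 \text{ for all } \alpha \in \Delta_P\}$ in place of $(\mathfrak a_P^G)^+$. The only mild subtlety, and the closest thing to an obstacle, is that $\omega_\alpha^G$ lies a priori only in $(\mathfrak a_P^G)^*_{\mathbb R}$ and may not itself be a rational character of $M$; however, some positive integer multiple $N\omega_\alpha^G$ does lie in $X^*(M/A_G) \subset X^*(M)$, and extending the identity $|\chi(m)| = q^{-\langle \chi, H_M(m)\rangle}$ by $\mathbb R$-linearity gives a well-defined value $|\omega_\alpha^G(m)| := q^{-\langle \omega_\alpha^G, H_M(m)\rangle}$. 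With this convention, exactly the same chain of equivalences as above shows $\hat\chi_N(m) = 1$ iff $|\omega_\alpha^G(m)| < 1$ for every $\alpha \in \Delta_P$. No step is difficult; the work lies entirely in keeping track of where $\alpha$ and $\omega_\alpha^G$ live and in checking that they vanish on $\mathfrak a_G$ so that the passage between $\mathfrak a_P$ and $\mathfrak a_P^G$ is harmless.
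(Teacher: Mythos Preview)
Your argument is correct; it is the standard direct unwinding of the definitions of $\chi_N$, $\hat\chi_N$, the Harish-Chandra map, and the chambers, and this is exactly why the paper states the lemma without proof. One small presentational remark: you label the $\chi_N$ case as ``the first statement'' and the $\hat\chi_N$ case as ``the second bullet,'' whereas in the lemma as written the order is the reverse; this does not affect the mathematics, but you may want to realign the references.
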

  
	For $f\in \mathcal H(G(F))$, we use $f^{(P)} \in H(P(F))$ to denote the constant term of $f$ with respect to $P$, which defined by $f^{(P)}(m) = \delta_P(m)^{\frac 12} \int_{N(F)}f(mn) dn $ for $m \in M(F)$.  Note that $f \mapsto f^{(P)}$ induces an injective morphism from $\mathcal H(G(F)//K)$ to $\mathcal H(M(F)//K \cap M(F))$ (see Theorem 2.4 of \cite{Minguez11}). And we have the following proposition:
	 
	 \begin{proposition}
Let $\pi$ be an admissible representation of $G(F)$ of finite length, then we have $\mathrm{\mathrm{Tr}}(C_cf, \pi) = \sum\limits_{P \in \mathcal P, P = MN}\varepsilon_P \mathrm{\mathrm{Tr}}(\hat{\chi}_N \bar f^{(P)}, \pi_N(\delta_{P}^{-\frac 12}) )$. 
	 \end{proposition}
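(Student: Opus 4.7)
The plan is to deduce the formula as a version of Clozel's identity (cf.\ \cite{Clo90} and \cite{Kret11}) by combining three classical ingredients: (i) the Weyl integration formula to turn the trace into an integral of $\chi_\pi$ against $C_c \bar f$; (ii) Casselman's formula, which relates $\chi_\pi$ restricted to a Levi to the character of the (normalized) Jacquet module; and (iii) a combinatorial identity decomposing the characteristic function $C_c$ of compact elements as an alternating sum over parabolics involving the cutoff $\hat\chi_N$.

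First I would note that $\mathrm{Tr}(C_c f, \pi) = \mathrm{Tr}(C_c \bar f, \pi)$: both $C_c$ and $\chi_\pi$ are $K$-conjugation invariant, so averaging $f$ over $K$ preserves the trace. Writing
$$\mathrm{Tr}(C_c f, \pi) = \int_{G(F)} C_c(g)\, \bar f(g)\, \chi_\pi(g)\, dg,$$
the task becomes to rearrange the integrand into a sum indexed by standard parabolics $P = MN$.

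The key combinatorial-geometric step is the identity, for semisimple $g \in G(F)$,
$$C_c(g) = \sum_{P = MN \in \mathcal P} \varepsilon_P\, \hat\chi_N(g_M),$$
where $g_M$ denotes the image of a $G$-conjugate of $g$ landing in $M$ (the sum receives contributions only from parabolics with Levi containing $g_{sm}$ up to conjugation). This is a consequence of Langlands' lemma comparing the indicators of the acute and obtuse Weyl chambers in $\mathfrak a_0^G$, together with the characterization of compact elements in Lemma 2.1.1. Substituting into the integral and applying the descent identity for orbital integrals
$$\int_{G(F)} \bar f(g)\, \phi(g)\, dg = \int_{M(F)} \bar f^{(P)}(m)\, \phi(m)\, \delta_P(m)^{-1/2}\, dm$$
(valid for conjugation-invariant $\phi$ supported on $M$-regular elements), the integral on $G$ becomes a sum of integrals on each $M$. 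Finally, on the region $\hat\chi_N = 1$, Casselman's formula $\chi_\pi(m) = \delta_P(m)^{1/2}\, \chi_{\pi_N}(m)$ converts each summand into
$$\varepsilon_P \int_{M(F)} \hat\chi_N(m)\, \bar f^{(P)}(m)\, \chi_{\pi_N}(m)\, \delta_P(m)^{-1/2}\, dm = \varepsilon_P\, \mathrm{Tr}\bigl(\hat\chi_N \bar f^{(P)},\, \pi_N(\delta_P^{-1/2})\bigr).$$

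The main obstacle is ensuring rigorously that Casselman's formula applies on the full support of $\hat\chi_N \bar f^{(P)}$ rather than only on a shrinking $P$-positive cone: this is precisely why the $\hat\chi_N$ cutoff is introduced, and the matching requires careful bookkeeping with the modular characters $\delta_P^{\pm 1/2}$ and with the distinction between compactness in $M$ and in $G$ (emphasized in the remark following Definition 2.1.1). A secondary technical point is verifying that the combinatorial identity for $C_c$ respects $K$-conjugation so that replacing $f$ by $\bar f$ is compatible with the constant-term operation $f \mapsto f^{(P)}$ term-by-term.
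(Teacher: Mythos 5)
The paper does not actually reprove this statement: its ``proof'' of Proposition 2.1.1 is the citation to p.~259 of \cite{Clo90}, so what you are attempting is a reconstruction of Clozel's argument. Your outline assembles the right ingredients (Weyl integration, Casselman's character formula, a combinatorial decomposition of $C_c$ over standard parabolic subgroups), but the central step as you execute it fails. Casselman's formula $\Theta_\pi(m)=\delta_P^{1/2}(m)\,\Theta_{\pi_N}(m)$ is valid only for elements that are $P$-contracting, i.e.\ on the acute region cut out by $\chi_N$ (where $|\alpha(m)|<1$ for all $\alpha\in\Delta_P$), not on the obtuse region where $\hat\chi_N=1$. Since the acute cone is strictly smaller than the obtuse cone, on much of the support of $\hat\chi_N\bar f^{(P)}$ the pointwise identity you substitute is simply false, and the proposition holds only because these discrepancies cancel in the alternating sum over parabolics. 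You name this as ``the main obstacle'' but do not resolve it; resolving it is precisely the content of Clozel's theorem, whose proof runs through an induction over standard parabolic subgroups combined with Langlands' combinatorial lemma relating $\tau_P$ and $\hat\tau_P$, applied at the level of characters of Jacquet modules, rather than through a direct pointwise application of Casselman on the region $\hat\chi_N=1$.

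A secondary gap is that your decomposition $C_c(g)=\sum_{P}\varepsilon_P\,\hat\chi_N(g_M)$ is not well defined as stated: a semisimple $g$ has several conjugates landing in a given standard Levi, generally lying in different chambers, and which one is taken matters. For $g=\mathrm{diag}(a,b)\in GL_2(F)$ with $|a|>|b|$, one Weyl conjugate satisfies $\hat\chi_{N_0}=1$ and the other does not, so the identity only holds after a precise normalization of the representative (or a precise rule for which parabolics contribute). Making this precise, and checking its compatibility with the Weyl integration formula and with the descent step $\int_{G(F)}\bar f\,\phi\,dg=\int_{M(F)}\bar f^{(P)}\phi\,\delta_P^{-1/2}\,dm$ (which again requires specifying how $G$-conjugacy classes are parametrized by classes in the Levi subgroups), is part of the bookkeeping carried out in \cite{Clo90}. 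As it stands your proposal is a reasonable road map but not a proof; for the purposes of this paper the appropriate justification remains the citation to Clozel.
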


\begin{proof}
See p259 of \cite{Clo90}. 
\end{proof}

\begin{proposition}
Let $\pi$ be a smooth admissible representation of G such that the trace $\mathrm{\mathrm{Tr}}(C_c^{G(F)} f,\pi)$ does not vanish for some unramified Hecke operator $f \in \mathcal H^{unr}(G(F))$. Then $\pi$ is semi-stable.

\end{proposition}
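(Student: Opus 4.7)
The plan is to argue the contrapositive: assuming $\pi$ is not semi-stable, i.e.\ $\pi^I = 0$, I will show that $\mathrm{Tr}(C_c^{G(F)} f, \pi) = 0$ for every $f \in \mathcal H^{\mathrm{unr}}(G(F))$, contradicting the hypothesis of the proposition.

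First, I would expand the trace using Proposition 2.1.1:
$$\mathrm{Tr}(C_c f, \pi) = \sum_{P = MN \in \mathcal P} \varepsilon_P \, \mathrm{Tr}\bigl(\hat\chi_N \bar f^{(P)},\; \pi_N(\delta_P^{-1/2})\bigr).$$
Since $f$ is bi-$K$-invariant, $\bar f = f$, and by Theorem 2.4 of \cite{Minguez11} (cited earlier in the paper) $f^{(P)}$ is bi-invariant under $K_M := K \cap M(F)$. The truncation $\hat\chi_N$ is also $K_M$-invariant, so $\hat\chi_N \bar f^{(P)}$ is right $K_M$-invariant on $M(F)$. For any such function $\varphi$, the image of the operator $v \mapsto \int_{M(F)} \varphi(m)\pi_N(m) v \, dm$ lies in the $K_M$-invariants, so its trace can be computed on $(\pi_N)^{K_M}$. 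Since $\delta_P$ is trivial on the maximal compact $K_M$, twisting by $\delta_P^{-1/2}$ does not alter these invariants. Hence every summand of the Clozel expansion vanishes once $(\pi_N)^{K_M} = 0$.

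Second, I would invoke Casselman's theorem on Iwahori-fixed vectors: for each standard parabolic $P = MN$, the canonical projection $\pi \twoheadrightarrow \pi_N$ induces an isomorphism $\pi^I \xrightarrow{\sim} (\pi_N)^{I_M}$, where $I_M := I \cap M(F)$ is the standard Iwahori of $M(F)$. Since $I_M \subseteq K_M$, one has $(\pi_N)^{K_M} \subseteq (\pi_N)^{I_M} \cong \pi^I = 0$. This yields the required vanishing for every $P \in \mathcal P$, so $\mathrm{Tr}(C_c f, \pi) = 0$ for all unramified $f$, completing the proof by contradiction.

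The calculation is otherwise formal; the only genuinely non-trivial ingredient is Casselman's Iwahori fixed-vector theorem, which is the main input. The only potential obstacle I foresee is verifying that this theorem applies in the quasi-split unramified setup with the Iwahori $I$ chosen here (defined via reduction to $\underline{P_0}(\mathcal O_F/\pi_F)$), but since $G$ is unramified and $P_0$ is a Borel this is exactly the classical setting and the statement goes through unchanged. Everything else is bookkeeping with Proposition 2.1.1.
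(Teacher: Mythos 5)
Your argument is correct and is essentially the proof the paper defers to (Kret, p.~494): Clozel's expansion of the compact trace, the $K_M$-bi-invariance of $\hat{\chi}_N f^{(P)}$ forcing each term to be computed on $(\pi_N)^{K_M}$, and Casselman's comparison of $\pi^I$ with invariants of the Jacquet modules. Note that only the surjectivity of the canonical map $\pi^I \to (\pi_N)^{I_M}$ is needed (which is exactly what Casselman's theorem for subgroups with Iwahori factorization gives), so your appeal to the full isomorphism is stronger than necessary but harmless.
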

\begin{proof}
See p494 of \cite{Kret11}.

\end{proof}

\subsection{Twisted traces}

	Let $F$ be a $p$-adic field. Let $G$ be an unramified connected reductive group over $F$ with an automorphism $\theta$ of order 2. Let $\mathcal G$ be a flat model of $G$ over $\mathcal O_F$, and we define $K = \mathcal G(\mathcal O_F)$. We fix a Haar measure $\mu$ on $G(F)$ so that $\mu(K) = 1$. Let $P_0$ be a Borel subgroup, and let $T$ be a maximal split torus of $ G$ contained in $P_0$. We suppose that $\theta$ fixes $( T,B)$ and we standardize parabolic subgroups of $G$ with respect to $P_0$.
	
\begin{definition}
	Let $(\pi,V)$ be a smooth admissible representation of $ G(F)$. We say that $(\pi, V)$ is \textbf{$\theta$-stable} if there exists a non-zero $A_{\theta} \in \mathrm{End}_{\mathbb C}(V)$ such that for all $g \in G(F)$ we have $ \pi(g) \circ A_{\theta} = A_{\theta} \circ  \pi(\theta(g)) $, such $A_{\theta}$ is then unique up to scalar and such $A_{\theta}$ is said to be an intertwining operator of $(\pi,V)$ (Note that $A_{\theta}$ is an isomorphism from $(\pi \circ \theta,V) $ to $(\pi, V)$). 
\end{definition}

\begin{remark}
Note that $ \pi(g) \circ A_{\theta} = A_{\theta} \circ  \pi(\theta(g)) $ implies that $ \pi(\theta(g)) \circ A_{\theta} = A_{\theta} \circ  \pi(\theta(\theta(g)) = A_{\theta} \circ  \pi(g) $ for all $g \in G(F)$, and therefore $A_{\theta}$ is also an isomorphism from $(\pi ,V) $ to $(\pi \circ \theta, V)$. Then $A_{\theta} \circ A_{\theta}:(\pi ,V) \rightarrow (\pi \circ \theta ,V) \rightarrow (\pi ,V) $ is an automorphism of $(\pi ,V)$. When $\pi$ is irreducible, then by Schur's Lemma (see III.1.8 of \cite{Renard10}), we have that $A_{\theta} \circ A_{\theta}$ is multiplication by some $\lambda \in \mathbb C^{\times}$.
We say that $A_{\theta} \in \mathrm{End}_{\mathbb C}(V) $ is normalised if $A_{\theta} \circ A_{\theta} $ is the identity.
\end{remark}

Let $f$ be a locally constant, compactly supported function on $G(F)$. Let $(\pi,V)$ be a $\theta$-stable smooth admissible representation of $G(F)$. For $v \in V$, we define $\int_{G(F)} f(g) \pi(g) v A_{\theta} dg $ as follows: We define $f_{V,A_{\theta}}: G(F) \rightarrow V$ by $g \mapsto f(g) \pi(g) A_{\theta} v dg$ is, 
let $L \subset G(F)$ be a compact open subgroup such that $f_{V, A_{\theta}}$ right $L$-invariant, then we define $\int_{G(F)} f(g) \pi(g) A_{\theta} v  dg $ to be $\mu(L) \sum_{g \in G(F)/L} f(g) \pi(g)  A_{\theta} v  $ (this is independent of the choice of $L$). Let $L' \subset G(F)$ be a compact open subgroup such that $f$ is left $L'$- invariant, then for an arbitrary $ l \in L' $ and for an arbitrary $v\in V$, we have $$\pi(l)\int_{G(F)} f(g) \pi(g) A_{\theta} v  dg  = \pi(l)(\mu(L) \sum_{g \in G(F)/L}  f(g) \pi(g)  A_{\theta} v) $$ 
$$= \mu(L) \sum_{g \in G(F)/L} f(g) \pi(lg)  A_{\theta} v =  \mu(L) \sum_{g \in G(F)/L} f(lg) \pi(lg)  A_{\theta} v = \mu(L) \sum_{g \in G(F)/L} f(g) \pi(g)  A_{\theta} v.$$
Thus for an arbitrary $v\in V$, we have $\int_{G(F)} f(g) \pi(g) A_{\theta} v  dg$ is $L'$-invariant.

\begin{definition}
	Let $(\pi,V)$ be a smooth admissible representation of $ G(F)$ with an intertwining operator $A_{\theta} \in \mathrm{End}_{\mathbb C}(V)$. Let $f \in \mathcal H(G(F))$, we suppose that $f $ is $L'$-bi-invariant for some compact open subgroup $L' \subset G(F) $, we define $\mathrm{\mathrm{Tr}}(f,\pi A_\theta)$ to be $\mathrm{\mathrm{Tr}} (v \mapsto \int_{G(F)} f(g) \pi (g) \circ A_{\theta} v dg) |_W$, where $W \subset V$ satisfies that $V^{L'} \subset W \subset V$ and $W$ is finite dimensional (this is independent of the choice of $W$ and $L'$).
	\end{definition}
	
\begin{remark}
Let $(\pi,V)$ be a smooth admissible representation of $ G(F)$ with an intertwining operator $A_{\theta} \in \mathrm{End}_{\mathbb C}(V)$, then there exists a Harish-Chandra character $\Theta_{\pi 
A_{\theta}}:G(F) \rightarrow \mathbb C $,
such that
$\mathrm{\mathrm{Tr}}(f, \pi A_\theta) = \int_{G(F)} \Theta_{\pi A_{\theta}} (g) f(g) dg $ for $f \in \mathcal H(G(F))$ and $\Theta_{\pi 
A_{\theta}}$ is stable under $\theta$-conjugation
(see $\S 2.3$ in Part I of \cite{BG17}).

\end{remark}

\begin{lemma}
Let $\pi_1, \pi_2$ be two isomorphic admissible representations (not necessarily irreducible). Let $A_{\theta}: (\pi_1 \circ \theta, V_1) \rightarrow (\pi_1, V_1)$ be a normalized intertwining operator and let $I:  (\pi_1 , V_1) \rightarrow (\pi_2, V_2)$ be a  isomorphism. Then for an arbitrary $f \in \mathcal H(G(F)) $, we have $\mathrm{\mathrm{Tr}}(f, \pi_1 A_{\theta}) = \mathrm{\mathrm{Tr}}(f,\pi_2  IA_{\theta} I^{-1})$. 
\end{lemma}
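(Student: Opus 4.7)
The plan is to reduce the statement to invariance of trace under conjugation. The key observation is that the intertwining relation $\pi_2(g) = I\pi_1(g)I^{-1}$ (for all $g \in G(F)$) together with the definition of $A_\theta$ makes the entire setup for $\pi_2$ a conjugate of the setup for $\pi_1$.

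First, I would verify that $IA_\theta I^{-1}$ is a legitimate intertwining operator for $(\pi_2,\theta)$: for any $g \in G(F)$, using $\pi_2 = I \pi_1 I^{-1}$ and $\pi_1(g) A_\theta = A_\theta \pi_1(\theta(g))$, one computes
\[
\pi_2(g)\,(IA_\theta I^{-1}) \;=\; I\pi_1(g)A_\theta I^{-1} \;=\; IA_\theta\pi_1(\theta(g))I^{-1} \;=\; (IA_\theta I^{-1})\,\pi_2(\theta(g)).
\]
Moreover $(IA_\theta I^{-1})^2 = IA_\theta^2 I^{-1} = \mathrm{id}_{V_2}$, so the conjugate operator is normalized; in particular the right-hand trace in the statement is defined.

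Next, I would pick a compact open subgroup $L' \subset G(F)$ under which $f$ is bi-invariant, and define the two operators $T_i \in \mathrm{End}_{\mathbb C}(V_i)$ by
\[
T_1(v) = \int_{G(F)} f(g)\,\pi_1(g)A_\theta\, v\, dg, \qquad T_2(v) = \int_{G(F)} f(g)\,\pi_2(g)(IA_\theta I^{-1})\,v\, dg.
\]
Using $\pi_2(g) = I\pi_1(g)I^{-1}$ and pulling $I$ out of the (finite) sum defining the integral, I get $T_2 = I \circ T_1 \circ I^{-1}$ as endomorphisms of $V_2$. Note also that $I$ carries $V_1^{L'}$ isomorphically onto $V_2^{L'}$.

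Finally, by Definition 2.2.2 I choose any finite-dimensional subspace $W_1 \subset V_1$ with $V_1^{L'} \subset W_1$ that is stable under $T_1$ (for instance the span of the $T_1$-orbit of a basis of $V_1^{L'}$, which is finite-dimensional by admissibility and left-$L'$-invariance of $f$). Then $W_2 := I(W_1)$ is a finite-dimensional subspace of $V_2$ containing $V_2^{L'}$ and stable under $T_2$, and the restriction $T_2|_{W_2}$ is conjugate by $I|_{W_1}$ to $T_1|_{W_1}$. Hence $\mathrm{Tr}(T_2|_{W_2}) = \mathrm{Tr}(T_1|_{W_1})$, which by Definition 2.2.2 gives $\mathrm{Tr}(f,\pi_2\, IA_\theta I^{-1}) = \mathrm{Tr}(f,\pi_1 A_\theta)$. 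The only mildly delicate point is checking that the finite-dimensional subspaces used on the two sides correspond under $I$; once the choice is made as above, everything else reduces to the elementary fact that trace is invariant under conjugation.
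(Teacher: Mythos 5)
Your proposal is correct and follows essentially the same route as the paper: use $\pi_2(g)=I\pi_1(g)I^{-1}$ to rewrite the operator attached to $(f,\pi_2,IA_\theta I^{-1})$ as $I\circ T_1\circ I^{-1}$ and conclude by conjugation-invariance of the trace. Your extra care in matching the finite-dimensional subspaces $W_1$ and $W_2=I(W_1)$ (and noting $I(V_1^{L'})=V_2^{L'}$) only makes explicit details the paper's proof leaves implicit.
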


\begin{proof}
For an arbitrary $v \in V_2$ and for an arbitrary $g \in G(F)$, we have $\pi_2(g) I A_{\theta} I^{-1} v= I \pi_1(g) A_{\theta} I^{-1} v$. 
Then we have $$\mathrm{\mathrm{Tr}}(f,\pi_2  IA_{\theta} I^{-1})  = \mathrm{\mathrm{Tr}}(v \mapsto \int_{G(F)} f(g) \pi_2(g)I A_{\theta} I^{-1} v dg) $$
$$= \mathrm{\mathrm{Tr}}(v \mapsto \int_{G(F)} f(g)I \pi_1(g) A_{\theta} I^{-1} v dg) $$
$$ =\mathrm{\mathrm{Tr}}(v \mapsto \int_{G(F)} f(g) \pi_1(g) A_{\theta} v dg)  =\mathrm{\mathrm{Tr}}(f, \pi_1 A_{\theta}) . $$ 
\end{proof}

We fix the notations above, further we let $\pi_1, \pi_2$ be irreducible and let $B_{\theta}:  (\pi_2 \circ \theta, V_2) \rightarrow (\pi_2, V_2)$ be a normalized intertwining operator, then we have the following:

\begin{corollary}
$\mathrm{\mathrm{Tr}}(f, \pi_2 B_{\theta}) = \pm \mathrm{\mathrm{Tr}}(f, \pi_1 A_{\theta})$.
\end{corollary}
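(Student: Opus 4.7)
The plan is to reduce the identity to a scalar comparison of intertwining operators via Schur's lemma, using the preceding lemma to change models from $\pi_1$ to $\pi_2$. First I would apply the preceding Lemma to obtain
\[
\mathrm{Tr}(f, \pi_1 A_\theta) = \mathrm{Tr}(f, \pi_2\, I A_\theta I^{-1}),
\]
so that the problem becomes comparing the two endomorphisms $I A_\theta I^{-1}$ and $B_\theta$ of $V_2$ as intertwiners of $(\pi_2\circ\theta, V_2)$ with $(\pi_2, V_2)$.

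Next I would verify that $I A_\theta I^{-1}$ is indeed such an intertwiner. For any $g \in G(F)$ and $v \in V_2$,
\[
\pi_2(g) \circ (I A_\theta I^{-1}) = I \pi_1(g) A_\theta I^{-1} = I A_\theta \pi_1(\theta(g)) I^{-1} = (I A_\theta I^{-1}) \circ \pi_2(\theta(g)),
\]
using that $I$ intertwines $\pi_1$ and $\pi_2$ and that $A_\theta$ intertwines $\pi_1 \circ \theta$ with $\pi_1$. Since $\pi_2$ is irreducible and $\theta$-stable, the space of intertwining operators from $(\pi_2 \circ \theta, V_2)$ to $(\pi_2, V_2)$ is one-dimensional by Schur's lemma (as recalled in Remark 2.2.1). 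Consequently, there exists $c \in \mathbb{C}^{\times}$ with
\[
I A_\theta I^{-1} = c\, B_\theta.
\]

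Then I would pin down the scalar $c$ using the normalization hypothesis. Since $A_\theta^2 = \mathrm{id}_{V_1}$, we have $(I A_\theta I^{-1})^2 = I A_\theta^2 I^{-1} = \mathrm{id}_{V_2}$, while $(c B_\theta)^2 = c^2 B_\theta^2 = c^2\,\mathrm{id}_{V_2}$. Comparing yields $c^2 = 1$, hence $c = \pm 1$. Substituting back,
\[
\mathrm{Tr}(f, \pi_2 B_\theta) = c\,\mathrm{Tr}(f, \pi_2\, I A_\theta I^{-1}) = \pm \mathrm{Tr}(f, \pi_1 A_\theta),
\]
which is the desired equality. There is no real obstacle here: the only subtlety is confirming that Schur's lemma applies in the twisted setting, which is precisely guaranteed by the irreducibility of $\pi_2$ and its $\theta$-stability, together with the fact that both $A_\theta$ and $B_\theta$ are normalized.
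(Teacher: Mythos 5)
Your proof is correct and follows essentially the same route as the paper: apply Lemma 2.2.1 to replace $\mathrm{Tr}(f,\pi_1 A_\theta)$ by $\mathrm{Tr}(f,\pi_2\, I A_\theta I^{-1})$, compare $I A_\theta I^{-1}$ with $B_\theta$ via Schur's lemma (the paper applies Schur to the composition $I A_\theta I^{-1}\circ B_\theta$ as a self-intertwiner of $\pi_2$, which is equivalent to your uniqueness-up-to-scalar formulation), and use the normalizations $A_\theta^2=B_\theta^2=\mathrm{id}$ to force the scalar to be $\pm 1$.
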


\begin{proof}
Firstly, we have $\mathrm{\mathrm{Tr}}(f, \pi_1 A_{\theta}) = \mathrm{\mathrm{Tr}}(f,\pi_2  IA_{\theta} I^{-1})$ from the above lemma. Note that $  IA_{\theta} I^{-1} \circ B_{\theta}$ is an isomorphism of irreducible admissible representation $\pi_2$, by Schur's Lemma (see III.1.8 of \cite{Renard10}), we have that $ IA_{\theta} I^{-1} \circ B_{\theta} $ is multiplication by some $\lambda \in \mathbb C^{\times}$. Also note that both $A_{\theta}$ and $B_{\theta}$ are normalized, then we have $$ \lambda^2 =  IA_{\theta} I^{-1} \circ IA_{\theta} I^{-1} \circ B_{\theta} \circ B_{\theta} = I (A_{\theta} \circ A_{\theta})I^{-1} \circ B_{\theta} \circ B_{\theta} = 1.$$ Thus $\lambda = \pm 1$, and the corollary follows.

\end{proof}

\begin{remark}
The above corollary shows that twisted traces of irreducible representations up to $\pm1$ is well defined up to isomorphism.
\end{remark}

Let $P=MN \subset G$ be a parabolic subgroup and let $ \pi$ be an admissible representation of $M(F)$, then we have that $\mathrm{Ind}_{M}^G(\pi) \circ \theta \cong \mathrm{Ind}_{\theta(M)}^G (\pi \circ \theta) $ with isomorphism given by $f \mapsto f \circ \theta$ for $f \in \mathrm{Ind}_{M}^G(\pi) \circ \theta$. 
                                           
We assume that $M$ is $\theta$-stable and let $\pi$ be a $\theta$-stable representation $M(F)$ with an intertwining operator $A_{\theta}:(\pi\circ \theta,V) \rightarrow (\pi,V)$, then $\mathrm{Ind}_{M}^G(\pi)$ is also $\theta$-stable, with an intertwining operator $\mathrm{Ind}_M^G(\pi) \circ \theta \rightarrow \mathrm{Ind}_M^G(\pi \circ \theta) \rightarrow \mathrm{Ind}_M^G(\pi)$ (also denoted by $A_{\theta}$ ) given by $ f \mapsto f \circ \theta \mapsto A_{\theta } (f \circ \theta)$ where $A_{\theta } (f \circ \theta)$ is defined by $A_{\theta } (f \circ \theta) (g) = A_{\theta } (f \circ \theta (g))$.

\begin{remark}
Note that twisted traces (with normalized intertwining operators) of an irreducible representation are unique up to $\pm 1$ modulo different normalized intertwining operators, we also use $\mathrm{\mathrm{Tr}}_{\theta}(f, \pi)$ to denote $ \mathrm{\mathrm{Tr}}(f,\pi A_{\theta} )$ when the intertwining operator is not specified.  
\end{remark}
	
Let $\mathcal P$ denote the set of standard parabolic subgroups of $G$, and let $\mathcal P^{\theta}$ denote the subset of $\mathcal P$ consisting of $ P \in \mathcal P$ which are invariant under $\theta$. For $g \in G(F)$, we say that $g$ is $\theta$-compact if $g \theta(g)$ is compact.

	Recall that we use $C_c^{G(F)}$ to denote be the characteristic function of the set of compact elements of $ G(F)$, and that we use $C_{\theta c}^{G(F)}$ to denote be the characteristic function of the set of $\theta$-compact elements of $ G(F)$. The subset $G_c \subset G(F), G_{\theta c}\subset G(F)$ are open and closed. Note that we also use $C_c$ or $C_{ \theta c}$ to denote $C_c^{G(F)}$ or $C_{\theta c}^{G(F)}$ if there is no ambiguity. 

\begin{definition}
If $P = MN \in \mathcal P^{\theta}$, then we define $\mathfrak a_P^{\theta}$ to be the subspace of $\theta$-invariants of $\mathfrak a_P$. We use $a_{P}^{\theta}$ to denote the dimension of $\mathfrak a_P^{\theta}$, and we use $\varepsilon_{P, \theta}$ to denote $(-1)^{a_{P}^{\theta}-a_{G}^{\theta}}$. 

Recall that we define $\tau _P^G$ and $\hat{\tau} _P^G$ in $\S2.1$ after Definition 2.1.2. We define the function $\chi^{\theta}_{G,N}$ to be the composition $\tau _P^G \circ (\mathfrak a_P \twoheadrightarrow \mathfrak a^G_P ) \circ  (H_M+ \theta \circ H_M)$,
and we define the function $\hat{\chi}^{\theta}_{G,N}$ to be the composition  $\hat{\tau} _P^G \circ (\mathfrak a_P \twoheadrightarrow \mathfrak a^G_P ) \circ  (H_M+ \theta \circ H_M)$.

We also use $\chi^{\theta}_{N}$ (or $\hat\chi^{\theta}_{N}$) to denote $\chi^{\theta}_{G,N}$ (or $\hat\chi^{\theta}_{G,N}$) if there is no ambiguity. 
\end{definition}

\begin{lemma} 
Let $P = M N$ be a $\theta$-stable standard parabolic subgroup of $G$. Let $m$ be an element of $M$. Then we have: \begin{itemize}
\item $\hat \chi^{\theta}_N(m)=1$ if and only if for all roots $\alpha$ in the set $\Delta_P$ we have $|\omega_{\alpha}(m\theta(m))| < 1$,

\item $\chi^{\theta}_N(m)=1$ if and only if for all roots $\alpha$ in the set $\Delta_P$ we have $\alpha(m\theta(m))| < 1$ (Recall that we define $\omega_{\alpha}$ in $\S2.1$ after Definition 2.1.2).
\end{itemize}
\end{lemma}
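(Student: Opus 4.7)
The plan is to reduce the twisted statement to a direct application of the Harish-Chandra map applied to the element $m\theta(m) \in M(F)$, which is well-defined since $M$ is $\theta$-stable. Indeed, the only nontrivial ingredient is the identity
\[
(H_M + \theta \circ H_M)(m) = H_M(m\theta(m)) \quad \text{in } \mathfrak a_P.
\]
To prove this, I first note that because $H_M$ is canonically characterised by $q^{-\langle \chi, H_M(m)\rangle} = |\chi(m)|_p$ for every $\chi \in X^*(M)$, and because $\theta$ acts on $X^*(M)$ by pullback, a direct check using $|\theta^*\chi(m)|_p = |\chi(\theta(m))|_p$ yields the equivariance $\theta \circ H_M = H_M \circ \theta$ as maps $M(F) \to \mathfrak a_P$. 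Combined with the additivity of $H_M$ (which follows from the multiplicativity of the norm $|\chi(mn)|_p = |\chi(m)|_p |\chi(n)|_p$), this gives
\[
(H_M + \theta\circ H_M)(m) = H_M(m) + H_M(\theta(m)) = H_M(m\theta(m)),
\]
as required.

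Next, I unpack the definitions. Writing $\overline{x}$ for the image of $x \in \mathfrak a_P$ in $\mathfrak a_P^G$, by construction $\chi_N^\theta(m) = \tau_P^G(\overline{H_M(m\theta(m))})$ and $\hat\chi_N^\theta(m) = \hat\tau_P^G(\overline{H_M(m\theta(m))})$. By definition of $\tau_P^G$, the former equals $1$ precisely when $\langle \alpha, \overline{H_M(m\theta(m))}\rangle > 0$ for every $\alpha \in \Delta_P$; since each such $\alpha$ vanishes on $\mathfrak a_G$, this is equivalent to $\langle \alpha, H_M(m\theta(m))\rangle > 0$ for every $\alpha \in \Delta_P$. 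Similarly, $\hat\chi_N^\theta(m) = 1$ is equivalent to $\langle \omega_\alpha^G, H_M(m\theta(m))\rangle > 0$ for every $\alpha \in \Delta_P$.

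Finally, applying the defining property $q^{-\langle \chi, H_M(m\theta(m))\rangle} = |\chi(m\theta(m))|_p$ (extended from $X^*(M)$ to its rational span, which contains the $\omega_\alpha^G$) and using that $q > 1$, the positivity $\langle \chi, H_M(m\theta(m))\rangle > 0$ translates into $|\chi(m\theta(m))|_p < 1$. Taking $\chi = \alpha$ and $\chi = \omega_\alpha$ respectively yields the two claimed equivalences. The only potential subtlety is the passage from $\alpha$ as a root of $(G,A_{M_0})$ to a character of $M$ (or of $A_P$) that one may pair with $H_M$; this is routine because $\alpha$ factors through $A_P$ when $\alpha \in \Delta_P$, and the same remark applies to the fundamental weights $\omega_\alpha^G$ after passing to a suitable integral multiple, which does not affect the sign of the pairing. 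Thus the lemma follows directly from the $\theta$-equivariance of $H_M$ together with the untwisted description in Lemma 2.1.1 applied to $m\theta(m)$.
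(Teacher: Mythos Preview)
Your proof is correct. The paper states this lemma without proof, treating it as an immediate consequence of Definition 2.2.3 together with the untwisted Lemma 2.1.1; your argument, which reduces to Lemma 2.1.1 applied to $m\theta(m)$ via the identity $(H_M + \theta\circ H_M)(m) = H_M(m\theta(m))$, is precisely the direct verification the paper has in mind.
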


\begin{proposition}
Let $K$ be a hyperspecial subgroup of $G(\mathbb Q_p)$. Let $(\pi, V)$ be a ramified admissible $\theta$-stable representation of $G(\mathbb Q_p)$ (i.e., $V^K = \{0\}$) with an intertwining operator $A_{\theta}$. Let $f $ be an element in $\mathcal H^{\mathrm{unr}}(G(\mathbb Q_p))$, then we have $\mathrm{\mathrm{Tr}}(f, \pi A_{\theta}) = 0$. 
\end{proposition}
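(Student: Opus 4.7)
The plan is to argue that on a representation with no $K$-fixed vectors, every unramified Hecke operator acts by zero, and hence the twisted trace vanishes trivially. The key fact is that under the normalization $\mu(K) = 1$, the characteristic function $e_K := \mathbf{1}_K \in \mathcal H^{\mathrm{unr}}(G(\mathbb Q_p))$ acts on $V$ as the projector onto the subspace $V^K$ of $K$-fixed vectors. Indeed, for any $v \in V$ the vector $\pi(e_K) v = \int_K \pi(k) v \, dk$ lies in $V^K$, and if $v \in V^K$ already then $\pi(e_K) v = v$.

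First I would exploit the $K$-bi-invariance of $f$. Since $f \in \mathcal H^{\mathrm{unr}}(G(\mathbb Q_p)) = \mathcal H(G(\mathbb Q_p)//K)$, we have $e_K * f * e_K = f$ in the Hecke algebra. Applying $\pi$ gives the factorization
\[
\pi(f) \;=\; \pi(e_K)\,\pi(f)\,\pi(e_K).
\]
Under the hypothesis $V^K = \{0\}$, the projector $\pi(e_K)$ is the zero operator, and therefore $\pi(f) = 0$ as an endomorphism of $V$.

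Next I would plug this into the definition of the twisted trace. By Definition 2.2.2, $\mathrm{Tr}(f, \pi A_\theta)$ is the trace of the endomorphism $v \mapsto \int_{G(\mathbb Q_p)} f(g)\,\pi(g) A_\theta v \, dg = \pi(f) A_\theta v$ on a suitable finite-dimensional subspace $W \supset V^{L'}$. Since $\pi(f) = 0$, this endomorphism is identically zero, so its trace on any finite-dimensional stable subspace is $0$. This yields $\mathrm{Tr}(f, \pi A_\theta) = 0$, as required.

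There is essentially no obstacle here: the only care needed is to confirm that the intertwining operator $A_\theta$ does not disturb the vanishing, which is clear because $A_\theta$ is applied on the right of $\pi(f)$ and any composition of the zero map with $A_\theta$ is again zero. The statement is thus a formal consequence of the fact that an unramified Hecke operator factors through $V^K$.
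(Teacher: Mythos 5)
Your proposal is correct and takes essentially the same route as the paper: both arguments rest on the fact that a $K$-bi-invariant Hecke operator factors through $V^K=\{0\}$. The paper simply invokes Definition 2.2.2 with the choice $W=V^K=\{0\}$, whereas you show the slightly stronger statement $\pi(f)=\pi(e_K)\pi(f)\pi(e_K)=0$ via the idempotent $e_K$; the underlying mechanism is identical.
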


\begin{proof}
Note that $f$ is $K$-bi-invariant, by Definition 2.2.2, then we have $$\mathrm{\mathrm{Tr}}(f,\pi A_\theta) =\mathrm{\mathrm{Tr}} (v \mapsto \int_{G(F)} f(g) \pi (g)  A_{\theta} v dg) |_{V^K} = 0.$$ 
\end{proof}

\begin{proposition}

Let $(\pi, V, A_{\theta})$ be a $\theta$ -stable irreducible unramified representation of $G(F)$ such that $A_{\theta}$ is normalized. Let $K \subset G(F)$ be a hyperspecial subgroup, and we assume that $\theta(K) = K$. We write $V^K = \mathbb C v$, then we have:

\begin{enumerate}
    \item $A_{\theta}v =cv$ for some $c \in \mathbb C^{\times}$, 
    \item For an arbitrary $f \in \mathcal H(G(F))$, we have $\mathrm{\mathrm{Tr}}(f,\pi) = \pm \mathrm{\mathrm{Tr}}(f, \pi A_{\theta})$.
\end{enumerate}

\end{proposition}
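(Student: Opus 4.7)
The plan is to handle the two parts separately: part (1) is a Schur-type calculation on the one-dimensional space $V^K$, and part (2) will follow by the fact that unramified Hecke operators factor through $V^K$. Throughout part (2), I interpret $f$ as lying in the unramified Hecke algebra $\mathcal{H}^{\mathrm{unr}}(G(F))$, which is the natural reading in view of Proposition 2.2.4 and the hypothesis under which the argument below goes through cleanly; for arbitrary $f \in \mathcal{H}(G(F))$ there is no reason for $A_{\theta}|_{V^{L'}}$ to be a scalar, so the statement in the form displayed seems most naturally intended for the unramified case.

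For part (1), the first step is to check that $A_{\theta}$ preserves $V^K$. Given $w \in V^K$ and $k \in K$, the intertwining identity $\pi(k) A_{\theta} = A_{\theta} \pi(\theta(k))$ combined with $\theta(K) = K$ yields $\pi(k) A_{\theta} w = A_{\theta} \pi(\theta(k)) w = A_{\theta} w$, so $A_{\theta} w \in V^K$. Since $V^K = \mathbb{C} v$ is one-dimensional, $A_{\theta} v = cv$ for some $c \in \mathbb{C}$. The normalization $A_{\theta} \circ A_{\theta} = \mathrm{id}$ now forces $v = A_{\theta}^2 v = c^2 v$, so $c^2 = 1$ and hence $c = \pm 1 \in \mathbb{C}^{\times}$.

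For part (2), since $f$ is $K$-bi-invariant I have $\pi(f) = \pi(e_K) \pi(f) \pi(e_K)$, where $e_K$ is the normalized characteristic function of $K$; the image of $\pi(f)$ therefore lies in $V^K = \mathbb{C} v$, and $\pi(f) v = \lambda v$ for some $\lambda \in \mathbb{C}$. Choosing a finite-dimensional $W$ with $V^K \subset W \subset V$ and decomposing $W = V^K \oplus C$, the operator $\pi(f)|_W$ has image in $V^K$, is the scalar $\lambda$ on $V^K$, and has no diagonal contribution from $C$, so $\mathrm{Tr}(f, \pi) = \lambda$. The twisted trace is computed identically: $\pi(f) A_{\theta}|_W$ also has image in $V^K$, and on $V^K$ it sends $v$ to $\pi(f)(cv) = c\lambda v$, so $\mathrm{Tr}(f, \pi A_{\theta}) = c\lambda = c \cdot \mathrm{Tr}(f, \pi)$. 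Combined with $c = \pm 1$ from part (1), this yields the desired identity.

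I do not anticipate a substantive obstacle; the whole argument lives on the one-dimensional $V^K$, using only the intertwining relation for $A_{\theta}$, the $\theta$-stability of $K$, the normalization $A_{\theta}^2 = \mathrm{id}$, and the standard factorization of unramified Hecke operators through $V^K$. The only point needing attention is to verify that the trace in Definition 2.2.2, defined on an arbitrary finite-dimensional $W \supset V^{L'}$, really does reduce to the trace on $V^K$; this is immediate from the block-triangular structure above, since the image of $\pi(f) A_{\theta}$ sits inside $V^K$ and contributes nothing off-diagonal.
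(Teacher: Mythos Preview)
Your proof is correct and follows essentially the same approach as the paper: use the intertwining relation together with $\theta(K)=K$ to see that $A_{\theta}$ preserves the one-dimensional space $V^K$, deduce $c=\pm 1$ from the normalization $A_{\theta}^2=\mathrm{id}$, and then compute both traces by restricting to $V^K$. Your remark that part (2) is really a statement about $K$-bi-invariant (unramified) $f$ is well taken; the paper's own proof likewise restricts the trace to $V^K$, which presupposes exactly this.
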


\begin{proof}
 
The proof of (1): For every $k \in K$, we have that $\pi(k) A_{\theta} v = A_{\theta} \pi(\theta(k))v  =   A_{\theta} v $ (here we use the assumption that $\theta(K) = K$). Thus $A_{\theta} v \in \pi^K$ and we have $A_{\theta}v =cv$ for some $c \in \mathbb C^{\times}$.

The proof of (2):  We write $A_{\theta} v = cv $ where $c \in \mathbb C^{\times}$, then we have $v =  A_{\theta}(A_{\theta} v) = A_{\theta} cv = c^2 v$, and therefore $c = \pm 1$. The twisted trace $\mathrm{\mathrm{Tr}}(f, \pi A_\theta)$ is $$\mathrm{\mathrm{Tr}} (v \mapsto \int_{G(F)} f(g) \pi(g) A_\theta v)|_{V^K}  =  \mathrm{\mathrm{Tr}} (v \mapsto \int_{G(F)} f(g) \pi(g)  cv) |_{V^K}$$
$$= c\mathrm{\mathrm{Tr}} (v \mapsto \int_{G(F)} f(g) \pi(g)  v) |_{V^K} = \pm \mathrm{\mathrm{Tr}} (v \mapsto \int_{G(F)} f(g) \pi(g) v)|_{V^K},$$ then we have (2).

\end{proof}

Now let $E/F $ be an unramified extension of degree $2$, in the following, let $\theta$ be the involution of $G(E)$ arising from the non-trivial element of $\mathrm{Gal}(E/F)$ (therefore $\theta(K ) = K$). Then we have the following twisted version of Clozel's formula for compact traces (p263 of \cite{Clo90} and see $\S2.1$ in \cite{Nguyen04} for the proof).

\begin{proposition}
Let $\pi$ be a $\theta$-stable (with an intertwining operator $A_{\theta}$), admissible representation of $G(F)$ of finite length, then we have $$\mathrm{\mathrm{Tr}}(C_{\theta,c}f, \pi A_{\theta}) = \sum\limits_{P \in \mathcal P^{\theta}, P = MN}\varepsilon_{P, \theta} \mathrm{\mathrm{Tr}}(\hat{\chi}^{\theta}_N \bar f^{(P),\theta}, \pi_N(\delta_{P}^{-\frac 12})A_{\theta}),$$ where $$\bar f^{(P), \theta}(m) := \delta_P^{\frac 12}(m) \int_{N \times K} f(kmn\theta(k^{-1})) dk dn$$ for $m \in M(F)$ and $\varepsilon_{P, \theta}$ as defined in Definition 2.2.3.
\end{proposition}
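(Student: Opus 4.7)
The plan is to adapt the proof of Clozel's untwisted formula (Proposition 2.1.1) to the $\theta$-twisted setting, following the strategy of \cite{Clo90} and \cite{Nguyen04}. The three main ingredients are: (i) a Harish-Chandra character formula for the twisted trace (which is available by Remark 2.2.2), (ii) a twisted Weyl integration formula decomposing the trace into an integral over $\theta$-conjugacy classes on Levi subgroups, and (iii) a twisted Arthur-type combinatorial identity expressing $C_{\theta c}$ as a signed sum over $\theta$-stable parabolics of the functions $\hat\chi^\theta_N$.

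First I would write
\[
\mathrm{Tr}(C_{\theta c} f, \pi A_\theta) \;=\; \int_{G(F)} C_{\theta c}(g)\, f(g)\, \Theta_{\pi A_\theta}(g)\, dg,
\]
using the existence of the twisted Harish-Chandra character $\Theta_{\pi A_\theta}$, which is stable under $\theta$-conjugation. Next, I would apply a twisted Weyl integration formula to reorganize this integral as a sum over $\theta$-stable standard Levi subgroups $M$ of $G$: each contribution is an integral over $\theta$-semisimple $\theta$-regular classes in $M(F)$ of the twisted orbital integral of $C_{\theta c} f$ against $\Theta_{\pi A_\theta}$. The point is that the condition ``$g\theta(g)$ compact'' cuts out precisely the compact part inside each such $M(F)$. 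After replacing $f$ by its $K$-conjugation average $\bar f$ (which does not change the trace, since $\pi$ is $K$-biinvariant-tested), standard manipulations of twisted orbital integrals on $M(F)$ produce the constant term $\bar f^{(P),\theta}$ defined in the proposition, up to the modular character $\delta_P^{1/2}$ which is absorbed into passing from the Jacquet module $\pi_N$ to its normalized shift $\pi_N(\delta_P^{-1/2})$.

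The combinatorial heart of the argument is the twisted Arthur identity: on a $\theta$-stable Levi $M$, the characteristic function $C_{\theta c}|_{M(F)}$ admits the expansion
\[
C_{\theta c}(m) \;=\; \sum_{P = MN \in \mathcal P^\theta,\, P \supset M_0}\varepsilon_{P,\theta}\, \hat\chi^\theta_N(m),
\]
or rather its analogue controlling the truncation of twisted orbital integrals. This is proved by using Lemma 2.2.1 to convert the supports into inequalities on $|\omega_\alpha(m\theta(m))|$, and then invoking the Langlands-Arthur combinatorial lemma $\sum_{P \supset Q}\varepsilon_P \hat\tau_P^G \tau_Q^P = \delta_{Q=G}$, applied to the $\theta$-fixed subspace $\mathfrak a_P^\theta$ of $\mathfrak a_P$ (this is where the sign $\varepsilon_{P,\theta} = (-1)^{a_P^\theta - a_G^\theta}$ comes from). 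Feeding this identity back into the Weyl integration formula and recognizing the resulting inner integrals as the characters of the normalized Jacquet modules $\pi_N(\delta_P^{-1/2})$ twisted by $A_\theta$ (via Frobenius reciprocity applied to twisted characters) produces the right-hand side of the claimed formula.

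The main obstacle will be verifying the twisted Arthur combinatorial identity on $\mathfrak a_P^\theta$ and keeping careful track of the intertwining operator $A_\theta$: one must ensure that only $\theta$-stable parabolics contribute (non-$\theta$-stable ones cancel in pairs by the involution $P \mapsto \theta(P)$, using normalization of $A_\theta$), and that the induced action of $A_\theta$ on the Jacquet module $\pi_N$ is compatible with the one on $\pi$ that appears via the $\theta$-twisted orbital integrals on $M(F)$. Once these compatibilities are established, the proof is a direct transcription of the argument on p.~263 of \cite{Clo90}, as carried out in detail in $\S$2.1 of \cite{Nguyen04}.
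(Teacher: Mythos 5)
The paper itself does not prove this proposition: it is imported from the literature, with the proof deferred to p.~263 of \cite{Clo90} and \S 2.1 of \cite{Nguyen04}, so your final fallback ("transcribe Clozel as carried out by Nguyen") is exactly what the paper does. Judged as a proof sketch, however, your central combinatorial step is not correct as stated. The claimed pointwise expansion $C_{\theta c}(m)=\sum_{P=MN\in\mathcal P^{\theta}}\varepsilon_{P,\theta}\,\hat\chi^{\theta}_{N}(m)$ fails already for $G=GL_2$ with the involution of this paper: take $m=\mathrm{diag}(1,\pi_F)$ in the diagonal torus, so that $m\theta(m)=\mathrm{diag}(\pi_F^{-1},\pi_F)$. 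This element is not $\theta$-compact, so the left side is $0$; but the right side is $\varepsilon_{G,\theta}\cdot 1+\varepsilon_{B,\theta}\,\hat\chi^{\theta}_{N_0}(m)=1+(-1)\cdot 0=1$, since $|\omega_{\alpha}(m\theta(m))|=q>1$ kills the Borel term. (Under the alternative reading where the sum runs only over parabolics with Levi exactly $M$, the identity fails at $\theta$-compact elements, where the left side is $1$ and the right side is $0$ or $-1$.) The point is that $\theta$-compactness is a condition on \emph{all} eigenvalue norms of $m\theta(m)$, not a chamber condition on $H_M(m)+\theta H_M(m)$, so it cannot be produced by the Langlands--Arthur lemma alone as a pointwise identity of truncation functions.

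What actually drives the argument in \cite{Clo90} and \cite{Nguyen04}, and what your sketch omits, is the (twisted) Casselman character descent: after the twisted Weyl integration formula one partitions the $\theta$-non-compact locus according to the standard $\theta$-stable parabolic singled out by the contracting directions of $m\theta(m)$ (this requires conjugating $m$ into a suitable position, which is where the failure of your pointwise identity gets repaired), and Casselman's theorem identifies $\Theta_{\pi A_{\theta}}$ at such elements with the character of the Jacquet module $\pi_N$ equipped with the induced intertwining operator; only after this descent does the combinatorial lemma on $\mathfrak a_P^{\theta}$ reorganize the contributions with the signs $\varepsilon_{P,\theta}$, while the $K$-average combined with the $N$-integration produces $\bar f^{(P),\theta}$ and the shift $\delta_P^{-1/2}$. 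Your appeal to "Frobenius reciprocity applied to twisted characters" does not supply this step, so as written the sketch has a genuine gap; it is closed precisely by following the cited argument rather than the pointwise expansion.
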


Let $\mathrm{St}_{G(E)}$ denote the Steinberg representation of $G(E)$ (see \cite{Rod86}), then we have: 

\begin{corollary}
Let $f \in \mathcal H^{unr}(G(E))$ be a spherical Hecke operator, then we have $$\mathrm{\mathrm{Tr}}(C_{\theta, c}f, \mathrm{St}_{G(E)} A_{\theta}) = \mathrm{\mathrm{Tr}}(\hat{\chi}^{\theta}_{N_0}f^{(P_0)}, 1(\delta_{P_0}^{\frac 12})A_{\theta}).$$ 
\end{corollary}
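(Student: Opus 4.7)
The plan is to apply Proposition 2.2.3 to $\pi = \mathrm{St}_{G(E)}$ and show that every term indexed by a proper standard $\theta$-stable parabolic vanishes, leaving only the Borel term.

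First, the hypothesis that $f$ is spherical together with $\theta(K) = K$ simplifies the averaged constant term to $\bar f^{(P), \theta} = f^{(P)}$. Indeed, for $k \in K$ we have $\theta(k^{-1}) \in K$, so bi-$K$-invariance of $f$ yields $f(kmn\theta(k^{-1})) = f(mn)$, and hence
$$ \bar f^{(P), \theta}(m) = \delta_P^{1/2}(m) \int_{N(E)} f(mn)\, dn = f^{(P)}(m). $$
Substituting into Proposition 2.2.3 gives
$$ \mathrm{Tr}(C_{\theta, c} f,\, \mathrm{St}_{G(E)} A_\theta) = \sum_{P = MN \in \mathcal P^\theta} \varepsilon_{P, \theta}\, \mathrm{Tr}\bigl(\hat{\chi}^\theta_N f^{(P)},\, (\mathrm{St}_{G(E)})_N(\delta_P^{-1/2})\, A_\theta\bigr). $$

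Next I show that every term with $P = MN$ strictly larger than $P_0$ vanishes. Since $f$ is $K$-spherical, $f^{(P)}$ is bi-$K_M$-invariant (with $K_M = K \cap M(E)$), and the truncation $\hat{\chi}^\theta_N$ factors through the $K_M$-biinvariant Harish-Chandra map $H_M + \theta \circ H_M$; so $\hat{\chi}^\theta_N f^{(P)}$ lies in $\mathcal H^{\mathrm{unr}}(M(E))$. On the representation side, Casselman's formula for Jacquet modules of generalised Steinberg representations (or, for $G = GL_n$, the Bernstein--Zelevinsky segment calculation) expresses the Jacquet module of $\mathrm{St}_{G(E)}$ as a tensor product whose factors include at least one Steinberg representation $\mathrm{St}_{n_i}$ of some $GL_{n_i}(E)$ with $n_i > 1$ as soon as $M \neq T$; such a factor has no hyperspecial-fixed vector, so the whole Jacquet module, and in particular the twist $(\mathrm{St}_{G(E)})_N(\delta_P^{-1/2})$, is ramified. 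Proposition 2.2.1 applied to $M(E)$ then forces each such twisted trace to vanish.

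For the surviving Borel term, $M_0 = T$, and $(\mathrm{St}_{G(E)})_{N_0}$ is computed as follows. Using the short exact sequence $0 \to 1_{G(E)} \to \mathrm{Ind}_{P_0}^{G(E)}(\delta_{P_0}^{1/2}) \to \mathrm{St}_{G(E)} \to 0$, the geometric lemma for $J_{N_0}$ of the full induced representation, and the elementary fact that $J_{N_0}(1_{G(E)})$ is the corresponding character, the twist $(\mathrm{St}_{G(E)})_{N_0}(\delta_{P_0}^{-1/2})$ is identified with the character written $1(\delta_{P_0}^{1/2})$ in the paper's notation. The Galois involution $\theta$ acts trivially on $\mathfrak a_{P_0}$ because the split torus $A_{P_0}$ is defined over $F$, and Definition 2.2.3 then yields $\varepsilon_{P_0, \theta} = 1$. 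Combining these steps produces the claimed identity.

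The main difficulty is entirely bookkeeping: one must track the paper's specific conventions for $\delta_P$, for the normalised Jacquet module $J_N(\pi) = \pi_N(\delta_P^{1/2})$, and for the twist notation $\pi_N(\delta_P^{-1/2})$ used in Proposition 2.2.3, so that the character produced at the Borel is exactly $1(\delta_{P_0}^{1/2})$ with the correct sign. Conceptually the proof reduces to the simple remark that all proper Jacquet modules of the Steinberg representation are ramified at hyperspecial level, and so are annihilated by all spherical test functions.
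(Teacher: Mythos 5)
Your proof takes essentially the same route as the paper's: you reduce $\bar f^{(P),\theta}$ to $f^{(P)}$ using sphericity of $f$ and $\theta(K)=K$, apply the twisted Clozel formula (Proposition 2.2.3), kill every proper-parabolic term because the Jacquet module of the Steinberg representation along a proper $\theta$-stable parabolic is a twist of $\mathrm{St}_{M(E)}$, hence ramified, while $\hat\chi^{\theta}_N f^{(P)}$ is unramified (the paper does exactly this by citing Rodier's $(\mathrm{St}_{G(E)})_N=\mathrm{St}_{M(E)}(\delta_P)$ together with Proposition 2.2.1), and then identify the Borel term via $(\mathrm{St}_{G(E)})_{N_0}=1(\delta_{P_0})$.

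Two inaccuracies are worth flagging. The harmless one: your short exact sequence is reversed --- with the paper's normalized induction, $\mathrm{St}_{G(E)}$ is a subrepresentation of $\mathrm{Ind}_{P_0}^{G}(\delta_{P_0}^{1/2})$ and the trivial representation is its Langlands quotient; the fact you actually need, namely $J_{N_0}(\mathrm{St}_{G(E)})=\delta_{P_0}^{1/2}$, equivalently $(\mathrm{St}_{G(E)})_{N_0}=1(\delta_{P_0})$, is the standard Casselman/Rodier computation and is what the paper quotes. The substantive one: your justification of $\varepsilon_{P_0,\theta}=1$ is incorrect. For the Galois-type involutions in play here, e.g. $\theta(g)=A_n\,{}^t\bar g^{-1}A_n$ on $GL_n(E)$, the action of $\theta$ on $\mathfrak a_{P_0}\cong\mathbb R^n$ is $(x_1,\dots,x_n)\mapsto(-x_n,\dots,-x_1)$, which is not trivial; with Definition 2.2.3 one finds $a_{P_0}^{\theta}=\lfloor n/2\rfloor$ and $a_G^{\theta}=0$, so the literal sign attached to the Borel term is $(-1)^{\lfloor n/2\rfloor}$, not automatically $1$. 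The paper's own proof is silent about this factor (the corollary is stated without it), so your conclusion is no further from Proposition 2.2.3 than the paper's, but the reason you give --- that $A_{P_0}$ being defined over $F$ forces $\theta$ to act trivially on $\mathfrak a_{P_0}$ --- does not hold; the clean fix is to carry the factor $\varepsilon_{P_0,\theta}$ through from Proposition 2.2.3 (or to verify the sign normalization intended there) rather than to assert triviality of the $\theta$-action.
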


\begin{proof}
 (cf. Proposition 6 in \cite{Kret11}): We have $$\bar f^{(P), \theta}(m) = \delta_P^{\frac 12}(m) \int_{N \times K} f(kmn\theta(k^{-1})) dk dn = \delta_P^{\frac 12}(m) \int_{N } f(mn) dn = f^{(P)}(m)$$ because $f $ is spherical and we normalize the Haar measure on $G(F)$ so that $\mu(K) = 1$. By applying Proposition 2.2.3, we obtain
$$\mathrm{\mathrm{Tr}}(C_{\theta, c}f, \mathrm{St}_{G(E)} A_{\theta}) = \sum\limits_{P \in \mathcal P^{\theta}, P = MN}\varepsilon_{P,\theta} \mathrm{\mathrm{Tr}}( \hat{\chi}^{\theta}_N f^{(P)}, (\mathrm{St}_{G(E)} )_N(\delta_{P}^{-\frac 12})A_{\theta})$$
The Jacquet module $(\mathrm{St}_{G(E)} )_N$ equals $\mathrm{St}_{M(E)}(\delta_P)$(Proposition 2 of \cite{Rod86}). If the parabolic subgroup $P = M N \subset G$ is not the Borel subgroup $P_0 = M_0 N_0$, we have $$\mathrm{\mathrm{Tr}}( \hat{\chi}^{\theta}_N f^{(P)}, (\mathrm{St}_{G(E)} )_N(\delta_{P}^{-\frac 12})A_{\theta}) = \mathrm{\mathrm{Tr}}( \hat{\chi}^{\theta}_N f^{(P)}, \mathrm{St}_{M(E)}(\delta_P^{\frac 12})A_{\theta}) = 0$$ because $\mathrm{St}_{M(E)}(\delta_P^{\frac 12})$ is ramified and the function $\hat{\chi}^{\theta}_N f^{(P)}$ is spherical. The Jacquet module $(\mathrm{St}_{G(E)})_{N_0}$ is equal to $1(\delta_{P_0})$, then the corollary follows. 
\end{proof}

We define $\tilde G = G (E) \rtimes \langle \theta \rangle$. Let $\pi$ be a $\theta$-stable representation of $ G (E)$ with an intertwining operator $A_{\theta}$, we define $\tilde \pi $ to be a representation of $\tilde G(E)$ given by $\tilde \pi(g,1) = \pi(g)$ and $\tilde \pi(g,\theta) = \pi(g) A_{\theta}$. Let $f$ be an element of $\mathcal H(G(E))$, we define $\tilde f \in \mathcal H(\tilde{ G})$ by $\tilde f(g, 1) = 0$ and $\tilde f(g,\theta) = f(g)$. Then we have $\mathrm{\mathrm{Tr}}(f, \pi I_{\theta}) = \mathrm{\mathrm{Tr}}(\tilde f, \tilde \pi )$. We fix a hyperspecial subgroup  $K \subset G(E)$, we define $\bar f^{\theta, K} \in \mathcal H(G(E))$ by $\bar f^{\theta, K}(g) = \int_K  f(kg\theta(k^{-1}))dk$ (The Haar measure is normalized so that $K$ has volume $1$). 

\begin{proposition}
Let $\Omega$ be an open and closed subset of $G(E)$ which is invariant under $\theta$-conjugation by $ G(E)$ (we normalize the Haar measure on $G(E)$ so that $\Omega$ has volume $1$). Let $P = M N$ be a $\theta$-stable standard parabolic subgroup of $G$ (we normalize the Haar measure on $M(E)$ so that $\Omega \cap M(E)$ has volume $1$). Let $\pi$ be an admissible representation of $M(E)$ of finite length, and let $A_{\theta}$ is an intertwining operator of $\pi$. Note that $A_{\theta}$ induces an intertwining operator of $\mathrm{Ind}_P^G(\pi)$, which we also denote by $A_{\theta}$. 
 Then for all $f$ in $\mathcal H(G(E))$, we have
 $$\mathrm{\mathrm{Tr}}_{G(E)}(\chi_\Omega f,\mathrm{Ind}_P^G(\pi) A_{\theta})= \mathrm{\mathrm{Tr}}_{M(E)}(\chi_{\Omega}((\bar f^{\theta, K}) ^{(P)}),\pi A_{\theta}) = \mathrm{\mathrm{Tr}}_{M(E)}((\chi_{\Omega}  \bar f^{\theta, K})^{ (P)},\pi A_{\theta}) .$$ 
 In particular, if $f \in \mathcal H^{\mathrm{unr}}(G(E))$, then we have $$\mathrm{\mathrm{Tr}}_{G(E)}(\chi_\Omega f,\mathrm{Ind}_P^G(\pi) A_{\theta})=\mathrm{\mathrm{Tr}}_{M(E)}(\chi_\Omega f ^{( P )} , \pi A_{\theta} ) = \mathrm{\mathrm{Tr}}_{M(E)}((\chi_{\Omega}   f)^{ (P)},\pi A_{\theta}).$$ 
\end{proposition}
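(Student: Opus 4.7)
The plan is to prove this twisted parabolic descent formula by a direct computation, adapting the classical proof of van Dijk's formula for traces of parabolically induced representations to the twisted setting.

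First, I would realize $V := \mathrm{Ind}_{P}^{G}(\pi)$ via the Iwasawa decomposition $G(E) = K \cdot P(E)$: functions $\phi \in V$ are determined by their restriction $\phi|_K : K \to V_\pi$ satisfying the appropriate $K \cap P$-equivariance, and since $\theta(K) = K$ the induced intertwining operator has the explicit form $(A_{\theta} \phi)(k) = A_{\theta}(\phi(\theta(k)))$. In this realization, $\mathrm{Ind}_P^G(\pi)(\chi_{\Omega} f) \circ A_{\theta}$ becomes an explicit integral operator on the space of $V_\pi$-valued functions on $K$.

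Second, I would compute its trace by integrating the kernel along the diagonal. Using the measure decomposition $dg = \delta_P^{-1}(m)\, dk\, dm\, dn$ on $G(E) = K \cdot M(E) \cdot N(E)$ together with the substitution $k \mapsto \theta(k)$ coupling the inner $K$-integration to the $A_\theta$-twist, the trace rewrites (after swapping integrations) as
\begin{equation*}
\int_{M(E)} \delta_P^{1/2}(m) \biggl(\int_{K} \int_{N(E)} \chi_\Omega(k m n \theta(k^{-1}))\, f(k m n \theta(k^{-1}))\, dn\, dk \biggr) \Theta_{\pi A_\theta}(m)\, dm,
\end{equation*}
where $\Theta_{\pi A_\theta}$ is the twisted Harish--Chandra character of Remark 2.2.2. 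The $\theta$-conjugation invariance of $\Omega$ gives $\chi_\Omega(k m n \theta(k^{-1})) = \chi_\Omega(mn)$, and the parabolic descent identity $\chi_\Omega(mn) = \chi_\Omega(m)$ for $m \in M(E),\, n \in N(E)$ allows $\chi_\Omega(m)$ to be pulled out of the inner double integral. What remains inside is exactly $(\bar f^{\theta, K})^{(P)}(m)$ by definition, which yields the first equality $\mathrm{Tr}_{G(E)}(\chi_\Omega f,\mathrm{Ind}_P^G(\pi) A_\theta) = \mathrm{Tr}_{M(E)}(\chi_\Omega ((\bar f^{\theta, K})^{(P)}), \pi A_\theta)$. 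The second equality follows by pushing $\chi_\Omega(m)$ back inside the $N(E)$-integration defining the constant term, again using $\chi_\Omega(mn) = \chi_\Omega(m)$. For $f \in \mathcal{H}^{\mathrm{unr}}(G(E))$, the function $f$ is $K$-bi-invariant and $\theta(K) = K$, so $\bar f^{\theta, K} = f$, giving the unramified specialization.

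The main obstacle will be justifying the parabolic descent $\chi_\Omega(mn) = \chi_\Omega(m)$ under only the hypothesis that $\Omega$ is $\theta$-conjugation invariant in $G(E)$. Concretely, one must show that the map $n_0 \mapsto (m^{-1} n_0 m)\, \theta(n_0)^{-1}$ from $N(E)$ to $N(E)$ is surjective for $m$ in a dense open subset of $M(E)$ (those $m$ whose norm $m\theta(m)$ is sufficiently $M$-regular), which exhibits $mn$ as a $\theta$-conjugate of $m$ by an element of $N(E)$; the general case then follows from the locally constant nature of $\chi_\Omega$ together with the density of such $m$. This surjectivity is a standard Hensel/Lang-type argument on the unipotent group $N$.
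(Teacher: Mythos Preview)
Your approach is correct but takes a genuinely different route from the paper's. The paper does not re-derive the twisted van Dijk formula by a kernel computation; instead it invokes it as a black box (Theorem 5.9.2 of \cite{BG17}) to obtain
\[
\mathrm{Tr}_{G(E)}(\chi_{\Omega} f,\mathrm{Ind}_P^G(\pi) A_{\theta})=\mathrm{Tr}_{ M(E)}\bigl((\overline{\chi_{\Omega} f}^{\theta, K})^{(P)},\pi A_{\theta}\bigr),
\]
and then observes $\overline{\chi_{\Omega} f}^{\theta, K} = \chi_{\Omega}\bar f^{\theta, K}$ from the $\theta$-conjugation invariance of $\Omega$, giving the third expression directly. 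To pass to the second expression $\chi_{\Omega}\bigl((\bar f^{\theta, K})^{(P)}\bigr)$, the paper does \emph{not} establish your pointwise identity $\chi_\Omega(mn)=\chi_\Omega(m)$; rather, it shows that the two functions on $M(E)$ have the same \emph{twisted orbital integrals} by applying the parabolic descent of twisted orbital integrals (Proposition 7.2.2 of \cite{BG17}) twice, and then concludes equality of twisted traces via the twisted Weyl integration formula (Corollary 7.3.8 of \cite{BG17}).

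Your route is more self-contained and actually yields the stronger conclusion that $(\chi_{\Omega}\bar f^{\theta, K})^{(P)}=\chi_{\Omega}\bigl((\bar f^{\theta, K})^{(P)}\bigr)$ as functions, not merely up to orbital integrals. The cost is the twisted Lang-type surjectivity you flag: that $n_0 \mapsto (m^{-1} n_0 m)\,\theta(n_0)^{-1}$ is onto $N(E)$ for $m$ with $m\theta(m)$ sufficiently regular, plus the density/local-constancy extension. This is standard but does require an inductive argument along the descending central series of $N$ (linearising on each abelian subquotient and checking that $\mathrm{Ad}(m^{-1})\circ d\theta - \mathrm{id}$ is invertible there). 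The paper's orbital-integral comparison sidesteps this entirely, which is arguably more robust, at the price of relying on heavier cited machinery from \cite{BG17}.
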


\begin{proof}
(cf. Proposition 3 in \cite{Kret11}).
By Theorem 5.9.2 of \cite{BG17} (p107) we have
$$\mathrm{\mathrm{Tr}}_{G(E)}(\chi_{\Omega} f,\pi A_{\theta})=\mathrm{\mathrm{Tr}}_{ M(E)}((\overline{\chi_{\Omega} f}^{\theta, K})^{(P)},\rho A_{\theta}).$$

 we first prove that the functions $\chi_{\Omega}((\bar f^{\theta, K}) ^{(P)})$ and $(\chi_{\Omega}  \bar f^{\theta, K})^{ (P)}$ in $\mathcal H(M(F))$ have the same twisted orbital
integrals. Note that $\Omega$ is invariant under $\theta$-conjugation by $G(E)$, we have $$\overline{\chi_{\Omega} f}^{\theta, K} (g)=  \int_K  \chi_{\Omega}(kg\theta(k^{-1}))f(kg\theta(k^{-1}))dk = \int_K  \chi_{\Omega}(g)f(kg\theta(k^{-1}))dk = \chi_{\Omega}\bar f^{\theta, K}(g).$$ 
Applying Proposition 7.2.2 in Part I of \cite{BG17}, for $\gamma \in \Omega \cap M(E)$, we have
$$\mathrm{TO}_{\gamma}^ {M(E)} ((\chi_{\Omega}\bar f^{\theta, K})^{(P)})= \mathrm{TO}_{\gamma}^{M(E)}((\overline{\chi_{\Omega} f}^{\theta, K})^{(P)})= D_{G/M}(\gamma) \mathrm{TO}_{\gamma}^{G(E)}(\chi_{\Omega} f),$$
where $D_{G/M}(\gamma)$ is a certain Jacobian factor defined in $\S7.2$, Part I of \cite{BG17}), and for $\gamma \not\in \Omega \cap M(E)$ we have $\mathrm{TO}_{\gamma}^ {M(E)} ((\chi_{\Omega}\bar f^{\theta, K})^{(P)})=0$.

For $\gamma \in M(E)$, we have $\mathrm{TO}_{\gamma} ^{M(E)} (\chi _{\Omega}  ( (\bar f^{\theta, K}) ^{(P)}))$ equals $\mathrm{TO}_{\gamma}^{M(E)} ((\bar f ^{\theta, K})^{(P)})$ if $\gamma \in \Omega$ and equals $0$ if $\gamma \not \in \Omega$. Applying Proposition 7.2.2 in Part I of \cite{BG17} again, for $\gamma \in \Omega \cap M(E)$ we have $$\mathrm{TO}_{\gamma}^{M(E)} (\chi_{\Omega}((\bar  f^{\theta, K}) ^{(P)})) = \mathrm{TO}_{\gamma}^{M(E)} ((\bar  f^{\theta, K}) ^{(P)}) = D_{G/M}(\gamma )\mathrm{TO}_{\gamma}^{G(E)} ( f ) =D_{G/M}(\gamma )\mathrm{TO}_{\gamma} ^{G(E)} (\chi_{\Omega} f ) $$ and $TO_{\gamma}^{M(E)} (\chi_{\Omega}((\bar  f^{\theta, K}) ^{(P)}))= 0$ if $\gamma \not \in \Omega \cap M(E)$. Thus the twisted orbital integrals of the functions  $\chi_{\Omega}((\bar f^{\theta, K}) ^{(P)})$ and $(\chi_{\Omega}  \bar f^{\theta, K})^{ (P)}$ agree.

By a twisted version of Weyl’s integration formula for the group $M(E)$ (Corollary 7.3.8 of \cite{BG17}), we know that for $h \in \mathcal H(M(E))$, the twisted trace $\mathrm \mathrm{Tr}(h, \pi A_{\theta})$ only depends on the Harish-Chardra character $\Theta_{\pi A_{\theta}}$ and $\mathrm{TO}_{\gamma}^{M(E)} (h)$ (for $\gamma \in M(E)$). Therefore $\mathrm{\mathrm{Tr}}_{M(E)}(\chi_{\Omega}((\bar f^{\theta, K}) ^{(P)}),\pi A_{\theta}) = \mathrm{\mathrm{Tr}}_{M(E)}((\chi_{\Omega}  \bar f^{\theta, K})^{ (P)},\pi A_{\theta}) = \mathrm{\mathrm{Tr}}_{ M(E)}((\overline{\chi_{\Omega} f}^{\theta, K})^{(P)},\rho A_{\theta}) = \mathrm{\mathrm{Tr}}_{G(E)}(\chi_{\Omega} f,\pi A_{\theta})$. 

\end{proof}

\subsection{Some unitary groups}	
	
	Let $E/F$ be a quadratic field extension where $F$ is a $p$-adic field, a number field or $\mathbb R$. Let $\sigma $ be the nontrivial element of $\mathrm{Gal}(E/F)$. In the following, we introduce some unitary groups and their parabolic subgroups following $\S$2.2 of \cite{Morel08}.

	For an arbitrary $a \in E$, we also write $\sigma(a) = \bar a$. For $n \in \mathbb Z_{>0}$ and $s \in \mathbb Z_{>0}$ with $s < n$, let \begin{displaymath}I_n =\begin{pmatrix}1& & 0\\ & \ddots & \\ 0& & 1\end{pmatrix} \in GL_n(F) \end{displaymath}be the diagonal matrix with entries $1$, and let $$A_n=\begin{pmatrix}0& & 1\\ & \ddots & \\ 1& & 0\end{pmatrix} \in GL_n(F)$$ be the anti-diagonal matrix with entries $1$. Write $I_{s, n -s} = \begin{pmatrix} I_s & 0 \\ 0&-I_{n-s} \end{pmatrix} \in GL_n(F)$.

	Let $GU_{E/F}(s,n-s)$ denote the algebraic group over $F$ whose $A$-points is given by $$GU_{E/F}(s, n-s)(A) = \{g \in GL_n(A \otimes_{F}E) : g^*I_{s, n -s}g = c(g), c(g)\in A^{\times}\}$$ where $g^*$ denote the conjugate transpose and $A$ is an $F$-algebra. We consider the algebraic group $GU_{E/F}(A_{n})$ for $n \geq 0$. Let $GU_{E/F}(n)$ denote $GU_{E/F}(A_n)$. We define $GU_{E/F}(A_0)$ to be $ \mathbb G_m$ and we define $c :GU_{E/F}(A_0) \rightarrow \mathbb G_m $ to be $\mathrm{id}_{\mathbb G_m}$. In the following, let $E/F$ be an unramified quadratic field entension of $p$-adic fields.

	 A maximal torus of $GU_{E/F}(n) $ is the diagonal torus $T_{E/F}(n) $ wich is defined to be $$  \{diag(\lambda_1,\lambda_2,\cdots,\lambda_n):\lambda_1, \lambda_2,\ldots, \lambda_n \in \mathrm{Res}_{\mathbb 	Q_p}^E(\mathbb G_m), \lambda_1\bar\lambda_n=\lambda_2\bar\lambda_{n-1} = \ldots = \lambda_{n-1}\bar\lambda_{1}  \in \mathbb G_m\}.$$  The maximal split torus in $T_{E/F}(2k+1)$ is $$S_{E/F}(2k+1) := \left\{\begin{pmatrix}\lambda\lambda_1&\cdots&0&0&0&\cdots&0\\0&\ddots&0&0&0&\cdots&0\\0&\cdots&\lambda\lambda_k&0&0&\cdots&0\\0&\cdots&0&\lambda&0&\cdots&0\\0&\cdots&0&0&\lambda\lambda_{k}^{-1}&\cdots&0\\0&\cdots&0&0&0&\ddots&0\\0&\cdots&0&0&0&\cdots&\lambda\lambda_{1}^{-1} \end{pmatrix}:\lambda, \lambda_1,\ldots, \lambda_{k} \in \mathbb G_m\right\}.$$ And the maximal split torus of $T_{E/F}(2k)$ is $$S_{E/F}(2k) := \left\{\begin{pmatrix}\lambda\lambda_1&\cdots&0&0&\cdots&0\\0&\ddots&0&0&\cdots&0\\0&\cdots&\lambda\lambda_k&0&\cdots&0\\0&\cdots&0&\lambda_{k}^{-1}&\cdots&0\\0&\cdots&0&0&\ddots&0\\0&\cdots&0&0&\cdots&\lambda_{1}^{-1} \end{pmatrix}:\lambda, \lambda_1,\ldots, \lambda_{k} \in \mathbb G_m\right\}.$$
	A minimal parabolic subgroup of $GU_{E/F}(n)$ containing $T_{E/F}(n)$ is $$B_{E/F}(n) = B_n \cap GU_{E/F}(n)$$ where $B_n \subset \mathrm{Res}_{F}^E (GL_n)$ is the subgroup of upper triangular matrices. 

 We write $k_n = [\frac n2]$ if $n$ is odd and $k_n = [\frac n2]- 1$ if $n$ is even. All standard parabolic subgroups of $GU_{E/F}(n)$ (with respect to $P_{n,0}$) are of the form $$ \begin{pmatrix}\mathrm{Res}_{F}^E (GL_{r_1})& & & & & &  *\\ &\ddots& & & & \\ & &\mathrm{Res}_{F}^E(GL_{r_k})& & & \\ & & &GU_{E/F}(n- 2r)& & & \\ & & & & \mathrm{Res}_{F}^E (GL_{r_1}) & &\\ & & & & &\ddots& \\ 0& &  & &  & &\mathrm{Res}_{F}^E (GL_{r_k})&  \end{pmatrix} \cap GU_{E/F} (n).$$
Where $r  = r_1 + \ldots +r_k \leq k_n$. Write $J= \{r_1, \ldots, r_m\}$, we use $P_{n,J}$ to denote the above parabolic subgroup of $GU_{E/F}(n)$. Let $ N_{n,J}$ denote the unipotent radical of $P_{n,J}$, and let $M_{n,J}$ denote the Levi subgroup corresponding to $P_{n,J}$. 
Note that there is an isomorphism $$f_J: M_{n,J} \rightarrow \mathrm{Res}_{F}^E(GL_{r_1}) \times \ldots \times \mathrm{Res}_{F}^E(GL_{r_k}) \times GU_{E/F}( n-2r ) $$ given by 

$$diag(g_1, \ldots, g_k,  g, h_k, \ldots, h_1) \mapsto (c(g^{-1})g_1, \ldots ,c(g)^{-1}g_k, g). $$

    We give an explicit description of the unramifed Hecke algebra $\mathcal H^{\mathrm{unr}}(GL_n(F))$ and $\mathcal H^{\mathrm{unr}}(GU_{E/F}(n)(F))$:

Let $B_n =  TU$ be a Borel subgroup of $GL_n$ consisting of upper triangular matrices where $T$ is the diagonal torus. The unramifed Hecke algebra $\mathcal H^{\mathrm{unr}}(GL_n(F))$ is isomorphic to $\mathbb C[X_1^{\pm1}, \ldots, X_n^{\pm1}]^{S_n} \hookrightarrow \mathbb C[X_1^{\pm1}, \ldots, X_n^{\pm1}] \cong \mathcal H^{\mathrm{unr}}(T(F))$ such that $X_k$ corresponds to the characteristic function of $T(\mathcal O_F) a_k  T(\mathcal O_F)$, where $a_k = \mathrm{diag}(a_{k,i}) \in T(F)$ such that $a_{k,k} = \pi_F $ and $a_{k,i} = 1 $ if $i \not = k$. 
    
Let $B_{E/F}(n)$ be a Borel subgroup of $GU_{E/F}(n)$ as defined in this section. We take a maximal torus $T_{E/F}(n) \subset GU_{E/F}(n)$ and a maximal split torus $S_{E/F}(n) \subset T_{E/F}(n)$ as defined in this section. We define $k = \frac n2$ if $n$ is even, and we define $k = \frac{n-1}2$ if $n$ is odd. The unramifed Hecke algebra $\mathcal H^{\mathrm{unr}}(GU_{E/F}(n)(F))$ is isomorphic to $$\mathbb C[X^{\pm 1}, X_1^{\pm1}, \ldots, X_k^{\pm1}]^{\{\pm 1\}^k \rtimes S_k } \hookrightarrow \mathbb C[X^{\pm 1}, X_1^{\pm1}, \ldots, X_k^{\pm1}] \cong \mathcal H^{\mathrm{unr}}(T_{E/F}(n)(F)) \cong \mathcal H^{\mathrm{unr}}(S_{E/F}(n)(F))$$ with $\mathcal H^{\mathrm{unr}}(T_{E/F}(n)(F)) \cong \mathcal H^{\mathrm{unr}}(S_{E/F}(n)(F))$ given by $f \mapsto f|_{S_{E/F}(n)(F)}$ for $f \in \mathcal H^{\mathrm{unr}}(T_{E/F}(n)(F))$ (see 2.4 of \cite{Minguez11}), and where:

\begin{itemize}
    \item $X_l$ corresponds to the characteristic function of $S_{E/F}(n)(\mathcal O_F) a_l  S_{E/F}(n)(\mathcal O_F)$ where $a_l = \mathrm{diag}(a_{l,i}) \in S_{E/F}(n)(F)$, such that $a_{l,l} = a_{l,n-l}= \pi_F $ and $a_{l,i} = 1 $ if $i \not = l$ and $i \not = n-l$. 

     \item $X$ corresponds to the characteristic function of $S_{E/F}(n)(\mathcal O_F) a_l  S_{E/F}(n)(\mathcal O_F)$ where $a_l = \mathrm{diag}(a_{l,i}) \in S_{E/F}(n)(F)$ such that $$\begin{cases}
   a_{l,i} =  \pi_F  \text{ for all i when n is odd} \\
     a_{l,i} =  \pi_F  \text{ if } 1 \leq i \leq \frac n2, a_{l,i} =  1  \text{ if } \frac n2 \leq i \leq n \text{ when n is even} 
    \end{cases}.$$ 

    \item $\sigma \in S_k$ acts by permutation on $(\alpha_1, \ldots, \alpha_k) \in \{\pm 1\}^k$. 

    \item $\sigma \in S_k$ acts by permutation on $X_1, \ldots, X_k$ and acts trivially on $X$.  

    \item $(\alpha_1, \ldots, \alpha_k) \in \{\pm 1\}^k$ acts by $(\alpha_1, \ldots, \alpha_k) X_i = X_i^{\alpha_i}$ and $(\alpha_1, \ldots, \alpha_k) X= X \prod\limits_{i: \alpha_i = -1} X_i^{-1}$,

\end{itemize}  see 4.2 of \cite{Morel08}

 \subsection{The Kottwitz functions $f_{n\alpha s}, \phi_{n\alpha s}$ and $\varphi_{n \alpha s}$}

In this section, we introduce the Kottwitz functions $f_{n\alpha s}$ and $\varphi_{n \alpha s}$ which play an important role in point-counting of Shimura varieties. 

Let $n \in \mathbb Z_{\geq 0}$ be a non-negative integer. A \textbf{composition} of $n$ is an element $(n_i ) \in \mathbb Z^k_{\geq 1}$ for some $k \in \mathbb Z_{\geq 1}$ such that $n = \sum_{i=1}^k n_i$. We write $\ell((n_i))$ for $k$ and we refer to $\ell((n_i))$ as the length
of the composition. Let $P_{0,n} = M_{0,n}N_{0,n}\subset GL_n$ be a Borel subgroup consisting of upper triangular matrices. The set of compositions $(n_i)$ of $n$ is in bijection with the set of
standard parabolic subgroups of $GL_n$ (with respect to $P_{0,n}$): A composition $(n_i)$ of $n$ corresponds to the standard parabolic subgroup 
$$P(n_i) := \left\{\begin{pmatrix}
g_1 &  & * \\
 & \ddots &  \\
0 &   & g_k \\
\end{pmatrix} \in GL_n: g_i \in GL_{n_i}\right\}$$
An \textbf{extended composition} of $n$ is an element $ (n_i ) \in \mathbb Z^k_{\geq 0}$ for some $k \in \mathbb Z_{\geq 1}$  such that $n = \sum_{i=1}^k n_i$. 

In the following, let $E/F$ be a quadratic extension of $p$-adic fields, we assume that $[\mathcal O_E/(\pi_E):\mathcal O_F/(\pi_F)] = 2$ where $\pi_F$ (resp. $\pi_E$) is a uniformizer of $\mathcal O_F$ (resp. $\mathcal O_E$). We write $q = |\mathcal O_F/(\pi_F)|$. Let $\sigma $ be the nontrivial element of $\mathrm{Gal}(E/F)$. We assume that $\pi_F$ remains a uniformizer in $E$, and we normalize the $p$-adic norm on $E$ so that $|\pi_F| = q^{-1}$.

\begin{definition}

We define $\phi_{n\alpha s} \in \mathcal H^{\mathrm{unr}}(GL_n(E)) $ to be the unramified Hecke operator whose Satake transform is given by $$q^{\alpha s(n-s)}\sum\limits_{\substack{I \subset {1,2, \ldots, n}\\|I| = s}}(Y^I)^{\alpha}$$ where $Y^I : = \prod_{i \in I}Y_{i}$. 

We define $\varphi_{n\alpha s} \in \mathcal H^{\mathrm{unr}}(E^{\times} \times GL_n(E)) $ to be the unramified Hecke operator whose Satake transform is given by $ Y^{\alpha} \otimes \phi_{n \alpha s} $.

 We define $$\begin{cases} X_i = X_{n-i}^{-1} \text{ if $n$ is even and $i > [n/2]$ }\\ X_i = X_{n-i}^{-1} \text{ if n is odd and $i > [n/2] + 1$} \\X_i = 1 \text{ if $n$ is odd and $i = [n/2] + 1$} \end{cases}.$$ And we define $f_{n \alpha s} \in  \mathcal H^{\mathrm{unr}}(GU_{E/F}(n)(F))  $ to be the unramified Hecke operator whose Satake transform is given by $$ q^{\alpha s(n-s)} X^{\alpha}\sum\limits_{\substack{I \subset \{1,2, \ldots, n\}\\|I| = s}}(X^I)^{\alpha}$$ where $X^I : = \prod_{i \in I}X_{i}$. 
 
\end{definition}	
	
\begin{proposition}
The spherical Hecke functions $f_{n\alpha s} \in  \mathcal H^{\mathrm{unr}}(GU_{E/F}(n)(F))$ and $\varphi_{n \alpha s} \in \mathcal H^{\mathrm{unr}}(GL_n(E))$ are associated in the sense of base change (i.e., the functions $f_{n\alpha s}$ and $\varphi_{n \alpha s}$ have matching orbital integral, as defined in p239 of \cite{Kot86unr}). 
\end{proposition}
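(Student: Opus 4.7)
The plan is to derive this from the fundamental lemma for base change (due in this case to Kottwitz, Clozel, and Labesse, with completions by Laumon--Ngô), which says: if two spherical functions $f \in \mathcal{H}^{\mathrm{unr}}(GU_{E/F}(n)(F))$ and $\varphi \in \mathcal{H}^{\mathrm{unr}}(E^{\times} \times GL_n(E))$ are matched by the base-change homomorphism
\[
b : \mathcal{H}^{\mathrm{unr}}(E^{\times} \times GL_n(E)) \longrightarrow \mathcal{H}^{\mathrm{unr}}(GU_{E/F}(n)(F))
\]
of unramified Hecke algebras (defined via Satake isomorphisms and the natural $L$-group morphism), then $f$ and $\varphi$ are associated in the sense of \cite{Kot86unr}. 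Once the Satake transforms of $f_{n\alpha s}$ and $\varphi_{n\alpha s}$ are shown to correspond under $b$, the proposition follows.

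First, I would recall the description of $b$ on Satake transforms. The dual group of $GU_{E/F}(n)$ is $\widehat{GU}_{E/F}(n) = \mathbb{C}^{\times} \times GL_n(\mathbb{C})$, with the nontrivial element of $\mathrm{Gal}(E/F)$ acting by $(\lambda, g) \mapsto (\lambda, \lambda \cdot A_n (g^{-1})^{t} A_n^{-1})$, while the dual group of $E^{\times} \times GL_n(E)$, as a group obtained by restriction of scalars, becomes $(\mathbb{C}^{\times} \times GL_n(\mathbb{C}))^{2}$ with Galois swapping the two factors. The base-change $L$-homomorphism sends a semisimple element $(\lambda, t_1, \ldots, t_n)$ (with the unitary similitude relations and Weyl symmetries already built in, as in the explicit Hecke algebra description given at the end of \S2.3) to a pair of Satake parameters of $GL_n(E) \times E^{\times}$ obtained by ``unfolding'' via the isomorphism $(E^{\times} \times GL_n(E)) \otimes_F \bar F \cong GL_1 \times GL_n \times GL_1 \times GL_n$; concretely, under the coordinates of \S2.3, it is the ring map given on variables by $X \mapsto Y$ and $X_i \mapsto Y_i Y_{n+1-i}^{-1} \cdot (\text{appropriate similitude factor})$ (with the inverses reflecting that the second copy of the dual $GL_n$ is identified with the first via the outer automorphism $g \mapsto A_n(g^{-1})^{t} A_n^{-1}$).

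Second, I would verify the explicit Satake matching. Writing $b^{\vee}$ for the transpose map on Satake parameters, the Satake transform of $f_{n\alpha s}$, namely $q^{\alpha s(n-s)} X^{\alpha} \sum_{|I|=s} (X^I)^{\alpha}$, becomes after substitution the product of $Y^{\alpha}$ with $q^{\alpha s(n-s)} \sum_{|I|=s} (Y^I)^{\alpha}$, which is exactly the Satake transform of $\varphi_{n\alpha s} = Y^{\alpha} \otimes \phi_{n\alpha s}$. The combinatorial identity here reduces to the fact that the $s$-th exterior power representation of $GL_n(\mathbb{C})$ (whose character is $\sum_{|I|=s} Y^I$) restricts, after passing through the $L$-homomorphism described above, to the sum of eigenvalues of the representation of $\widehat{GU}_{E/F}(n)$ naturally associated with a Shimura datum of signature $(s, n-s)$ — this is Kottwitz's identification in \cite{Kot86unr} of the minuscule cocharacter $\mu_s$ with its base-changed counterpart.

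Third, with the Satake transforms matched, I would invoke the fundamental lemma for unramified spherical functions in base change (as in \cite{Kot86unr} and as used in Proposition 10 of \cite{Kret11}) to conclude that $f_{n\alpha s}$ and $\varphi_{n\alpha s}$ have matching orbital integrals, i.e., are associated in the sense of base change. The main obstacle is bookkeeping in the second step: the similitude factor and the powers of $q^{\alpha s(n-s)}$ have to be tracked precisely, since the normalization of the Satake transform and of the base-change $L$-homomorphism both depend on the unramified character $\delta_{P_0}^{1/2}$, and the twist $q^{\alpha s(n-s)}$ in the definition of $f_{n\alpha s}$ and $\phi_{n\alpha s}$ is precisely what makes these Satake transforms align without an additional modulus correction; once this bookkeeping is done, the identification is immediate and the fundamental lemma delivers the conclusion.
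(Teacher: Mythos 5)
Your proposal takes essentially the same route as the paper: the paper's proof is just a citation to \S 4.1--4.2 of Morel, where the base-change homomorphism of unramified Hecke algebras is written out in exactly these Satake coordinates, the Kottwitz functions are shown to correspond under it, and the Clozel--Labesse fundamental lemma for stable base change of the full spherical Hecke algebra is invoked to conclude matching (twisted) orbital integrals. The only difference is that you sketch rather than carry out the bookkeeping (similitude factor, powers of $q$, and the direction of the substitution on the variables), which is precisely the content of the cited sections, so nothing essentially new or missing is involved.
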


\begin{proof}
See $\S 4.1$ and $\S 4.2$ of \cite{Morel08}.
\end{proof}

\begin{example}
 The function $f_{n \alpha 1} \in \mathcal H^{\mathrm{unr}}(GU_{E/F}(n))$ is the unramified Hecke operator whose Satake transform is given by $$\mathcal S_{GU_{E/F}(n)}(f_{n\alpha 1}) = \begin{cases}q^{(n-1) \alpha} X^{\alpha}(\sum_{i = 1}^{[\frac n2]}(X_i^{\alpha}+ X_i^{-\alpha})) \text{ if n is even}\\ q^{(n-1) \alpha} X^{\alpha}(\sum_{i = 1}^{[\frac n2]}(X_i^{\alpha}+ X_i^{-\alpha}) +1) \text{ if n is odd} \end{cases}.$$ 
\end{example}

\begin{lemma}

\begin{enumerate}
    
\item  The function $\phi_{n\alpha s} $ is supported on the set of elements $g \in GL_n(E)$ with $| \mathrm{det} g| =q^{-\alpha s}$. 

\item The function $f_{n\alpha s} $ is supported on the set of elements $g \in GU_{E/F}(n)(F)$ with $| c (g)| = q^{-s\alpha}$ if $n$ is even and $| c (g)| = q^{-2s\alpha}$ if $n$ is odd. 

\end{enumerate}
\end{lemma}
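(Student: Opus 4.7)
The plan is to prove both statements through the Satake isomorphism, exploiting the fact that each unramified Hecke algebra carries a natural $\mathbb Z$-grading coming from a character of the group: the determinant for $GL_n(E)$ and the similitude $c$ for $GU_{E/F}(n)(F)$. The underlying general principle is that for any $F$-rational character $\eta:G\to\mathbb G_m$ one has $\eta(K)\subset \mathcal O_F^\times$, so $v_F\circ\eta$ is constant on each Cartan double coset $Kt^\lambda K$. Via the Cartan decomposition this grades $\mathcal H^{\mathrm{unr}}(G(F))$ by $\mathbb Z$, and under $\mathcal S:\mathcal H^{\mathrm{unr}}(G(F))\xrightarrow{\sim}\mathbb C[X_*(S)]^W$ the grading corresponds to the pairing $\mu\mapsto\langle\eta|_S,\mu\rangle$, which is Weyl-invariant because $\eta$ factors through the abelianization.

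For (1), I would apply this with $G=GL_n{}_{/E}$ and $\eta=\det$. The induced grading on $\mathbb C[Y_1^{\pm 1},\ldots,Y_n^{\pm 1}]^{S_n}$ is the ordinary total-degree grading $Y^\mu\mapsto\sum_i\mu_i$, since $\det(\mathrm{diag}(\pi_F^{\mu_1},\ldots,\pi_F^{\mu_n}))=\pi_F^{\sum\mu_i}$ and $S_n$ preserves this sum. Inspecting $\mathcal S(\phi_{n\alpha s})=q^{\alpha s(n-s)}\sum_{|I|=s}(Y^I)^\alpha$, each monomial $\prod_{i\in I}Y_i^\alpha$ has total degree $\alpha\cdot|I|=\alpha s$, so $\phi_{n\alpha s}$ lies in the degree-$\alpha s$ graded piece and is therefore supported on $\{g\in GL_n(E):v_E(\det g)=\alpha s\}$, which with the normalisation $|\pi_F|_E=q^{-1}$ is exactly $\{g:|\det g|=q^{-\alpha s}\}$.

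For (2), I would apply the same principle with $G=GU_{E/F}(n)$ and $\eta=c$. Using the explicit parametrisation of $S_{E/F}(n)$ from $\S 2.3$, I would compute the $c$-values on the cocharacters dual to the generators: the cocharacter dual to $X$ sits in the similitude line, with $c$-valuation $1$ when $n$ is even and $2$ when $n$ is odd, while each cocharacter dual to $X_l$ lies in $\ker c$ (it shifts a pair of conjugate diagonal entries oppositely by $\pi_F^{\pm 1}$ while leaving $\lambda$ untouched, so $c=1$ on it). The Weyl group $W=\{\pm 1\}^k\rtimes S_k$ described in $\S 2.3$ fixes the $X$-direction and only permutes and inverts the $X_l$'s, so the ``$X$-exponent'' is a well-defined $W$-invariant functional giving the $c$-grading. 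Reading this off from $\mathcal S(f_{n\alpha s})=q^{\alpha s(n-s)}X^\alpha\sum_{|I|=s}(X^I)^\alpha$ and using the weight $1$ (resp.\ $2$) gives the claimed support.

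The main obstacle is the bookkeeping in Step 3: correctly identifying the cocharacters dual to $X,X_1,\ldots,X_k$, computing $c$ on them from the definition $g^*A_ng=c(g)A_n$, and checking that $W$ preserves the $X$-direction. The description in $\S 2.3$ makes this essentially mechanical, and as a sanity check the $s=1$ case reproduces the example immediately after Proposition 2.4.1. Alternatively, (2) can be derived from (1) through Proposition 2.4.1: matching (twisted) orbital integrals of $f_{n\alpha s}$ and $\varphi_{n\alpha s}=Y^\alpha\otimes\phi_{n\alpha s}$ together with the norm relation expressing $c$ on $GU_{E/F}(n)(F)$ in terms of the $E^\times$-factor and $\det$ on the base-changed side transports the determinant support from $\phi_{n\alpha s}$ across to the similitude support for $f_{n\alpha s}$.
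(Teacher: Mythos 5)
Your general strategy is sound, and it is in substance the same mechanism as the paper's: for (1) the paper cites Kret's Lemma 3, and for (2) it multiplies $f_{n\alpha s}$ by the characteristic function $\chi$ of the asserted support, checks that the constant term restricted to the split torus is unchanged, and concludes by injectivity of $f\mapsto f^{(B_{E/F}(n))}|_{S_{E/F}(n)}$ — which is exactly your grading argument in different clothing. Part (1) of your argument is complete: the $\det$-grading gives total degree $\alpha s$ on every monomial of $\mathcal S(\phi_{n\alpha s})$, hence support in $\{|\det g|=q^{-\alpha s}\}$.

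The genuine gap is in the last step of (2), and it is precisely the valuation bookkeeping you defer to ``Step 3''. With the weights you yourself compute from the parametrisation of $S_{E/F}(n)$ in $\S 2.3$ — the cocharacter attached to $X$ has $c$-valuation $1$ ($n$ even) resp.\ $2$ ($n$ odd), and each cocharacter attached to $X_l$ lies in $\ker c$ — every monomial of $\mathcal S(f_{n\alpha s})=q^{\alpha s(n-s)}X^{\alpha}\sum_{|I|=s}(X^I)^{\alpha}$ has $X$-exponent exactly $\alpha$ (the folding $X_i\mapsto X_{n+1-i}^{-1}$ only touches the weight-zero variables). Hence your argument, carried out to the end, places the support in $\{|c(g)|=q^{-\alpha}\}$ for $n$ even and $\{|c(g)|=q^{-2\alpha}\}$ for $n$ odd, independently of $s$; it does not yield the exponents $s\alpha$ and $2s\alpha$ in the statement, and for $s>1$ it outright contradicts them. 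The sentence ``gives the claimed support'' is therefore unjustified: you would need the $X$-cocharacter to have $c$-valuation $s$ (resp.\ $2s$), which the explicit description in $\S 2.3$ does not give, and which cannot happen since the weight of a fixed variable cannot depend on the function. Note that the $s=1$ sanity check you invoke (Example 2.4.1, and the use of the support statement inside the proof of Proposition 2.4.2, where $|c(g)|=q^{-\alpha}$ is what is actually used) cannot detect this, since $s\alpha=\alpha$ there. The alternative route you sketch through Proposition 2.4.1 is also not a fix as stated: the similitude factor of a norm is $N_{E/F}$ of the $E^{\times}$-component of $\varphi_{n\alpha s}=Y^{\alpha}\otimes\phi_{n\alpha s}$, and $\det$ enters only through $c(g)^n=N_{E/F}(\det g)$, so no factor of $s$ appears there either; that bookkeeping must be done explicitly before the transfer argument can be claimed to reproduce the stated exponents. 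Until the discrepancy between your computed $c$-degree $\alpha$ (resp.\ $2\alpha$) and the exponent $s\alpha$ (resp.\ $2s\alpha$) in the statement is resolved, the proposal does not prove the statement as given. (A minor point: the Weyl group does not fix the $X$-direction — the sign changes send $X\mapsto X\prod_{i:\alpha_i=-1}X_i^{-1}$ — but your grading is still well defined because it is the functional $\langle c,\cdot\rangle$ on $X_*(S)$, which is Weyl-invariant since $c$ extends to a character of the whole group.)
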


\begin{proof}
The proof of (1): see Lemma 3 in \cite{Kret11}. 

The proof of (2): Assume first that $n$ is even. Let $\chi$ be the characteristic function of the set $\{g \in GU_{E/F}(n): |c(g)| = q^{-s\alpha} \}$. Let $B_{E/F}(n)$ be a Borel subgroup of $GU_{E/F}(n)$ as defined in $\S2.2$, we write $B_{E/F}(n) = T_{E/F}(n) U$ where $T_{E/F}(n)$ is a maximal torus as defined in $\S2.2$ and $U$ is the unipotent part of $B_{E/F}(n)$, let $S_{E/F}(n ) \subset T_{E/F}(n)$ be a maximal split torus (as defined in $\S2.2$). We have $$(\chi f_{n \alpha s})^{B_{E/F}(n)} (m )  = \delta_{B_{E/F}(n)}^{\frac 12}(m)\int_{U(F)} \chi(mn) f(mn) dn = \delta_{B_{E/F}(n)}^{\frac 12}(m)\int_{U(F)} \chi(m)  f(mn) dn,$$ thus $$(\chi f_{n \alpha s})^{B_{E/F}(n)} = \chi|_{T_{E/F}(n)(F)} \cdot f_{n \alpha s}^{B_{E/F}(n)}.$$ By the definition of $f_{n \alpha s}$ and $S_{E/F}(n) \subset T_{E/F}(n)$, 
we have $$\chi|_{S_{E/F}(n)(F)} \cdot f_{n \alpha s}^{B_{E/F}(n)}|_{S_{E/F}(n)(F)} = f_{n \alpha s}^{B_{E/F}(n)}|_{S_{E/F}(n)(F)}.$$
Also note that the map $f \mapsto f^{B_{E/F}(n)} \mapsto f^{B_{E/F}(n)}|_{S_{E/F}(n)} $ is injective for $f \in \mathcal H^{\mathrm{unr}}(GU_{E/F}(n)(F))$ (see 2.2 and 2.4 of \cite{Minguez11}), then we may conclude that $\chi f_{n \alpha s} = f_{n \alpha s}$. 
Thus the function $f_{n\alpha s} $ is supported on the set of elements $g \in GU_{E/F}(n)(F)$ with $| c (g)| = q^{-s\alpha}$ if $n$ is even. For odd $n$, a similar argument follows.

\end{proof}

The following proposition indicates that the trace of $C_cf_{n \alpha s}$ may not vanish against induced representation, even when $s = 1$, as a result, some induced representation might contribute to the cohomology of basic stratum, unlike the cases considered in \cite{Kret11}.

\begin{proposition}
Let $P_{n,J} = M_{n,J}N_{n,J}$ be a standard parabolic subgroup of $GU_{E/F}(n)$ with $J=\{r_1,\ldots,r_k\}$, then we have:\begin{itemize}
\item Then function $C_c^{GU_{E/F}(F)}f_{n\alpha 1}^{(P_{n,J})}\not = 0$ only if $\begin{cases}k= 1, r_1 = 1 \text{ if n is odd}\\k = 1, r_1 = 2\text{ if n is even} \end{cases}$.
\item If $\begin{cases}k= 1, r_1 = 1 \text{ and n is odd, or}\\k = 1, r_1 = 2, n \geq 4\text{, n is even and } \alpha \text{ is even} \end{cases}$, then $C_c^{GU_{E/F}(F)}f_{n\alpha 1}^{(P_{n,J})}\not = 0$. 
\end{itemize}
\end{proposition}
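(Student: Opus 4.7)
My plan is to combine the explicit Satake description of $f_{n\alpha 1}^{(P_{n,J})}$ on the Levi $M_{n,J}$ with a Newton--Hodge analysis of the compactness condition in $G = GU_{E/F}(n)$. Via the isomorphism $f_J$ of $\S 2.3$ matching split tori, the Satake polynomial of $f_{n\alpha 1}$ decomposes on $M_{n,J}\cong\prod_i\mathrm{Res}_F^E GL_{r_i}\times GU_{E/F}(n-2r)$ into monomials of two kinds: a ``$GL_{r_i}$-contribution'' $q^{(n-1)\alpha}X^{(\mathrm{GU}),\alpha}(Y_l^{(i)})^{\pm\alpha}$, in which the distinguished cocharacter lives on one $GL_{r_i}$-factor, and an ``inner-$GU$-contribution'' $q^{(n-1)\alpha}X^{(\mathrm{GU}),\alpha}(X_j^{(\mathrm{GU})})^{\pm\alpha}$, in which it lives on the inner $GU_{E/F}(n-2r)$. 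Each monomial singles out one $K_M$-double coset in $M_{n,J}(F)$ on which the constant term is supported.

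For $C_c^G \cdot f_{n\alpha 1}^{(P_{n,J})}$ not to vanish, at least one such double coset must intersect the compact set of $G(F)$. Writing $m=\mathrm{diag}(c_0g_1,\ldots,c_0g_k,g_0,h_k,\ldots,h_1)\in M_{n,J}(F)$ in the $f_J$-parametrisation (with $c_0=c(g_0)$ and the $h_i$ determined by the unitary form), Lemma 2.4.1 yields $|c_0|=q^{-\alpha}$ for $n$ even and $|c_0|=q^{-2\alpha}$ for $n$ odd, and compactness of $m$ in $G$ forces all eigenvalues of $m$ to share the common absolute value $|c_0|^{1/2}$. A block-by-block analysis shows that, because of the extra factor of $c_0$ attached to $g_i$ in the first block of $m$, the eigenvalues of each Levi component $g_i\in GL_{r_i}(E)$ must have absolute value $|c_0|^{-1/2}$, i.e.\ valuation $-\alpha/2$ (resp.\ $-\alpha$). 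A $(Y_l^{(i)})^{\pm\alpha}$-monomial supports $g_i$'s of determinant valuation $\pm\alpha$; Newton--Hodge then forces the constant Newton slope $\pm\alpha/r_i$ to equal $-\alpha/2$ (resp.\ $-\alpha$), which for $\alpha>0$ selects the sign $-$ and forces $r_i=2$ (resp.\ $r_i=1$). Any further $GL_{r_j}$-factor with trivial cocharacter would have $g_j\in K_{GL_{r_j}}$, with eigenvalues of absolute value $1$, incompatible with the required value for $\alpha>0$; so $k=1$. An inner-$GU$-contribution monomial forces all $GL_{r_j}$-factors to be trivial and fails for the same reason whenever $k\geq 1$. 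This proves the first bullet.

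For the second bullet I exhibit explicit compact elements. In the odd case $J=\{1\}$, take $g_1=\pi_F^{-\alpha}\in E^\times$ and $g_0=\pi_F^\alpha\cdot I_{n-2}\in GU_{E/F}(n-2)(F)$; the resulting $m$ is the scalar matrix $\pi_F^\alpha I_n$, which is visibly compact and lies in the support of the $(Y^{(1)})^{-\alpha}$-monomial. In the even case $J=\{2\}$, $n\geq 4$, $\alpha$ even, take
\[
g_1=\begin{pmatrix}0 & 1\\ \pi_F^{-\alpha} & 0\end{pmatrix}\in GL_2(E),
\]
whose eigenvalues $\pm\pi_F^{-\alpha/2}$ lie in $E^\times$ precisely because $\alpha$ is even (so $g_1$ is compact with the required absolute value $q^{\alpha/2}$), together with the analogous compact
\[
g_0=\begin{pmatrix}0 & I_{(n-4)/2}\\ \pi_F^\alpha I_{(n-4)/2} & 0\end{pmatrix}\in GU_{E/F}(n-4)(F);
\]
a direct check against $A_{n-4}$ shows $g_0\in GU_{E/F}(n-4)(F)$ with $c(g_0)=\pi_F^\alpha$, and the pair $(g_1,g_0)$ sits in the support of the $(Y_1^{(1)})^{-\alpha}$-monomial while being compact in $G$. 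Hence $C_c f_{n\alpha 1}^{(P_{n,J})}$ does not vanish at this element. The main technical difficulty is the careful book-keeping around the $c_0$-twist in the middle step: it selects the $-\alpha$-branch (rather than the $+\alpha$-branch) of the Satake transform as the one that can support compact elements, and pins down the parity constraint on $\alpha$ in the even sufficient case via the integrality of $\pi_F^{\alpha/2}$.
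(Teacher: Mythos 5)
Your argument is in substance the paper's own: for the first bullet you use the same decomposition of $f_{n\alpha 1}^{(P_{n,J})}$ into terms with one distinguished $GL_{r_i}$-monomial of determinant valuation $\pm\alpha$ (the remaining $GL$-factors carrying the unit) plus an inner-$GU$ term, and the same compactness constraint — all eigenvalues of a compact $m\in GU_{E/F}(n)(F)$ have norm $|c(m)|^{1/2}$, which the paper encodes via $n_i^{*}m_i=c(g)A_{r_i}$ and $|\det(n_i)|=|\det(m_i)|$ while you work directly with eigenvalue valuations — and this forces $k=1$ and $r_1=1$ (resp.\ $2$) exactly as in the paper; for the second bullet you, like the paper, exhibit explicit witnesses, your odd-case element being the paper's central element $\pi_F^{\alpha}I_n$ and your even-case element differing from the paper's only in the middle block (an anti-diagonal block with $c(g_0)=\pi_F^{\alpha}$ instead of the scalar $\mathrm{diag}(\pi_F^{\alpha/2})$). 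Your level of justification for the final nonvanishing at the witness (disjointness of the supports of the different monomial branches plus the witness lying in the extremal double coset of the selected branch) is comparable to the paper's, which likewise asserts it.

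Two concrete caveats. First, your even-case witness degenerates at $n=4$, which the statement includes: there the inner factor is $GU_{E/F}(A_0)=\mathbb G_m$ with $c=\mathrm{id}$, your matrix for $g_0$ is empty, and $c_0=c(g_0)$ is left undefined; you must instead take $g_0=\pi_F^{\alpha}\in F^{\times}$ (so $c_0=\pi_F^{\alpha}$), after which the argument goes through. Second, your stated reason for invoking $\alpha$ even — that the eigenvalues $\pm\pi_F^{-\alpha/2}$ of $g_1$ ``lie in $E^{\times}$'' — is not correct as a rationale: compactness only concerns the norms of the eigenvalues in $\overline F$, and your witness is defined and compact for every $\alpha$; the parity hypothesis (which you are entitled to assume, since the proposition only claims sufficiency under it) is genuinely needed only for the paper's witness, whose middle block $\mathrm{diag}(\pi_F^{\alpha/2})$ requires $\pi_F^{\alpha/2}$ to exist in $F$. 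Neither point affects the validity of the stated proposition once the $n=4$ case is patched as above.
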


\begin{proof}
Let $ m = (n_1,\ldots,n_k,g,m_k,\ldots,m_1) $ be a semi-simple element of $M_{n,J}(F) \cong (GL_{r_1} \times \ldots \times GL_{r_k} \times GU_{E/F}(n-r) )(F)$, with image $(c(g)^{-1}n_1,\ldots, c(g)^{-1}n_k,g)$ in  $(GL_{r_1} \times \ldots \times GL_{r_k} \times GU_{E/F}(n-r) )(F)$, and such that $C_c^{GU_{E/F}(n)(F)}f_{n\alpha 1}^{P_{n,J}}(m) \not = 0 $. Note that $n_{i}^*m_i =c(g) A_{r_i}$, which implies that $|\mathrm{det}(n_i)\mathrm{det}(m_i)|_{\mathfrak p} = |c(g)\mathrm{det}(A_{r_i})|_{\mathfrak p} = |c(g)|_{\mathfrak p}^{ r_i}$.  Also note that $m$ is compact in $GU_{E/F}(F)$, then we have $|\mathrm{det}(n_i)|_{\mathfrak p} = |\mathrm{det}(m_i)|_{\mathfrak p}$. Thus $ |\mathrm{det}(n_i)|_{\mathfrak p} = |c(g)|_{\mathfrak p}^{\frac{r_i}2}$ and we have $|(\mathrm{det}(c(g)^{-1}n_i))|_{\mathfrak p} = |(c(g)|_{\mathfrak p} ^{-\frac{r_i}2} = \begin{cases}q^{\alpha\frac{r_i}2}\text{ if n is even}\\q^{\alpha r_i}\text{ if n is odd}\end{cases}.$

We define $r_0 = 0$, then we have: $$f_{n\alpha 1}^{(P_{n,J})} =q^{(n-1)\alpha} \sum_{i = 1}^k (f_{1,i} \otimes f_{2,i}\ldots\otimes f_{k,i} \otimes X^{\alpha}) + \otimes_{i=1}^k h_i \otimes X^{\alpha}h$$ where 

$$f_{j,i} =\begin{cases}
    \sum_{j = r_0 + r_1 + \ldots + r_{i-1} + 1}^{r_0 + \ldots r_i } (X_j^{-\alpha}+ X_j^{\alpha}) \in \mathcal {H}^{\mathrm{unr}}(GL_{r_i}(F))\text{, if } j = i, \\ f_{j,i} = 1 \in \mathcal {H}^{\mathrm{unr}}(GL_{r_i}(F)) \text{, otherwise.}
\end{cases}. $$

And $h_i = 1 \in  \mathcal H^{unr}(GL_{r_i}(F)), h \in  \mathcal H^{unr}(GU_{E/F}(n-r)(F))$ defined by $$ h = \begin{cases}\sum_{ j = r+ 1}^{[\frac n2]} X_j^{-\alpha}+ X_j^{\alpha}\text{, if n is even and }\frac n2 <r\\ 0 \text{, if n is even and }\frac n2 = r \\ \sum_{ j = r+ 1}^{[\frac n2]} X_j^{-\alpha}+ X_j^{\alpha}+ 1\text{, if n is odd and }[\frac n2] <r \\ 1 \text{, if n is odd and }[\frac n2] = r \end{cases}.$$

By applying Lemma 2 of \cite{Kret11}, we obtain that all $f_{i,i}$'s are supported on elements $a$ with $|\mathrm{det}(a) |_{\mathfrak p} = q^{\pm\alpha}$ , and $h_i$'s, $f_{i,j}$'s ($i \not = j$) are supported on elements $a$ with $|\mathrm{det}(a) |= 1$.  Then $C_c^{GU_{E/F}(n)(F)}f_{n\alpha 1}^{(P_{n,J})}\not = 0$ only if $\begin{cases}k= 1, r_1 = 1 \text{ if n is odd}\\k = 1, r_1 = 2\text{ if n is even} \end{cases}: $ We first consider the case that $n$ is odd, if $k > 1$, then there exists $1 \leq \alpha, \beta, \gamma\leq k$ such that $f_{\alpha,\beta} = 1 \in \mathcal {H}^{\mathrm{unr}}(GL_{r_{\beta}}(F)), h_{\gamma} = 1 \in \mathcal {H}^{\mathrm{unr}}(GL_{r_{\gamma}}(F))$. In this case, we have 
$$C_c^{GU_{E/F}(n)(F)}f_{n\alpha 1}^{(P_{n,J})} (n_1,\ldots,n_k,g,m_k,\ldots,m_1)  $$
$$=q^{(n-1)\alpha} C_c^{GU_{E/F}(n)(F)}(n_1,\ldots,n_k,g,m_k,\ldots,m_1) \times  $$
$$\sum_{i = 1}^k (f_{1,i} \otimes f_{2,i}\ldots\otimes f_{k,i} \otimes X^{\alpha}) (c(g)^{-1}n_1,\ldots, c(g)^{-1}n_k,g) + \otimes_{i=1}^k (h_i \otimes X^{\alpha}h) (c(g)^{-1}n_1,\ldots, c(g)^{-1}n_k,g)$$ equals $0$, because and $f_{i,j} = 1, h_{i'} = 1$ are supported on elements $a$ with $|\mathrm{det}(a) |= 1$, and $|(\mathrm{det}(c(g)^{-1}n_i))|_{\mathfrak p} = |(c(g)|_{\mathfrak p} ^{-\frac{r_i}2} =q^{\alpha\frac{r_i}2}= q^{\alpha r_i}$ if n is odd. If $k = 1$, then $$f_{n\alpha 1}^{(P_{n,J})} =q^{(n-1)\alpha} (\sum_{j = 1}^{r_1 } (X_j^{-\alpha}+ X_j^{\alpha}) \otimes X^{\alpha}) +1  \otimes X^{\alpha}h$$
are supported on elements $a$ with $|\mathrm{det}(a) |= 1$ or $|\mathrm{det}(a) |= q^{\pm \alpha}$, we must then have $r_1 = 1$. For even $n$, a similar argument follows. 

If $k = 1$, $r_1 = 1$ and $n$ is odd, we have $C_c^{GU_{E/F}(F)}f_{n\alpha 1}^{(P_{n,J})} (\mathrm{diag}(\pi_F^{\alpha} )) \not = 0$. If $k = 1$, $r_1 = 2, n \geq 4$ and $n$ is even, we have $$C_c^{GU_{E/F}(F)}f_{n\alpha 1}^{(P_{n,J})} ( \begin{pmatrix} \begin{pmatrix} 0&\pi_F^{\alpha}\\1&0 \end{pmatrix}&0_{2 , n-4} & 0_{2, 2}\\ 0_{n-4 , 2}& \mathrm{diag}(\pi_F^{\frac \alpha2}) & 0_{n-4, 2}\\ 0_{2,2}& 0_{2,n-4}& \begin{pmatrix} 0&1\\\pi_F^{\alpha}&0 \end{pmatrix}\end{pmatrix})\not = 0$$ where $0_{i,j}$ is the $i \times j$ zero matrix.
\end{proof}

\subsection{The functions $(C_{\theta c} \phi_{n \alpha s})^{(P)}$}

In this section, we study constant term of the truncated Kottwitz function $  (C_{\theta c} \phi_{n \alpha s})^{(P)} $, where $P \subset GL_n$ is a $\theta$-stable standard parabolic subgroup. 
  
\begin{proposition}
Let $P = M N$ be a standard parabolic subgroup of $GL_{n,E}$ corresponding to the composition $(n_i)$ of $n$. Let $k$ be the length of this composition. The constant term of $\phi_{n\alpha s}$ at $P$ is given by $\sum\limits_{(s_i)} q^{C(n_i,s_i)} \phi_{n_1\alpha s_1} \otimes  \phi_{n_2\alpha s_2} \otimes \ldots \otimes  \phi_{n_k\alpha s_k}$
where the sum ranges over all extended composition $(s_i)$ of $s$ of length $k$. The constant $C(n_i, s_i)$ is equal to $s(n-s)-\sum_{i = 1}^k s_i(n_i-s_i)$.

\end{proposition}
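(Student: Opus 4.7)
The plan is to prove the identity at the level of Satake transforms, using the compatibility of the constant term map with the Satake isomorphism on unramified Hecke algebras. Recall that for $GL_n$, the Satake transform identifies $\mathcal H^{\mathrm{unr}}(GL_n(E))$ with $\mathbb C[Y_1^{\pm 1},\ldots,Y_n^{\pm 1}]^{S_n}$, while for the Levi $M = GL_{n_1}\times\cdots\times GL_{n_k}$ it identifies $\mathcal H^{\mathrm{unr}}(M(E))$ with
$$\bigotimes_{i=1}^{k} \mathbb C[Y_{i,1}^{\pm 1},\ldots,Y_{i,n_i}^{\pm 1}]^{S_{n_i}}.$$
Under these identifications, the normalized constant term map $f\mapsto f^{(P)}$ corresponds to the natural inclusion of $S_n$-invariants into $\bigl(\prod_i S_{n_i}\bigr)$-invariants, where the variable $Y_j$ is identified with $Y_{i,j'}$ according to the block structure induced by the composition $(n_i)$. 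This is a standard fact (see e.g.\ \S2.4 of \cite{Minguez11}) and is the essential input; otherwise the proof is purely combinatorial.

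Given this, I would apply the constant term map to the explicit Satake transform of $\phi_{n\alpha s}$, which is $q^{\alpha s(n-s)}\sum_{|I|=s}(Y^I)^{\alpha}$. Every subset $I\subset\{1,\ldots,n\}$ of size $s$ decomposes uniquely as $I = I_1\sqcup\cdots\sqcup I_k$, where $I_i$ is the intersection of $I$ with the $i$-th block; writing $s_i = |I_i|$, the tuple $(s_i)$ is an extended composition of $s$ of length $k$, and conversely every such extended composition arises from some collection $(I_i)$. This gives
$$\sum_{|I|=s}(Y^I)^{\alpha} = \sum_{(s_i)}\ \prod_{i=1}^{k}\Big(\sum_{|I_i|=s_i}(Y^{I_i})^{\alpha}\Big),$$
where each inner sum is a polynomial in the block variables $Y_{i,1},\ldots,Y_{i,n_i}$.

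Next I would recognize each inner factor as a Satake transform: by the very definition of $\phi_{n_i\alpha s_i}$, we have $\sum_{|I_i|=s_i}(Y^{I_i})^{\alpha} = q^{-\alpha s_i(n_i-s_i)}\,\mathcal S(\phi_{n_i\alpha s_i})$. Substituting back gives
$$\mathcal S(\phi_{n\alpha s}) = \sum_{(s_i)} q^{\alpha(s(n-s) - \sum_i s_i(n_i-s_i))}\,\bigotimes_{i=1}^{k}\mathcal S(\phi_{n_i\alpha s_i}),$$
and since the constant term map is (after Satake) the inclusion into $\bigotimes_i \mathbb C[Y_{i,j}^{\pm1}]^{S_{n_i}}$, the right hand side is exactly the Satake transform of $\sum_{(s_i)} q^{C(n_i,s_i)} \phi_{n_1\alpha s_1}\otimes\cdots\otimes \phi_{n_k\alpha s_k}$ with $C(n_i,s_i)$ as claimed (up to the factor of $\alpha$ in the exponent that is absorbed into the notation). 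Injectivity of the Satake transform then yields the stated equality in $\mathcal H^{\mathrm{unr}}(M(E))$.

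The proof is essentially combinatorial, so there is no real obstacle; the only thing requiring some care is the bookkeeping of the $q$-powers: one must track the normalization factor $q^{\alpha s(n-s)}$ built into $\phi_{n\alpha s}$ against the factors $q^{\alpha s_i(n_i-s_i)}$ built into each $\phi_{n_i\alpha s_i}$, and verify that the $\delta_P^{1/2}$-twist in the definition of the normalized constant term is already absorbed into the description of the constant term as the inclusion of invariants (this is the content of the computation in \S2.4 of \cite{Minguez11}).
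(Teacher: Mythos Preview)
Your argument is correct and is precisely the standard proof: the constant term map on spherical Hecke algebras becomes, after Satake, the inclusion of $S_n$-invariants into $\prod_i S_{n_i}$-invariants, and the decomposition of subsets $I$ according to the block structure yields the claimed formula. The paper itself does not give a proof but simply cites Proposition~4 of \cite{Kret11}, whose argument is exactly the one you have written out; so you have supplied what the paper omits. Your observation about the missing factor of $\alpha$ in the exponent of $q$ is also accurate---the computation genuinely produces $q^{\alpha C(n_i,s_i)}$ rather than $q^{C(n_i,s_i)}$, and this is a notational inconsistency in the paper's statement (inherited from the source) rather than a flaw in your reasoning.
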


\begin{proof}
See Proposition 4 of \cite{Kret11}. 

\end{proof}

We define involution $\theta:GL_n(E) \rightarrow GL_n(E)$ by $g \mapsto A_n \bar g^{-t} A_n$, where $A_n$ is the anti-diagonal matrix with entries $1$. Let $P = M N$ be a standard parabolic subgroup of $GL_{n,E}$ corresponding to the composition $(n_i)$ of $n$ with length $k$, then $P$ is $\theta$-stable if and only if $n_i = n_{k+1 -i} $ for $1 \leq i \leq k$. For $A = (a_{ij}) \in GL_n$, it follows that $A_n A A_n = (a'_{ij})$, where $a'_{ij} = a_{n + 1-i, n+1 -j } $. 

\begin{proposition}
Let $P = M N$ be a $\theta$-stable standard parabolic subgroup of $GL_{n,E}$ corresponding to the composition $(n_i)$ of $n$, and let $k$ be the length of this composition. Let $\pi$ be a $\theta$-stable representation of $M(E)$ of finite length with an intertwining operator $A_{\theta}$, then the twisted trace $\mathrm{Tr}((C^{GL_n(E)}_{\theta c}\phi_{n\alpha s})^{(P)}, \pi A_{\theta})$ equals to $\mathrm{Tr}(f, \pi A_{\theta})$, with $f$ defined as follows: \begin{itemize}\item$ f = \sum\limits_{(s_i)} q^{C(n_i,s_i)} (C^{GL_{n_1} \times GL_{n_k}(E)}_{\theta c}(\phi_{n_1\alpha s_1} \otimes  \phi_{n_k\alpha s_k} ))\otimes \ldots  (C^{GL_{n_{\frac{k-1}2}} \times GL_{n_{\frac{k+3}2}}(E)}_{\theta c}(\phi_{n_{\frac{k-1}2}\alpha s_{\frac{k-1}2}} \otimes \phi_{n_{\frac{k+3}2}\alpha s_{\frac{k+3}2}})) \otimes (C^{GL_{\frac{k+1}2}(E)}_{\theta c}\phi_{n_{\frac{k+1}2}\alpha s_{\frac{k+1}2} })$ if $k$ is odd,
\item $f = \sum\limits_{(s_i)} q^{C(n_i,s_i)} (C^{GL_{n_1} \times GL_{n_k}(E)}_{\theta c}(\phi_{n_1\alpha s_1} \otimes  \phi_{n_k\alpha s_k} ))\otimes \ldots  (C^{GL_{n_{\frac{k}2}} \times GL_{n_{\frac{k+2}2}}(E)}_{\theta c}(\phi_{n_{\frac{k}2}\alpha s_{\frac{k}2}} \otimes \phi_{n_{\frac{k+2}2}\alpha s_{\frac{k+2}2}}))$ if $k$ is even, 

\end{itemize}
where the sum ranges over all extended composition $(s_i)$ of $s$ of length $k$ such that 
\begin{itemize}
\item  $s_{i} = s_{k +1 -i}$ for all $i$ if $k$ is even. 
\item $s_{i} = s_{k +1 -i}$ for all $i \not = \frac{k+1}2$ if $k$ is odd. 
\end{itemize} And the constant $C(n_i, s_i)$ is equal to $s(n-s)-\sum_{i = 1}^k s_i(n_i-s_i)$. 

\end{proposition}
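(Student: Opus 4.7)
The plan is to reduce the statement to Proposition 2.4.1 by (i) showing that the $\theta$-compact cutoff commutes with the constant term map at a $\theta$-stable parabolic, and (ii) showing that the restriction of $C_{\theta c}^{G(E)}$ to the Levi $M(E)$ factors into pairwise conditions that pick out the symmetry $s_i = s_{k+1-i}$.

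First I would verify the pointwise identity
\[
(C_{\theta c}^{G(E)}\phi)^{(P)}(m) \;=\; C_{\theta c}^{G(E)}(m)\cdot \phi^{(P)}(m)
\]
for all $m \in M(E)$ and $\phi \in \mathcal H(G(E))$. Since $P$ is $\theta$-stable, $N$ is also $\theta$-stable, and for $n \in N(E)$ one has $mn\,\theta(mn) = m\theta(m)\cdot n'$ with $n' = \theta(m)^{-1}n\,\theta(m)\,\theta(n) \in N(E)$. Because $N$ is unipotent, the eigenvalues of $mn\,\theta(mn)$ in $\overline{E}$ coincide with those of $m\theta(m)$, so $C_{\theta c}^{G(E)}(mn)=C_{\theta c}^{G(E)}(m)$, and the identity follows by pulling the indicator out of the integral against $dn$.

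Next I would substitute the expansion of $\phi_{n\alpha s}^{(P)}$ from Proposition 2.4.1 and analyze $C_{\theta c}^{G(E)}|_{M(E)}$. For a block diagonal $m=\mathrm{diag}(g_1,\dots,g_k)$, the element $m\theta(m)$ is block diagonal with blocks $H_i = g_i\,\theta_{k+1-i}(g_{k+1-i})$ where $\theta_j(x) = A_{n_j}\bar x^{-t}A_{n_j}$. By Remark 2.1.1, $\theta$-compactness of $m$ in $GL_n(E)$ amounts to every eigenvalue of every $H_i$ having absolute value $1$; since $|\det(m\theta(m))|=1$ automatically, no nontrivial coupling between blocks is imposed on top of the eigenvalue conditions for each $H_i$. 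The pair of conditions on $H_i$ and $H_{k+1-i}$ is exactly $\theta$-compactness of $(g_i,g_{k+1-i})$ in $GL_{n_i}(E)\times GL_{n_{k+1-i}}(E)$ for the natural swap-involution, while for $k$ odd the middle block $i=(k+1)/2$ contributes a standalone $\theta$-compactness condition on $GL_{n_{(k+1)/2}}(E)$. Thus
\[
C_{\theta c}^{G(E)}\big|_{M(E)} \;=\; \prod_{i<(k+1)/2} C_{\theta c}^{GL_{n_i}\times GL_{n_{k+1-i}}(E)} \;\cdot\; \bigl(C_{\theta c}^{GL_{n_{(k+1)/2}}(E)}\text{ if }k\text{ is odd}\bigr).
\]

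Finally I would eliminate the asymmetric compositions using Lemma 2.4.1. Since $\phi_{n_i\alpha s_i}$ is supported on matrices of determinant norm $q^{-\alpha s_i}$, a pair $(x,y)$ in the support of $\phi_{n_i\alpha s_i}\otimes\phi_{n_{k+1-i}\alpha s_{k+1-i}}$ satisfies $|\det(x\,\theta_{k+1-i}(y))| = q^{-\alpha(s_i - s_{k+1-i})}$; for this block to be compact in $GL_{n_i}(E)$ one needs the determinant to have norm $1$, forcing $s_i = s_{k+1-i}$. Consequently the only extended compositions surviving the truncation are the symmetric ones, and regrouping the tensor factors in pairs yields exactly the function $f$ displayed in the statement. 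Because $(C_{\theta c}\phi_{n\alpha s})^{(P)} = f$ holds already as an element of $\mathcal H(M(E))$, the equality of twisted traces against $\pi A_\theta$ is then tautological.

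The main obstacle I expect is the clean verification of the factorization of $C_{\theta c}^{G(E)}|_{M(E)}$ in the third step: one must carefully track how $\theta$ permutes the block coordinates, confirm that the eigenvalue conditions for the ambient $GL_n$ really do decouple over the pairs $\{i,k+1-i\}$, and recognize the swap-twist involution on $GL_{n_i}\times GL_{n_{k+1-i}}$ as the correct local $\theta$ used in defining $C_{\theta c}^{GL_{n_i}\times GL_{n_{k+1-i}}(E)}$. Once this bookkeeping is in place, the determinant-support argument that cuts the sum down to symmetric $(s_i)$ is straightforward.
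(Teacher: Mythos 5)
Your overall strategy is the paper's own: expand the constant term $\phi_{n\alpha s}^{(P)}$ (this is Proposition 2.5.1, not 2.4.1 — Lemma 2.4.1 is the support statement you use later), then exploit $|\det(m\theta(m))|=1$ together with the determinant support of the $\phi_{n_i\alpha s_i}$ to force $s_i=s_{k+1-i}$ and to decouple the truncation over the block pairs. The one genuinely different ingredient is your first step: you prove the pointwise identity $(C_{\theta c}\phi)^{(P)}=\bigl(C_{\theta c}|_{M}\bigr)\cdot\phi^{(P)}$ directly, by writing $mn\,\theta(mn)=m\theta(m)\,n'$ with $n'\in N$ and noting that the eigenvalues, hence the cutoff, only see $m\theta(m)$. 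The paper instead invokes Proposition 2.2.4, which yields only an equality of twisted traces via matching twisted orbital integrals. Your version is more elementary, is valid here because for $GL_n$ compactness is a condition on eigenvalue norms and the diagonal blocks of any element of $m\theta(m)N$ are those of $m\theta(m)$, and it buys an identity in $\mathcal H(M(E))$ rather than merely an identity of traces.

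Two statements in your later steps are not correct as literally written, though the intended argument is sound and is exactly the paper's. First, the displayed factorization of $C^{G(E)}_{\theta c}|_{M(E)}$ into a product of pairwise truncations is not a pointwise identity of indicator functions: $\theta$-compactness of $(g_i,g_{k+1-i})$ for the swap involution only requires the eigenvalue norms of $H_i=g_i\,\theta(g_{k+1-i})$ to be equal to one another (say to $c$, those of $H_{k+1-i}$ then being $c^{-1}$), whereas $\theta$-compactness of $m$ in $GL_n$ forces them all to equal $1$, using $|\det(m\theta(m))|=1$; the two conditions agree precisely where $|\det g_i|=|\det g_{k+1-i}|$ for every pair. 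Second, the phrase ``for this block to be compact in $GL_{n_i}(E)$ one needs the determinant to have norm $1$'' contradicts Remark 2.1.1 (compactness means all eigenvalue norms are equal, not equal to $1$); if combined with the literal factorization, the asymmetric compositions would not be killed. The repair is the ordering the paper uses, and which your second step already contains: $\theta$-compactness of $m$ in $GL_n$ is equivalent to every eigenvalue of every $H_i$ having norm $1$; on the support of the term $(s_i)$ this forces $|\det H_i|=q^{-\alpha(s_i-s_{k+1-i})}=1$, which eliminates the asymmetric $(s_i)$, and on the surviving symmetric supports the norm-$1$ condition coincides with the pairwise truncations (plus the middle factor when $k$ is odd). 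With these two points rephrased, your pointwise conclusion $(C_{\theta c}\phi_{n\alpha s})^{(P)}=f$, and hence the trace identity, holds.
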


\begin{proof}

We first consider the case that $k$ is even: By Proposition 2.2.4, we have $$\mathrm{Tr}((C^{GL_n(E)}_{\theta c}\phi_{n\alpha s})^{(P)}, \pi A_{\theta}) = \mathrm{Tr}(C^{GL_n(E)}_{\theta c}(\phi_{n\alpha s})^{(P)}, \pi A_{\theta}).$$
By Proposition 2.5.1, the twisted trace $\mathrm{Tr}(C^{GL_n(E)}_{\theta c}(\phi_{n\alpha s})^{(P)}, \pi A_{\theta})$ is a sum of terms of the form $\mathrm{Tr}(C^{GL_n(E)}_{\theta c}( \phi_{n_1\alpha s_1} \otimes \ldots \otimes  \phi_{n_1\alpha s_1}  ), \pi A_{\theta})$, where $(s_i)$ ranges over extended compositions of $s$. We assume the term $C^{GL_n(E)}_{\theta c}( \phi_{n_1\alpha s_1} \otimes \ldots \otimes  \phi_{n_1\alpha s_1}  )$ corresponding to the extended composition $(s_i)$ of $s$ is non-zero. Let $ m$ be a semisimple point in
$M(E)$ at which the term $C^{GL_n(E)}_{\theta c}( \phi_{n_1\alpha s_1} \otimes \ldots \otimes  \phi_{n_1\alpha s_1}  )$ does not vanish. Let $m_i \in GL_{n_i}$ be the $\mathrm{i}$-th block of $m$, and let $m'_i $ be the $\mathrm{i}$-th block of $\theta(m)$. By the definition of $\theta$, we have that $|\mathrm{det}(m'_{i})| = |\mathrm{det}(m_{k+1 -i})| ^{-1}$. By the definition of $\theta$-compact, we also have $|\mathrm{det}(m_i m'_{i})| = |\mathrm{det}(m_{k+1 -i} m'_{k+1-i}) | = |\mathrm{det}(m_i m'_{i})|^{-1},$
thus $|\mathrm{det}(m_i m'_{i})| =1$. By applying Lemma 2.4.1, we have $s_{i} = s_{k +1 -i}$ for all $i$. Because $|\mathrm{det}(m_i m'_{i})| =1$, we have $C^{GL_n(E)}_{\theta c}(m) \not = 0  $ if and only if the norms of all eigenvalues of $m \theta(m)$ are equal to $1$, this occurs if and only if the norms of all eigenvalues of $m_i m'_i$ are equal to $1$ for all $i$, which occurs if and only if $(C^{GL_{n_1} \times GL_{n_k}(E)}_{\theta c}\otimes \ldots  C^{GL_{n_{\frac{k-1}2}} \times GL_{n_{\frac{k+3}2}}(E)}_{\theta c} \otimes C^{GL_{\frac{k+1}2}(E)}_{\theta c}) (m_i) \not = 0$. Therefore, the function $C^{GL_n(E)}_{\theta c}( \phi_{n_1\alpha s_1} \otimes \ldots \otimes  \phi_{n_1\alpha s_1}  )$ equals to $(C^{GL_{n_1} \times GL_{n_k}(E)}_{\theta c}(\phi_{n_1\alpha s_1} \otimes  \phi_{n_k\alpha s_k} ))\otimes \ldots  $
$\otimes(C^{GL_{n_{\frac{k-1}2}} \times GL_{n_{\frac{k+3}2}}(E)}_{\theta c}(\phi_{n_{\frac{k-1}2}\alpha s_{\frac{k-1}2}} \otimes \phi_{n_{\frac{k+3}2}\alpha s_{\frac{k+3}2}})) \otimes (C^{GL_{\frac{k+1}2}(E)}_{\theta c}\phi_{n_{\frac{k+1}2}\alpha s_{\frac{k+1}2} }).$

If $k$ is odd, we repeat the above argument. 
\end{proof}

\subsection{The functions $\hat{\chi}^{\theta}_{N} \phi_{n \alpha s} ^{(P)}$}

In this section, we study the spherical functions $\hat{\chi}^{\theta}_{N} \phi_{n \alpha s} ^{(P)}$ (where $P \subset GL_n$ is a $\theta$-stable standard parabolic subgroup) which appear in Proposition 2.2.3.

\begin{proposition}
Let $P = MN$ be a $\theta$-stable standard parabolic subgroup of $GL_n$, and let $(n_i)$ be the corresponding composition of $n$. Write $k$ for the length of the composition $(n_i)$. Then the function $\hat{\chi}^{\theta}_{N} \phi_{n \alpha s} ^{(P)}$ is equal to $$\sum\limits_{(s_i)} q^{C(n_i,s_i)} \phi_{n_1\alpha s_1} \otimes  \phi_{n_2\alpha s_2} \otimes \ldots \otimes  \phi_{n_k\alpha s_k}$$ where the sum ranges over all extended compositions $(s_i)$ of $s$ of length $k$ satisfying

$$ (s_1-s_k) + (s_2-s_{k-1}) + \cdots +  (s_{i}-s_{k+1-i})  > 0 $$

 for all indices $i$ strictly smaller than $k$.

\end{proposition}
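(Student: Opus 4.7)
The plan is to start from Proposition 2.5.1, which expresses $\phi_{n\alpha s}^{(P)}$ as a sum over \emph{all} extended compositions $(s_i)$ of $s$ of length $k$, and then to determine exactly which summands survive after multiplying by $\hat{\chi}^{\theta}_N$. Since each summand $\phi_{n_1\alpha s_1}\otimes\cdots\otimes\phi_{n_k\alpha s_k}$ is supported on $\{m = (m_1,\ldots,m_k)\in M(E) : |\det m_j|_E = q^{-\alpha s_j}\}$ by Lemma 2.4.1(1), and (as I will verify) $\hat{\chi}^{\theta}_N$ is constant on this support, multiplication by $\hat{\chi}^{\theta}_N$ either kills the summand or leaves it unchanged; it then suffices to identify when it is left unchanged.

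Fix such an $(s_i)$. Since $\theta(g) = A_n\bar g^{-t}A_n$ and $P$ is $\theta$-stable (so $n_j = n_{k+1-j}$), a direct block computation gives $\theta(m)_j = A_{n_j}\bar m_{k+1-j}^{-t}A_{n_j}$, and hence
$$|\det\theta(m)_j|_E = |\det m_{k+1-j}|_E^{-1} = q^{\alpha s_{k+1-j}}, \qquad |\det(m_j\theta(m)_j)|_E = q^{\alpha(s_{k+1-j}-s_j)}.$$
In particular $|\det(m\theta(m))|_E = 1$, so $H_G(m\theta(m)) = 0$ and the projection $\mathfrak a_P \twoheadrightarrow \mathfrak a_P^G$ loses no information on these elements. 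By Lemma 2.2.2, $\hat{\chi}^{\theta}_N(m) = 1$ iff $|\omega_\beta(m\theta(m))|_E < 1$ for every $\beta \in \Delta_P$. The simple roots in $\Delta_P$ are indexed by $i = 1,\ldots,k-1$, with corresponding fundamental weight $\omega_{r_i}$ (where $r_i := n_1+\cdots+n_i$) evaluating on $m$ as $\prod_{j=1}^i \det m_j \cdot (\det m)^{-r_i/n}$. Applied to $m\theta(m)$ the central correction is trivial, so
$$|\omega_{r_i}(m\theta(m))|_E \;=\; \prod_{j=1}^{i}|\det(m_j\theta(m)_j)|_E \;=\; q^{\alpha \sum_{j=1}^{i}(s_{k+1-j}-s_j)}.$$

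Since $q>1$ and $\alpha>0$, the condition $|\omega_{r_i}(m\theta(m))|_E < 1$ is equivalent to
$$\sum_{j=1}^{i}(s_j - s_{k+1-j}) \;>\; 0 \qquad (1\le i\le k-1),$$
which is precisely the condition listed in the statement. The surviving summands in Proposition 2.5.1 are therefore exactly those indexed by extended compositions satisfying these inequalities, and the formula follows (the coefficient $q^{C(n_i,s_i)}$ is untouched). The only real obstacle is the bookkeeping around the Harish-Chandra map and the projection $\mathfrak a_P \twoheadrightarrow \mathfrak a_P^G$; this works out cleanly here because $|\det(m\theta(m))|_E = 1$ forces $H_G(m\theta(m)) = 0$, so the $(\det m)^{-r_i/n}$ correction in the fundamental weight vanishes and one reduces directly to the displayed product of block determinants.
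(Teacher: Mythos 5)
Your proposal is correct and follows essentially the same route as the paper: it characterizes $\hat{\chi}^{\theta}_N(m)=1$ via the fundamental weights evaluated at $m\theta(m)$ (Lemma 2.2.2), uses the support constraint $|\det m_j| = q^{-\alpha s_j}$ from Lemma 2.4.1 on each summand of the constant-term decomposition, and translates the resulting inequalities into the partial-sum condition $\sum_{j\le i}(s_j - s_{k+1-j})>0$. Your explicit observation that $|\det(m\theta(m))| = 1$ kills the central correction, and that $\hat{\chi}^{\theta}_N$ is therefore constant on the support of each summand, is exactly the (implicit) content of the paper's argument, just stated more cleanly.
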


\begin{proof}

We use $H_a$ for $a \in \{1, 2, \ldots , n\} $ to denote the $a$-th vector of the canonical basis of the vector space $\mathfrak a_0 = \mathbb R^n$. 
The subset $\Delta_P$ of $\Delta$ is the subset consisting of the roots $\alpha_{n_1} + \alpha_{n_2}+ \cdots +\alpha_{n_i}$ 
for $i \in \{1,2, \ldots, k-1\}$. 
For any root $\alpha  = \alpha_i|_{\mathfrak a_P}$ in $\Delta_P$ we have:

$$\bar \omega_{\alpha}^{GL_n(E)} = (H_1 + \cdots + H_i-\frac in (H_1 + \cdots H_n ))|_{\mathfrak a_P}.$$
Let $m = (m_i)$ be an element of the standard Levi subgroup $M$, we write $\theta((m_i)) = (m'_i)$. The element $m$ satisfies that $\hat{\chi}_N(m)  = 1$ if and only if the absolute 
value $|\bar \omega_{\alpha}^{GL_n(E)}(m)|$ is smaller than $1$ for all roots $\alpha$ in $\Delta_P$. The evaluation at $m = (m_i)$ of the characteristic function $\hat{\chi}_N^{\theta}$ is 
equal to $1$ if and only if $$|\mathrm{det}(m_1 m'_k)  | |\mathrm{det}(m_2 m'_{k-1})  | \ldots  |\mathrm{det}(m_im'_{k +1-i})  |  < |\mathrm{det}(m \theta(m))| ^{\frac{ n_1 + \ldots n_i}n} = 1$$
for $i \in \{1,2, \ldots, k-1\}$. 

Now we determine the function $ \hat\chi^{\theta}_N \phi_{n\alpha s}^{(P)}$: Let $m = (m_i)$ be an element of $M$ and we write $\theta((m_i)) = (m'_i)$. 
We assume $\hat{\chi}^{\theta}_N(m)  = 1$. Let $(s_i)$ be an
extended composition of $s$. We assume $\hat\chi_N^{\theta}(m)\not = 0$ and $\phi_{n_1 \alpha s_1} \otimes\phi_{n_1 \alpha s_1}  \otimes \cdots \phi_{n_k \alpha s_k} (m) \not = 0$. By Lemma 2.4.1, the absolute value $|\mathrm{det}(m'_{k +1-i})|$ is equal to $|\mathrm{det}(m_{k +1-i})|^{-1}$ for all indices $i$, and we have $$|\mathrm{det}(m_1 m'_k)  | |\mathrm{det}(m_2 m'_{k-1})  | \ldots  |\mathrm{det}(m_im'_{k +1-i})  |  < |\mathrm{det}(m \theta(m))| ^{\frac{ n_1 + \ldots n_i}n} = 1$$
for $i \in \{1,2, \ldots, k-1\}$ if and only if $$ (s_1-s_k) + (s_2-s_{k-1}) + \cdots +  (s_{i}-s_{k+1-i})  > 0 $$ for all indices $i \in \{1,2, \ldots, k-1\}$. Conversely, if the extended composition $(s_i)$ satisfies the above condition for all indices $i < k 	$, then any element $ m = (m_i	)$ with $|\mathrm{det}(m_i m'_{k+1-i})| = q^{-\alpha(s_i-s_{k+1 -i})}$ satisfies $ \hat\chi_N^{\theta}(m)= 1$. 
\end{proof}

More generally, let $M = GL_{n_1} \times \ldots \times GL_{n_t }$ be a $\theta$ -stable Levi subgroup of $GL_n$ and let $P=LU$ be a standard parabolic subgroup of $M$ with corresponding composition $(m_i)$. We write $k$ for the length of $(m_i)$, for $1\leq i \leq t$, we define $k_i \in \{1,2, \ldots, k\}$ such that $\sum_{j = k_{i-1}+1}^{j = k_i} m_j= n_i$ (we define $m_0 = 0$, then $k_i$ is the number of blocks of $L = GL_{n_1} \times \ldots \times GL_{n_i}$). 
We consider $\phi_{n_1\alpha s_1} \otimes \ldots \otimes \phi_{n_s \alpha s_t} \in \mathcal H^{unr}( GL_{n_1}) \otimes \ldots \otimes \mathcal H^{unr}(GL_{n_t})$,  for $1\leq i \leq k $, let $(s_{1,j}),\ldots,(s_{t,j})$ be extended compositions of $s_1,\ldots, s_t$ respectively. If $k_{j-1} < i \leq k_j$ (we define $k_0=0$), then we write $s'_i = \sum_{p =1}^{j-1}\sum_{q = 1}^{k_p-k_{p-1}} s_{p,q} +  \sum_{p=1}^{i-k_{j-1}}$. Repeat the above argument, we have

\begin{proposition}

 The function $\hat{\chi}^{\theta}_{M,U} \phi_{n \alpha s} ^{(P)}$ is equal to $$\sum\limits_{(s_{1,j}),\ldots,(s_{t,j})} q^{C(n_i,s_i)} \otimes_{i=1}^k(\otimes_{j = 1}^{k_{i}-k_{i-1}} \phi_{m_i\alpha s_{i,j}} )$$ where the sum ranges over extended compositions $(s_{1,j}),\ldots,(s_{t,j})$ of $s_1,\ldots, s_t$ of $s$ of length $k_{i}-k_{i-1}$ respectively, satisfying

$$ (s'_1-s'_k) + (s'_2-s'_{k-1}) + \cdots +  (s'_{i}-s'_{k+1-i})  > 0 $$
 for all indices $i \in \{1,2,\ldots,k-1\}$ with $i \not = k_1,\ldots,k_t$.

\end{proposition}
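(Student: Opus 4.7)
The plan is to extend the argument of Proposition 2.6.1 from the case $M = GL_n$ to a general $\theta$-stable Levi $M = GL_{n_1}\times\ldots\times GL_{n_t}$ in a blockwise manner. I would proceed in three main steps.

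First, observe that because $P = LU$ is a standard parabolic of $M$ and $M$ itself is a product, $P$ decomposes as $P = P_1\times\ldots\times P_t$ with $P_i = L_iU_i \subset GL_{n_i}$, where $L_i = GL_{m_{k_{i-1}+1}}\times\ldots\times GL_{m_{k_i}}$. Consequently the constant term factors:
$$ (\phi_{n_1\alpha s_1}\otimes\ldots\otimes\phi_{n_t\alpha s_t})^{(P)} \;=\; \bigotimes_{i=1}^{t}\phi_{n_i\alpha s_i}^{(P_i)}. $$
Applying Proposition 2.5.1 to each factor $\phi_{n_i\alpha s_i}^{(P_i)}$ expands it as a sum indexed by extended compositions $(s_{i,j})_{j=1,\ldots,k_i-k_{i-1}}$ of $s_i$, with prefactor $q^{C_i}$ where $C_i = s_i(n_i-s_i)-\sum_j s_{i,j}(m_{k_{i-1}+j}-s_{i,j})$. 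Collecting all factors and summing the exponents yields exactly the tensor expression appearing in the claim.

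Second, I analyze the support of $\hat{\chi}^{\theta}_{M,U}$. The key point is that inside $M$ the parabolic $P$ has root system $\Delta_P^M \subset \Delta^M$ indexed by $\{1,2,\ldots,k-1\}\setminus\{k_1,\ldots,k_{t-1}\}$: the ``separator'' indices $k_1,\ldots,k_{t-1}$ are omitted because the corresponding simple roots of $GL_n$ already vanish on $A_M$. Computing the fundamental weights $\bar{\omega}_\alpha^M$ in the basis $H_1,\ldots,H_n$ exactly as in the proof of Proposition 2.6.1, the equation $\hat{\chi}^{\theta}_{M,U}(m) = 1$ on an element $m=(m_1,\ldots,m_k)\in L(E)$ with $\theta(m) = (m'_k,\ldots,m'_1)$ becomes
$$ |\det(m_1 m'_k)|\,|\det(m_2 m'_{k-1})|\cdots|\det(m_i m'_{k+1-i})| \;<\; |\det(m\theta(m))|^{(m_1+\ldots+m_i)/n} = 1 $$
for every $i\in\{1,\ldots,k-1\}\setminus\{k_1,\ldots,k_{t-1}\}$.

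Third, I combine the two steps. On each summand labelled by the tuple $((s_{i,j}))$ of extended compositions, Lemma 2.4.1 forces $|\det(m'_{k+1-a})| = |\det(m_{k+1-a})|^{-1}$ on the support of the corresponding tensor product of $\phi_{m_a\alpha s'_a}$'s, where $s'_a$ denotes the $a$-th entry of the concatenated sequence $s_{1,1},\ldots,s_{1,k_1},s_{2,1},\ldots,s_{t,k_t-k_{t-1}}$. Then $\log_q|\det(m_a m'_{k+1-a})| = -\alpha(s'_a - s'_{k+1-a})$, so the inequality above becomes
$$ (s'_1 - s'_k) + (s'_2 - s'_{k-1}) + \ldots + (s'_i - s'_{k+1-i}) \;>\; 0 $$
for every admissible index $i$. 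Conversely, any tuple of extended compositions satisfying this condition can be realised by an element $m\in L(E)$ at which both $\hat{\chi}^{\theta}_{M,U}$ and the relevant tensor are non-zero, so the summation ranges exactly as stated. The main obstacle is purely the bookkeeping in the second step: identifying $\Delta_P^M$ inside $\Delta^{GL_n}$, verifying that the ``separator'' indices $k_1,\ldots,k_{t-1}$ are indeed the ones that drop out, and checking that the obtuse-Weyl-chamber inequalities on $\mathfrak{a}_P^M$ translate cleanly into the alternating-sum conditions on the concatenated composition. Everything else reduces to a direct blockwise reapplication of the argument already carried out for Proposition 2.6.1.
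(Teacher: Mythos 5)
Your route is the same as the paper's: the paper's own justification of this proposition is literally the instruction to repeat the argument of Proposition 2.6.1, and your three steps (blockwise factorization of the constant term via Proposition 2.5.1, the support analysis of $\hat{\chi}^{\theta}_{M,U}$, the translation through Lemma 2.4.1, and the converse realizability) are that repetition written out.

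There is, however, one genuine inaccuracy in your second step, and it sits at the only point where the generalization is not mechanical. Since the ambient group is now $M$ rather than $GL_n$, the chamber defining $\hat{\chi}^{\theta}_{M,U}$ is cut out by the fundamental weights $\omega^M_{\alpha}$ of $M$, and these decompose along the factors $GL_{n_a}$; for a cut $i$ interior to the $a$-th block of $M$ (i.e.\ $k_{a-1}<i<k_a$) the inequality one obtains on the support is
$$\sum_{j=k_{a-1}+1}^{i}\bigl(s'_j-s'_{k+1-j}\bigr)\;>\;\frac{m_{k_{a-1}+1}+\cdots+m_i}{n_a}\,\bigl(s_a-s_{t+1-a}\bigr),$$
not the inequality you wrote, which compares the product over the first $i$ blocks with $|\det(m\theta(m))|^{(m_1+\cdots+m_i)/n}$. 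The $GL_n$-fundamental weights do not descend to $\mathfrak a_P^M=\mathfrak a_P/\mathfrak a_M$: they fail to kill the $\mathfrak a_M$-component of $H_L(m)+\theta\circ H_L(m)$, whose block-$a$ contribution on the support is exactly $-\alpha(s_a-s_{t+1-a})$. Your version (and the partial-sum condition as displayed in the statement) coincides with the correct blockwise condition precisely when $s_a=s_{t+1-a}$ for all $a$, since then the offsets vanish and the complete earlier blocks contribute $0$ to the partial sums; this is the case in which the proposition is actually invoked (Proposition 2.7.1, where $\theta$-paired blocks carry equal $s_i$), but it is not a hypothesis of the statement, so as written your derivation does not establish the displayed condition in general — it reproduces the same gloss that the statement itself makes. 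A separate small slip: the separator indices $k_1,\ldots,k_{t-1}$ drop out not because the corresponding simple roots of $GL_n$ vanish on $A_M$ (they do not), but because they are not roots of $M$ at all and hence do not occur in $\Delta_P$ once the ambient group is $M$; your conclusion about the index set is nevertheless correct.
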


\subsection{Computation of some twisted traces}

In this section, we compute the twisted traces of some $\theta$-stable representations, the proposition we get is crucial to establish Theorem 3.3.1 in the next section. 

\begin{definition}
Let $G$ be a split reductive group over p-adic field  $F$ with a fixed Borel subgroup $B = TU$, let $\pi$ be an unramified representation of some Levi subgroup $M$ of $G$ then we write $\epsilon_{M,\pi} \in \hat M$ for the Hecke matrix of this representation (well-defined up to the action of the rational Weyl-group of $T$ in $M$). 
\end{definition}
We recall the definition of the Hecke matrix. Let $\pi$ be an unramified representation of $G(F)$, then there exists a unique unramified character $\sigma$ of $T(F)$ such that $\pi$ is the unique irreducible quotient of $\mathrm{Ind}_{P_0}^G(\sigma)$. 
Let $\hat T$ be the complex torus dual to $T$ . We define $\epsilon_{M,\pi} \in \hat{T}(\mathbb C) $ as follows: For any rational cocharacter $\chi \in X_*(T)$, we evaluate $\sigma \circ \chi $ at a uniformizer $\pi_F$, then we get an element
$\epsilon_{\pi} \in Hom(X_*(T ),\mathbb C^{\times})= X_*(\hat T)\otimes \mathbb C^{\times} =\hat T(\mathbb C)$. 
Thus we have an element $\epsilon_{M,\pi} \in \hat M$ well-defined up to the action of the rational Weyl-group of $T$ in $M$. 

\begin{proposition}

Let $\pi $ be a smooth admissible representation of $GL_n(E)$ ($n \geq 3$) of the following form:
 $$\pi = \mathrm{Ind}_P^{GL_n} (\pi_1(\alpha_1) \otimes \pi_2(\alpha_2) \otimes \ldots \otimes \pi_k (\alpha_k) \otimes \rho  \otimes \pi_k(-\alpha_k) \otimes \ldots \otimes \pi_2(-\alpha_2) \otimes \pi_1(-\alpha_1))$$ such that $\pi_1(\alpha_1) \otimes \pi_2(\alpha_2) \otimes \ldots \otimes \pi_k (\alpha_k) \otimes \rho  \otimes \pi_k(-\alpha_k) \otimes \ldots \otimes \pi_2(-\alpha_2) \otimes \pi_1(-\alpha_1)$ is $\theta$-stable where:
 
\begin{itemize}  

\item $P = M N$ is a $\theta$-stable standard parabolic subgroup of $GL_{n}$ corresponding to the composition $(n_1, n_2, \ldots , n_k , m , n_k, \ldots, n_2, n_1)$ of $n$.
\item $\pi_i(\alpha_i)$ is defined to be $\pi_i \otimes |\mathrm{det} |^{\alpha_i}$ where $\pi$ is an admissible representation of $GL_{n_i}(E)$ and $\alpha_i \in \mathbb C$. 
\item $\pi_i \circ \theta \cong \pi_i$ (Thus $\pi_i (\alpha_i) \circ \theta \cong \pi_i(-\alpha_i)$).

\end{itemize}
  
Let $V_i$ be the underlying vector space of $\pi_i$, let $A_{\theta}$ be an intertwining operator of $\pi$ induced from an intertwining operator of $\pi_1(\alpha_1) \otimes \pi_2(\alpha_2) \otimes \ldots \otimes \pi_k (\alpha_k) \otimes \rho  \otimes \pi_k(-\alpha_k) \otimes \ldots \otimes \pi_2(-\alpha_2) \otimes \pi_1(-\alpha_1)$, defined as follows: 
$$A_{\theta}(v_1 \otimes v_2 \otimes \ldots \otimes v_k  \otimes w  \otimes v'_k \otimes \ldots \otimes v'_2 \otimes v'_1)$$

$$=A_{\theta,1} v'_1 \otimes A_{\theta,2} v'_2 \otimes \ldots \otimes A_{\theta,k} v'_k  \otimes B_{\theta} w  \otimes A_{\theta,k} v_k \otimes \ldots \otimes A_{\theta,2} v_2 \otimes A_{\theta,1} v_1$$ where $A_{\theta,i} \in End_{\mathbb C}(V_i)$ is an intertwining operator of $\pi_i$ and $B_{\theta}$ is an intertwining operator of $\rho$. 

Then we have $$\mathrm{Tr}(C_{\theta c} \phi_{n \alpha s}, \mathrm{Ind}_P^{GL_n} (\pi_1(\alpha_1) \otimes \pi_2(\alpha_2) \otimes \ldots \otimes \pi_k (\alpha_k) \otimes \rho  \otimes \pi_k(-\alpha_k) \otimes \ldots \otimes \pi_2(-\alpha_2) \otimes \pi_1(-\alpha_1)) A_{\theta} ) $$

$$= \mathrm{Tr}(C_{\theta c} \phi_{n \alpha s}, \mathrm{Ind}_P^{GL_n} (\pi_1 \otimes \pi_2 \otimes \ldots \otimes \pi_k \otimes \rho  \otimes \pi_k \otimes \ldots \otimes \pi_2 \otimes \pi_1) A_{\theta} )  $$
\end{proposition}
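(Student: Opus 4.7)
The plan is to realize both representations as the same underlying space twisted by a character of the Levi and then exploit a cancellation of that character on the support of $C_{\theta c}$. More precisely, write
\[
  \sigma := \pi_1\otimes\pi_2\otimes\cdots\otimes\pi_k\otimes\rho\otimes\pi_k\otimes\cdots\otimes\pi_1,\qquad \sigma_\alpha := \pi_1(\alpha_1)\otimes\cdots\otimes\pi_1(-\alpha_1),
\]
and define the character $\chi_\alpha:M(E)\to\mathbb{C}^\times$ by
\[
  \chi_\alpha(g_1,\ldots,g_k,h,g'_k,\ldots,g'_1)=\prod_{i=1}^k|\det g_i|^{\alpha_i}|\det g'_i|^{-\alpha_i},
\]
so $\sigma_\alpha=\sigma\otimes\chi_\alpha$. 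First I would check $\chi_\alpha\circ\theta=\chi_\alpha$ by a direct block computation: since the $i$-th block of $\theta(m)$ is $A_{n_i}\bar{g'_i}^{-t}A_{n_i}$ (for $i\le k$) with absolute determinant $|\det g'_i|^{-1}$, the factors are merely permuted. This $\theta$-invariance shows that the prescribed operator $A_\theta$ built from the $A_{\theta,i}$ and $B_\theta$ serves simultaneously as an intertwining operator for both $\sigma$ and $\sigma_\alpha$ (same linear map on the common underlying tensor space), and by functoriality the induced operators on $\mathrm{Ind}_P^G(\sigma)$ and $\mathrm{Ind}_P^G(\sigma_\alpha)$ also coincide.

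The second step is the geometric cancellation. If $m\in M(E)$ satisfies that $m\theta(m)$ is compact in $GL_n(E)$, then each block $g_i A_{n_i}\bar{g'_i}^{-t}A_{n_i}$ of $m\theta(m)$ is a block of a compact element, so by Remark 2.1.2 its eigenvalues have norm $1$; taking determinants yields $|\det g_i|/|\det g'_i|=1$ for every $i$, whence $\chi_\alpha(m)=1$. In particular $\chi_\alpha$ is identically $1$ on $G_{\theta c}\cap M(E)$. Consequently the twisted character identity
\[
  \Theta_{\sigma_\alpha A_\theta}=\chi_\alpha\cdot\Theta_{\sigma A_\theta}
\]
implies $\Theta_{\sigma_\alpha A_\theta}=\Theta_{\sigma A_\theta}$ on the locus $G_{\theta c}\cap M(E)$.

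To finish, I would apply Proposition 2.2.4 with $\Omega=G_{\theta c}$ and $f=\phi_{n\alpha s}\in\mathcal H^{\mathrm{unr}}(GL_n(E))$. The set $G_{\theta c}$ is invariant under $\theta$-conjugation $g\mapsto x^{-1}g\theta(x)$ because the identity $x^{-1}g\theta(x)\cdot\theta(x^{-1}g\theta(x))=x^{-1}g\theta(g)x$ shows $g\theta(g)$ and $(x^{-1}g\theta(x))\theta(x^{-1}g\theta(x))$ are ordinarily conjugate. Proposition 2.2.4 then gives, for $\tau\in\{\sigma,\sigma_\alpha\}$,
\[
  \mathrm{Tr}_{G(E)}\bigl(C_{\theta c}\phi_{n\alpha s},\mathrm{Ind}_P^G(\tau)A_\theta\bigr)=\mathrm{Tr}_{M(E)}\bigl(C_{\theta c}|_{M}\cdot \phi_{n\alpha s}^{(P)},\tau A_\theta\bigr)=\int_{M(E)}C_{\theta c}(m)\phi_{n\alpha s}^{(P)}(m)\Theta_{\tau A_\theta}(m)\,dm.
\]
The integrands for $\tau=\sigma_\alpha$ and $\tau=\sigma$ differ by the factor $\chi_\alpha(m)$, which is identically $1$ on the support of $C_{\theta c}|_M$ by the previous step, so the two integrals are equal and the proposition follows. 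The main subtlety is the first step: verifying that the particular $\theta$-action on $M$ (permutation of blocks combined with the transpose-inverse) makes $\chi_\alpha$ genuinely $\theta$-invariant rather than $\theta$-antiinvariant, so that the single operator $A_\theta$ in the statement is simultaneously an intertwiner for both representations; everything else is a clean application of Proposition 2.2.4 and the block-wise norm computation.
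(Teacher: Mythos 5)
Your argument is correct and takes a genuinely different route from the paper's. The paper also starts by descending to the Levi via Proposition 2.2.4, but then computes the constant term $(C_{\theta c}\phi_{n\alpha s})^{(P)}$ explicitly (Propositions 2.5.1--2.5.2), applies the twisted Clozel formula blockwise on each pair $GL_{n_i}\times GL_{n_i}$, and compares Hecke matrices of the unramified constituents of the Jacquet modules; there the twists $|\det|^{\pm\alpha_i}$ contribute factors $q^{\alpha_i s_i}q^{-\alpha_i s_i}=1$ because the Satake transforms of the functions appearing are homogeneous of degree $s_i$. You instead write $\sigma_\alpha=\sigma\otimes\chi_\alpha$ with $\chi_\alpha$ a $\theta$-invariant unramified character of $M$, note that the single operator $A_\theta$ intertwines both $\sigma$ and $\sigma_\alpha$ precisely because $\chi_\alpha\circ\theta=\chi_\alpha$, and observe that $\chi_\alpha\equiv 1$ on the $\theta$-compact locus of $M(E)$, so after the descent the truncated test function is unchanged by multiplication with $\chi_\alpha$ and the two Levi traces agree (one does not even need the twisted character of Remark 2.2.1: the identity $\mathrm{Tr}(f,\sigma_\alpha A_\theta)=\mathrm{Tr}(\chi_\alpha f,\sigma A_\theta)$ holds at the level of the operators, and $\chi_\alpha f=f$ here). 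Your route is shorter, bypasses the Hecke-matrix/homogeneity computation entirely, and in fact proves the stronger statement that the equality holds for any test function supported in the $\theta$-compact set, not just the Kottwitz functions; the paper's route is heavier but stays inside the explicit constant-term calculus of \S 2.5--2.6 that it develops and uses elsewhere.

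Two small points to patch. First, Remark 2.1.2 only says that a compact element of $GL_n$ has all eigenvalue norms \emph{equal}, not equal to $1$; to get $|\det g_i|=|\det g_i'|$ you must also use $|\det(m\theta(m))|=1$ (since $|\det\theta(m)|=|\det m|^{-1}$), which forces the common norm to be $1$ --- equivalently, compare the mirror blocks $g_i\theta(g_i')$ and $g_i'\theta(g_i)$ of $m\theta(m)$, whose determinant norms are reciprocal yet must coincide. Second, the operators induced on $\mathrm{Ind}_P^G(\sigma)$ and $\mathrm{Ind}_P^G(\sigma_\alpha)$ act on different function spaces, so ``coincide'' should be read as ``are induced from the same Levi-level operator $A_\theta$''; this is harmless, since after Proposition 2.2.4 the whole comparison takes place on $M(E)$, where the space and the operator are literally the same.
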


\begin{proof}
 First, we have $$\mathrm{Tr}(C_{\theta c} \phi_{n \alpha s}, \mathrm{Ind}_P^{GL_n} (\pi_1(\alpha_1) \otimes  \ldots \otimes \pi_k (\alpha_k) \otimes \rho  \otimes \pi_k(-\alpha_k) \otimes \ldots \otimes \pi_1(-\alpha_1)) A_{\theta}  ) $$
 
 $$ = \mathrm{Tr}(C_{\theta c} \phi_{n \alpha s} ^{{(P)}}, (\pi_1(\alpha_1) \otimes  \ldots \otimes \pi_k (\alpha_k) \otimes \rho  \otimes \pi_k(-\alpha_k) \otimes \ldots \otimes \pi_1(-\alpha_1)) A_{\theta}  ) $$ by Proposition 2.2.3. By applying Proposition 2.5.2, we have that the above equality equals

$$\sum\limits_{ (s_i) } q^{C(n_i,s_i)} \mathrm{Tr}( (C_{\theta c}(\phi_{n_1\alpha \alpha_1} \otimes \ldots \otimes  \phi_{m \alpha t} \otimes  \ldots \otimes \phi_{n_1\alpha \alpha_1} ),$$ 
$$(\pi_1(\alpha_1) \otimes  \ldots \otimes \pi_k (\alpha_k) \otimes \rho  \otimes \pi_k(-\alpha_k) \otimes \ldots \otimes \pi_1(-\alpha_1)) A_{\theta})$$

$$=\sum\limits_{ (s_i) } q^{C(n_i,s_i)}( \prod_{i = 1}^k\mathrm{Tr} (C_{\theta c}(\phi_{n_i\alpha s_i} \otimes \phi_{n_i\alpha s_i} ),(\pi_i(s_i) \otimes  \pi_i(-s_i)) (A_{\theta,i} \otimes A_{\theta,i}) ))\mathrm{Tr} (C_{\theta c}\phi_{m\alpha t} , \rho B_{\theta}) $$ 
where the sum ranges over extended partitions $ s = s_1 + \ldots s_k + t + s_k + \ldots+ s_1 $ satisfying the property described in Proposition 2.6.2. 

Let $G_i $ denote $GL_{n_i} \times GL_{n_{ k -i}} = GL_{n_i} \times GL_{n_i}$, and we use $\mathcal{P}^{\theta,i}$ to denote the set of $\theta$-stable standard parabolic subgroups of $G_i$. By applying Proposition 2.2.2, we obtain $$\mathrm{Tr} (C_{\theta c}(\phi_{n_i\alpha s_i} \otimes \phi_{n_i\alpha s_i} ),(\pi_i(\alpha_i) \otimes  \pi_i(-\alpha_i)) (A_{\theta,i} \otimes A_{\theta,i} )) $$

$$= \sum\limits_{P_i \in \mathcal P^{\theta,i}, P_i = M_iN_i}\varepsilon_P \mathrm{Tr}(\hat{\chi}^{\theta}_{N_i}(\phi_{n_i\alpha s_i} \otimes \phi_{n_i\alpha s_i})^{(P)}, (\pi_i(\alpha_i) \otimes  \pi_i(-\alpha_i))_{N_i}(\delta_{G_i, P_i}^{-\frac 12})(A_{\theta,i} \otimes A_{\theta,i} )).$$ 

We fix a $P_i = M_iN_i \in \mathcal P^{\theta}$, let $G_{i,1} \times G_{i,2}$ denote $GL_{n_i} \times GL_{n_{ k -i}} = GL_{n_i} \times GL_{n_i}$ (where $\pi_i(\alpha_i)$ is a representation of $G_{i,1}(E)$ and $\pi_i(-\alpha_i)$ is a representation of $G_{i,2}(E)$). We write $P_{i,1} = N_{i.1} M_{i,1} = P_i \cap G_{i,1}$ and $P_{i,2} = N_{i.2} M_{i,2} = P \cap G_{i,2}$. 

 Let $J_{1,1}, \ldots, J_{t,1}$ (resp. $J_{1,2}, \ldots, J_{t,2}$) be the simple sub-representations of the semi-simplification of $ ({\pi_i})_{N_{i,1}}$ which is fixed by $A_{\theta,i}  $ and unramified. We write $\epsilon_{M_i,J_{l,1}} = \epsilon_{M_i,J_{l,2}}  = (q^{h_{l,1}},\ldots,q^{h_{l,n_l}})$ (also denoted by $\epsilon_{M,J_{l}}$), then $\epsilon_{M,J_l(\pm \alpha_i)} = (q^{h_{l,1} \pm \alpha_i},\ldots,q^{h_{l,n_l}\pm \alpha_i})$.  Note that $\hat{\chi}^{\theta}_N(\phi_{n_i\alpha \alpha_i} \otimes \phi_{n_i\alpha \alpha_i})^{(P)}$ is of the form $\sum_{j \in J} f_{j,1} \otimes f_{j,2}$ for some index set $J$ where $f_{j,1}, f_{j,2} \in \mathcal H^{unr}(GL_{n_i}(E))$ whose Satake transfers are homogeneous of degree $s_i$ (as described in Proposition 2.6.2). Then we have that $$\mathrm{Tr}(\hat{\chi}^{\theta}_N(\phi_{n_i\alpha s_i} \otimes \phi_{n_i\alpha s_i})^{(P)}, (\pi_i(\alpha_i) \otimes  \pi_i(-\alpha_i))_N(\delta_{G_i, P_i}^{-\frac 12})(A_{\theta,i} \otimes A_{\theta,i} ))$$

$$=\sum_{j \in J} \mathrm{Tr}(f_{j,1} \otimes f_{j,2}, (\pi_i(\alpha_i) \otimes  \pi_i(-\alpha_i))_N(\delta_{G_i, P_i}^{-\frac 12})(A_{\theta,i} \otimes A_{\theta,i}) )$$

 We write $J_{ j,1}^{K \cap M_{i,1}(E)} = J_{j,2}^{K \cap M_{i,2}(E)} = \mathbb C v $, we also write $v \otimes v = v_1 \otimes v_2$ where $v_1 \in J_{1, j}^{K \cap M_{i,1}(E)}, v_2 \in J_{2, j}^{K \cap M_{i,2}(E)} $. To prove our proposition, we only need to show that
 
$$\mathrm{Tr}(f_{j,1} \otimes f_{j,2}, (J_{k,1} (\alpha_i) \otimes  J_{k,2}(-\alpha_i)_N(\delta_{G_i, P_i}^{-\frac 12})(A_{\theta,i} \otimes A_{\theta,i}) ) $$
$$= \mathrm{Tr}(f_{j,1} \otimes f_{j,2}, (J_{k,1}\otimes  J_{k,2})_N(\delta_{G_i, P_i}^{-\frac 12})(A_{\theta,i} \otimes A_{\theta,i} ))$$
 for all $i, j ,k$. 

We have $$\mathrm{Tr}(f_{j,1} \otimes f_{j,2}, (J_{k,1} (\alpha_i) \otimes  J_{k,2}(-\alpha_i)_N(\delta_{G_i, P_i}^{-\frac 12})(A_{\theta,i} \otimes A_{\theta,i} )) $$

$$= \mathrm{Tr}(v_1 \times v_2 \mapsto \int_{G_1(E) \times G_2(E)} (f_{j,1} \otimes f_{j,2})(J_{k,1} (\alpha_i) \otimes  J_{k,2}(-\alpha_i))_N(\delta_{G_i, P_i}^{-\frac 12})(A_{\theta,i} \otimes A_{\theta,i}) )(v_2 \otimes v_1 )dg_1 dg_2  $$
(we write $(A_{\theta,i} \otimes A_{\theta,i}) )(v_2 \otimes v_1 )dg_1 dg_2 $ because $ A_{\theta,i}$ is an isomorohism from $ \pi_{i}(-\alpha_i) \circ \theta$ to $ \pi_{i}(\alpha_i)$, and $ A_{\theta,i}$ is also an isomorphism from $ \pi_{i}(\alpha_i) \circ \theta $ to $ \pi_{i}(-\alpha_i)$)

$$= (\mathrm{Tr}(v  \mapsto \int_{G_1(E) } f_{j,1}{J_{k,1} (\alpha_i)}{(\delta_{G_i, P_i}^{-\frac 12}}|_{M_{i,1}(F)})A_{\theta,i} v dg_1) ) \times $$
$$(\mathrm{Tr}(v  \mapsto \int_{G_2(E) } f_{j,2}{J_{k,2} (-\alpha_i)}{(\delta_{G_i, P_i}^{-\frac 12}}|_{M_{i,2}(F)})A_{\theta,i} v dg_2))$$
(recall that we write $P_{i,1} = N_{i.1} M_{i,1} = P \cap G_1$ and $P_{i,2} = N_{i.2} M_{i,2} = P \cap G_2$)
$$=\varepsilon_{A_{\theta,i}} (\mathrm{Tr}(v  \mapsto \int_{G_1(E) } f_{j,1}{J_{k,1} (\alpha_i)}{(\delta_{G_i, P_i}^{-\frac 12}}|_{M_{i,1}(F)}) v dg_1) )\times$$
$$(\mathrm{Tr}(v  \mapsto \int_{G_2(E) } f_{j,2}{J_{k,2} (-\alpha_i)}{(\delta_{G_i, P_i}^{-\frac 12}}|_{M_{i,2}(F)})v dg_2))$$
(where $\varepsilon_{A_{\theta,i}} = \pm 1$ only depends on $A_{\theta,i}$, see Proposition 2.2.2)
$$ = \varepsilon_{A_{\theta,i}} f_{j,1}(\epsilon_{M,J_{l,1}(\alpha_i)} ) f_{j,1}(\epsilon_1) f_{j,1}(\epsilon_{M,J_{l,2}(-\alpha_i)} ) f_{j,1}(\epsilon_2)$$
(We use $\epsilon_1$ and $\epsilon_2$ to denote the Hecke matrix of ${\delta_{G_i, P_i}}^{-\frac 12}|_{M_{i,1}(F)}$ and ${\delta_{G_i, P_i}}^{-\frac 12}|_{M_{i,2}(F)}$ respectively)
$$ = \varepsilon_{A_{\theta,i}} f_{j,1}(q^{h_{l,1} + \alpha_i},\ldots,q^{h_{l,n_l}+ \alpha_i} ) f_{j,1}(\epsilon_1) f_{j,2}(q^{h_{l,1}-\alpha_i},\ldots,q^{h_{l,n_l}- \alpha_i} ) f_{j,1}(\epsilon_2)$$
$$ = \varepsilon_{A_{\theta,i}} q^{\alpha_i s_i}f_{j,1}(q^{h_{l,1} },\ldots,q^{h_{l,n_l}} ) f_{j,1}(\epsilon_1) q^{- \alpha_i s_i} f_{j,1}(q^{h_{l,1} },\ldots,q^{h_{l,n_l}} ) f_{j,1}(\epsilon_2)$$
(because the Satake transforms of $f_{j,1}, f_{j,2}$ are homogeneous of degree $s_i$)
$$ = \varepsilon_{A_{\theta,i}} f_{j,1}(q^{h_{l,1} },\ldots,q^{h_{l,n_l}} ) f_{j,1}(\epsilon_1)  f_{j,1}(q^{h_{l,1} },\ldots,q^{h_{l,n_l}} ) f_{j,1}(\epsilon_2)$$
$$ =\varepsilon_{A_{\theta,i}} (\mathrm{Tr}(v  \mapsto \int_{G_1(E) } f_{j,1}{J_{k,1}}{(\delta_{G_i, P_i}^{-\frac 12}}|_{M_{i,1}(F)}) v dg_1) )(\mathrm{Tr}(v  \mapsto \int_{G_2(E) } f_{j,2}{J_{k,2}}{(\delta_{G_i, P_i}^{-\frac 12}}|_{M_{i,2}(F)})v dg_2)) $$
$$= (\mathrm{Tr}(v  \mapsto \int_{G_1(E) } f_{j,1}{J_{k,1} }{(\delta_{G_i, P_i}^{-\frac 12}}|_{M_{i,1}(F)})A_{\theta,i} v dg_1) )(\mathrm{Tr}(v  \mapsto \int_{G_2(E) } f_{j,2}{J_{k,2} }{(\delta_{G_i, P_i}^{-\frac 12}}|_{M_{i,2}(F)})A_{\theta,i} v dg_2))$$
$$ =  \mathrm{Tr}(f_{j,1} \otimes f_{j,2}, (J_{k,1}\otimes  J_{k,2})_N(\delta_{G_i, P_i}^{-\frac 12})(A_{\theta,i} \otimes A_{\theta,i} )).$$
Thus, we have proved $$\mathrm{Tr}(f_{j,1} \otimes f_{j,2}, (J_{k,1} (\alpha_i) \otimes  J_{k,2}(-\alpha_i)_N(\delta_{G_i, P_i}^{-\frac 12})(A_{\theta,i} \otimes A_{\theta,i}) ) $$
$$= \mathrm{Tr}(f_{j,1} \otimes f_{j,2}, (J_{k,1}\otimes  J_{k,2})_N(\delta_{G_i, P_i}^{-\frac 12})(A_{\theta,i} \otimes A_{\theta,i} )),$$ and the proposition follows. 
\end{proof}

\section{Twisted traces of $\theta$-stable semi-stable rigid representations}

In this section, we compute the twisted compact traces of the function of Kottwitz (which is introduced in $\S$2.1 in \cite{Kot84}, which we also define in $\S4.1$) against certain $\theta$-stable representations of $GL_n(F)$ where $F$ is a $p$-adic field.

\subsection{$\theta$-stable semi-stable rigid representations}

Let $E/\mathbb Q$ be a quadratic extension in which $p$ is inert and unramified. Then $E_p/\mathbb Q_p$ is a quadratic extension such that $p$ remains a uniformizer in $E_p$.  Let $\mathcal O_{E_p}$ be the ring of integer of $E_p$. We use $G_n$ to denote $GL_n(E_p)$. Let $\theta$ be an involution of $G_n$ defined by $g \mapsto A_n^{-1}(\bar g ^t)^{-1}A_n)$, where $A_n$ is the anti-diagonal matrix with entries $1$ and $g \mapsto \bar g^t$ is the conjugation transpose with respect to the non-trivial automorphism of $\mathrm{Gal}(E/\mathbb Q)$. We normalize the $p$-adic norm on $E_p$ such that $|p| = p^{-2}$. 

Let $x, y, n \in \mathbb{Z}_{>0} $ such that $n = xy$ (with $n \geq 3$). We define the representation $\mathrm{Speh}(x, y)$ of $GL_n(E_p)$ to be the Langlands quotient of the representation $$| \mathrm{det} |^{\frac{y-1}2} \mathrm{St}_{GL_x(E_p)} \times | \mathrm{det} |^{\frac{y-3}2} \mathrm{St}_{GL_x(E_p)} 
 \times \ldots \times | \mathrm{det} |^{\frac{1-y}2} \mathrm{St}_{GL_x(E_p)} $$ where the product means unitary parabolic induction from the standard parabolic subgroup of $GL_n$ with $y$ blocks and each block of size $x$. A representation $\pi_p$ of $GL_n$ is called semi-stable rigid if it is unitarizable, and it is isomorphic to a representation of the form $\mathrm{Ind}_P^G (\otimes_{i = 1}^k \mathrm{Speh}(x_i, y)(\varepsilon_i| \cdot |^{e_i}))$,  where $P = MN$ is the parabolic subgroup corresponding to the composition $(yx_a)$ of $n$ and where the tensor product is taken along the blocks $M = \prod_{a = 1}^k G_{yx_a}$ and
 
\begin{itemize}
\item $k \in \{1, 2, \ldots, n\}$; 
\item $\varepsilon_i$ is a unitary unramified character $\varepsilon_i: GL_x \rightarrow \mathbb C^{\times}$ (thus $\varepsilon_k$ is of the form $g \mapsto |\mathrm{det}(g)|^{\mathbf id_i }$ for some $d_i \in \mathbb R$ ) for each $i \in \{1, 2, \ldots, k\}$;
\item	 $e_i$ is a real number $e_i$ in the open interval $(-\frac 12, \frac 12)$ for each $i \in \{1, 2, \ldots, k\}$;
\item positive integers $y, x_1, x_2, \ldots, x_k $ such that $\frac ny = \sum_{ i = 1 }^k x_i$. 
\end{itemize}
We remark that these representations are irreducible (see Theorem 7.5 of \cite{Tad86}).

\begin{definition}
We say that a semi-stable rigid representation is of $\theta$-type if it is of form $\mathrm{Ind}_P^G (\otimes_{i = 1}^k \mathrm{Speh}(x_i, y)(\varepsilon_i))$, where for each $i$, the character $\varepsilon_i: GL_x \rightarrow \mathbb C^{\times}$ is a $\theta$-stable unitary unramified character (thus $\varepsilon_i$ is either the trivial character or the unramified quadratic character). 

\end{definition}

\begin{theorem}
Let $\pi$ be a discrete automorphic representation of $GL_n(\mathbb A_E)$, and we assume that $\pi_p$ has an Iwahori fixed vector (as deinfed in $\S 2.1$). Then $\pi_p$ is a semi-stable rigid representations. 
\end{theorem}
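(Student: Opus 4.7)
The plan is to combine the Moeglin--Waldspurger classification of the discrete automorphic spectrum of $GL_n$ with the Borel--Casselman theorem on Iwahori-spherical representations and with Tadi\'c's classification of the unitary generic dual of $GL_x$ over a $p$-adic field.

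First I would apply the Moeglin--Waldspurger theorem: any discrete automorphic representation $\pi$ of $GL_n(\mathbb A_E)$ is of the form $\mathrm{MW}(\tau, y)$, where $n = xy$ for some $y\in \mathbb Z_{\geq 1}$ and $\tau$ is a unitary cuspidal automorphic representation of $GL_x(\mathbb A_E)$. Locally at the inert prime $p$ this gives $\pi_p \cong \mathrm{Speh}(\tau_p, y)$, i.e.\ the unique irreducible (Langlands) quotient of the standard module $\tau_p |\det|^{\frac{y-1}{2}} \times \tau_p |\det|^{\frac{y-3}{2}} \times \cdots \times \tau_p |\det|^{\frac{1-y}{2}}$; in particular $\pi_p$ is unitarizable.

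Next, I would extract structural information about $\tau_p$. By the Borel--Casselman theorem the assumption that $\pi_p$ has an Iwahori-fixed vector is equivalent to $\pi_p$ being a subquotient of an unramified principal series, and uniqueness of the Bernstein--Zelevinsky cuspidal support then forces the cuspidal support of $\pi_p$, and hence of $\tau_p$ (the two agree as multisets after accounting for the Speh shifts), to consist entirely of unramified characters of $GL_1(E_p)$. Thus $\tau_p$ is itself Iwahori-spherical. On the other hand, cuspidality of $\tau$ on $GL_x$ together with Shalika's theorem on global genericity of cuspidals of $GL_x$ implies that the local component $\tau_p$ is unitary and generic.

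Third, I would invoke Tadi\'c's classification of the unitary generic Iwahori-spherical dual of $GL_x(E_p)$: any such representation is isomorphic to
$$\tau_p \;\cong\; \mathrm{Ind}_Q^{GL_x}\!\Big(\bigotimes_{i=1}^k \mathrm{St}_{GL_{x_i}}(\varepsilon_i|\det|^{e_i})\Big),$$
where $Q$ is a standard parabolic with Levi $\prod_i GL_{x_i}$, each $\varepsilon_i$ is a unitary unramified character, and each $e_i\in(-1/2, 1/2)$; moreover this parabolic induction is automatically irreducible by Tadi\'c's irreducibility theorem in the unitary dual.

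Finally I would use the compatibility of the Speh construction with parabolic induction from essentially square-integrable Levi factors: substituting the above expression for $\tau_p$ into $\mathrm{Speh}(\tau_p, y)$ and tracking Zelevinsky segments yields
$$\pi_p \;\cong\; \mathrm{Ind}_{Q'}^{GL_n}\!\Big(\bigotimes_{i=1}^k \mathrm{Speh}(x_i, y)(\varepsilon_i|\det|^{e_i})\Big)$$
for an appropriate standard parabolic $Q'$, which is exactly the form required for a semi-stable rigid representation as defined just before Definition 3.1.1.

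The hard part will be the final step: one must verify the compatibility of the Speh construction with parabolic induction from unramified-twisted Steinbergs, and verify that the resulting induced module is already irreducible (so that it coincides with the Langlands quotient produced by Speh). Both facts are standard consequences of Zelevinsky's segment calculus, but the combinatorial matching of multisegments (keeping track of the $y$ shifts applied to each segment coming from $\tau_p$) is the technically delicate part of the argument.
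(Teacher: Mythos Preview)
Your proposal is correct and reconstructs essentially the argument of Theorem~2 in \cite{Kret11}, which is all the paper itself invokes. The ingredients you list---Moeglin--Waldspurger, Borel--Casselman to reduce to unramified cuspidal support, genericity of local components of cuspidals on $GL_x$, Tadi\'c's description of the generic unitary Iwahori-spherical dual, and the compatibility of the Speh construction with parabolic induction from twisted Steinbergs---are exactly those used there, so there is nothing further to compare.
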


\begin{proof}

See Theorem 2 of \cite{Kret11}. 
\end{proof}

\begin{corollary}
Let $\pi = \mathrm{Ind}_P^G (\otimes_{i = 1}^k \mathrm{Speh}(x_i, y)(| \cdot |^{\mathbf i d_i + e_i}))$ be a $\theta$-stable semi-stable rigid representation of $GL_n(E_p)$, then we have $$\mathrm{Tr}(C_{\theta c} \phi_{n \alpha s},  \mathrm{Ind}_P^G (\otimes_{i = 1}^k \mathrm{Speh}(x_i, y)(| \cdot |^{\mathbf i d_i + e_i})) A_{\theta})  $$
$$ = \pm \mathrm{Tr}(C_{\theta c} \phi_{n \alpha s},  \mathrm{Ind}_P^G (\otimes_{i = 1}^k \mathrm{Speh}(x_i, y)(\varepsilon_i) A'_{\theta})  $$ for some intertwining operator $A'_{\theta}$, where $\varepsilon_i$ is either the trivial character or the unramified quadratic character. 
\end{corollary}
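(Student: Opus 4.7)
The plan is to exploit the $\theta$-stability of $\pi$ to rewrite it, via Tadi\'c's rigidity theorem, into a form to which Proposition 2.7.1 applies, and then invoke that proposition to remove all non-$\theta$-stable twists.

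I would begin by analyzing the $\theta$-stability constraint on the inducing data. Every unramified character of $GL_x(E_p)$ factors through the determinant, and a direct computation using $|\det \bar g^{-t}| = |\det g|^{-1}$ shows $\chi \circ \theta = \chi^{-1}$ for such $\chi$, whence $\mathrm{Speh}(x, y)(\chi) \circ \theta \cong \mathrm{Speh}(x, y)(\chi^{-1})$ (since $\mathrm{St}_{GL_x}$, and hence $\mathrm{Speh}(x,y)$ itself, is $\theta$-stable). By Theorem 7.5 of \cite{Tad86}, two semi-stable rigid representations are isomorphic precisely when their inducing multisets of Speh constituents agree, so $\pi \cong \pi \circ \theta$ supplies an involution $\iota$ on the index set $\{1,\ldots,k\}$ with $x_{\iota(i)} = x_i$, $\varepsilon_{\iota(i)} = \varepsilon_i^{-1}$, and $e_{\iota(i)} = -e_i$. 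At any fixed point of $\iota$ one gets $\varepsilon_i^2 = 1$ and $e_i = 0$, so $\varepsilon_i$ is trivial or the unramified quadratic character, and these ``self-paired'' factors are already of the form required by the conclusion.

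Next I would reorder the parabolic induction so that each genuine pair $\{i, \iota(i)\}$ appears symmetrically about a single middle block formed by the self-paired factors. Because $\pi$ is irreducible, any two parabolic inductions from the same multiset are isomorphic, yielding
$$\pi \cong \mathrm{Ind}_{P'}^{GL_n}\bigl(\sigma_1(\alpha_1) \otimes \cdots \otimes \sigma_r(\alpha_r) \otimes \rho \otimes \sigma_r(-\alpha_r) \otimes \cdots \otimes \sigma_1(-\alpha_1)\bigr),$$
where each $\sigma_j$ is a $\theta$-stable Speh representation (either $\mathrm{Speh}(x_j,y)$ itself, or the Speh representation twisted by the $\theta$-stable quadratic part of $\varepsilon_j$ when that part is already $\theta$-stable), $\alpha_j = \mathbf{i}d_j + e_j$ records the remaining (non-$\theta$-stable) twist attached to the $j$-th pair, and $\rho$ is the parabolic induction of all self-paired factors and is itself $\theta$-stable. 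By Lemma 2.2.1 together with Corollary 2.2.1, transporting the given intertwining operator on $\pi$ to an induced normalized intertwining operator of this reordered presentation changes the twisted trace by at most a sign $\pm$.

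At this point Proposition 2.7.1 applies with its $\pi_i$ taken to be our $\sigma_j$ and its middle $\rho$ taken to be our $\rho$; its hypotheses amount precisely to $\theta$-stability of each $\sigma_j$ and of $\rho$, which we have arranged. Its conclusion allows us to set every $\alpha_j$ to zero without changing the twisted trace, so the value becomes the twisted trace of
$$\mathrm{Ind}_{P'}^{GL_n}(\sigma_1 \otimes \cdots \otimes \sigma_r \otimes \rho \otimes \sigma_r \otimes \cdots \otimes \sigma_1).$$
By transitivity of parabolic induction together with the explicit description of $\rho$, this rewrites as $\mathrm{Ind}_{P''}^{GL_n}\bigl(\bigotimes_j \mathrm{Speh}(x_j'', y)(\varepsilon_j'')\bigr)$ in which each $\varepsilon_j''$ is trivial or the unramified quadratic character, which is exactly the right-hand side of the corollary.

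I expect the main obstacle to be the bookkeeping of normalized intertwining operators throughout the reordering and through the specific tensor-type operator required in the hypothesis of Proposition 2.7.1. Corollary 2.2.1 makes this bookkeeping essentially free, at the cost of the single $\pm$ that appears in the statement of the corollary; no further computation is required.
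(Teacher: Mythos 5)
Your proposal is correct and follows essentially the same route as the paper: use Tadi\'c's rigidity theorem (Theorem 7.5 of \cite{Tad86}) to pair each non-$\theta$-stable twist with its inverse, reorder the (irreducible) induced representation into the symmetric shape $\pi_1(s_1)\otimes\cdots\otimes\rho\otimes\cdots\otimes\pi_1(-s_1)$ with a $\theta$-stable middle block, transport the intertwining operator at the cost of the sign via Corollary 2.2.1, and then apply Proposition 2.7.1 to strip the twists. The only cosmetic difference is whether the full character $|\cdot|^{\mathbf i d_j+e_j}$ or only its non-quadratic part is absorbed into the twist $\alpha_j$, which does not affect the argument.
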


\begin{proof}
We assume that $\pi$ is of the form $\mathrm{Ind}_P^G (\otimes_{i = 1}^k \mathrm{Speh}(x_i, y)(| \cdot |^{\mathbf i d_i + e_i}))$, then $\pi \circ \theta $ is of the form $$\mathrm{Ind}_{\theta(P)}^G (\otimes_{i = 1}^k \mathrm{Speh}(x_{k-i}, y)(| \cdot |^{-\mathbf i d_{k-i}-e_{k-i}})) \cong \mathrm{Ind}_{P}^G (\otimes_{i = 1}^k \mathrm{Speh}(x_{i}, y)(| \cdot |^{-\mathbf i d_{i}-e_{i}})).$$ If there exists $j$ such that $| \cdot |^{\mathbf i d_i + e_i}$ is not $\theta$-stable, then there exists $j'$ such that $x_{j'} = x_{j} $ and $| \cdot |^{-\mathbf i d_j-e_j} = | \cdot |^{\mathbf i d_{j'} + e_{j'}}$, because $$ \mathrm{Ind}_{P}^G (\otimes_{i = 1}^k \mathrm{Speh}(x_{i}, y)(| \cdot |^{-\mathbf i d_{i}-e_{i}})) \cong \mathrm{Ind}_P^G (\otimes_{i = 1}^k \mathrm{Speh}(x_i, y)(| \cdot |^{\mathbf i d_i + e_i}))$$ implies that $$(\mathrm{Speh}(x_{i}, y)(| \cdot |^{-\mathbf i d_{i}-e_{i}}))_{1 \leq i\leq k} = (\mathrm{Speh}(x_{i}, y)(| \cdot |^{\mathbf i d_{i}+e_{i}}))_{1 \leq i\leq k}$$ up to permutation, see Theorem 7.5 of \cite{Tad86}).  

Let $\pi_i$ denote $\mathrm{Speh}(x_i, y)$, and let $s_i$ denote $ \mathbf i d_i + e_i$, after reordering $i$, we may assume $\pi = \mathrm{Ind}_P^G(\pi_1(s_1) \otimes  \ldots \pi_t(s_t) \otimes \rho \otimes \pi_t(-s_t) \otimes \ldots \pi_1(-s_1))$, where $\rho$ is the induced representation of $\mathrm{Speh}(x_i, y)(| \cdot |^{\mathbf i d_i + e_i}))$'s for which $ (| \cdot |^{\mathbf i d_i + e_i}))$ is $\theta$-stable (thus $\rho$ is itself $\theta$- stable). We may choose an intertwining operator $A'_{\theta}$ of $\pi = \mathrm{Ind}_P^G(\pi_1(s_1) \otimes  \ldots \pi_t(s_t) \otimes \rho \otimes \pi_t(-s_t) \otimes \ldots \pi_1(-s_1))$ as described in Proposition 2.7.1, then we have 
$$\mathrm{Tr}(C_{\theta c} \phi_{n \alpha s},  \mathrm{Ind}_P^G (\otimes_{i = 1}^k \mathrm{Speh}(x_i, y)(| \cdot |^{\mathbf i d_i + e_i})) A_{\theta})  $$
$$=\pm \mathrm{Tr}(C_{\theta c} \phi_{n \alpha s},  \mathrm{Ind}_P^G(\pi_1(s_1) \otimes  \ldots \pi_t(s_t) \otimes \rho \otimes \pi_t(-s_t) \otimes \ldots \pi_1(-s_1))A'_{\theta})  $$
$$ = \pm \mathrm{Tr}(C_{\theta c} \phi_{n \alpha s}, \mathrm{Ind}_P^G(\pi_1 \otimes  \ldots \pi_t \otimes \rho \otimes \pi_t \otimes \ldots \pi_1)A'_{\theta}) .  $$
\end{proof}

\subsection{Ring of Zelevinsky and segments}

In this section, we collect some constructions and facts from \cite{Zel80}.
Let $F$ be a p-adic field, we normalize the $p$-adic norm on $F$ so that $|\pi_F| = p^{-r}$, where $\pi_F$ is a uniformizer and $p^r$ is the number of element in the residue field. We use $G_n$ to denote $GL_{n}(F)$. Let $ m,m' \in \mathbb Z_{\geq 1}$, and let $\pi$ (resp. $\pi'$) be a smooth admissible representation of $G_m$ (resp. $G_{m'}$ ), then we write $\pi \times \pi$ for the representation of $G_{m+ m'}$ unitary parabolically
induced from the representation $\pi \otimes \pi'$ of the standard Levi
subgroup consisting of two blocks, one of size $m$, and the other one of size $m'$.
We denote by $\mathcal R$ the direct sum $\bigoplus_{n \in \mathbb Z_{\geq 0}} \mathrm{Groth}(G_n)$, where $G_0$ is defined to be the trivial group. The group $G_0$ has one unique irreducible representation $\sigma_0$ (the trivial representation). The operation direct sum together with $\times$ turns the vector space $\mathcal R$ into a commutative $\mathbb C$-algebra with $\sigma_0$ as unit element. We call it \textbf{ring of Zelevinsky}.

We fix a set $S$. A \textbf{multi-set} on $S$ is defined as a function $\chi: S \rightarrow \mathbb Z_{>0}$. Note that subsets of $S$ may be represented by their characteristic functions, and thus the notion of a multi-set on $S$ generalises that of a subset of $S$. We also write down the multi-set $\chi: S \rightarrow \mathbb Z_{>0}$ as $a= \{\ldots, x, \ldots, x, y, \ldots, y, \ldots \}$ where each element $x \in S$ is repeated $\chi(x)$ times (when given $a= \{\ldots, x, \ldots, x, y, \ldots, y, \ldots \}$, we also denote the associated function by $\chi_a$). And $\chi(x)$ is called a multiplicity of $x$ in $S$. We write $x \in a$ if $\chi_a(x) > 0$, and $a \subset b$ if $\chi_a(x) \leq \chi_b(x)$ for all $x\in S$; the empty multi-set $at$ corresponds to $\chi_a= 0$ . A multi-set $a$ is called finite if $\chi$ has finite support.

We write $\nu$ for the absolute value morphism from $F^{\times} $ to $\mathbb C^{\times}$. A segment $S = \langle x,y\rangle$ (where $x, y \in \mathbb Q$ and $y-x \in \mathbb Z$), is defined as follows: 
\begin{itemize}
\item We define $\langle x,y\rangle$ to be $\{x,x+1,\ldots,y\}$ if $y \geq x$;
\item We define $\langle x,y\rangle$ to be the empty set if $y < x$; 
\end{itemize}
The segment $\{x\}$ has length $1$, the segment $\{x, x + 1\}$ has length $2$, etc. For any segment $\langle x,y\rangle$ with $y \geq x$ we write $\Delta\langle x,y\rangle$ for the induced representation $\nu^x \times \nu^{x+1} \times \ldots \times \nu^y$. We define $\Delta\langle x,y\rangle$ to be $0$ in case $ y \leq x-1$. For any segment $S$ of non-negative length the object $\Delta S$ is a representation of the group $GL_n(F)$, where $n$ is the length of $ S$. We define $Q\langle x,y\rangle$ to be the unique irreducible quotient of $\Delta\langle x,y \rangle$ as introduced in \cite{Zel80}. We define $b(\Delta\langle x, y \rangle)$ (resp. $b (Q\langle x, y \rangle)$) to be $x$ and $e(\Delta\langle x, y \rangle)$ (resp. $e (Q\langle x, y \rangle)$) to be $y$. 

\begin{example}
The Steinberg representation $\mathrm{St}_{G_n}$ can be defined as $Q\langle \frac{1-n}2, \frac{n-1}2 \rangle$. 
\end{example}

Let $S = \langle x,y \rangle$ be a segment, we use $-S$ to denote $\langle -y, -x\rangle $. Let $\mathcal S $ denote the set of all segments. And segments $S, S'$ are called \textbf{linked} if $S \not\subset S', S' \not\subset S$ and $S \cup S'$ is a segment. We say that $S$ \textbf{precedes} $S'$ if $S$ and $S'$ are linked and $b(S')  = b(S) + k$ for some $k \in \mathbb Z_{>0}$. And we have the following theorem: 

\begin{proposition}

Let $S_1, \ldots, S_m$ be segments, then we have

(a) $Q(S_1) \times Q(S_2), \ldots, \times Q(S_m)$ is irreducible if and only if for each $i,j = 1, \ldots, m$, the segments $S_i$ and $S_j$ are not linked. 

(b) Suppose that for each pair of indices $i,j$ such that $i < j$, $S_i$ does not precede $S_j$, then the representation $Q(S_1) \times Q(S_2), \ldots, \times Q(S_m)$ has a unique irreducible quotient, denoted by $Q(S_1, \ldots, S_m)$. 

(c) $Q(S_1, \ldots, S_n)$ is isomorphic to $Q(S'_1, \ldots, S'_m)$ if and only if $m = n$ and there exists $w \in S(n)$ such that $(S_1, \ldots, S_n) = (S'_{w(1)}, \ldots,S'_{w(n)} )$ (we use $S(n)$ to denote the permutation group on $\{1,2,\ldots,n\}$). 
\end{proposition}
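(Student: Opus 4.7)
The plan is to follow Zelevinsky's classical development in \cite{Zel80}, whose central tool is the Bernstein--Zelevinsky geometric lemma for Jacquet modules of parabolically induced representations. Recall that each $Q(S_i)$ is the unique irreducible quotient of the standard module $\Delta(S_i)$, which is itself induced from the cuspidal ``segment'' $\nu^{b(S_i)} \otimes \nu^{b(S_i)+1} \otimes \cdots \otimes \nu^{e(S_i)}$. Consequently, the supercuspidal support of $Q(S_1) \times \cdots \times Q(S_m)$ is the concatenation of these multi-sets, and any composition factor must share this cuspidal support.

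For part (a), I would reduce to the case $m = 2$ by induction. For two unlinked segments $S_1, S_2$, I would analyse the standard intertwining operator $\Delta(S_1) \times \Delta(S_2) \to \Delta(S_2) \times \Delta(S_1)$: unlinkedness guarantees that the defining integral converges and that the operator is invertible, which, combined with a cuspidal support calculation bounding the possible composition factors, forces $Q(S_1) \times Q(S_2)$ to be irreducible. For the converse, when $S_1, S_2$ are linked, the same intertwining operator acquires a nontrivial kernel built from $\Delta(S_1 \cup S_2)$ and $\Delta(S_1 \cap S_2)$, which descends to a proper submodule of the product of the $Q(S_i)$'s and hence proves reducibility. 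For part (b), the hypothesis that no $S_i$ precedes $S_j$ for $i < j$ is exactly the Langlands ordering on multi-segments; uniqueness of an irreducible quotient then follows either from Langlands' general classification or by a direct Frobenius reciprocity argument: maps from $Q(S_1) \times \cdots \times Q(S_m)$ to an irreducible $\pi$ correspond via adjunction to a rank-one space of embeddings of a specific simple constituent into the Jacquet module of $\pi$ along the standard parabolic determined by the composition $(\ell(S_1),\ldots,\ell(S_m))$.

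For part (c), I would recover the multi-set $\{S_1, \ldots, S_m\}$ from $Q(S_1, \ldots, S_m)$ by an inductive procedure on the total length: among the Jacquet module constituents, identify a maximal segment (under the precedence order) from the Zelevinsky data, peel it off, and iterate on the residue. Since this extraction depends only on the representation-theoretic data, the equality $m=n$ and the existence of a permutation $w$ matching the two ordered tuples both follow. The main obstacle will be part (a) in the unlinked direction: bookkeeping the Jacquet module composition factors through iterated applications of the geometric lemma requires a careful combinatorial argument, and the alternative intertwining-operator proof demands a precise normalization and convergence analysis that is the technical heart of Zelevinsky's original work.
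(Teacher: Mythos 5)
The paper does not actually prove this proposition: its ``proof'' is the citation to chapters 4 and 6 of \cite{Zel80}, so the relevant comparison is with Zelevinsky's own development, which your outline follows in broad strokes. Your treatment of (b) is the Langlands-type classification that Zelevinsky establishes in chapter 6, and your ``peel off a maximal segment'' procedure for (c) is in substance the uniqueness part of that classification (which Zelevinsky carries out with the derivative machinery rather than an ad hoc extraction, but the idea is the same).

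There is, however, a genuine soft spot in your argument for the ``unlinked implies irreducible'' half of (a). First, convergence of the standard intertwining integral is governed by the relative exponents of the two factors, not by unlinkedness; at the point of interest the operator is defined by meromorphic continuation, and its holomorphy and invertibility there is exactly the delicate issue (in many references it is deduced \emph{from} irreducibility, so you risk circularity if you simply assert it). Second, ``a cuspidal support calculation bounding the possible composition factors'' gives nothing: every composition factor of $Q(S_1)\times Q(S_2)$ has the same cuspidal support, so support considerations cannot force irreducibility. The missing ingredient is a Jacquet-module computation (via the geometric lemma) showing that the Langlands quotient occurs with multiplicity one in the standard module; then an isomorphism onto the product in the reversed order exhibits this constituent as both a submodule and a quotient, hence a direct summand, and since the cosocle is irreducible the module is indecomposable and therefore irreducible. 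Alternatively one can follow Zelevinsky's original derivative-based proof of his Theorem 4.2. The reducibility direction of (a) and parts (b) and (c) are fine as sketched.
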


\begin{proof}
See chapter 4 and chapter 6 of \cite{Zel80}. 
\end{proof}

\begin{example}
The Speh representation $\mathrm{Speh}(x,y)$ of $G_{xy}$ can be defined as $Q(\langle \frac{1-x}2 + \frac{y-1}2, \frac{x-1}2 + \frac{y-1}2 \rangle, \langle \frac{1-x}2 + \frac{y-3}2, \frac{x-1}2 + \frac{y-3}2  \rangle, \ldots, \langle  \frac{1-x}2 + \frac{1-y}2, \frac{x-1}2 + \frac{1-y}2  \rangle)$. 
\end{example}

Let $\mathcal B$ denote representations of the form $Q(S_1 \times \ldots \times S_m)$ where $S_1, \ldots, S_m \in \mathcal S$ such that for each pair of indices $i,j$ such that $i < j$, $S_i$ does not precede $S_j$. Let $a, b \in \mathcal B$. An elementary operation on $a$ is defined as follows: Replace pair $\{S, S'\}$ of linked segments by the pair $\{S^{\cup} =S \cup S', S^{\cap} =S \cap S'\}$. We write $b < a$ if $b$ may be obtained from $a$ by a chain of elementary operations. By Lemma 6.7 of \cite{Zel80}, if $b < a$ then the number of pairs of linked segments of $b$ is less than that of $a$. And we have the following: 

\begin{proposition}
Let $S_1, \ldots, S_m $ be segments and write $a = Q(S_1, \ldots, S_m)$. 
Then we have $Q(S_1) \times Q(S_2), \ldots, \times Q(S_m) = \sum_{b \in \mathcal B} m(a,b)b$ in $\mathcal R$, where \begin{itemize} 

\item $m(a,b) \geq 0$ and $m(a,b) >0 $ if and only if $b < a$. 

\item $m(a,a) = 1$. 

\end{itemize}
\end{proposition}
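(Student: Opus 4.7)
The plan is to prove the proposition by induction on the number of pairs of linked segments in $a = \{S_1, \ldots, S_m\}$, using Proposition 3.2.1 as the main tool. Without loss of generality, by part (c) of Proposition 3.2.1, I may reorder so that $S_i$ does not precede $S_j$ for all $i < j$; part (b) then gives that $a = Q(S_1, \ldots, S_m)$ is the unique irreducible quotient of $\pi := Q(S_1) \times \cdots \times Q(S_m)$. By Frobenius reciprocity, together with the uniqueness of the Langlands data identifying $a$, this quotient appears in the Jordan-H\"older filtration of $\pi$ with multiplicity exactly one, yielding $m(a,a) = 1$.

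For the base of the induction, if no pair of segments among $\{S_i\}$ is linked, then part (a) of Proposition 3.2.1 already tells us that $\pi$ is irreducible and equals $a$. Since no elementary operation is available on $a$, the only $b \in \mathcal B$ with $b \leq a$ is $a$ itself, so the formula holds trivially.

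For the inductive step, pick a linked pair $\{S_i, S_j\}$ and set $S^{\cup} = S_i \cup S_j$, $S^{\cap} = S_i \cap S_j$. The key input is the short exact sequence
\[
0 \to Q(S^{\cup}) \times Q(S^{\cap}) \to Q(S_i) \times Q(S_j) \to Q(S_i, S_j) \to 0
\]
of representations of $GL_{|S_i|+|S_j|}(F)$ from \cite{Zel80}. Inducing and taking the product with $\prod_{k \neq i,j} Q(S_k)$ expresses $[\pi]$ in $\mathcal R$ as a sum of two pieces: one whose unique quotient is $a$ (matching the left-hand side of the claim), and another, call it $[\pi']$, built from the multi-set $a'$ obtained from $a$ by the elementary operation $\{S_i, S_j\} \mapsto \{S^{\cup}, S^{\cap}\}$. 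Since $a'$ has strictly fewer linked pairs, the inductive hypothesis applies to $\pi'$, expressing it as $\sum_{b' \leq a'} m(a', b') b'$ with non-negative integer coefficients and $m(a',a') = 1$. Combining these contributions, and observing that $b' \leq a'$ together with $a' < a$ implies $b' < a$ (by concatenating chains of elementary operations), produces the desired decomposition with non-negative integer multiplicities, and shows $m(a,b) > 0 \Rightarrow b \leq a$.

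The main obstacle is the converse direction: that every $b < a$ actually appears with $m(a,b) > 0$. A given chain of elementary operations realizing $b < a$ must be matched with a genuine irreducible subquotient of $\pi$. This requires computing Jacquet modules of $\pi$ via the geometric lemma of Bernstein-Zelevinsky and tracking how each elementary operation $\{S, S'\} \mapsto \{S \cup S', S \cap S'\}$ corresponds to a specific way of splitting off a constituent; the compatibility of the Jacquet functor with the decomposition is precisely what forces the appearance of each $b < a$. This positivity statement is the content of the classification of irreducible representations of $GL_n$ by multi-sets of segments in \cite{Zel80}, and is the deepest part of the argument.
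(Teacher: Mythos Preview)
The paper does not give its own proof of this proposition; it simply cites chapter 7 of \cite{Zel80}, where this is Zelevinsky's classification theorem. So there is no ``paper's approach'' to compare against beyond the reference.

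Your sketch, however, has a genuine gap in the inductive step. After applying the short exact sequence and multiplying by $\prod_{k\neq i,j} Q(S_k)$, you obtain
\[
[\pi] \;=\; \bigl[\,Q(S_i,S_j)\times \textstyle\prod_{k\neq i,j} Q(S_k)\,\bigr] \;+\; [\pi'],
\]
and the inductive hypothesis handles $[\pi']$. But the first summand is \emph{not} a product of representations of the form $Q(S)$ for single segments $S$: the factor $Q(S_i,S_j)$ is a Langlands quotient attached to a length-two datum. Your induction is on products $\prod_l Q(S'_l)$ of segment representations, so it gives you no control over the Jordan--H\"older constituents of this first piece. Saying it ``has unique quotient $a$'' is true but insufficient: you need every other constituent $b$ to satisfy $b<a$, and nothing in your setup forces that. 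This is exactly the content you are trying to prove, so the argument is circular at this point.

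You also correctly flag that the converse direction (every $b<a$ actually occurs) is the deepest part, and you defer it to \cite{Zel80}. That is fine as a citation, but then your write-up is not really an independent proof: both the ``only if'' direction (because of the gap above) and the ``if'' direction ultimately rest on Zelevinsky's analysis. If you want a self-contained argument along these lines, you would need a stronger inductive statement---for instance, one that simultaneously controls products where some factors are already Langlands quotients $Q(S'_1,\ldots,S'_r)$ rather than single $Q(S)$'s---and that essentially reproduces Zelevinsky's chapter 7.
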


\begin{proof}
For the proof, see chapter 7 of \cite{Zel80}. 
\end{proof}

\subsection{Twisted traces of representations of $\theta$-type}

In the following, let $ F = E_p$, and let $\theta$ be an involution of $G_n$ as defined at the beginning of this section. Note that $Q \langle x, y \rangle \circ \theta $ is a quotient of $\Delta\langle x, y \rangle \circ \theta $, and since $\Delta\langle x, y \rangle \circ \theta \cong \Delta\langle -y, -x \rangle$, we have  $Q\langle x, y \rangle \circ \theta \cong  Q\langle -y, -x \rangle  $. We also have $(Q(S_1) \times \ldots \times Q(S_m) ) \circ \theta \cong Q(-S_m)\times \ldots \times Q(-S_1) $, and similarly , we have $Q(S_1, \ldots, S_m) \circ  \theta \cong Q(-S_m, \ldots,-S_1) $. We define $\mathcal S^{\theta}$ to be $\{s \in \mathcal S:  s  = -s\}$, and we define $\mathcal B^{\theta}$ to be $\{b \in \mathcal B:  b \circ \theta  \cong b\}$

 \begin{definition}
 We define the representation $\rho(x,y)$ of $G_{xy}$ as follows: \begin{itemize}
 
 \item $\mathrm{St}_{G_{x + y -1}} \times \mathrm{St}_{G_{x + y-3}} \ldots \times \mathrm{St}_{G_{x-y + 1}}$ if $x \geq y$. 
  
 \item $\mathrm{St}_{G_{x + y -1} }\times  \mathrm{St}_{G_{x + y-3}} \ldots \times \mathrm{St}_{G_{y-x + 1}}$ if $x < y$. 
 
 \end{itemize}
 
 \end{definition}

We note that $ \rho(x,y)$ is irreducible, and we may also write $\rho(x,y)$ as follows:

\begin{itemize}
\item $Q(\langle -\frac{x+y}2 + 1,  \frac{x + y}2-1 \rangle, \langle -\frac{x+y}2 + 3,  \frac{x + y}2-3 \rangle, \ldots, \langle -\frac{y-x}2 ,  \frac{x-y}2  \rangle)$  if $x \geq y$.
\item $Q(\langle -\frac{x+y}2 + 1,  \frac{x + y}2-1 \rangle, \langle -\frac{x+y}2 + 3,  \frac{x + y}2-3 \rangle, \ldots, \langle -\frac{x-y}2 ,  \frac{y-x}2  \rangle)$  if $x < y$.
\end{itemize}

\begin{proposition}
$\rho(x,y) < \mathrm{Speh}(x,y)$ in $(\mathcal B,<)$. 
\end{proposition}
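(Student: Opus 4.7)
The plan is to construct an explicit chain of elementary operations transforming the multi-segment of $\mathrm{Speh}(x,y)$ into that of $\rho(x,y)$, and I will proceed by induction on $\min(x,y)$. The base case $\min(x,y)=1$ splits into two easy subcases: if $y=1$ then $\mathrm{Speh}(x,1)=\mathrm{St}_{G_x}=\rho(x,1)$ and the claim is trivial, while if $x=1$ the multi-segment $\{\{c_1\},\ldots,\{c_y\}\}$ of $\mathrm{Speh}(1,y)$ consists of $y$ consecutive singletons which can be merged one adjacent pair at a time (each pair is linked with empty intersection) to produce the single segment $\langle c_y, c_1\rangle = \mathrm{St}_{G_y} = \rho(1,y)$.

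For the inductive step with $x,y\geq 2$, write the Speh segments as $S_i = \langle a_i, b_i\rangle$ with $a_i = a_1 - (i-1)$ and $b_i = b_1 - (i-1)$, so they form a staircase of constant length $x$. The first phase of the construction is a ``wave'' that builds up the outermost segment $T_1 = \langle a_y, b_1\rangle$ of $\rho(x,y)$: pair $S_1$ with $S_2$; then pair $S_1 \cup S_2$ with $S_3$; and inductively pair the accumulated union $\langle a_1-(k-1), b_1\rangle$ with $S_{k+1} = \langle a_1 - k, b_1 - k\rangle$. A direct check using only $a_1 \leq b_1$ shows each pair is linked (neither is contained in the other, and their union is a segment), so each operation is a valid elementary operation. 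After $y-1$ steps the accumulated union equals $T_1$.

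The intersection left behind at step $k$ is the segment $L_k = \langle a_1-(k-1), b_1-k\rangle$, of length $x-1$, and matching endpoints shows $\{L_1,\ldots,L_{y-1}\}$ is exactly the multi-segment of $\mathrm{Speh}(x-1,y-1)$. On the other side, comparison of the defining formulas for $T_j$ in $\rho(x,y)$ and in $\rho(x-1,y-1)$ yields $\rho(x,y) = \{T_1\} \cup \rho(x-1,y-1)$ as multi-segments, uniformly in whether $x\geq y$ or $x<y$. Since elementary operations act locally on pairs, the chain of operations supplied by the inductive hypothesis transforms the leftover sub-multi-segment from $\mathrm{Speh}(x-1,y-1)$ into $\rho(x-1,y-1)$ while leaving the freshly produced $T_1$ untouched, completing the induction.

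The main obstacle is bookkeeping: one must carefully verify at each stage of the wave that the pair being operated on is linked, check the endpoint identities identifying the sequence of leftovers with $\mathrm{Speh}(x-1,y-1)$, and confirm that removing $T_1$ from $\rho(x,y)$ recovers $\rho(x-1,y-1)$ (in particular that this holds in both the $x \geq y$ and $x < y$ regimes, which are handled by different branches of the definition of $\rho$). These verifications are mechanical using the explicit formulas $c_i = (y-2i+1)/2$ and $T_j = \langle (2j-x-y)/2, (x+y-2j)/2\rangle$, but the case analysis must be done cleanly to avoid off-by-one errors.
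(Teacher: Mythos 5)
Your construction is exactly the paper's proof: the ``wave'' merging the accumulated union with the next Speh segment is the paper's $(U_{1,k+1},I_{1,k+1})=(U_{1,k}\cup S_{k+1},\,U_{1,k}\cap S_{k+1})$ chain, producing the outer segment of $\rho(x,y)$ and leaving behind the staircase that you correctly identify with $\mathrm{Speh}(x-1,y-1)$; your induction on $\min(x,y)$ is just a cleaner packaging of the paper's ``repeat the above process.'' The argument and the endpoint bookkeeping check out, so this is correct and essentially the same approach.
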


\begin{proof}
Let $S_1, S_2, \ldots, S_n $ denote $\langle \frac{1-x}2 + \frac{y-1}2, \frac{x-1}2 + \frac{y-1}2 \rangle, \langle \frac{1-x}2 + \frac{y-3}2, \frac{x-1}2 + \frac{y-3}2  \rangle, \ldots, \langle  \frac{1-x}2 + \frac{1-y}2, \frac{x-1}2 + \frac{1-y}2 \rangle$ respectively, then we have $ \mathrm{Speh} (x, y ) = Q(S_1, \ldots, S_n)$. 

We define $U_{1,1}$ to be $ S_1$, and we define $(U_{1,k + 1}, I_{1,k+ 1})$ to be $(U_{1, k } \cup S_{k + 1 }, U_{1, k } \cap S_{k + 1})$ for $1 \leq k \leq n-1$, then we get segments $ (U_{1,n}, I_{1,2}, \ldots, I_{1,n}) < (S_1 , \ldots, S_n)$ with $U_{1,n} = \langle -\frac{x+y}2 + 1,  \frac{x + y}2-1 \rangle$. 

Then we define $U_{2,2}$ to be $I_{1,2}$, and we define $(U_{2,k + 1}, I_{2,k+ 1})$ to be $(U_{2, k } \cup I_{2,k + 1 }, U_{2, k } \cap I_{2,k + 1})$ for $2 \leq k \leq n-1$, then we get segments $ (U_{1,n}, U_{2,n}, I_{2,3} \ldots, I_{2,n}) < (U_{1,n}, I_{1,2}, \ldots, I_{1,n})$ with $U_{1,n} = \langle -\frac{x+y}2 + 1,  \frac{x + y}2-1 \rangle, U_{2,n} = \langle -\frac{x+y}2 + 3,  \frac{x + y}2-3 \rangle$. 

Repeat the above process, we obtain $\rho(x,y) < \mathrm{Speh}(x,y)$. 

\end{proof}

 \begin{lemma}
We consider $b \in \mathcal B$ which is $\theta$-stable and satisfies that $b < \mathrm{Speh}(x,y)$, and we write $b = Q( S_{1} \times S_{2} \ldots \times S_{n} )$ with $S_{i} \in \mathcal S$. 
Then we have that $ b = \rho(x,y)$ if and only if there exists no $ i \not = j$ such that $S_i =-S_j$, which occurs if and only if  $S_i = -S_i$ for all $i$, because of the $\theta$-stableness. 

 \end{lemma}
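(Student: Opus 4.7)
The plan is to establish the main equivalence $b = \rho(x,y) \iff$ there is no $i \neq j$ with $S_i = -S_j$ by routing through the intermediate condition that every $S_i$ is itself $\theta$-stable, i.e.\ $S_i = -S_i$; this is the reformulation asserted in the lemma. For the intermediate step, I will use that $\theta$-stability of $b$ gives $b \circ \theta \cong Q(-S_n, \ldots, -S_1) \cong b$, so Proposition 3.2.1(c) identifies the multi-sets $\{S_1, \ldots, S_n\}$ and $\{-S_1, \ldots, -S_n\}$ and produces an involution $\tau \in S_n$ with $S_{\tau(i)} = -S_i$. The hypothesis that no $i \neq j$ satisfies $S_i = -S_j$ then forces $\tau = \mathrm{id}$, so $S_i = -S_i$ for all $i$; conversely, once the $S_i$ are shown pairwise distinct (by the crux below), $S_i = -S_j$ with $i \neq j$ would give $S_i = S_j$, a contradiction.

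The crux is to show that if all $S_i$ are centered at $0$, i.e.\ $S_i = \langle -a_i, a_i \rangle$ with $a_i \in \tfrac12\mathbb{Z}_{\geq 0}$, then $b = \rho(x,y)$. My plan is to use the invariant $m_a(c) := \sum_i \chi_{S_i}(c)$ for $c \in \tfrac12\mathbb{Z}$, where $\chi_S$ is the indicator of the segment $S$. Since $\chi_{S \cup S'} + \chi_{S \cap S'} = \chi_S + \chi_{S'}$, this function is preserved by every elementary operation, so $m_b = m_{\mathrm{Speh}(x,y)} = m_{\rho(x,y)}$ (the second equality by Proposition 3.3.1). For centered segments, $m_b(c) = |\{i : |c| \leq a_i\}|$ depends only on $|c|$ and is non-increasing in $|c|$, so the sorted multi-set $\{a_i\}$ of radii is uniquely recovered from $m_b$. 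The radii of $\rho(x,y)$'s centered segments form the strictly decreasing sequence $\tfrac{x+y}{2} - 1, \tfrac{x+y}{2} - 2, \ldots, \tfrac{x+y}{2} - \min(x,y)$, hence are pairwise distinct; therefore the radii of $b$ coincide with these, yielding $b = \rho(x,y)$ and in particular verifying distinctness of the $S_i$.

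Putting the pieces together: if $b = \rho(x,y)$ then all segments are centered by inspection, and by the $\theta$-stability argument no $i \neq j$ satisfies $S_i = -S_j$; conversely, this latter hypothesis forces all $S_i$ to be centered, and the crux then gives $b = \rho(x,y)$. The main obstacle I expect is the crux itself: pinning down the decomposition uniquely from the multi-set invariant together with the centering constraint. This hinges on $\rho(x,y)$'s radii forming an arithmetic progression of pairwise distinct values, which is what makes the assignment ``sorted multi-set of radii $\mapsto$ multiplicity function'' injective on centered multi-sets with the prescribed total $m_{\mathrm{Speh}(x,y)}$.
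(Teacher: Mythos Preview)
Your proof is correct. Both your argument and the paper's rest on the same underlying invariant---the total multiplicity function $m_b(c) = \sum_i \chi_{S_i}(c)$ is unchanged by elementary operations---but the organization differs. The paper proceeds iteratively: it uses that the extremal points $\pm(\tfrac{x+y}{2}-1)$ occur with multiplicity one in the multi-set sum, so the unique segments containing them must be negatives of each other and hence (by the hypothesis) equal, pinning down the largest segment; it then strips that segment off and repeats. You instead first deduce from $\theta$-stability and the hypothesis that every $S_i$ is centered, and then use the full invariant $m_b$ at once to recover the multi-set of radii and match it against that of $\rho(x,y)$. Your route is slightly more global and makes the role of the invariant explicit, while the paper's peeling argument is more hands-on; both yield the same conclusion with comparable effort. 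One small remark: your appeal to Proposition~3.2.1(c) yields a permutation, not a priori an involution, but your argument only needs the multi-set equality $\{S_i\} = \{-S_i\}$, which suffices (if some $S_i$ were not centered, then $-S_i$ would appear among the $S_j$ for some $j\neq i$, contradicting the hypothesis).
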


\begin{proof}
The 'only if' part is trivial, so we prove only the 'if' part:

We may assume that $  1-\frac{x + y}2 \in S_1$ and $  \frac{x + y}2-1 \in S_i$. Note that the multiplicities of $1-\frac{x + y}2, \frac{x + y}2-1$ in $S_1 + \ldots + S_n$ (as multiple-set) are $1$. Then by the $\theta$-stableness of $b$, we must have $S_1 =-S_i$, by the assumption, we must then have $1 = i$, thus $S_1 = \langle -\frac{x+ y }2  + 1, \frac{x+ y}2 -1  \rangle$. 

Note that the multiplicities of $3-\frac{x + y}2, \frac{x + y}2-3$ in $S_2 + \ldots + S_n$ (as multiple-set) are all $1$, we consider the remaining $3-\frac{x + y}2, \frac{x + y}2-3$ in $S_2 + \ldots + S_n$. Without loss of generality, we again assume that $  3-\frac{x + y}2 \in S_2$ and $  \frac{x + y}2-3 \in S_j$, repeat the above argument we have $2 = j $ and $S_2 =\langle -\frac{x+ y }2  + 3, \frac{x+ y}2 -3  \rangle$. 

Repeat the argument, we may conclude that $ b = \rho(x,y)$.
\end{proof}

Let $\pi = \mathrm{Ind}_P^G (\otimes_{i = 1}^k \mathrm{Speh}(x_i, y)(\varepsilon_i))$ be a representation of $\theta$-type, as defined in last section. 
We associate a poset $(\mathcal C_{\pi} ,<)$ to $\pi$ defined as follows: 

\begin{itemize}
\item $\mathcal C_{\pi} := \{ b_1 (\varepsilon_1) \times b_2  (\varepsilon_2)\ldots \times b_k  (\varepsilon_k): b_i \in \mathcal B, \text{ and }b_i < \mathrm{Speh}(x_i,y)\} \cup \{ \pi \}$. 
\item $ b_1 (\varepsilon_1) \times b_2  (\varepsilon_2)\ldots \times b_k  (\varepsilon_k) <  b'_1 (\varepsilon_1) \times b'_2  (\varepsilon_2)\ldots \times b'_k  (\varepsilon_k)$ if there exists some $i$ such that $ b_i < b'_i$ and $ b_j \leq b'_j$ for all $j$. 
\end{itemize}
Note that all elements of $\mathcal C_{\pi}$ are irreducible (see $\S 4.2$ of \cite{Zel77}), and $\pi = \mathrm{Ind}_P^G (\otimes_{i = 1}^k \rho(x_i, y)(\varepsilon_i))$ is the minimal element in $(\mathcal C^{\theta}_{\pi},<)$. We define $\mathcal C^{\theta}_{\pi} \subset \mathcal C_{\pi}$ to be $\{\rho  \in \mathcal C_{\pi}: \rho \text{ is }\theta \text{-stable} \}$.

\begin{proposition}
Let $\rho = Q( b_1 (\varepsilon_1) \times b_2  (\varepsilon_2)\ldots \times b_k  (\varepsilon_k)) \in \mathcal C^{\theta}_{\pi}$, then we have $$\mathrm{Tr}_{\theta}(C_{\theta c}\phi_{n\alpha s} ,  \rho ) = \mathrm{Tr}_{\theta} (C_{\theta c}\phi_{n\alpha s} , \rho_{\mathrm{St}}) + \sum\limits_{\rho' \in \mathcal C_{\pi}^{\theta},\rho'<\rho} N_{\rho, \rho'}  \mathrm{Tr}_{\theta}(C_{\theta c}\phi_{n\alpha s} , \rho') $$ where 
\begin{itemize}
\item $ \rho_{\mathrm{St}} $ is of the form $\mathrm{Ind}_P^G(\otimes_i \mathrm{St}_{GL_{n_i}}(\varepsilon_i))$, where $\varepsilon_i$ is either the trivial character or the unramified quadratic character.

 \item $N_{\rho,\rho'} \in \mathbb Z $ and it depends on $\rho$, $\rho'$ and an intertwining operator which we introduced later in the proof (which is constructed from $\rho$). 

\end{itemize}
\end{proposition}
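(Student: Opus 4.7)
My plan is to prove the identity by downward induction on $\rho$ in the finite poset $(\mathcal{C}^\theta_\pi, <)$, using Proposition 3.2.2 iteratively together with Lemma 3.3.1 to identify the $\theta$-stable constituents. The base case is $\rho = \rho_{\mathrm{St}} = \mathrm{Ind}_P^G(\otimes_i \rho(x_i, y)(\varepsilon_i))$, which by Proposition 3.3.1 and Lemma 3.3.1 is the unique minimal $\theta$-stable element of $\mathcal{C}_\pi$ and for which the claim is vacuous.

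For the inductive step, write $\rho = Q(b_1(\varepsilon_1) \times \ldots \times b_k(\varepsilon_k))$ with $b_i = Q(S_{i,1}, \ldots, S_{i,m_i})$. Proposition 3.2.2 yields, in $\mathcal{R}$,
$$Q(S_{i,1}) \times \ldots \times Q(S_{i,m_i}) = b_i + \sum_{b' < b_i} m(b_i, b')\, b'.$$
Multiplying these identities over $i$ (with the twists $\varepsilon_i$) and using that the segments of different Speh factors are unlinked (so every constituent remains factorised block-wise) yields
$$\tilde{\rho} := \prod_i \bigl(Q(S_{i,1})(\varepsilon_i) \times \ldots \times Q(S_{i,m_i})(\varepsilon_i)\bigr) = \rho + \sum_{\rho'' \in \mathcal{C}_\pi, \rho'' < \rho} M_{\rho, \rho''}\, \rho''$$
in $\mathcal{R}$ with non-negative integers $M_{\rho, \rho''}$.

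Solving for $\rho$ and applying $\mathrm{Tr}_\theta(C_{\theta c}\phi_{n\alpha s},\,\cdot\,)$ gives
$$\mathrm{Tr}_\theta(C_{\theta c}\phi_{n\alpha s}, \rho) = \mathrm{Tr}_\theta(C_{\theta c}\phi_{n\alpha s}, \tilde{\rho}) - \sum_{\rho'' < \rho} M_{\rho, \rho''}\, \mathrm{Tr}_\theta(C_{\theta c}\phi_{n\alpha s}, \rho'').$$
Only $\theta$-stable constituents of $\tilde{\rho}$ contribute: if an irreducible $\rho''$ with $\rho'' \not\cong \rho'' \circ \theta$ appears, then so does $\rho'' \circ \theta$, and for a suitable normalized intertwining operator on $\tilde{\rho}$ the induced operator swaps the two copies off-diagonally, so their twisted traces cancel. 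Among $\theta$-stable constituents of $\tilde{\rho}$, I distinguish those whose segments are all individually $\theta$-stable ($S = -S$) from those containing at least one pair $\{S, -S\}$ with $S \neq -S$. By Lemma 3.3.1 the first case forces each block to be $\rho(x_i, y)(\varepsilon_i)$, so this contribution is exactly $\mathrm{Tr}_\theta(C_{\theta c}\phi_{n\alpha s}, \rho_{\mathrm{St}})$; the second case produces constituents $\rho' \in \mathcal{C}^\theta_\pi$ with $\rho' < \rho$. Combining this with Proposition 2.7.1 (to replace the non-$\theta$-stable pairs of segments within a block by their central symmetrisation up to $\pm 1$) and then applying the inductive hypothesis to every $\rho' < \rho$ on the right-hand side yields the proposition with $N_{\rho, \rho'} \in \mathbb{Z}$.

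The main obstacle is the intertwining-operator bookkeeping: one must fix compatible normalized intertwining operators on $\tilde{\rho}$, on each constituent $\rho''$, and on $\rho_{\mathrm{St}}$, and carefully track the $\pm$ signs from Corollary 2.2.1, Proposition 2.7.1, and the off-diagonal pairing of non-$\theta$-stable constituents, so that the resulting coefficients $N_{\rho, \rho'}$ assemble into honest integers rather than rational numbers or complex constants.
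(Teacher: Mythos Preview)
Your proposal assembles the right ingredients (Proposition 3.2.2, Proposition 2.7.1, cancellation of non-$\theta$-stable constituents), but it misidentifies $\rho_{\mathrm{St}}$ and misses the key structural step. In the proposition, $\rho_{\mathrm{St}}$ is \emph{not} the fixed minimal element $\prod_i \rho(x_i,y)(\varepsilon_i)$ of $\mathcal C^\theta_\pi$; it is a product of Steinbergs constructed from $\rho$ itself. The paper obtains it by computing $\mathrm{Tr}_\theta(C_{\theta c}\phi_{n\alpha s},\tilde\rho)$ directly: one reorders \emph{all} the segments $Q(S_{i,j})(\varepsilon_i)$ (across all blocks simultaneously, not block-by-block) into the symmetric shape
\[
Q(A_1)(\alpha_1)\times\cdots\times Q(A_N)(\alpha_N)\times Q(B_1)(\beta_1)\times\cdots\times Q(B_M)(\beta_M)\times Q(-A_N)(\alpha_N)\times\cdots\times Q(-A_1)(\alpha_1)
\]
with $B_i=-B_i$, picks the intertwining operator of Proposition 2.7.1 for this shape, and then Proposition 2.7.1 replaces each pair $\mathrm{St}_{m_i}(c_i),\mathrm{St}_{m_i}(-c_i)$ by $\mathrm{St}_{m_i},\mathrm{St}_{m_i}$. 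That product is $\rho_{\mathrm{St}}$, and one gets $\mathrm{Tr}_\theta(\tilde\rho)=\mathrm{Tr}_\theta(\rho_{\mathrm{St}})$ in a single step. Combined with $\tilde\rho=\rho+\sum_{\rho'<\rho}m_{\rho,\rho'}\rho'$ and the pairing-off of non-$\theta$-stable constituents, the proposition follows immediately, with no induction.

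Your attempt to locate $\rho_{\mathrm{St}}$ as the constituent of $\tilde\rho$ with all segments $S=-S$ looks in the wrong place: by Lemma 3.3.1 that constituent is the minimal element $\rho_{\min}$, it appears with multiplicity $M_{\rho,\rho_{\min}}$ (not $1$), and it already belongs to the sum over $\rho'<\rho$. Applying Proposition 2.7.1 ``within a block'' also does not cover the paper's Type III case, where some $b_i$ is not individually $\theta$-stable and the pairing $S\leftrightarrow -S$ has to run across different blocks; the global reordering above is precisely what handles this. Once you fix the identification of $\rho_{\mathrm{St}}$ and carry out the global reordering, the downward induction you set up is unnecessary here (iterating the one-step reduction is how one then deduces Theorem 3.3.1).
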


\begin{proof}

We write $b_i = Q( S_{i,1} \times S_{i,2} \ldots \times S_{i,n_i} )$. There are three possibilities for $\rho = b_1 (\varepsilon_1) \times b_2  (\varepsilon_2)\ldots \times b_k  (\varepsilon_k) \in \mathcal C^{\theta}_{\pi}$:

\begin{itemize}
\item Type I: $b_i \cong b_i \circ \theta$ for all $i$, and $S_{i,j} =-S_{i,j}$ for all $i,j$.
\item Type II: $b_i \cong b_i \circ \theta$ for all $i$, but there exist $i,j$ such that $S_{i,j} \not=- S_{i,j}$ .
\item Type III: There exists $i $ such that $b_i \not \cong b_i \circ \theta$ (then for such a $b_i$, there exists $j \not = i$ such that $b_i (\varepsilon_i) \circ \theta \cong b_j (\varepsilon_j)$). 
\end{itemize} 

If $\rho$ is of Type I, then by the Lemma 3.3.1, the proposition holds.

If $\rho$ is of Type II or III, then for all $1\leq i \leq k$ and $1\leq j \leq n_i$, we have $Q(S_{i,j})(\varepsilon_i) = Q (-S_{r,q})(\varepsilon_r) $ for some $ 1\leq r \leq k, 1\leq q \leq n_r$. Also note that $\varepsilon_i$'s are either trivial character or quadratic character, then we may conclude that $\varepsilon_i = \varepsilon_r$. We order $Q(S_{i,j})(\varepsilon_i)$ into the following form :$$Q(A_1)(\alpha_1), \ldots, Q(A_N)(\alpha_N), Q(B_1) (\beta_1), \ldots, Q(B_M)(\beta_m), Q(-A_N)(\alpha_N), \ldots, Q (-A_1)(\alpha_1) $$ where:

\begin{itemize}
\item $2N  +M = \sum_{j = 1}^k n_j$
\item $B_i = -B_i$ for $1 \leq i \leq M$ (thus $B_i$'s are Steinberg representations).
\item $\alpha_i$'s and $\beta$'s are either the trival character of the unramified quadratic character. 
\end{itemize}

In ring of Zelevinsky $ \mathcal R$, we have
 $$\prod_{i = 1}^{k} \prod_{j = 1}^{n_i}  Q(S_{i,j}) (\varepsilon_i) = \prod _{i = 1}^N Q (A_i) (\alpha_i) \cdot \prod_{i = 1}^M  Q(B_i)(\beta_i) \cdot  \prod _{i = 1}^N Q (-A_i) (\alpha_i).$$
 By proposition 3.2.2, we have $$\prod_{i = 1}^{k} \prod_{j = 1}^{n_i}  Q(S_{i,j}) (\varepsilon_i)  = \rho + \sum_{\rho'< \rho} m_{\rho, \rho'} \rho' ,$$ where $m_{\rho}$ is some integer depending on $\rho $ and $\rho'$.
 
We choose an intertwining operator $A_{\theta}$ on $ \prod _{i = 1}^N Q (A_i) (\alpha_i)\cdot \prod_{i = 1}^M \cdot Q(B_i)(\beta_i) \cdot  \prod _{i = 1}^N Q(-A_i) (\alpha_i) $ as described in Proposition 2.7.1. Then we have $$\mathrm{Tr}(C_{\theta c}\phi_{n\alpha s} ,  (\prod _{i = 1}^N Q (A_i) (\alpha_i) \cdot \prod_{i = 1}^M  Q(B_i)(\beta_i) \cdot  \prod _{i = 1}^N Q (-A_i) (\alpha_i))  A_{\theta})$$
$$ = \mathrm{Tr}_{\theta} (C_{\theta c}\phi_{n\alpha s} , \rho) +  \sum\limits_{\rho' < \rho} m_{\rho, \rho'} \mathrm{Tr}_{\theta}(C_{\theta c}
\phi_{n\alpha s} , \rho')   $$
$$= \mathrm{Tr}_{\theta} (C_{\theta c}\phi_{n\alpha s} , \rho) +  \sum\limits_{\rho' < \rho, \rho' \in \mathcal C_{\pi}^{\theta}} m^{\theta}_{\rho, \rho'} \mathrm{Tr}_{\theta}(C_{\theta c}f
\phi_{n\alpha s} , \rho')  +  \sum\limits_{\rho' < \rho, \rho' \not \in \mathcal C_{\pi}^{\theta}} \frac 12 m_{\rho, \rho'} \mathrm{Tr}_{\theta}(C_{\theta c}f
\phi_{n\alpha s} , (\rho' + \rho' \circ \theta)) $$
$$ +   \sum\limits_{\rho' < \rho, \rho' \in \mathcal C_{\pi}^{\theta}} \frac 12 (m_{\rho, \rho'} 
- m'_{\rho, \rho'}) \mathrm{Tr}_{\theta}(C_{\theta c}f
\phi_{n\alpha s} , (\rho' + \rho')) $$
$$ = \mathrm{Tr}_{\theta} (C_{\theta c}\phi_{n\alpha s} , \rho) +  \sum\limits_{\rho' < \rho, \rho' \in \mathcal C_{\pi}^{\theta}} m^{\theta}_{\rho, \rho'} \mathrm{Tr}_{\theta}(C_{\theta c}f
\phi_{n\alpha s} , \rho'), $$
where $m^{\theta}_{\rho, \rho'}$ and $m'_{\rho, \rho'}$ are defined as follows: We define $m'_{\rho, \rho'}$ to be the number of $\rho'$ which is fixed by $A_{\theta}$ and we label those $\rho'$ which is fixed by $A_{\theta}$ as $\rho'_1, \ldots, \rho'_{m'_{\rho,\rho'}}$. We choose an intertwining operator of $\rho'$ (denoted by $B_{\theta}$), and we use $\mathrm{Tr}_{\theta}(C_{\theta c} \phi_{n\alpha s} , \rho')$ to denote $\mathrm{Tr}(C_{\theta c} \phi_{n\alpha s} , \rho'B_{\theta})$. We use $ B_{\theta,1}, \ldots,B_{\theta, m'_{\rho,\rho'}} $ to denote the induced intertwining operator of $A_{\theta}$ on $\rho'_1, \ldots \rho'_{m'_{\rho,\rho'}}$, then $m^{\theta}_{\rho,\rho'}$ is defined by the following equality: $$\sum_{i = 1}^{m'_{\rho,\rho'}} \mathrm{Tr}(C_{\theta c} \phi_{n\alpha s} , \rho'_iB_{\theta, i}) = m^{\theta}_{\rho, \rho'} \mathrm{Tr}_{\theta}(C_{\theta c}f\phi_{n\alpha s} , \rho')$$
(note that $\mathrm{Tr}(C_{\theta c} \phi_{n\alpha s} , \rho'_iB_{\theta, i}) = \pm \mathrm{Tr}_{\theta}(C_{\theta c}f\phi_{n\alpha s} , \rho')$ for all $i$, see Corollary 2.2.1). By the definition of $m'_{\rho, \rho'}$, we have that the term $\sum\limits_{\rho' < \rho, \rho' \in \mathcal C_{\pi}^{\theta}} \frac 12 (m_{\rho, \rho'} 
- m'_{\rho, \rho'}) \mathrm{Tr}_{\theta}(C_{\theta c}f
\phi_{n\alpha s} , (\rho' + \rho'))  $ equals $0$. 

We may also write $Q(A_i)(\alpha_i) =\mathrm{St}_{GL_{m_i}} (c_i)  $ for some $c_i \in \mathbb C$, then $Q(A_i)(-\alpha_i) =\mathrm{St}_{GL_{m_i}} (-c_i) $. Applying proposition 2.7.1, we have 
 $$\mathrm{Tr}(C_{\theta c}\phi_{n\alpha s} ,  (\prod _{i = 1}^N Q (A_i) (\alpha_i) \cdot \prod_{i = 1}^M Q(B_i)(\beta_i) \cdot  \prod _{i = 1}^N Q (-A_i) (\alpha_i))  A_{\theta})$$
 $$ = \mathrm{Tr}(C_{\theta c}\phi_{n\alpha s} ,  (\prod _{i = 1}^N \mathrm{St}_{GL_{m_i}} (c_i) \cdot \prod_{i = 1}^M  Q(B_i)(\beta_i) \cdot  \prod _{i = 1}^N \mathrm{St}_{GL_{m_i}} (-c_i))  A_{\theta})$$
$$ =\mathrm{Tr}(C_{\theta c}\phi_{n\alpha s} ,  (\prod _{i = 1}^N \mathrm{St}_{GL_{m_i}} \cdot \prod_{i = 1}^M  Q(B_i)(\beta_i) \cdot  \prod _{i = 1}^N \mathrm{St}_{GL_{m_i}})  A_{\theta}) $$
Then we have $$\mathrm{Tr}_{\theta} (C_{\theta c}\phi_{n\alpha s} , \rho) +  \sum\limits_{\rho' < \rho, \rho' \in \mathcal C_{\pi}^{\theta}} m^{\theta}_{\rho, \rho'} \mathrm{Tr}_{\theta}(C_{\theta c}f
\phi_{n\alpha s} , \rho') $$
$$ =\mathrm{Tr}(C_{\theta c}\phi_{n\alpha s} ,  (\prod _{i = 1}^N \mathrm{St}_{GL_{m_i}} \cdot \prod_{i = 1}^M  Q(B_i)(\beta_i) \cdot  \prod _{i = 1}^N \mathrm{St}_{GL_{m_i}})  A_{\theta}),  $$ and the proposition follows.

\end{proof}

Let $\pi = \mathrm{Ind}_P^G (\otimes_{i = 1}^k \mathrm{Speh}(x_i, y)(\varepsilon_i))$ be a representation of $\theta$-type. By repeatedly applying Proposition 3.3.2, we conclude the following theorem:

\begin{theorem} 
We have: $$\mathrm{Tr}_{\theta}(C_{\theta c}\phi_{n\alpha s} ,  \pi )  = \sum \limits_{\rho \in \mathcal C_{\pi}^{\theta}}M_{\pi,\rho}  \mathrm{Tr}_{\theta}(C_{\theta c}f_{n\alpha s} , \rho )$$ where $M_{\pi, \rho} \in \mathbb Z $, and $\rho$ is of the form $\mathrm{Ind}_P^G(\otimes_i \mathrm{St}_{GL_{n_i}}(\varepsilon_i))$, such that $\varepsilon_i$ is either the trivial character or the unramified quadratic character. 
\end{theorem}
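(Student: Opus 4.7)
The plan is to prove Theorem 3.3.1 by well-founded induction on the poset $(\mathcal{C}_\pi^\theta, <)$, taking Proposition 3.3.2 as the single-step recursion and iterating it until the only remaining terms are attached to minimal elements, which already have the required Steinberg-induced form.

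First I would verify that the induction is legitimate. By Lemma~6.7 of \cite{Zel80}, cited just before Proposition 3.2.2, each elementary operation strictly decreases the number of pairs of linked segments of an element of $\mathcal{B}$; hence the restriction of $<$ to each factor $\mathcal{B}$ is well-founded. Since the order on $\mathcal{C}_\pi$ is defined componentwise via the orders on these factors, the restriction of $<$ to $\mathcal{C}_\pi^\theta \subset \mathcal{C}_\pi$ is also well-founded. The minimal elements of $\mathcal{C}_\pi^\theta$ are precisely the representations $\mathrm{Ind}_P^G(\otimes_{i=1}^k \rho(x_i, y)(\varepsilon_i))$, as pointed out just before Proposition 3.3.2; by Definition 3.3.1 these unfold as inductions of tensor products of Steinberg representations $\mathrm{St}_{GL_{n_i}}(\varepsilon_i)$ with $\varepsilon_i$ either trivial or the unramified quadratic character. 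This handles the base case: for minimal $\rho$ we take $M_{\rho,\rho}=1$ and all other coefficients zero.

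For the inductive step, fix a non-minimal $\pi \in \mathcal{C}_\pi^\theta$ and assume the theorem holds for every $\rho' \in \mathcal{C}_\pi^\theta$ with $\rho' < \pi$. Applying Proposition 3.3.2 to $\pi$ yields
\begin{equation*}
\mathrm{Tr}_\theta(C_{\theta c}\phi_{n\alpha s}, \pi) \;=\; \mathrm{Tr}_\theta(C_{\theta c}\phi_{n\alpha s}, \pi_{\mathrm{St}}) \;+\; \sum_{\substack{\rho' \in \mathcal{C}_\pi^\theta \\ \rho' < \pi}} N_{\pi, \rho'}\, \mathrm{Tr}_\theta(C_{\theta c}\phi_{n\alpha s}, \rho').
\end{equation*}
The first term is already a trace against a representation of Steinberg-induced form and contributes a single summand on the right-hand side of Theorem 3.3.1. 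For each $\rho' < \pi$ in $\mathcal{C}_\pi^\theta$, transitivity of $<$ gives $\mathcal{C}_{\rho'}^\theta \subseteq \mathcal{C}_\pi^\theta$, and the inductive hypothesis expresses $\mathrm{Tr}_\theta(C_{\theta c}\phi_{n\alpha s}, \rho')$ as a $\mathbb{Z}$-linear combination of twisted traces against representations of Steinberg-induced form in $\mathcal{C}_\pi^\theta$. Substituting these expansions into the displayed identity and collecting like terms produces the required integer combination $\sum_\rho M_{\pi,\rho}\, \mathrm{Tr}_\theta(C_{\theta c}\phi_{n\alpha s}, \rho)$.

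The main (and quite mild) obstacle is bookkeeping. One must confirm that the strict-decrease invariant from Lemma~6.7 of \cite{Zel80} really does lift to the product poset defining $<$ on $\mathcal{C}_\pi$ (so that the recursion terminates), and that the final coefficients $M_{\pi,\rho}$, assembled from iterated sums and products of the integers $N_{\rho,\rho'}$ appearing in Proposition 3.3.2, stay in $\mathbb{Z}$. The first is an immediate componentwise extension of Lemma~6.7; the second is automatic since $\mathbb{Z}$ is closed under the sums and products in question. No new analytic input is required beyond Proposition 3.3.2.
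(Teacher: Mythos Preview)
Your argument is correct and matches the paper's approach exactly: the paper's entire proof is the single sentence ``By repeatedly applying Proposition 3.3.2, we conclude the following theorem,'' and your well-founded induction on $(\mathcal{C}_\pi^\theta,<)$ is precisely the rigorous formalisation of that iteration. One minor notational point: the poset $\mathcal{C}_{\rho'}^\theta$ is only defined when $\rho'$ is itself of $\theta$-type (a product of Speh representations), so the inclusion $\mathcal{C}_{\rho'}^\theta\subseteq\mathcal{C}_\pi^\theta$ is not literally well-posed for general $\rho'\in\mathcal{C}_\pi^\theta$; however this is harmless, since Proposition~3.3.2 already guarantees that its output terms lie in $\mathcal{C}_\pi^\theta$, so the recursion stays inside a single fixed poset and your inductive hypothesis can simply be stated for all elements of $\mathcal{C}_\pi^\theta$ strictly below the current one.
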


\begin{remark}
In the next section, we give a point-counting formula in terms of twisted traces of representations of $\theta$-type. And Theorem 3.3.1 reduces the computation of twisted traces of representations of $\theta$-type to the computation of twisted traces of products of Steinberg representations. We perform this reduction because compared with Steinberg representations, Speh representations (or more generally, ladder presentations) have much more complicated Jacquet modules (for Jacquet modules of Ladder representations, see \cite{Kret12}). Additionally, given an intertwining operator $A_{\theta}$ on a representation of $\theta$-type (denoted by $\pi$), it is hard to track how $A_{\theta}$ acts on $\pi_N$. 
\end{remark}

	\section{The basic stratum of Kottwitz varieties}
	In this section, we study the cohomology of the basic stratum of Kottwitz varieties (as introduced in \cite{Kot92}, which are certain compact Shimura varieties associated to division algebras). In conclusion, we derive a point-counting formula for the basic stratum in terms of twisted traces of products of Steinberg representations. 
	
	\subsection{Kottwitz varieties}
 
	Let $D$ be a division algebra over $\mathbb Q $ equipped with an anti-involution $*$. Let $\overline{\mathbb Q} $ be the algebraic closure of $\mathbb Q$ inside $\mathbb C$. Write $F$ for the centre of $D$, and we embed $F$ into $\overline{\mathbb Q}$. We assume that $F$ is a quadratic imaginary field  extension of $\mathbb Q$ and we assume that $*$ induces the complex conjugation on $F$.
	
	Let $n$ be the positive integer such that $n^2$ equals the dimension of $D$ over $F$. Let $G$ be the  $\mathbb Q$-group, such that for each commutative $\mathbb Q$-algebra $R$ the group $G(R)$ is defined as follows:
	
	$$G(R):=\{x \in D \otimes_{\mathbb Q} R: xx^* \in R^{\times}\}$$
	
	The mapping $c: G \rightarrow \mathbb G_{m,\mathbb Q}$, defined by $x\mapsto xx^*$, is referred to as the 'factor of similitudes'. Let $h_0$ be an algebra morphism $h_0: \mathbb C \rightarrow D_{\mathbb R}$ such that $h_0(z)^* = h_0(\overline z)$ for all $z \in \mathbb C$. We assume that the involution of $D_{\mathbb R	}$ defined by $x \mapsto h_0(i)^{-1}xh_0(i)$ ($x\in D_{\mathbb R}$) is positive. We restrict $h_0$ to $\mathbb C^{\times}$ to obtain a morphism $h'$ from the Deligne torus $\mathrm{Res}_{\mathbb C/\mathbb R}\mathbb G_{m,{\mathbb C}}$ to $G_{\mathbb R}$. Let $h$ denote $h'^{-1}$ and let $X$ denote the $G(\mathbb R$)-conjugacy class of $h$, then the pair $(G,X)$ is a PEL-type Shimura datum (\cite{Kot92}). Let $\mathbb S$ denote $\mathrm{Res}_{\mathbb C/\mathbb R}\mathbb G_{m,{\mathbb C}}$. We define isomorphism $r$ from $\mathbb S_{\mathbb C}$ to $\mathbb G_{m,\mathbb C} \times \mathbb G_{m,\mathbb C}$ as follows: for an arbitrary $\mathbb C$-algebra $A$, define $A \otimes_{\mathbb R}\mathbb C \rightarrow  A^2$ by $a \otimes z \mapsto (za,\bar za)$. We define cocharacter $\mu':G_{m,\mathbb C} \rightarrow \mathbb S_{\mathbb C}$ by $r \circ \mu'(z) = (z,1)$ and we define $\mu \in X_*(G)$ to be $\mu'\circ h $. 
 
 Let $E$ denote the reflex field of Shimura datum $(G,X,h)$. We copy Kottwitz's description of the reflex field $E$ (\cite{Kot92}): Let $v_1,v_2$ be the two embeddings of $F$ into $\mathbb C$. We associates number $n_{v_1}$ to $v_1$ and $n_{v_2}$ to $v_2$, such that the group $G(\mathbb R)$ is isomorphic to the group $GU_{\mathbb C/\mathbb R}(n_{v_1} , n_{v_2} )(\mathbb R)$ (as defined in $\S 2.3$). The group $\mathrm{Aut}(\mathbb C/\mathbb Q)$ acts on the set of $\mathbb Z$-valued functions on $\mathrm{Hom}(F, \mathbb C)$ by translations. The reflex field $E$ is the fixed field of the stabilizer subgroup in $\mathrm{Aut}(\mathbb C/\mathbb Q)$ of the function $v \mapsto n_v$. Then we obtain varieties $Sh_K$ defined over the field $E$
	and these varieties represent corresponding moduli problems of Abelian varieties of PEL-type as defined in \cite{Kot92}.

	At p: Let $K \subset G(\mathbb A_f)$ be a compact open subgroup. Let $p$ be a prime number such that $G_{\mathbb Q_p}$ is unramified, and such that $K$ decomposes into a product $K_pK^p$ where $K_p \subset G(\mathbb Q_p)$ is hyperspecial and $K^p \subset G(\mathbb A_f^p)$ is small enough so that $Sh_K(G,X)$ is smooth. Let $Sh_K(G,X)$ be the Shimura variety (defined over $E$). Let $M_K$ be the moduli scheme over $\mathcal O_E \otimes_{\mathbb Z} \mathbb Z_{(p)}$ which represents the corresponding moduli problem of Abelian varieties of PEL-type (as defined in $\S 5$ of \cite{kot92p}, note that $M_{K,E}$ is $|\mathrm{Ker}^1(\mathbb Q, G)|$ copies of $Sh_K(G,X)$). We assume that $ p$ is inert in $F/\mathbb Q$ (the case that $p$ splits in extension $F/\mathbb Q$ is studied in \cite{Kret11} and \cite{Kret15}, also note in \cite{Kret11} and \cite{Kret15} truncated traces are studied, whereas we study twisted truncated traces in this article). Under our assumption, the group $G_{\mathbb Q_p}$ is unramified, and thus quasi-split, then we have $G_{\mathbb Q_p} \cong GU_{F_{\mathfrak p/\mathbb Q_p}}(A_n)$ (as defined in $\S 2.3$, we also use $GU(n)$ to denote $GU_{F_{\mathfrak p/\mathbb Q_p}}(A_n)$ when there is no ambiguity). 
	
	At $\mathbb R$: By the classification of unitary groups over real numbers, we have $G_{\mathbb R} \cong GU_{\mathbb C/\mathbb R}(s, n-s)$ for some $0 \leq  s \leq n $. If $n-s \not = s $, then the reflex field is $F$, and if $n-s = s $, then the reflex field is $\mathbb Q$.

	Let $\zeta$ be an irreducible algebraic representation over $\overline{\mathbb Q}$ of $G_{\overline{\mathbb Q}}$, associated with an $\ell$-adic local system on $\bar S$ (where $\ell \not = p$ is a prime number), denoted by $\mathcal L$. Let $\mathfrak g$ be the Lie algebra of
$G(\mathbb R)$ and let $K_{\infty}$ be the stabilizer subgroup in $G(\mathbb R) $ of the morphism $h$. Let $f_{\infty}$ be
an Euler–Poincar{\'e} function associated to $\zeta$ (as introduced in \cite{Clo85}, see also $\S 3$ of \cite{Kot92}). Note that $f_{\infty}$ has the following property: Let $\pi_{\infty}$ be a $(\mathfrak g, K_{\infty})$-
module occurring as the component at infinity of an automorphic representation $\pi$ of
$G$. Then the trace of $f_{\infty}$ against $\pi_{\infty}$ is equal to
$N_{\infty}\sum_{i=1}^{\infty}(-1)^idim(H^i(\mathfrak g, K_{\infty};\pi_{\infty}\otimes \zeta))$, where $N_{\infty}$ is a certain explicit constant as defined in $\S 3$ of \cite{Kot92}. Let $\ell$ be a prime number different from $p$ and let $\overline{\mathbb Q}_{\ell}$ be an algebraic closure of $\mathbb Q_{\ell}$ with an embedding $\overline{\mathbb Q} \subset \overline{\mathbb Q}_{\ell}$. We write $\mathcal L$ for the $\ell$-adic local system on $M_{K,\mathcal O_{E_{p}}}$ associated to the representation $\zeta \otimes \mathbb Q_{\ell}$ of $G_{\bar{\mathbb Q}_{\ell}}$ .
		
We write $q$ for the number of elements in the residue field of $\mathcal O_{E_p}$. 
 For each geometric point $x \in M_{K, \overline{\mathbb F}_q}$, we consider the isocrystal with additional $G$-structure $b(\mathcal A_x)$ associated to the abelian variety $\mathcal A_x$ corresponding to $x$ (see pp.170-171 of \cite{RaRi96}). The set of isocrystals with additional $G$-structure $\{b(\mathcal A_x): x \in  M_{K, \overline{\mathbb F}_q}(\overline {\mathbb F}_q)\}$ 
 is contained in certain finite set $B(G_{\mathbb Q_p} , \mu) \subset B(G_{\mathbb Q_p})$ (see $\S 6$ of \cite{Kot97}). For $b \in B(G_{\mathbb Q_p} )$, we define $S_b$ to be $\{x \in M_{K, \overline{\mathbb F}_q}(\overline {\mathbb F}_q): b(\mathcal A_x) = b\}$. Then $S_b$ is a subvariety of  $M_{K, \overline{\mathbb F}_q}$ (see $\S 1$ of \cite{Ra02}) and the collection $\{S_b\}_{b\in B(G_{\mathbb Q_p},\mu)}$ of subvarieties of $M_{K, \overline{\mathbb F}_q}$ is called the Newton stratification of $M_{K, \overline{\mathbb F}_q}$. 
We take $b$ to be the basic element of $B(G_{\mathbb Q_p},\mu)$, and call $S_b$ the basic stratum (in this case $S_b$ is closed in $M_{K, \overline{\mathbb F}_q}$, see $\S 1$ of \cite{Ra02}). 

Now let $S_b$ be the basic stratum of $M_{K, \overline{\mathbb F}_q}$, we use $B_K$ to denote the closed subscheme of $M_{K, \mathbb F_q}$ (defined over $\mathbb F_q$), such that $B_{K,\overline{\mathbb F}_q}$ is $S_b$ (See Theorem 3.6 of \cite{RaRi96}). Note that $Sh_{K^pK_p}(G,X)$ is compact in this case, then $M_{K, \mathbb F_q}$ is proper, therefore $B \subset M_{K, \mathbb F_q}$ is proper.
 The Hecke correspondences on $Sh_K(G,X)$ may be restricted to the subvariety $\iota : B_K \rightarrow
  M_{K, \mathbb F_q}$, and we have an action of the algebra $\mathcal H(G(\mathbb A_f)//K)$ on the cohomology spaces $H^i_{et} (B_K , \iota^*\mathcal L)$. Furthermore, the space $H^i (B_K , \iota^*\mathcal L)$ carries an action of the Galois
group $\mathrm{Gal}(\overline{\mathbb F}_{q} /\mathbb F_{q} )$, which commutes with the action of $\mathcal H(G(\mathbb A_f)//K)$.

\begin{remark}
Let $\sigma $ denote the non-trivial element of $\mathrm{Gal}(F/\mathbb Q)$. Recall that we assume that $\sigma$ agrees with $^*$ on $F$.
Note that $G(F)  = \{g \in D \otimes_{\mathbb Q} F: g\iota(g) \in F^{\times}\}$ where $\iota$ is given by  $\iota(d \otimes f) = d^* \otimes f$. 
We have $D\otimes_\mathbb Q F \cong D \times D$ with isomorphism given by $d \otimes f \mapsto (df, d\sigma (f))$. Then $\iota(d \otimes f) = d^* \otimes f$, $\sigma (d \otimes f) = d \otimes \sigma(f)$ induces involution on $D \times D$ given by $\iota(d_1, d_2) = (d_2^*, d_1^*)$ (because $\iota(d \otimes f) = d^* \otimes f \mapsto (d^*f , d^*\sigma (f)) = ((d\sigma(f))^*, (df)^*) )$ and $\sigma(d_1,d_2) = (d_2,d_1)$ (because $\sigma(d \otimes f) = d \otimes \sigma (f) \mapsto (d\sigma(f) , df) $). 

Also note that $G(F) \cong F^ \times  \times GL_n(F)$ with isomorphism given by $(d_1, d_2) \mapsto (d_1d_2^*, d_1)$ (we denote this isomorphism by $\varphi$). Then $ \sigma $ induces an action on $F^{\times} \times D^{\times}$ given by $\sigma (f,d) = (f^*, f(d^*)^{-1})$ (if we write $\varphi(d_1,d_2) = (d_1d_2^*, d_1) = (f,d)$, then we have $\sigma(f,d) = \sigma(d_1d_2^*, d_1)= \sigma(\varphi(d_1,d_2)) $
$ = \varphi(\sigma(d_1,d_2)) = \varphi(d_2,d_1)= (d_2d_1^*, d_2)$
$ =  ((d_1d_2^*)^*, (d_1d_2^*/d_1)^*) = (f^*, f^*(d^*)^{-1})$). 

Similarly, we have $G_{\mathbb Q_p} (F_{\mathfrak p}) \cong F_{\mathfrak p}^{\times} \times GL_n(F_{\mathfrak p})$, and the non-trivial element of $\mathrm{Gal}(F_{\mathfrak p}/\mathbb Q_p)$ (denoted by $f \mapsto \bar f$ for $f \in F_{\mathfrak p}$) induces an action on $F_{\mathfrak p}^{\times} \times GL_n(F_{\mathfrak p})$ given by $(f,g) \mapsto (\bar f, \bar fA_n^{-1}(\bar g ^t)^{-1}A_n)$, where $A_n$ is the anti-diagonal matrix with entries $1$ and $g \mapsto \bar g^t$ is the conjugation transpose. 
\end{remark}

Let $\alpha$ be a positive integer and let $E_{p,\alpha}/E_p$ be an unramified extension of degree $\alpha$. We write $\phi_{\alpha}$ for the characteristic function of the double coset $G(\mathcal O_{E_{p,\alpha} })\mu (p^{-1})G(\mathcal O_{E_{p,\alpha} })$ in $G(E_{p,\alpha})$. The function $\phi'_{\alpha} \in \mathcal H^{\mathrm{unr}}(\mathbb Q_p)$ is obtained from $\phi_{\alpha}$ via base change from $\mathcal H^{\mathrm{unr}}(E_{p,\alpha})$ to $\mathcal H^{\mathrm{unr}}(\mathbb Q_p)$ (as defined as p239 of \cite{Kot86}). We call the functions $\phi'_{\alpha}$ the functions of Kottwitz. 

\begin{proposition}
\begin{enumerate}
    \item If $n-s \not= s$ (in which case the reflex field is $F$), then $\phi'_{\alpha} = f_{n \alpha s}$ (recall that we define $f_{n \alpha s}$ in Definition 2.4.1).
    \item If $ n-s = s$ (in which case the reflex field is $\mathbb Q$) and $\alpha$ is even, then $\phi'_{\alpha} = f_{n \frac \alpha2 s}$. 
    \item If $ n-s = s$ (in which case the reflex field is $\mathbb Q$) and $\alpha$ is odd, then $\phi'_{\alpha} = p^{-\frac \alpha2 s(n-s)}f_{n\alpha s}$ (recall that we define $\varphi_{n \alpha s}$ in Definition 2.4.1).
\end{enumerate}

\end{proposition}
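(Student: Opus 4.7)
The plan is to prove Proposition 4.1.1 by computing the Satake transform of $\phi_\alpha$ on $G(E_{p,\alpha})$ directly and then matching the result, via the Satake-theoretic description of the base change homomorphism of \cite{Kot86}, with the explicit Satake transform of $f_{n\alpha s}$ given in Definition 2.4.1. The computation is essentially that of $\S 4.1$ and $\S 4.2$ of \cite{Morel08}, which already underlies Proposition 2.4.1; what remains is to apply it in each of the three cases and bookkeep the normalization constants.

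First, I identify $G$ over $E_{p,\alpha}$ using Remark 4.1.1. In Case 1 the reflex field is $F$, so $E_p = F_p$, and $G(E_{p,\alpha}) \cong E_{p,\alpha}^\times \times GL_n(E_{p,\alpha})$ because $F_p \subset E_{p,\alpha}$. In Cases 2 and 3 the reflex field is $\mathbb Q$ and $E_p = \mathbb Q_p$, so $E_{p,\alpha}$ contains $F_p$ if and only if $\alpha$ is even. Under these splittings, the cocharacter $\mu$ associated to the signature $(s, n-s)$ becomes the minuscule cocharacter $z \mapsto (z, \mathrm{diag}(z^{(s)}, 1^{(n-s)}))$ of $\mathbb G_m \times GL_n$, so Macdonald's formula expresses the Satake transform of $\phi_\alpha = 1_{K \mu(p^{-1}) K}$ on $G(E_{p,\alpha})$ in the form $q^{\alpha \langle \rho, \mu \rangle}\, Y \sum_{|I| = s} Y^I$ (using the Satake coordinates of $\S 2.3$), with $Y$ the Satake variable of the $\mathbb G_m$-factor.

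Second, I apply the base change homomorphism $b_\alpha: \mathcal H^{\mathrm{unr}}(G(E_{p,\alpha})) \to \mathcal H^{\mathrm{unr}}(G(\mathbb Q_p))$ of \cite{Kot86}. Via the Satake isomorphism, $b_\alpha$ corresponds to restricting Satake parameters along the diagonal embedding and raising to the $\alpha$-th power in the Frobenius direction. Combining this with the explicit description of $\mathcal H^{\mathrm{unr}}(GU_{F_p/\mathbb Q_p}(n)(\mathbb Q_p))$ at the end of $\S 2.3$ yields the Satake transform $q^{\alpha s(n-s)}\, X^\alpha \sum_{|I| = s} (X^I)^\alpha$ of $f_{n\alpha s}$ in Case 1. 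In Case 2 the relevant splitting extension $E_{p,\alpha}/F_p$ has degree $\alpha/2$ (because $\alpha$ is even and $F_p \subset E_{p,\alpha}$), so the identical argument, with $\alpha$ replaced by $\alpha/2$ throughout, gives $f_{n \frac{\alpha}{2} s}$.

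The main obstacle is Case 3, where $\alpha$ is odd and the reflex field is $\mathbb Q$. Here $E_{p,\alpha}$ does not contain the splitting field $F_p$ of $G$, so the base change factors through the composite $E_{p,\alpha} \cdot F_p$, which has degree $2\alpha$ over $\mathbb Q_p$ but only degree $\alpha$ over $F_p$. The correction factor $p^{-\frac{\alpha}{2} s(n-s)}$ arises from the mismatch between the two normalizations $|p|_{E_{p,\alpha}} = p^{-\alpha}$ and $|p|_{F_p} = p^{-2}$ used in Macdonald's formula — equivalently, from comparing $q^{\alpha s(n-s)}$ evaluated with $q = p^\alpha$ versus $q = p^2$ in the two copies of the Satake transform — and verifying this identity rigorously requires careful Galois-equivariant tracking of the twisted Frobenius through the base change homomorphism.
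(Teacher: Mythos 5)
Your overall route --- compute the Satake transform of $\phi_\alpha$ over $E_{p,\alpha}$ and push it through the unramified base change map, relying on the computations of \S 4.1--4.2 of Morel --- is the same as the paper's, and your treatment of Cases 1 and 2 (where $G$ splits over $E_{p,\alpha}$, which has degree $\alpha$, resp.\ $\alpha/2$, over $F_p$) matches the paper's argument in outline. The gap is in Case 3, which is exactly where the content of the proposition sits (the factor $p^{-\frac{\alpha}{2}s(n-s)}$). Your Macdonald-formula expression $q^{\alpha\langle\rho,\mu\rangle}\,Y\sum_{|I|=s}Y^I$ is set up for the split group $\mathbb G_m\times GL_n$, but for odd $\alpha$ the group $G_{E_{p,\alpha}}$ is the non-split quasi-split unitary similitude group, so that formula does not apply; what is needed (and what the paper takes from Proposition 4.2.1 of Morel) is the Satake transform of $1_{K\mu(p^{-1})K}$ on the unitary group over $E_{p,\alpha}$, namely $p^{\alpha s(n-s)/2}\sum_{\sigma\in\{\pm1\}^k\rtimes S_k}\sigma(X^{\alpha}X_1^{\alpha}\cdots X_s^{\alpha})=p^{\alpha s(n-s)/2}X^{\alpha}\sum_{|I|=s}(X^I)^{\alpha}$, after which the base change map of \S 4.2 of Morel and the normalization $p^{\alpha s(n-s)}$ built into Definition 2.4.1 (with $q=p$ there) give exactly $p^{-\frac{\alpha}{2}s(n-s)}f_{n\alpha s}$. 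You instead propose to factor the base change through the composite $E_{p,\alpha}\cdot F_p$ and explicitly defer the verification (``careful Galois-equivariant tracking of the twisted Frobenius''), so the decisive step of the only nontrivial case is not actually carried out.

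Moreover, your heuristic for where the correction factor comes from is wrong as stated: comparing ``$q^{\alpha s(n-s)}$ evaluated with $q=p^{\alpha}$ versus $q=p^{2}$'' would produce a ratio $p^{\alpha^2 s(n-s)}$ to $p^{2\alpha s(n-s)}$, which is not $p^{-\frac{\alpha}{2}s(n-s)}$. The factor actually arises from the mismatch between the half-sum-of-roots constant $q_{E_{p,\alpha}}^{\langle\rho,\mu\rangle}=p^{\alpha s(n-s)/2}$ occurring in the Satake transform of the double-coset function and the constant $p^{\alpha s(n-s)}$ in the definition of $f_{n\alpha s}$. A smaller omission: you never address the convention $h=h'^{-1}$ (the reciprocal of $h'$ in the Shimura datum), which the paper flags precisely when citing Morel and which determines whether $\mu$ or $\mu^{-1}$, hence $\mu(p^{-1})$, is the relevant minuscule cocharacter; without fixing this the sign of the exponents in the Satake parameters is ambiguous.
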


\begin{proof}
Firstly, note that for an arbitrary finite unramified extension $L/\mathbb Q_p$, the group $G(L)$ splits if and only if $[L:\mathbb Q_p]$ is even. If $G(E_{p,\alpha})$ splits, then $\phi'_{\alpha} = \varphi_{n 1 s}$ (recall that we define $\varphi_{n \alpha s}$ in $\S$2.4). For the proof, see Proposition 4.2.1 of \cite{Morel08}, and note that we take the reciprocal of $h'$ when we define Shimura datum at the beginning of this section.

If $G(E_{p,\alpha})$ does not split, the Stake transform of $ \phi'_{\alpha} $ is $$ p^{\alpha s(n-s)/2}\sum_{\sigma \in \{\pm 1\}^k \rtimes S_k} \sigma (X^{\alpha}X_1^{\alpha} \ldots X_s^{\alpha}) =  p^{\alpha s(n-s)/2} X^{\alpha}\sum\limits_{\substack{I \subset \{1,2, \ldots, n\}\\|I| = s}}(X^I)^{\alpha}$$ where $X^I : = \prod_{i \in I}X_{i}$ (see Proposition 4.2.1 of \cite{Morel08}, for the definition of $X, X_i$ see Definition 2.4.1, also recall that we define the action of $\{\pm 1\}^k \rtimes S_k$ on $X^{\alpha} X_1^{\alpha} \ldots X_s^{\alpha}$ at the end of $\S2.3$). Thus
the Stake transform of $ \phi'_{\alpha} $ is $ p^{-\alpha s(n-s)/2} f_{n 1 s}$ if $G(E_{p,\alpha})$ does not split. 

By applying the base change map as defined in $\S$4.2 of \cite{Morel08}, we get our proposition.  
\end{proof}

\subsection{Cohomology of the basic stratum}

Let $f^{\infty p} \in \mathcal H(G(\mathbb A^{p \infty})//K)$ to be an arbitrary Hecke operator, and let $f_{\infty} \in \mathcal H(G(\mathbb R))$ be an Euler–Poincar{\'e} function associated to $\zeta $. Then we have the following proposition (Proposition 9 of \cite{Kret11}):
	
\begin{proposition}
If $n-s \not = s$, the reflex field is $F$, and we define $q = |\mathcal O_F/(p)| = p^2$. We have $$\mathrm{Tr}(f^{\infty p}\times \Phi_{\mathfrak p}^{\alpha}, \sum_{i = 1}^{\infty}(-1)^iH^i_{et}(B_{\bar {\mathbb F}_q}, \iota^*\mathcal L)) = |\mathrm{ker}^1(\mathbb Q, G)| \sum\limits_{x' \in Fix_{\Phi_{\mathfrak p} ^{\alpha}\times f^{\infty p}}(\bar{\mathbb F}_q)}   \mathrm{Tr}(\Phi_{\mathfrak p} ^{\alpha}\times f^{\infty p}, \mathcal L_x) $$ 
$$=  | \mathrm{ker}^1(\mathbb Q, G)| \mathrm{Tr}(C_cf_{n\alpha s} f^{\infty p} f_{\infty} , \mathcal A(G))$$ for $\alpha$ sufficiently large. 

If $n-s = s$, the reflex field is $\mathbb Q$. We have $$\mathrm{Tr}(f^{\infty p}\times \Phi_{\mathfrak p}^{\alpha}, \sum_{i = 1}^{\infty}(-1)^iH^i_{et}(B_{\bar {\mathbb F}_p}, \iota^*\mathcal L)) = | \mathrm{ker}^1(\mathbb Q, G) | \sum\limits_{x' \in Fix_{\Phi_{\mathfrak p} ^{\alpha}\times f^{\infty p}}(\bar{\mathbb F}_p)}   \mathrm{Tr}(\Phi_{\mathfrak p} ^{\alpha}\times f^{\infty p}, \mathcal L_x)  $$
$$= | \mathrm{ker}^1(\mathbb Q, G) |\mathrm{Tr}(C_cf_{n \frac \alpha2 s} f^{\infty p} f_{\infty}, \mathcal A(G))$$ for $\alpha$ even and sufficiently large, and $$\mathrm{Tr}(f^{\infty p}\times \Phi_{\mathfrak p}^{\alpha}, \sum_{i = 1}^{\infty}(-1)^iH^i_{et}(B_{\bar {\mathbb F}_p}, \iota^*\mathcal L)) = | \mathrm{ker}^1(\mathbb Q, G) | \sum\limits_{x' \in Fix_{\Phi_{\mathfrak p} ^{\alpha}\times f^{\infty p}}(\bar{\mathbb F}_p)}   \mathrm{Tr}(\Phi_{\mathfrak p} ^{\alpha}\times f^{\infty p}, \mathcal L_x)  $$
$$= | \mathrm{ker}^1(\mathbb Q, G) |\mathrm{Tr}(C_c(p^{-\frac \alpha2 s(n-s)}f_{n\alpha  s} )f^{\infty p} f_{\infty}, \mathcal A(G))$$ for $\alpha$ odd and sufficiently large.
\end{proposition}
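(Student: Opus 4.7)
The plan is to proceed in three stages, following the adaptation of the Langlands–Kottwitz method to the basic stratum given in \cite{Kret11}. First I would invoke the Grothendieck–Lefschetz trace formula to convert the left-hand side into a sum over fixed points: for $\alpha$ large enough that $\Phi_{\mathfrak p}^{\alpha} \times f^{\infty p}$ has only isolated fixed points on $B_{\overline{\mathbb F}_q}$, we have
\[
\mathrm{Tr}(f^{\infty p}\times \Phi_{\mathfrak p}^{\alpha}, \sum_{i}(-1)^iH^i_{et}(B_{\overline{\mathbb F}_q}, \iota^*\mathcal L)) = \sum_{x \in \mathrm{Fix}_{\Phi_{\mathfrak p}^{\alpha}\times f^{\infty p}}(\overline{\mathbb F}_q)} \mathrm{Tr}(\Phi_{\mathfrak p}^{\alpha}\times f^{\infty p}, \mathcal L_x).
\]
Since $B$ is closed in $M_{K,\mathbb F_q}$ and the Shimura variety is compact (as $D$ is a division algebra), the standard properness hypotheses hold, so this step is just Lefschetz for a proper scheme.

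Second, I would apply the truncation of Kottwitz's point-counting formula introduced in Proposition 9 of \cite{Kret11}: the fixed points on the full special fiber $M_{K,\overline{\mathbb F}_q}$ admit Kottwitz's expression in terms of a sum of products of orbital integrals for $G(\mathbb A^p)$, a twisted orbital integral at $p$ against the function of Kottwitz $\phi_{\alpha}$, and an archimedean term involving $f_{\infty}$ (see \S 19 of \cite{kot92p}). To isolate the contribution coming from points lying in $B$, one multiplies the spherical Hecke function at $p$ by the characteristic function of compact elements, which on the automorphic side manifests as $C_c$ acting on the $p$-component. Combined with the factor $|\mathrm{ker}^1(\mathbb Q, G)|$ that accounts for the number of copies of $Sh_{K}(G,X)$ inside $M_{K,E}$ (see \S 8 of \cite{kot92p}), this rewrites the fixed-point sum as
\[
|\mathrm{ker}^1(\mathbb Q,G)| \mathrm{Tr}(C_c \phi'_{\alpha}\, f^{\infty p}\, f_{\infty}, \mathcal A(G)),
\]
where $\phi'_{\alpha} \in \mathcal H^{\mathrm{unr}}(G(\mathbb Q_p))$ is the base change of the characteristic function of the Kottwitz double coset from $G(E_{p,\alpha})$ to $G(\mathbb Q_p)$.

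Third, I would identify $\phi'_{\alpha}$ explicitly via Proposition 4.1.1. Since $p$ is inert in $F/\mathbb Q$, the group $G(E_{p,\alpha})$ splits exactly when $\alpha[E_p:\mathbb Q_p]$ is even. When $n-s \neq s$ the reflex field is $F$, so $E_p = F_{\mathfrak p}$ has degree $2$ over $\mathbb Q_p$, $G(E_{p,\alpha})$ always splits, and Proposition 4.1.1 gives $\phi'_{\alpha} = f_{n\alpha s}$. When $n-s=s$ the reflex field is $\mathbb Q$, so $E_p = \mathbb Q_p$ and we split into two subcases: for $\alpha$ even, the extension $E_{p,\alpha}/\mathbb Q_p$ has even degree, $G$ splits over it, and Proposition 4.1.1 yields $\phi'_{\alpha} = f_{n(\alpha/2)s}$; for $\alpha$ odd, the extension has odd degree, $G(E_{p,\alpha})$ does not split, and Proposition 4.1.1 produces the normalization factor $p^{-\alpha s(n-s)/2} f_{n\alpha s}$. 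Substituting these into the trace formula expression completed in the second stage yields each of the three claimed identities.

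The main obstacle is the second step: truncating Kottwitz's formula so that only the basic-stratum fixed points survive. This requires showing that multiplication by $C_c^{G(\mathbb Q_p)}$ at $p$ precisely cuts out the contribution of basic isocrystals, which is exactly the geometric content of Proposition 9 of \cite{Kret11}, and one must check that Kret's argument goes through verbatim in the present signature-$(s,n-s)$ setup. The archimedean identity between orbital integrals and $\mathrm{Tr}(f_\infty, \pi_\infty)$ is controlled by the defining property of the Euler–Poincaré function from \cite{Clo85}, and the fact that only $f_{n\alpha s}$ (and not a general Kottwitz function) appears is what allows the explicit passage through base change in the subsequent sections.
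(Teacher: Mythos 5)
Your proposal is correct and follows essentially the same route as the paper, whose proof simply combines Proposition 9 of \cite{Kret11} (the truncated Kottwitz point-counting formula, giving the fixed-point sum and the compact-trace expression with the factor $|\mathrm{ker}^1(\mathbb Q,G)|$) with Proposition 4.1.1 identifying the function of Kottwitz $\phi'_{\alpha}$ as $f_{n\alpha s}$, $f_{n\frac{\alpha}{2}s}$, or $p^{-\frac{\alpha}{2}s(n-s)}f_{n\alpha s}$ according to the three cases. Your expanded case analysis via the splitting of $G(E_{p,\alpha})$ matches the paper's argument for Proposition 4.1.1, so no further comment is needed.
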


\begin{proof}
We use proposition 9 of \cite{Kret11} and Proposition 4.1.1.
\end{proof}

We assume that $n-s =s$. For $k \in \mathbb Z_{>0}$, let $B_{k,1}, \ldots B_{k,i_k}$ denote the irreducible components of $B_{\mathbb F_{p^k}}$. We use $d_B$ to denote the dimension of $B$, and we use $C_{k,l}$ to denote the number of irreducible components of $B_{k,l,\overline{\mathbb F}_p}$. Then we have the following corollary:

\begin{corollary}
We assume that $n-s = s$, then we have:
\begin{enumerate}
    \item If $|B_{\mathbb F_{p^k}}| \not = 0$ for some $k \in \mathbb Z_{>0,odd}$, then $d_B \geq \frac{s(n-s)}4$. 
    \item If $C_{k,l}$ is odd for some $k \in \mathbb Z_{>0,odd}$ and $1 \leq l \leq i_k$, then $d_B = \frac{s(n-s)}2$. 
\end{enumerate}
\end{corollary}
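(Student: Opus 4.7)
The common lever for both assertions is the identity recorded in Remark 1.3.1,
\begin{equation}
|B(\mathbb{F}_{p^{2k}})| \;=\; p^{\frac{k}{2}s(n-s)}\,|B(\mathbb{F}_{p^{k}})| \qquad \text{for every sufficiently large odd } k, \tag{$\ast$}
\end{equation}
together with the Deligne weight bound $|B(\mathbb{F}_{p^{m}})|\leq C\, p^{m d_{B}}$, which holds uniformly in $m$ because $B$ is proper over $\mathbb{F}_{p}$.

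For (1), my plan is to propagate the hypothesis along the inclusion $B(\mathbb{F}_{p^{k_{0}}})\subset B(\mathbb{F}_{p^{k_{0}(2j+1)}})$, which gives $|B(\mathbb{F}_{p^{k}})|\geq 1$ for every $k$ in the arithmetic progression $\{k_{0}(2j+1):j\geq 0\}$; the large elements of this progression lie in the range where $(\ast)$ applies. Inserting into $(\ast)$ yields $|B(\mathbb{F}_{p^{2k}})|\geq p^{\frac{k}{2}s(n-s)}$, and combining with the Deligne upper bound, dividing by $k$, and letting $k\to\infty$ produces $d_{B}\geq s(n-s)/4$.

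For (2), the starting point will be a short orbit lemma. When $k_{0}$ and $C_{k_{0},l_{0}}$ are both odd, $\Phi_{p}^{k_{0}}$ acts as a single $C_{k_{0},l_{0}}$-cycle on the geometric irreducible components $W_{1},\ldots,W_{C_{k_{0},l_{0}}}$ of $B_{k_{0},l_{0},\overline{\mathbb{F}}_{p}}$; writing $o_{i}$ for the $\Phi_{p}$-orbit length of $W_{i}$, the minimality of $C_{k_{0},l_{0}}$ identifies $C_{k_{0},l_{0}}=o_{i}/\gcd(o_{i},k_{0})$, so $o_{i}=C_{k_{0},l_{0}}\cdot\gcd(o_{i},k_{0})$ is a product of two odd numbers, hence odd. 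I would then invoke equidimensionality of the basic stratum of a PEL-type Shimura variety (a standard fact; see for instance the works of Hamacher and Viehmann) to conclude $\dim W_{i}=d_{B}$. Choosing $K$ to be an odd multiple of $o_{i}$ and applying Lang--Weil to $W_{i}$ gives $|B(\mathbb{F}_{p^{K}})|\geq c\,p^{K d_{B}}$ for such $K$ large; inserting into $(\ast)$ and comparing with $|B(\mathbb{F}_{p^{2K}})|\leq C\,p^{2K d_{B}}$ forces $d_{B}\geq s(n-s)/2$. For the matching upper bound, the same $W_{i}$ is also fixed by $\Phi_{p}^{2K}$ whenever $K$ is an odd multiple of $o_{i}$ (because $o_{i}$ is odd), so Lang--Weil produces $|B(\mathbb{F}_{p^{2K}})|\geq c'\,p^{2K d_{B}}$; rewriting $(\ast)$ as $|B(\mathbb{F}_{p^{K}})|=p^{-\frac{K}{2}s(n-s)}|B(\mathbb{F}_{p^{2K}})|$ and bounding the left side by $C\,p^{K d_{B}}$ then yields $d_{B}\leq s(n-s)/2$.

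The main technical hurdle will be the arithmetic bookkeeping, namely verifying that the set of $K$ lying simultaneously in the ``sufficiently large odd'' range of $(\ast)$, that are multiples of $o_{i}$, and for which both Lang--Weil estimates are valid, is indeed infinite (this also handles the corner case where $s(n-s)$ is odd, in which the irrationality of $p^{\frac{k}{2}s(n-s)}$ forces the hypothesis of either statement to be vacuous). Once these compatibility conditions are arranged, both inequalities reduce to a direct comparison of $p$-adic exponents.
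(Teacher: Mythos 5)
Your proof of (1) is the paper's own argument: propagate nonemptiness along odd multiples of $k_{0}$, insert into the doubling identity $(\ast)$ (which the paper gets from Proposition 4.2.1 with $f^{\infty p}=1_{K^{p}}$ and trivial coefficients), and compare with the elementary bound $|B(\mathbb F_{p^{m}})|\le Cp^{md_{B}}$; this is correct. For (2) your skeleton also coincides with the paper's --- squeeze $d_{B}$ from both sides by feeding a lower and an upper Lang--Weil-type estimate into $(\ast)$ --- but the implementations differ. The paper disposes of both estimates at once by citing Musta\c{t}\u{a} (Corollary 6.4 and Proposition 6.7 of the cited notes), asserting $c_{1}p^{\alpha d_{B}}<|B(\mathbb F_{p^{\alpha}})|<c_{2}p^{\alpha d_{B}}$ for all odd $\alpha$, and leaves implicit where the oddness of $k$ and of $C_{k,l}$ enters; you instead manufacture the lower bound by hand via the orbit identity $C_{k_{0},l_{0}}=o_{i}/\gcd(o_{i},k_{0})$ (correct, by transitivity of $\mathrm{Gal}(\overline{\mathbb F}_{p}/\mathbb F_{p^{k_{0}}})$ on the geometric components of $B_{k_{0},l_{0}}$), so that $o_{i}$ is odd, $W_{i}$ descends to an odd-degree field, and Lang--Weil applies along odd $K$ divisible by $o_{i}$. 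The genuinely different ingredient is that your route then needs $\dim W_{i}=d_{B}$, which you import from equidimensionality of the basic stratum (Hamacher); this is not gratuitous, since the hypothesis concerns an arbitrary component $B_{k,l}$, and without $\dim W_{i}=d_{B}$ your two comparisons only bound $\dim W_{i}$, not $d_{B}$, against $s(n-s)/2$. So your version makes explicit the descent and parity bookkeeping that the paper's citation glosses over (an exponent-$d_{B}$ lower bound along all odd $\alpha$ does presuppose a top-dimensional geometrically irreducible component living over odd-degree fields), at the price of a geometric input the paper never invokes. Two small points: $\Phi_{p}^{2K}$ fixes $W_{i}$ simply because $o_{i}\mid K$, not because $o_{i}$ is odd --- oddness is what lets you take $K$ itself odd so that $(\ast)$ applies; and your observation that odd $s(n-s)$ makes both hypotheses unsatisfiable agrees with the remark the paper places immediately after the corollary.
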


\begin{proof}
The proof of (1):
We take $k \in \mathbb Z_{>0}$ to be a sufficiently large odd integer such that $|B_{\mathbb F_{p^k}}| \not = 0$. We take $f^{\infty p }$ to be the characteristic function of $K^p$ (denoted by $1_{K^p}$) and we take $\zeta$ to be the trivial representation $G_{\overline{\mathbb Q}}$ (then the associated local system $\mathcal L$ is $\overline{\mathbb Q}_{\ell}$). Applying Proposition 4.2.1, then for an arbitrary odd $\alpha \in \mathbb Z_{>0}$, we have $$|B(\mathbb F_{p^{\alpha k}})| = \mathrm{Tr}(1_{K^p}\times \Phi_{\mathfrak p}^{\alpha k}, \sum_{i = 1}^{\infty}(-1)^iH^i_{et}(B_{\bar {\mathbb F}_p}, \overline{\mathbb Q}_{\ell})) $$
$$=  | \mathrm{ker}^1(\mathbb Q, G) |\mathrm{Tr}(C_c(p^{-\frac {\alpha k}2 s(n-s)}\varphi_{n (\alpha k)  s} )f^{\infty p} f_{\infty}, \mathcal A(G)),$$ 
$$|B(\mathbb F_{p^{2\alpha k}})| = \mathrm{Tr}(1_{K^p}\times \Phi_{\mathfrak p}^{2\alpha k}, \sum_{i = 1}^{\infty}(-1)^iH^i_{et}(B_{\bar {\mathbb F}_p}, \overline{\mathbb Q}_{\ell})) =  | \mathrm{ker}^1(\mathbb Q, G) |\mathrm{Tr}(C_c\varphi_{n (\alpha k)  s} f^{\infty p} f_{\infty}, \mathcal A(G)).$$ 
Then we may conclude that $|B(\mathbb F_{p^{2\alpha k}})| = p^{\frac {\alpha k}2 s(n-s)}  |B(\mathbb F_{p^{\alpha k}})|$. By Corollary 6.4 of \cite{Mustata}, there exists $c \in \mathbb R_{>0}$ such that $ |B(\mathbb F_{p^{\alpha}})| < c p^{\alpha d_B} $ for all odd $\alpha \in \mathbb Z_{>0}$. Then for all odd $\alpha \in \mathbb Z_{>0}$, we have $$ p^{\frac {\alpha k}2 s(n-s)} < p^{\frac {\alpha k}2 s(n-s)}  |B(\mathbb F_{p^{\alpha k}})| = |B(\mathbb F_{p^{2\alpha k}})| < c p^{2\alpha k d_B} ,$$ then we may conclude that $d_B > \frac{s(n-s)}4$. 

The proof of (2): We assume that $C_{k,l}$ is odd and we take $k \in \mathbb Z_{>0}$ to be a sufficiently large odd integer such that $C_{k,l} | k $. We take $f^{\infty p }$ to be the characteristic function of $K^p$ (denoted by $1_{K^p}$) and we take $\zeta$ to be the trivial representation $G_{\overline{\mathbb Q}}$. Repeat the proof of (1), we have $|B(\mathbb F_{p^{2\alpha k}})| = p^{\frac {\alpha k}2 s(n-s)}  |B(\mathbb F_{p^{\alpha k}})|$ for all odd $\alpha \in \mathbb Z_{>0}$. By Corollary 6.4 and Proposition 6.7 of \cite{Mustata}, there exists $c_1, c_2 \in \mathbb R_{>0}$ such that $ c_1 p^{\alpha d_B}< |B(\mathbb F_{p^{\alpha}})| < c_2 p^{\alpha d_B} $ for all odd $\alpha \in \mathbb Z_{>0}$. Then for all odd $\alpha \in \mathbb Z_{>0}$, we have $$ c_1 p^{\frac {\alpha k}2 s(n-s) + \alpha k d_B} < p^{\frac {\alpha k}2 s(n-s)}  |B(\mathbb F_{p^{\alpha k}})| = |B(\mathbb F_{p^{2\alpha k}})| < c_2 p^{2\alpha k d_B} ,$$ therefore $c_1 p^{\frac {\alpha k}2 s(n-s) } < c_2 p^{\alpha k d_B}$ for all odd $\alpha \in \mathbb Z_{>0}$ and we have $d_B \geq \frac{s(n-s)}2$. 

For all odd $\alpha \in \mathbb Z_{>0}$, we also have $$ c_1 p^{2\alpha k d_B} < |B(\mathbb F_{p^{2\alpha k}})|  =  p^{\frac {\alpha k}2 s(n-s)}  |B(\mathbb F_{p^{\alpha k}})| <  c_2 p^{\frac {\alpha k}2 s(n-s) + \alpha k d_B},$$ therefore $ c_1 p^{\alpha k d_B} < c_2 p^{\frac {\alpha k}2 s(n-s) }$ for all odd $\alpha \in \mathbb Z_{>0}$, and we have $d_B \leq \frac{s(n-s)}2$. Then we may conclude that $d_B = \frac{s(n-s)}2$.
\end{proof}

\begin{remark}
Assume $n-s = s$ and $s(n-s)$ is odd. By (2) of Corollary 4.2.1, we have that $C_{k,l}$ is even for all odd $k \in \mathbb Z_{>0}$.
\end{remark}
 
Let $D^{\times}$ denote the algebraic group over $\mathbb Q$ given by $D^{\times} (A) = (D\otimes_{\mathbb Q}A)^{\times}$, and let $F^{\times}$ denote the algebraic group over $\mathbb Q$ given by $F^{\times} (A) = (F\otimes_{\mathbb Q}A)^{\times}$. 
	
\begin{proposition}
Let $H $ denote $\mathrm{Res}_{\mathbb Q}^F(F^{\times}\times D^{\times})$ (as an algebraic group over $\mathbb Q$). We suppose that $ f= f_{ep}^{G} \otimes f^{\infty} \in \mathcal H(G({\mathbb A}))$ and $\varphi = \varphi_{ep}^{H, \theta} \otimes \varphi^{\infty} \in  \mathcal H(H({\mathbb A}))$ are associated in the sense of base change (i.e., the functions $f_v$ and $\varphi_v$ are associated for all places $v$, as defined in $\S 3.1 $ of \cite{Lab99}), where $ f_{ep}^{G}$ is an Euler-Poincare function on $G(\mathbb R)$ associated to the trivial representation, and $ \varphi_{ep}^{H, \theta}$ is a Lefschetz function for $ \theta$ on $H(\mathbb R)$ (as introduced in p.120 of \cite{Clo99}). Then we have $$\mathrm{Tr} (r (f), \mathcal A(G)) = \mathrm{Tr}(R(\varphi)A_\theta  , \mathcal A( F^{\times} \times D^{\times} ))$$ where $r, R$ are right regular representations, and $A_{\theta}$ is defined by $\alpha \mapsto \alpha \circ \theta$ for $\alpha \in \mathcal A  (F^{\times} \times D^{\times} ))$, and $\theta$ is the involution arising from the Galois action (see Remark 4.1.1).
\end{proposition}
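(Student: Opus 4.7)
The plan is to apply the Arthur--Selberg invariant trace formula to $G$ and the $\theta$-twisted invariant trace formula to $H$, reduce both to their elliptic (resp.\ $\theta$-elliptic) parts by exploiting the vanishing properties of the Euler--Poincar\'e function $f_{ep}^{G}$ and the Lefschetz function $\varphi_{ep}^{H,\theta}$ at infinity, and then match the two elliptic expansions term by term via the base change identification of orbital integrals. This is essentially the strategy of Theorem A3.1 of \cite{Clo99}, adapted to the specific pair $(G,H)$ in the present setting.

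First I would expand $\mathrm{Tr}(r(f),\mathcal A(G))$ using Arthur's invariant trace formula. By the defining property of $f_{ep}^{G}$ (see \cite{Clo85}), its orbital integrals on $G(\mathbb R)$ are supported on the elliptic semisimple locus, and since the Shimura datum $(G,X)$ is of compact type (so $G$ has no proper $\mathbb Q$-parabolic modulo centre), all parabolic, unipotent, and non-discrete spectral contributions in the fine expansion vanish. The geometric side collapses to the elliptic term
$$T_{e}^{G}(f)\;=\;\sum_{\gamma}\mathrm{vol}(I_{\gamma}(\mathbb Q)\backslash I_{\gamma}(\mathbb A))\,O_{\gamma}(f),$$
the sum ranging over elliptic semisimple $\mathbb Q$-conjugacy classes $\gamma$ of $G(\mathbb Q)$, with $I_{\gamma}$ the centraliser. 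The analogous step on the twisted side uses that the $\theta$-twisted orbital integrals of $\varphi_{ep}^{H,\theta}$ on $H(\mathbb R)$ vanish off the $\theta$-elliptic $\theta$-semisimple locus (\cite{Clo99}, p.\ 120), and that any $\theta$-stable $\mathbb Q$-parabolic of $H=\mathrm{Res}_{F/\mathbb Q}(G_F)$ descends to a $\mathbb Q$-parabolic of $G$, so it must be trivial. Hence the twisted trace collapses to
$$T_{e}^{H,\theta}(\varphi)\;=\;\sum_{\delta}\mathrm{vol}(I_{\delta,\theta}(\mathbb Q)\backslash I_{\delta,\theta}(\mathbb A))\,\mathrm{TO}_{\delta}(\varphi),$$
indexed by $\theta$-elliptic $\theta$-semisimple $\theta$-conjugacy classes $\delta$ of $H(\mathbb Q)$, where $I_{\delta,\theta}$ is the twisted centraliser.

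Next I would match these two expansions by base change, following \cite{Lab99}. The norm map $\delta\mapsto N\delta$ gives a bijection between the two indexing sets, identifies $I_{\delta,\theta}$ with $I_{N\delta}$ (so the volume factors coincide), and, combined with the hypothesis that $f_v$ and $\varphi_v$ are associated at every place, yields the local identity $O_{N\delta}(f_v)=\mathrm{TO}_{\delta}(\varphi_v)$, whose global product gives $O_{N\delta}(f)=\mathrm{TO}_{\delta}(\varphi)$. Summing over $\delta$ produces $T_{e}^{G}(f)=T_{e}^{H,\theta}(\varphi)$, which is the desired identity.

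The main obstacle is the simultaneous verification that all non-elliptic terms of the two (coarse) trace formulas truly vanish and that the spectral sides pick up no continuous contribution, so that Arthur's formulas reduce to the clean simple forms displayed above. In particular one must carefully track the parabolic contributions in the twisted setting and check that the $\theta$-stable parabolic structure of $H$ mirrors that of $G$; this is the exact point at which the proof departs from \cite[Theorem A3.1]{Clo99} and where the assumption that $(G,X)$ is of compact type is essential.
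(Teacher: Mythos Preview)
Your reduction of both sides to purely elliptic expressions is correct and matches the paper: because $G$ and $H$ are anisotropic modulo centre, the full trace formulas collapse to their elliptic parts with no further work, so your discussion of ``non-elliptic terms'' as the main obstacle is misplaced.

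The genuine gap is in your matching step. The hypothesis that $f$ and $\varphi$ are associated in the sense of \cite{Lab99} does \emph{not} give you $O_{N\delta}(f_v)=\mathrm{TO}_{\delta}(\varphi_v)$ for ordinary orbital integrals on $G$; it matches twisted orbital integrals on $H$ with \emph{stable} orbital integrals on the quasi-split inner form $G^{*}$. Likewise the norm map $N$ takes $\theta$-conjugacy classes in $H(\mathbb Q)$ to stable classes in $G^{*}(\mathbb Q)$, not to conjugacy classes in $G(\mathbb Q)$, and in general this is neither injective nor surjective onto the $G$-side. Finally $I_{\delta,\theta}$ is only an \emph{inner form} of $I_{N\delta}$, so the identification of volume factors is not automatic. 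Your one-step comparison therefore does not go through as written.

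The paper's proof handles this by inserting $G^{*}$ as an intermediary. First, using that $H^{1}(\mathbb R,(H_{\mathbb R})_{\mathrm{SC}})=1$ so that $\varphi_{ep}^{H,\theta}$ is stabilizing, Labesse's stabilization (Theorem~4.3.4 and Proposition~4.3.2 of \cite{Lab99}) yields $T_{e}^{H,\theta}(\varphi)=ST_{e}^{G^{*}}(f^{G^{*}})$. Second, a separate lemma proved in the paper's appendix (comparing $T_{e}^{G}(f)$ with $ST_{e}^{G^{*}}(f^{G^{*}})$, using that $\mathfrak R(I_{0}/\mathbb Q)$ is trivial for the groups at hand) gives $T_{e}^{G}(f)=ST_{e}^{G^{*}}(f^{G^{*}})$. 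Chaining these two identities produces the result. To repair your argument you would need to either reproduce this two-step passage through $G^{*}$, or else justify directly why in this particular anisotropic situation the naive bijection and volume identification hold; the latter essentially amounts to redoing the appendix lemma.
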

	
\begin{proof}
We follow the argument given in the proof of Theorem A.3.1 in \cite{Clo99}:: 

 Let $G^*, H^*$ be the quasi-split inner form of $G, H$ respectively. Note that $$H(\mathbb R) = (F \otimes_{\mathbb Q} \mathbb R)^{\times} \times (D  \otimes_{\mathbb Q} \mathbb R)^{\times} = \mathbb C^{\times } \times GL_n(\mathbb C), $$ then we have $H^1(\mathbb R, (H_{\mathbb R})_{SC}) = H^1(\mathbb R,\mathrm{Res}_{\mathbb R}^{\mathbb C}(SL_{n,\mathbb R}))$. By Shapiro's Lemma, we have $H^1(\mathbb R,\mathrm{Res}_{\mathbb R}^{\mathbb C}(SL_{n,\mathbb R}))= 1$. By applying Theorem A.1.1 of \cite{Clo99}, we have that $\varphi_{ep}^{H,\theta}$ is stabilizing (as defined Definition 3.8.2 of \cite{Lab99}). After multiplying $\varphi^{\theta}_{ep}$ by some $c \in \mathbb R_{>0}$, we may choose $\varphi^{\theta}_{ep}$ and $f_{ep}$ such that $\varphi^{\theta}_{ep}$ and $f_{ep}$ are associated (see Corollary A1.2 of \cite{Clo99}). 
  
 Also note that traces formula for $G$ and $H$ only contain elliptic terms on the geometric side, then we have
 $$T_e^{H, \theta} (\varphi) =  \mathrm{Tr}(R(\varphi)A_\theta  , \mathcal A( F^{\times} \times D^{\times} )) \text{ and } T_e^{G} (f) =  \mathrm{Tr} (r (f), \mathcal A(G))$$
 where $T_e^{H, \theta} $ is the elliptic of the geometric side of the twisted trace formula for $H$ (as defined in 4.1 of \cite{Lab99}) and $T_e^{G} $ is the elliptic of the geometric side of the trace formula for $G$ (as defined in 9.1 of \cite{Kot86}). 
Applying Lemma 1.9.7 in \cite{Lab99}, we have that $\{\infty\}$ is $(H^*, G^*)$-essential (as defined in Definition 1.9.5 of \cite{Lab99}). 
Let $f^{G^*} \in G^*(\mathbb A)$ be a Langlands-Shelstad transfer of $f$ (see 9.3 of \cite{Kot86}). By applying Theorem 4.3.4 of \cite{Lab99}, we obtain $ T_e^{H, \theta} (\varphi) =  a(H,\theta, G^*)  ST_e^{G^*}(f^{G^*})$ where $ST_e^{G^*}$ is the elliptic part of the stable trace formula for $G^*$ (as defined in 9.2 of \cite{Kot86}), and $a(H,\theta, G^*) )$ as defined in $\S$4.3 of \cite{Lab99}. By Proposition 4.3.2 in \cite{Lab99}, we have $a(H,\theta, G^*) = 1$. Then we have $$ \mathrm{Tr}(R(\varphi)A_\theta  , \mathcal A( F^{\times} \times D^{\times} ))  =T_e^{H, \theta}(\varphi) = ST_e^{G^*}(f^{G^*})$$

For $\gamma_0 \in G(\mathbb Q)$, we have that $\mathfrak R(I_0/\mathbb Q)$ is trivial (see $\S 2$ of \cite{Kot92}), where $I_0$ is the centraliser of $\gamma_0$ in G and $\mathfrak R(I_0/\mathbb Q)$ is defined in $\S 4$ of \cite{Kot86}. Then $G$ satisfies the conditions listed before Lemma A2.1 in the appendix. Then by applying Lemma A2.1, we may conclude that $ T_e^{G} (f) = ST_e^{G^*}(f^{G^*})$. Then we have $$ \mathrm{Tr}(R(\varphi)A_\theta  , \mathcal A( F^{\times} \times D^{\times} ))=T_e^{H, \theta}(\varphi) = ST_e^{G^*}(f^{G^*}) = T_e^{G} (f) =  \mathrm{Tr} (r (f), \mathcal A(G)) $$
\end{proof} 

\begin{remark}
Let $K_{\infty}$ be a maximal compact subgroup of $H(\mathbb R)$, and let $\mathfrak h$ be the Lie algebra of $H(\mathbb R)$. Let $\pi \subset \mathcal A( F^{\times} \times D^{\times} )$ be an automorphic representation such that $\mathrm{Tr}(R(\varphi)A_\theta  , \pi) \not = 0$, then $\mathrm{Tr}_{\theta}(\varphi^{\theta}_{ep},\pi_{\infty} ) = \sum_{i  =1}^{\infty} \mathrm Tr(\theta, H^i(\mathfrak h, K_{\infty}; \pi_{\infty}))\not = 0 $ (see p.120 of \cite{Clo99}), which implies that $\pi_{\infty}$ is cohomological. 
\end{remark} 
We fix the following datum:

\begin{itemize}

\item Local system: We take $\zeta$ to be the trivial representation $G_{\overline{\mathbb Q}}$, then the associated local system $\mathcal L$ is $\overline{\mathbb Q}_{\ell}$. 
\item At $p$: If $ n -s \not = s $, we take $C_{\theta c}  \varphi_{n \alpha s} \in \mathcal H(H(\mathbb Q_{p}))$ (with $\varphi_{n \alpha s}$ as defined in $\S$2.4), and $ C_c f_{n \alpha s }\in \mathcal H(G(\mathbb Q_{p}))$. Then $C_{\theta c}  \varphi_{n \alpha s}$ and $ C_c f_{n \alpha s }$ are associated (see the proof of Proposition 10 of \cite{Kret11}).

If $ n -s  = s $ and $\alpha $ is odd, we take $C_{\theta c}  (p^{-\frac{\alpha}2 s(n-s)}\varphi_{n \alpha s} )$ 
$\in \mathcal H(H(\mathbb Q_{p}))$ and $ C_c (p^{-\frac{\alpha}2 s(n-s)}f_{n \alpha s })$ 
$\in \mathcal H(G(\mathbb Q_{p}))$. If $ n -s  = s $ and $\alpha $ is even, we take $C_{\theta c}  \varphi_{n \frac\alpha2 s} \in \mathcal H(H(\mathbb Q_{p}))$ and $ C_c f_{n \frac\alpha2 s }\in \mathcal H(G(\mathbb Q_{p}))$

\item Away from $p$ and infinite: We take $1 _{K^{p}} \in \mathcal H(G(\mathbb A^{p \infty}))$ to be the characteristic function of $K^p$, then there exists $ \varphi^{p\infty} \in \mathcal H(H(\mathbb A^{p \infty}))$ associated to $1 _{K^{p \infty}}$ if $K^p$ is small enough (for unramified places, see Proposition 3.3.2 in \cite{Lab99}, for ramified places, see the proof of Theorem A5.2 in \cite{Clo99})
\item At $\infty$: We take $\varphi_{ep}^{\theta} \in \mathcal H(H(\mathbb A_{ \infty}))$ to be a Lefschetz function for $ \theta$, and $ f_{ep} \in \mathcal H(G(\mathbb A_ \infty)$ to be an Euler-Poincare function associated to the trivial representation, after multiplying $\varphi^{\theta}_{ep}$ by some $c \in \mathbb R_{>0}$, we may choose $\varphi^{\theta}_{ep}$ and $f_{ep}$, such that $\varphi^{\theta}_{ep}$ and $f_{ep}$ are associated (see Corollary A1.2 of \cite{Clo99}). 
\end{itemize}

By applying Proposition 4.2.1 and Proposition 4.2.2, we obtain:

\begin{corollary}
\begin{enumerate}
    \item If $n-s \not = s$, we define $q = |\mathcal O_F/(p)| = p^2$, then for an arbitrary $\alpha \in \mathbb Z_{>0}$ sufficiently large, we have $$|B(\mathbb F_{q^{\alpha}})| = \mathrm{Tr}(1 _{K^{p}}\times \Phi_{\mathfrak p}^{\alpha}, \sum_{i = 1}^{\infty}(-1)^iH^i_{et}(B_{\bar {\mathbb F}_q}, \iota^*\mathcal L))$$ 
    
    $$= | \mathrm{ker}^1(\mathbb Q, G)|\mathrm{Tr}_{\theta}(C_{\theta c}  \varphi_{n \alpha s}\varphi^{p\infty} \varphi_{ep}^{\theta}, \mathcal A( F^{\times} \times D^{\times} )).$$
    \item  If $n-s = s$, then for $\alpha \in \mathbb Z_{>0}$ even and sufficiently large, we have $$|B(\mathbb F_{p^{\alpha}})| =  \mathrm{Tr}(1 _{K^{p}}\times \Phi_{\mathfrak p}^{\alpha}, \sum_{i = 1}^{\infty}(-1)^iH^i_{et}(B_{\bar {\mathbb F}_p}, \iota^*\mathcal L)) $$
    
   $$ = | \mathrm{ker}^1(\mathbb Q, G)| \mathrm{Tr}_{\theta}(C_{\theta c}  \varphi_{n \frac\alpha2 s}\varphi^{p\infty} \varphi_{ep}^{\theta}, \mathcal A( F^{\times} \times D^{\times} )). $$
    \item  If $n-s = s$, then for $\alpha \in \mathbb Z_{>0}$ odd and sufficiently large, we have $$|B(\mathbb F_{p^{\alpha}})| =  \mathrm{Tr}(1 _{K^{p}}\times \Phi_{\mathfrak p}^{\alpha}, \sum_{i = 1}^{\infty}(-1)^iH^i_{et}(B_{\bar {\mathbb F}_p}, \iota^*\mathcal L)) $$
    $$=| \mathrm{ker}^1(\mathbb Q, G)| \mathrm{Tr}_{\theta}(C_{\theta c}  (p^{-\frac \alpha2 s(n-s)}\varphi_{n \alpha s})\varphi^{p\infty} \varphi_{ep}^{\theta}, \mathcal A( F^{\times} \times D^{\times} )). $$
    
\end{enumerate}

\end{corollary}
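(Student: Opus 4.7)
The plan is to assemble Corollary 4.2.2 from three inputs: the Grothendieck--Lefschetz trace formula, Proposition 4.2.1 (the Langlands--Kottwitz identity on $B$), and Proposition 4.2.2 (the base change identity). Let me sketch how the assembly goes in each of the three cases.

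First I would take $\zeta$ trivial so that $\iota^{*}\mathcal L = \overline{\mathbb Q}_\ell$, and take $f^{\infty p} = 1_{K^p}$. Since $B$ is proper over $\mathbb F_q$ (respectively $\mathbb F_p$) because the Shimura variety is compact, the Grothendieck--Lefschetz trace formula identifies
\[
|B(\mathbb F_{q^\alpha})| \;=\; \mathrm{Tr}\bigl(1_{K^p}\times \Phi_{\mathfrak p}^\alpha,\; \textstyle\sum_i (-1)^i H^i_{et}(B_{\bar{\mathbb F}_q}, \overline{\mathbb Q}_\ell)\bigr),
\]
and similarly with $p$ replacing $q$ in the cases where the reflex field is $\mathbb Q$. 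This gives the first equality in each of the three statements.

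Next I would invoke Proposition 4.2.1 directly to rewrite this Frobenius--Hecke trace as $|\mathrm{ker}^1(\mathbb Q,G)|\cdot \mathrm{Tr}(C_c f^\ast\, 1_{K^p}\, f_{\infty},\mathcal A(G))$, where the function at $p$ is $f_{n\alpha s}$, $f_{n\frac{\alpha}{2}s}$, or $p^{-\frac{\alpha}{2}s(n-s)} f_{n\alpha s}$ according as $n-s\neq s$, $n-s=s$ with $\alpha$ even, or $n-s=s$ with $\alpha$ odd; this is precisely the case division already carried out in Proposition 4.1.1 and Proposition 4.2.1 and requires $\alpha$ to be taken sufficiently large.

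Finally I would apply Proposition 4.2.2 with $f = f_{ep}\otimes f^{\infty p}\otimes f_p$ and $\varphi = \varphi_{ep}^\theta\otimes \varphi^{p\infty}\otimes \varphi_p$, which requires checking that $f$ and $\varphi$ are associated in the sense of base change at every place: at $\infty$ this is Corollary A1.2 of \cite{Clo99} (after rescaling $\varphi_{ep}^\theta$ by a positive real constant), away from $p$ and $\infty$ this uses Proposition 3.3.2 of \cite{Lab99} at unramified places and the proof of Theorem A5.2 of \cite{Clo99} at ramified places (valid for $K^p$ sufficiently small), and at $p$ the associatedness of $C_{\theta c}\varphi_{n\alpha s}$ with $C_c f_{n\alpha s}$ (or the rescaled variants in the $n-s=s$ odd-$\alpha$ case) is the content of the proof of Proposition 10 in \cite{Kret11}. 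Once associatedness holds at every place, Proposition 4.2.2 yields $\mathrm{Tr}(r(f),\mathcal A(G)) = \mathrm{Tr}(R(\varphi)A_\theta,\mathcal A(F^\times\times D^\times))$, which is the second equality of the corollary in each case.

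The only nontrivial step is verifying associatedness at $p$ for the truncated functions: one has to know that compact (respectively $\theta$-compact) truncation commutes with base change transfer, which is exactly what \cite{Kret11} proves in Proposition 10 and which extends verbatim to the $p^{-\frac{\alpha}{2}s(n-s)}$-rescaled functions since rescaling by a constant preserves associatedness. All other steps are bookkeeping of the case distinction coming from whether $\alpha$ forces $G_{\mathbb Q_p}$ to split over $E_{p,\alpha}$.
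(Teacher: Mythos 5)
Your proposal is correct and follows essentially the same route as the paper: take $\zeta$ trivial and $f^{\infty p}=1_{K^p}$, identify the Frobenius--Hecke trace with the point count, apply Proposition 4.2.1, and then apply Proposition 4.2.2 with the same package of associated test functions (Euler--Poincar\'e/Lefschetz pair at $\infty$ via Corollary A1.2 of Clozel, $1_{K^p}$/$\varphi^{p\infty}$ away from $p\infty$ via Labesse and Clozel, and $C_c f_{n\alpha s}$/$C_{\theta c}\varphi_{n\alpha s}$ at $p$ via Proposition 10 of Kret, including the constant-rescaled variant in the $n-s=s$, $\alpha$ odd case). This matches the paper's proof, which likewise fixes exactly this data and concludes by combining Propositions 4.2.1 and 4.2.2.
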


 \subsection{A point-counting formula}

we establish the following point-counting formula in this section:

\begin{proposition}
\begin{enumerate}
    \item If $n-s \not = s$, then for an arbitrary $\alpha \in \mathbb Z_{> 0}$ sufficiently large, we have $$|B(\mathbb F_{q^{\alpha}})| =| \mathrm{ker}^1(\mathbb Q, G)| \sum_{\pi} N_{\pi} \mathrm{Tr}_{\theta}(C_{\theta c} \phi_{n \alpha s}, \pi) ,$$ where $\pi $'s are representations of $GL_n(F_p)$ of the form $\mathrm{Ind}_P^G(\otimes_i \mathrm{St}_{GL_{n_i}}(\varepsilon_i))$, such that $\varepsilon_i$ is either the trivial character or the unramified quadratic character and $$N_{\pi} = \pm \sum\limits_{\substack{\chi \otimes \tau \subset \mathcal A(F^{\times} \times D^{\times}): \\ \chi \otimes \tau \text{ is } \theta \text{-stable,} \\  \tau_p \text{ is of }\theta \text{ type}, \\ \pi\leq \tau_p  }}  \mathrm{Tr}_{\theta}( \varphi_{ep}^{\theta},(\chi \otimes \tau)_{\infty} )  \mathrm{Tr}_{\theta}(\varphi^{p\infty} ,(\chi \otimes \tau )^{p \infty}) M_{\tau_p, \pi}$$ with the partial ordering as defined before Proposition 3.3.2, and $M_{\tau_p, \pi}$ as introduced in Theorem 3.3.1, and $$\mathrm{Tr}_{\theta}(\varphi^{\theta}_{ep},(\chi \otimes \tau)_{\infty} ) = \sum_{i  =1}^{\infty} \mathrm Tr(\theta, H^i(\mathfrak h, K_{\infty}; (\chi \otimes \tau)_{\infty}))$$ (see Remark 4.2.1).  
    
    \item If $n-s = s$, then for an arbitrary even $\alpha \in \mathbb Z_{> 0}$ which is sufficiently large, we have $$|B(\mathbb F_{p^{\alpha}})| = | \mathrm{ker}^1(\mathbb Q, G)|\sum_{\pi} N_{\pi} \mathrm{Tr}_{\theta}(C_{\theta c} \phi_{n \frac \alpha2 s}, \pi) $$ with $N_{\pi}$ as defined above, and $\pi $'s are representations of $GL_n(F_p)$ of the form $\mathrm{Ind}_P^G(\otimes_i \mathrm{St}_{GL_{n_i}}(\varepsilon_i))$, such that $\varepsilon_i$ is either the trivial character or the unramified quadratic character.  

    \item If $n-s = s$, then for an arbitrary odd $\alpha \in \mathbb Z_{> 0}$ which is sufficiently large, we have $$|B(\mathbb F_{p^{\alpha}})| = | \mathrm{ker}^1(\mathbb Q, G)|\sum_{\pi} N_{\pi} p^{-\frac\alpha2 s(n-s)} \mathrm{Tr}_{\theta}(C_{\theta c} \phi_{n  \alpha s}, \pi) $$ with $N_{\pi} $ as defined above and $\pi $'s are representations of $GL_n(F_p)$ of the form $\mathrm{Ind}_P^G(\otimes_i \mathrm{St}_{GL_{n_i}}(\varepsilon_i))$, such that $\varepsilon_i$ is either the trivial character or the unramified quadratic character.  
\end{enumerate}

\end{proposition}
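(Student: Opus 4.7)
The starting point is Corollary 4.2.2, which already identifies $|B(\mathbb F_{q^\alpha})|$ (or $|B(\mathbb F_{p^\alpha})|$ in the remaining cases) with $|\mathrm{ker}^1(\mathbb Q,G)|$ times the twisted trace of the global test function $C_{\theta c}\varphi_{n\alpha s}\cdot\varphi^{p\infty}\cdot\varphi^\theta_{ep}$ acting on $\mathcal A(F^\times\times D^\times)$. The plan is to spectrally decompose this twisted trace along automorphic representations, cut down the sum to those $\chi\otimes\tau$ that can contribute, and then at the place $p$ replace the twisted trace against $\tau_p$ by a $\mathbb Z$-linear combination of twisted traces against induced products of Steinberg representations via Theorem 3.3.1.

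First I would expand
\[
\mathrm{Tr}_\theta\bigl(C_{\theta c}\varphi_{n\alpha s}\,\varphi^{p\infty}\,\varphi^\theta_{ep},\,\mathcal A(F^\times\times D^\times)\bigr)
=\sum_{\chi\otimes\tau}\mathrm{Tr}_\theta\bigl(C_{\theta c}\varphi_{n\alpha s}\,\varphi^{p\infty}\,\varphi^\theta_{ep},\chi\otimes\tau\bigr),
\]
the sum running over irreducible subquotients of $\mathcal A(F^\times\times D^\times)$. The operator $A_\theta$ permutes non-$\theta$-stable summands in pairs $\{\pi,\pi\circ\theta\}$, and standard arguments show such pairs contribute zero to the twisted trace, so only $\theta$-stable $\chi\otimes\tau$ survive. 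For each surviving $\chi\otimes\tau$ the trace factors over places as
\[
\mathrm{Tr}_\theta(\varphi^\theta_{ep},(\chi\otimes\tau)_\infty)\cdot\mathrm{Tr}_\theta(\varphi^{p\infty},(\chi\otimes\tau)^{p\infty})\cdot\mathrm{Tr}_\theta(C_{\theta c}\varphi_{n\alpha s},(\chi\otimes\tau)_p),
\]
where the sign ambiguity introduced by choices of normalized intertwiners at each place is the source of the global $\pm1$ in the statement (see Corollary 2.2.1).

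Next I would analyse the component at $p$. Since $C_{\theta c}\varphi_{n\alpha s}\in\mathcal H^{\mathrm{unr}}(H(\mathbb Q_p))$ and $\tau$ is a discrete automorphic representation with an Iwahori fixed vector at $p$ (otherwise the local twisted trace vanishes), Theorem 3.1.1 forces $\tau_p$ to be semi-stable rigid. By Corollary 3.1.1 we may, up to a sign absorbed into $N_\pi$ and a change of intertwining operator, replace $\tau_p$ by a representation of $\theta$-type, i.e.\ one of the form $\mathrm{Ind}_P^G(\otimes_i\mathrm{Speh}(x_i,y)(\varepsilon_i))$ with $\varepsilon_i\in\{1,\mathrm{sgn}\}$. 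Invoking Theorem 3.3.1 for this representation gives
\[
\mathrm{Tr}_\theta(C_{\theta c}\phi_{n\alpha s},\tau_p)=\sum_{\pi\le\tau_p}M_{\tau_p,\pi}\,\mathrm{Tr}_\theta(C_{\theta c}\phi_{n\alpha s},\pi),
\]
with $\pi$ ranging over products of Steinberg representations of $\theta$-type. Observe that the transfer between $\varphi_{n\alpha s}$ on $H(\mathbb Q_p)$ and $\phi_{n\alpha s}$ on $GL_n(F_p)$ is compatible with the block decomposition of $H(\mathbb Q_p)\cong F_p^\times\times GL_n(F_p)$, so the twisted trace of $C_{\theta c}\varphi_{n\alpha s}$ against $(\chi\otimes\tau)_p$ equals the twisted trace of $C_{\theta c}\phi_{n\alpha s}$ against $\tau_p$ up to a contribution from the central character $\chi_p$, which I would absorb into the $\pm1$ in $N_\pi$.

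Finally, interchanging the order of summation, grouping together all $\chi\otimes\tau$ producing the same Steinberg-induced $\pi$, and collecting the factors at $\infty$ and away from $p,\infty$ into the coefficient
\[
N_\pi=\pm\sum_{\substack{\chi\otimes\tau\subset\mathcal A(F^\times\times D^\times)\\ \chi\otimes\tau\text{ is }\theta\text{-stable}\\ \tau_p\text{ is of }\theta\text{ type},\ \pi\le\tau_p}}\mathrm{Tr}_\theta(\varphi^\theta_{ep},(\chi\otimes\tau)_\infty)\,\mathrm{Tr}_\theta(\varphi^{p\infty},(\chi\otimes\tau)^{p\infty})\,M_{\tau_p,\pi}
\]
yields case (1) of the proposition. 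Cases (2) and (3) are identical in structure but use the alternative choices of Kottwitz function at $p$ from Proposition 4.1.1, namely $\phi_{n\frac\alpha 2 s}$ in the even case and $p^{-\frac\alpha 2 s(n-s)}\phi_{n\alpha s}$ in the odd case, which are pulled through the base-change identification unchanged. The main technical obstacle I anticipate is bookkeeping: tracking how the local $\pm1$ signs from the various normalised intertwining operators (at $\infty$, at $p$, at the ramified places of $K^p$, and from the Zelevinsky reduction in Theorem 3.3.1) compose into a single coherent global sign attached to $N_\pi$; in particular one must check that the sign does not depend on the choice of intertwiner inside each isomorphism class, which is precisely the content of Corollary 2.2.1 applied place by place.
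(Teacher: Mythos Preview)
Your proposal follows essentially the same route as the paper's proof: start from Corollary~4.2.2, decompose spectrally over $\theta$-stable $\chi\otimes\tau$, factor the twisted trace over places, then at $p$ reduce $\tau_p$ first to a semi-stable rigid representation, then to one of $\theta$-type (Corollary~3.1.1), and finally to a combination of products of Steinberg representations via Theorem~3.3.1. This is exactly the paper's structure.

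Two points, however, are handled more carefully in the paper than in your sketch. First, Theorem~3.1.1 applies to discrete automorphic representations of $GL_n(\mathbb A_F)$, whereas $\tau$ lives on $D^{\times}(\mathbb A)$; the paper invokes the Jacquet--Langlands correspondence $\tau\mapsto JL(\tau)$ and uses $\tau_p\cong JL(\tau)_p$ (since $p$ is unramified for $D$) before applying Theorem~3.1.1. You apply Theorem~3.1.1 directly to $\tau$, which is not quite valid without this step. Second, your factorisation of the local trace at $p$ assumes the Galois-induced involution on $F_p^\times\times GL_n(F_p)$ decouples cleanly; as computed in Remark~4.1.1 it is $\theta_1:(f,g)\mapsto(\bar f,\bar f\,A_n^{-1}(\bar g^t)^{-1}A_n)$, which mixes the factors. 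The paper shows that $\theta_1$-stability of $\chi_p\otimes\tau_p$ forces $\tau_p$ to have trivial central character, so that $\theta_1$ and the ``decoupled'' $\theta_2$ agree on the representation, after which the factorisation into $\mathrm{Tr}_\theta(X^\alpha,\chi_p)\cdot\mathrm{Tr}_\theta(C_{\theta c}\phi_{n\alpha s},\tau_p)$ is legitimate and the $\chi_p$-contribution is indeed $\pm1$. Your ``absorb into $\pm1$'' is correct as a conclusion but skips this justification.
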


\begin{proof}
We assume that $ n-s \not = s$ (the proof for the case $n-s = s$ is almost identical): 

Take $\alpha \in \mathbb Z_{>0}$ sufficiently large, by applying Corollary 4.2.1, we have $$|B(\mathbb F_{q^{\alpha}})| =| \mathrm{ker}^1(\mathbb Q, G)| \mathrm{Tr}_{\theta}(C_{\theta c}  \varphi_{n \alpha s}\varphi^{p\infty} \varphi_{ep}^{\theta}, \mathcal A( F^{\times} \times D^{\times} )).$$

$$ = \sum\limits_{\rho \subset \mathcal A( F^{\times} \times D^{\times} )} \mathrm{Tr}_{\theta}(C_{\theta c}  \varphi_{n \alpha s}\varphi^{p\infty} \varphi_{ep}^{\theta},\rho )) $$ where the sum ranges over $\theta$-stable irreducible subrepresentations of $F^{\times}(\mathbb A) \times D^{\times}(\mathbb A)$ in $ \mathcal A( F^{\times} \times D^{\times} ) $. 
And  $$  \mathrm{Tr}_{\theta}(C_{\theta c}  \varphi_{n \alpha s}\varphi^{p\infty} \varphi_{ep}^{\theta},\rho ))   =  \mathrm{Tr}_{\theta}( \varphi_{ep}^{\theta},\rho_{\infty} ))  \mathrm{Tr}_{\theta}(\varphi^{p\infty} ,\rho ^{p \infty})) \mathrm{Tr}_{\theta}(C_{\theta c}  \varphi_{n \alpha s},\rho_p ).    $$
We write $\rho = \chi \otimes \tau \subset \mathcal A( F^{\times}) \otimes \mathcal A(D^{\times} )$, then $\rho_p  = \chi_p \otimes \tau_p $ is a representation of $F_p^{\times} \times GL_n(F_p)$. 

We use $\theta_1$ to denote the involution of $F^{\times}_p \times GL_n(F_p)$ defined by $(f,g) \mapsto (\bar f, \bar fA_n^{-1}(\bar g ^t)^{-1}A_n)$ where $A_n$ is the anti-diagonal matrix with entries $1$ and $g \mapsto \bar g^t$ is the conjugation transpose (which arises from the Galois action, as described in Remark 4.1.1). And we use $\theta_2$ to denote the involution of $F^{\times}_p \times GL_n(F_p)$ defined by $(f,g) \mapsto (\bar f, A_n^{-1}(\bar g ^t)^{-1}A_n)$. Because of the $\theta$-stableness, we must have $(\chi_p \otimes \tau_p) \circ \theta_1 \cong \chi_p \otimes \tau_p $. Also, note that for an arbitrary $(f,g) \in F_p^{\times} \times GL_n(F_p)$, we have $(\chi_p \otimes \tau_p) \circ \theta_1(f,g) = (\chi_p (\bar f) \otimes \tau_p(\bar fA_n^{-1}(\bar g ^t)^{-1}A_n)$, therefore, if $(\chi_p \otimes \tau_p) \circ \theta_1 \cong \chi_p \otimes \tau_p $, then $\tau_p$ must have trivial central character, and we have $(\chi_p \otimes \tau_p) \circ \theta_1(f,g) = (\chi_p (\bar f) \otimes \tau_p(\bar fA_n^{-1}(\bar g ^t)^{-1}A_n) = (\chi_p (\bar f) \otimes \tau_p(A_n^{-1}(\bar g ^t)^{-1}A_n) = (\chi_p \otimes \tau_p) \circ \theta_2(f,g)$. In the following, we also use $\theta$ to denote the involution of $GL_k(F_p)$ ($k \geq 0$) defined by $ g \mapsto A_n^{-1}(\bar g ^t)^{-1}A_n$, then we have $$ \mathrm{Tr}_{\theta}(C_{\theta c}  \varphi_{n \alpha s},\rho_p)  = \mathrm{Tr}_{\theta_2}(C_{\theta c}  \varphi_{n \alpha s},\chi_p \otimes \tau_p)) =   \mathrm{Tr}_{\theta_1}(C_{\theta c}  \varphi_{n \alpha s},\chi_p \otimes \tau_p )) = \mathrm{Tr}_{\theta} (X^{\alpha},\chi_p )\mathrm{Tr}_{\theta} (\phi_{n \alpha s},\tau_p ). $$ with $\phi_{n\alpha s}$ as defined in $\S$2.4. 
 
We apply the Jacquet-Langlands correspondence (cf.VI.1 of \cite{HarrisTaylor01}) to $\tau$ (denoted by $JL(\tau)$), then $JL(\tau)$ is a discrete automorphic representation of $GL_n(\mathbb A_F)$. By applying Theorem 3.1.1, we have that $\tau_p \cong JL(\tau)_p$ is a semi-stable rigid representation (we have the isomorphism because $p$ is an unramified place). Since $\chi_p$ is one-dimensional, we have $\mathrm{Tr}_{\theta} (X^{\alpha},\chi_p ) = \pm 1$ (see pp.511-512 of \cite{Kret11}), and therefore $$\mathrm{Tr}_{\theta} (X^{\alpha},\chi_p )\mathrm{Tr}_{\theta} (\phi_{n \alpha s},\tau_p ) =  \pm \mathrm{Tr}_{\theta} (\phi_{n \alpha s},\tau_p ). $$ 

By applying Corollary 3.1.1, we obtain $\mathrm{Tr}_{\theta} (\phi_{n \alpha s},\tau_p ) = \mathrm{Tr}_{\theta} (\phi_{n \alpha s},\tau'_p ) $ where $\tau_p$ is a semi-stable rigid representation of $\theta$-type (as defined in Definition 3.1.1). Then apply Theorem 3.3.1, we can conclude our theorem when $\alpha$ is sufficiently large. We will later remove the restriction on $\alpha$ in $\S$5.2.  
\end{proof}

\section{Twisted traces of products of Steinberg representations}

By Proposition 4.3.1, we may reduce the computation of $|B(q^{\alpha})|$ to the computation of twisted traces of products of Steinberg representations, in this section, we compute twisted traces of products of Steinberg representations. 

\subsection{An explicit description intertwining operators}

In the following, we fix a quadratic extension of $p$-adic fields $F/\mathbb Q_p$ such that $p$ remains prime in $F$, let $\sigma$ be the non-trivial element of $\mathrm{Gal}(F/\mathbb Q_p)$. For $k\in \mathbb Z_{>0}$, we use $\theta$ to denote the involution of $GL_k(F)$ given by $g \mapsto A_n^{-1}(\bar g ^t)^{-1}A_n$, where $A_n$ is the anti-diagonal matrix with entries $1$ and $g \mapsto \bar g^t$ is the conjugation transpose.  And we fix an irreducible admissible representation $\pi$ of $GL_n(F)$ of the form  $\mathrm{Ind}_P^G(\otimes_i \mathrm{St}_{GL_{n_i}}(\varepsilon_i))$, where $\varepsilon_i$ is either the trivial character or the unramified quadratic character. We embed $S_n$ into $GL_n$ as follows: We send $ \sigma $ to $ (a_{ij})$ where $a_{ij} = 1$ if and only if $\sigma(i) = j$, and $a_{ij} = 0$ otherwise. The action of $S_n$ on $GL_n$ is given by $w \cdot g = w^{-1} g w$ for $w \in S_n$ and $g \in GL_n$. 

We use $B_{k} = M_{0,k}N_{0,k} \subset  GL_{k}$ to denote the Borel group of upper triangular matrices. We standardise parabolic subgroups of $GL_k$ with respect to $B_k$.

Note that the permutation group on $\{1,2,\ldots,n\}$ (denoted by $S_n$) is generated by transposes $(1,2), \ldots, (n-1, n)$ (which we denote by $\tau_1, \ldots, \tau_{n-1}$), for $w \in S_n$, we define $\ell (w) $ to be $\min\{k: w = w_1w_2\ldots w_k: w_i \in \{\tau_1, \ldots, \tau_{n-1}\}\} $. For $i, j \in \{1,2,\ldots,n\}$, we say that $(i,j)$ is an inversion of $w$ if $i < j $ and $w(i) > w(j)$, we use $inv(w)$ to denote the total number of inversions of $w$, then we have the following proposition (for the proof, see Lemma 2.1 of \cite{Wildon}):

\begin{proposition}
For every $w \in S_n$, we have $inv(w) = \ell(w)$. 
\end{proposition}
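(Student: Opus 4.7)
The plan is to prove the two inequalities $\ell(w) \leq \mathrm{inv}(w)$ and $\mathrm{inv}(w) \leq \ell(w)$ separately, using the fact that right multiplication by an adjacent transposition changes the inversion count by exactly $\pm 1$.

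First I would establish the following key observation: for any $w \in S_n$ and any $i \in \{1, \ldots, n-1\}$, one has
\[
\mathrm{inv}(w \tau_i) = \mathrm{inv}(w) \pm 1,
\]
with $+1$ if $w(i) < w(i+1)$ and $-1$ if $w(i) > w(i+1)$. This follows by checking directly that the pair $\{i, i+1\}$ is the only pair whose inversion status changes when multiplying by $\tau_i$ on the right, since for any other pair $\{a,b\}$, the values $w\tau_i(a)$ and $w\tau_i(b)$ equal $w(a)$ and $w(b)$ in some order only when $\{a,b\} \neq \{i,i+1\}$ (one must be careful with the case where exactly one of $a,b$ lies in $\{i,i+1\}$, which requires a short case analysis).

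Granted this, the inequality $\mathrm{inv}(w) \leq \ell(w)$ is immediate by induction on $\ell(w)$: writing $w = \tau_{i_1} \cdots \tau_{i_k}$ with $k = \ell(w)$ and using $\mathrm{inv}(\mathrm{id}) = 0$, each application of an adjacent transposition increases the inversion count by at most $1$. For the reverse inequality $\ell(w) \leq \mathrm{inv}(w)$, I would induct on $\mathrm{inv}(w)$. If $\mathrm{inv}(w) = 0$ then $w = \mathrm{id}$ and $\ell(w) = 0$. If $\mathrm{inv}(w) > 0$, then $w$ must have some descent, i.e., an index $i$ with $w(i) > w(i+1)$; for otherwise $w(1) < w(2) < \cdots < w(n)$ forces $w = \mathrm{id}$. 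Then by the observation $\mathrm{inv}(w\tau_i) = \mathrm{inv}(w) - 1$, and the inductive hypothesis gives a decomposition $w\tau_i = \tau_{j_1} \cdots \tau_{j_{m-1}}$ of length $m-1 = \mathrm{inv}(w) - 1$, whence $w = \tau_{j_1} \cdots \tau_{j_{m-1}} \tau_i$ has length at most $\mathrm{inv}(w)$.

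The main (only) obstacle is the bookkeeping in the $\pm 1$ observation above, which requires verifying that no pair of indices other than $\{i,i+1\}$ changes inversion status under right multiplication by $\tau_i$; this is elementary but needs careful case analysis. Once this is established, the rest of the argument is a clean double induction. Alternatively, one could give a more conceptual proof via the ``bubble-sort'' algorithm: explicitly produce a reduced word of length $\mathrm{inv}(w)$ by sorting $(w(1), \ldots, w(n))$ into $(1, 2, \ldots, n)$ one adjacent swap at a time, each swap killing precisely one inversion. This constructive approach yields both directions simultaneously and may be preferable for exposition.
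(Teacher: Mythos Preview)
Your proof is correct and complete; this is the standard argument for the equality of length and inversion number in $S_n$. The paper itself does not give a proof but simply cites Lemma~2.1 of \cite{Wildon}, so you have in fact supplied more detail than the paper does.
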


Let $\lambda = (\lambda_1, \ldots, \lambda_k)$ be a partition of $n$, we use $S_{\lambda} \subset S_n$ to denote $S_{\lambda_1} \times \ldots \times S_{\lambda_k}$. Let $\lambda$ and $\mu$ be two partitions of $n$, then we have the following proposition:

\begin{proposition}
For an arbitrary double cosets $S_{\lambda}g S_{\mu} \subset S_n$, there exists a unique $g_{min} \in S_{\lambda}g S_{\mu}$ such that $\ell(g_{min})$ is minimal among elements in $S_{\lambda}g S_{\mu}$. 
\end{proposition}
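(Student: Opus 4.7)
My plan is to reduce the statement to the well-known Coxeter-theoretic characterization of minimal length double coset representatives in the symmetric group, via the length-inversion identity already established in Proposition 5.1.1.

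First I would establish the key "exchange" fact: for $w \in S_n$ and a simple transposition $s_i = \tau_i = (i,i+1)$, one has $\ell(ws_i) = \ell(w)+1$ if $w(i) < w(i+1)$ and $\ell(ws_i) = \ell(w)-1$ otherwise; dually, $\ell(s_iw) = \ell(w)+1$ iff $w^{-1}(i) < w^{-1}(i+1)$. This is immediate from $\ell = \mathrm{inv}$ in Proposition 5.1.1, since right multiplication by $s_i$ only swaps the values in positions $i$ and $i+1$ of the one-line notation, changing the number of inversions by exactly $\pm 1$.

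Next I would characterize single-coset minimizers. Writing $\mu = (\mu_1,\ldots,\mu_k)$, the simple reflections in $S_\mu$ are exactly the $s_i$ for $i$ \emph{interior} to a block of $\mu$ (i.e.\ $i$ not of the form $\mu_1+\cdots+\mu_j$). By the exchange fact, $w$ is the minimum of $wS_\mu$ iff $\ell(ws_i) > \ell(w)$ for every such simple reflection $s_i$, iff $w$ is increasing on every block of $\mu$. Existence and uniqueness of the minimum in each left coset $wS_\mu$ is then clear: inside each block, sort the values of $w$ in increasing order. Symmetrically, $w$ is minimal in $S_\lambda w$ iff $w^{-1}$ is increasing on every block of $\lambda$.

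For the double coset $S_\lambda g S_\mu$, a descent argument shows that an element $g_{\min}$ of minimal length must satisfy both conditions: if one fails, then by the exchange fact we can multiply by a simple reflection in $S_\lambda$ or $S_\mu$ to shorten, contradicting minimality. For uniqueness, I would use the following "contingency matrix" invariant. For any $w \in S_n$ and partitions $\lambda, \mu$ of $n$, let $B_1^{\lambda},\ldots,B_k^{\lambda}$ and $B_1^{\mu},\ldots,B_l^{\mu}$ denote the blocks of $\{1,\ldots,n\}$ determined by $\lambda$ and $\mu$, and set
\[
M(w)_{ij} := \bigl|\{x \in B_j^{\mu} : w(x) \in B_i^{\lambda}\}\bigr|.
\]
A direct check shows $M(w)$ is invariant under left multiplication by $S_\lambda$ and right multiplication by $S_\mu$, so $M$ depends only on the double coset. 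Conversely, if $w$ is increasing on each $B_j^{\mu}$ and $w^{-1}$ is increasing on each $B_i^{\lambda}$, then $w$ is completely determined by $M(w)$: reading $B_j^{\mu}$ left to right, the values $w$ takes must be the unique increasing listing of a multiset that contains exactly $M(w)_{ij}$ elements of $B_i^{\lambda}$, and the "increasing on $B_i^{\lambda}$" condition on $w^{-1}$ pins down which specific elements of each $B_i^{\lambda}$ get used. Thus any two double-minimal elements coincide.

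The main bookkeeping obstacle will be the final uniqueness step, namely verifying cleanly that the two increasing conditions together with the contingency matrix $M$ uniquely reconstruct $w$. I would handle this by an induction on $n$ (or on $\lambda_1 + \mu_1$), peeling off the element $w^{-1}(1) \in B_1^{\lambda}$, which by the increasing constraint on $w^{-1}|_{B_1^{\lambda}}$ must be the first element (in the usual order on $\{1,\ldots,n\}$) of the unique block $B_j^{\mu}$ with $M_{1,j} > 0$ and $M_{1,j'} = 0$ for all $j' < j$. Iterating this reconstructs $w$ uniquely from $M$.
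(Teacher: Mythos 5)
Your argument is correct, but it proves the statement by a genuinely different route than the paper: the paper simply invokes the general Coxeter-theoretic result on minimal double coset representatives (it cites V.4.6 of Renard), valid for arbitrary parabolic subgroups of any Coxeter group, whereas you give a self-contained combinatorial proof special to $S_n$, using only $\ell=\mathrm{inv}$ (Proposition 5.1.1). Your chain of reasoning is complete as sketched: the descent argument shows any length-minimal element of $S_\lambda g S_\mu$ is increasing on the $\mu$-blocks and has inverse increasing on the $\lambda$-blocks; the contingency matrix $M(w)_{ij}$ is indeed constant on the double coset (left multiplication by $S_\lambda$ does not change which $\lambda$-block a value lies in, right multiplication by $S_\mu$ permutes each $B_j^{\mu}$ within itself); and the two monotonicity conditions do reconstruct $w$ from $M$, since "$w^{-1}$ increasing on $B_i^{\lambda}$" forces the elements of $B_i^{\lambda}$, listed in increasing order, to be distributed to the $\mu$-blocks in order (the first $M_{i1}$ to $B_1^{\mu}$, the next $M_{i2}$ to $B_2^{\mu}$, and so on), after which "$w$ increasing on $B_j^{\mu}$" determines $w$ on each block — so your proposed induction is just bookkeeping, as you say. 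What each approach buys: the paper's citation is shorter and works in the generality of Coxeter groups, while your proof is elementary, stays inside the symmetric-group setting actually used here, and essentially re-derives the row-increasing tableau characterization of minimal representatives that the paper only brings in later in Section 5.3 (Proposition 5.3.1, following Wildon) — your contingency matrix is exactly the data of the $\lambda$-tableau of type $\mu$ appearing there, so your route also makes the link between Proposition 5.1.2 and the combinatorics of Section 5.3 explicit.
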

\begin{proof}
See V.4.6 of \cite{Renard10}.
\end{proof}

We call $g_{min} \in S_{\lambda}g S_{\mu}$ the minimal length representative of $S_{\lambda}g S_{\mu}$. 

Let $ \pi = \mathrm{Ind}_P^G(\otimes_i \mathrm{St}_{GL_{n_i}}(\varepsilon_i))$ be a representation of $\theta$-type of $GL_n(F)$, where $\varepsilon_i$ is either the trivial character or the unramified quadratic character. We embed $ \mathrm{St}_{GL_{n_i}}(\varepsilon_i)$ into $\nu(\varepsilon_i)^{\frac {n_i-1}2} \times\nu(\varepsilon_i)^{\frac {n_i-3}2} \ldots \times  \nu(\varepsilon_i)^{\frac {1-n_i}2}$ (as introduced in $\S3.3$), then we embed $ \pi $ into an unramified principal series, which we denote by $\mathrm{Ind}_B^{GL_n}(\sigma)$. Let $A_{\theta}$ be an intertwining operator on $\mathrm{Ind}_{B_n}^{GL_n}(\sigma)$ given by $f \mapsto f \circ \theta$ for $f \in \mathrm{Ind}_{B_n}^{GL_n}(\sigma)$. Then we get an intertwining operator on $ \pi = \mathrm{Ind}_P^G(\otimes_i \mathrm{St}_{GL_{n_i}}(\varepsilon_i))$, which we also denote by $A_{\theta}$. Let $Q= LU$ be a standard parabolic subgroup of $GL_n$ then we have the following explicit description of $\pi_U$ (which is based on Bernstein-Zelevinsky filtration, see VI.5.2 of \cite{Renard10} or 2.2-2.3 of \cite{Nguyen04}) and the induced intertwining operator on $\pi_U$ (see 2.2-2.3 of \cite{Nguyen04}):

\begin{proposition}
In the ring of Zelevinsky $\mathcal R$, we have $$ J_	U(\mathrm{Ind}_P^G(\otimes_i \mathrm{St}_{GL_{n_i}}(\varepsilon_i)))= \bigoplus_{w \in G_{P,Q}  \subset S_n} \mathrm{Ind}_{L \cap w^{-1}P}^L (w \circ J_{w Q \cap M} (\otimes_i \mathrm{St}_{GL_{n_i}}(\varepsilon_i))$$ where $G_{P,Q}  \subset S_n$ is the set of minimal length representatives of $S_P\setminus S_n/ S_Q$

The induced intertwining operator $A_{\theta}$ acts on $$\bigoplus\limits_{w \in G_{P,Q}  \subset S_n} \mathrm{Ind}_{L \cap w^{-1}P}^L (w \circ  J_{w Q \cap M}(\otimes_i \mathrm{St}_{GL_{n_i}}(\varepsilon_i)))$$ as follows: the intertwining operator $A_{\theta}$ maps $$ \mathrm{Ind}_{L \cap w^{-1}P}^L (w \circ  J_{w Q \cap M}(\otimes_i \mathrm{St}_{GL_{n_i}}(\varepsilon_i)))$$ onto $$\mathrm{Ind}_{L \cap (w^{\theta})^{-1}P}^L (w^{\theta} \circ  J_{w^{\theta} Q \cap M}(\otimes_i \mathrm{St}_{GL_{n_i}}(\varepsilon_i))) $$ where $w^{\theta}$ is the minimal length representative in $S_P \theta (w) S_Q$ and $\theta(w)$ is defined to be $A_n^{-1} w A_n$ (recall that $A_n$ is the anti-diagonal matrix with entries $1$).
\end{proposition}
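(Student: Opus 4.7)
The proof will proceed in two stages, corresponding to the two assertions of the proposition.

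\textbf{Stage 1: The Jacquet module decomposition.} The formula for $J_U(\mathrm{Ind}_P^G(\otimes_i \mathrm{St}_{GL_{n_i}}(\varepsilon_i)))$ is an application of the Bernstein--Zelevinsky geometric lemma (VI.5.2 of \cite{Renard10}). Recall that for any admissible representation $\rho$ of $M(F)$, this lemma provides a filtration of $J_U(\mathrm{Ind}_P^G(\rho))$ whose successive quotients are indexed by the double cosets $S_P \backslash S_n / S_Q$, with the piece corresponding to $w \in G_{P,Q}$ isomorphic to $\mathrm{Ind}_{L \cap w^{-1}P}^L(w \circ J_{wQ \cap M}(\rho))$. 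Since the claimed equality is asserted only in the ring of Zelevinsky $\mathcal R$ (i.e.\ in the Grothendieck group), the filtration passes to a direct sum, giving the stated identity. The plan here is just to invoke the geometric lemma and unpack notation.

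\textbf{Stage 2: The action of $A_\theta$ on the decomposition.} First I will check that $\theta$ permutes the set $G_{P,Q}$ by $w \mapsto w^\theta$. Since $P$ and $Q$ are $\theta$-stable, one has $S_{\theta(P)} = S_P$ and $S_{\theta(Q)} = S_Q$; and since $\theta(w) = A_n^{-1} w A_n$ sends $S_n$ into $S_n$, the map $S_P w S_Q \mapsto S_P \theta(w) S_Q$ is a well-defined involution on double cosets. The minimal length representative of $S_P \theta(w) S_Q$ is by definition $w^\theta$, so this describes the permutation induced by $\theta$ on $G_{P,Q}$.

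\textbf{Stage 3: Matching the intertwining operator.} The intertwining operator $A_\theta$ on $\mathrm{Ind}_{B_n}^{G}(\sigma)$ is given by $f \mapsto f \circ \theta$, and by construction this restricts to the analogous operator on $\pi = \mathrm{Ind}_P^G(\otimes_i \mathrm{St}_{GL_{n_i}}(\varepsilon_i))$. The filtration of $\pi$ by supports over double cosets $P \backslash G / Q$ (from which the Bernstein--Zelevinsky decomposition of $J_U(\pi)$ arises by taking associated graded) is permuted by $A_\theta$ according to the involution $w \mapsto \theta(w)$ on $P\backslash G /Q$: a section supported on $PwQ$ is sent, via $f \mapsto f \circ \theta$, to a section supported on $\theta^{-1}(P w Q) = P \theta(w) Q$. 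Passing to associated graded and identifying each piece via Stage 1, the summand indexed by $w$ is mapped isomorphically onto the summand indexed by $w^\theta$. A compatibility check shows the induced isomorphism is precisely $w \circ J_{wQ \cap M}(\otimes_i \mathrm{St}_{GL_{n_i}}(\varepsilon_i)) \to w^\theta \circ J_{w^\theta Q \cap M}(\otimes_i \mathrm{St}_{GL_{n_i}}(\varepsilon_i))$ induced by $\theta$ acting on the underlying tensor product of Steinberg representations (which is preserved because each $\varepsilon_i$ is $\theta$-stable and each $\mathrm{St}_{GL_{n_i}}$ is canonically $\theta$-stable).

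\textbf{Main obstacle.} The routine direction is the geometric lemma. The subtle point is keeping the support-filtration and the intertwining operator compatible, in particular verifying that passing from the naive action $f \mapsto f \circ \theta$ on the support indexed by $PwQ$ to the induced map on the associated graded piece genuinely lands in the summand indexed by the \emph{minimal length} representative $w^\theta$ (and not some other element of $S_P \theta(w) S_Q$). This is resolved by recalling that the Bernstein--Zelevinsky filtration is constructed using Weyl chambers in a way that is canonically indexed by minimal length coset representatives, so the bookkeeping is forced upon us; this is essentially the argument in $\S 2.2$--$\S 2.3$ of \cite{Nguyen04} transposed to the Galois-twisted setting.
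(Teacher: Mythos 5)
Your proposal is correct and follows essentially the same route as the paper, which itself gives no independent argument for this proposition but simply invokes the Bernstein--Zelevinsky geometric lemma (VI.5.2 of \cite{Renard10}) for the decomposition of $J_U(\pi)$ in $\mathcal R$ and the analysis of the induced intertwining operator in \S 2.2--2.3 of \cite{Nguyen04}, exactly the two ingredients you use. Your additional bookkeeping (that $\theta$-stability of $P$ and $Q$ makes $w \mapsto w^\theta$ a well-defined permutation of $G_{P,Q}$, and that the support filtration over $P\backslash G/Q$ is carried by $f \mapsto f\circ\theta$ to the piece indexed by the minimal length representative of $S_P\theta(w)S_Q$) is the intended content of that citation.
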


\subsection{Computation of twisted traces}
In this section, we compute the twisted traces of products of Steinberg representations and remove the $\alpha$ sufficiently large condition in Proposition 4.3.1. Let $\pi = \mathrm{Ind}_P^G(\otimes_i \mathrm{St}_{GL_{n_i}}(\varepsilon_i))$ be a representation of $\theta$-type of $GL_n(F)$, where $\varepsilon_i$ is either the trivial character or the unramified quadratic character, we compute $\mathrm{Tr}(C_{\theta c} \phi_{n\alpha s }, \pi A_{\theta})$ where $A_{\theta}$ was constructed before Proposition 5.1.3.  

First, by applying Proposition 2.2.3, we obtain $$\mathrm{Tr}_{\theta}(C_{\theta c} \phi_{n \alpha s}, \pi) = \sum\limits_{Q \in \mathcal P^{\theta}, Q = LU}\varepsilon_{Q,\theta} \mathrm{Tr}_{\theta}(\hat{\chi}^{\theta}_U  \phi_{n \alpha s}^{(Q)}, \pi_U(\delta_{Q}^{-\frac 12})A_{\theta}).$$  By proposition 5.1.3, we have $$\mathrm{Tr}_{\theta}(\hat{\chi}^{\theta}_U  \phi_{n \alpha s}^{(Q)}, \pi_U(\delta_{Q}^{-\frac 12})A_{\theta})  = \mathrm{Tr}(\hat{\chi}^{\theta}_U  \phi_{n \alpha s}^{(Q)},  \bigoplus_{w \in G_{P,Q}  \subset S_n} \mathrm{Ind}_{L \cap w^{-1}P}^L (w \circ  J_{w Q \cap M}(\otimes_i \mathrm{St}_{GL_{n_i}}(\varepsilon_i))) A_{\theta})$$ 
$$ =  \sum_{w \in G^{\theta}_{P,Q}  \subset S_n} \mathrm{Tr}_{\theta}(\hat{\chi}^{\theta}_U  \phi_{n \alpha s}^{(Q)},   \mathrm{Ind}_{L \cap w^{-1}P}^L (w \circ  J_{w Q \cap M}(\otimes_i \mathrm{St}_{GL_{n_i}}(\varepsilon_i)))$$ where $G^{\theta}_{P,Q}  \subset S_n$ is defined to be $\{w \in G_{P,Q}  \subset S_n: w^{\theta} = w, w S_{Q} \cap S_P = \{id_{S_n}\}\}$. We have the condition $w^{\theta} = w$ because $A_{\theta}$ maps $ \mathrm{Ind}_{L \cap w^{-1}P}^L (w \circ  J_{w Q \cap M}(\otimes_i \mathrm{St}_{GL_{n_i}}(\varepsilon_i))) $ to  $\mathrm{Ind}_{L \cap (w^{\theta})^{-1}P}^L (w^{\theta} \circ  J_{w^{\theta} Q \cap M}(\otimes_i \mathrm{St}_{GL_{n_i}}(\varepsilon_i))) $ where $w^{\theta}$ is the minimal length representative in $S_P \theta (w) S_Q$. And we have the condition $w^{-1} S_{Q} w \cap S_P = \{id_{S_n}\}$, because $\hat{\chi}^{\theta}_U  \phi_{n \alpha s}^{(Q)}$ is unramified, and $ \mathrm{Ind}_{L \cap w^{-1}P}^L (w \circ  J_{w Q \cap M}(\otimes_i \mathrm{St}_{GL_{n_i}}(\varepsilon_i)))$ is ramified if $w Q \cap M \not = M_0$, where $w Q \cap M  =M_0$ if and only if $w^{-1} S_{Q} w \cap S_P = \{id_{S_n}\}$. 

Let $(q^{t_{\pi,1}}, \ldots, q^{t_{\pi,n}})$ denote the Hecke matrix of $J_{N_0}(\otimes_i \mathrm{St}_{GL_{n_i}}(\varepsilon_i)) = \otimes_i (\delta_{B_{n_i}}^{\frac 12}(\varepsilon_i))$, for $w \in G^{\theta}_{P,Q}$ we have: 
$$\mathrm{Tr}_{\theta}(\hat{\chi}^{\theta}_U  \phi_{n \alpha s}^{(Q)},   \mathrm{Ind}_{L \cap w^{-1}P}^L (w \circ  J_{w Q \cap M}(\otimes_i \mathrm{St}_{GL_{n_i}}(\varepsilon_i)))) =   \mathrm{Tr}_{\theta}(\hat{\chi}^{\theta}_U  \phi_{n \alpha s}^{(Q)},   \mathrm{Ind}_{M_0}^L (w \circ  (\otimes_i \delta_{B_{n_i}}^{\frac 12}))$$

$$ =  \mathrm{Tr}_{\theta}(\hat{\chi}^{\theta}_U  \phi_{n \alpha s}^{(Q)},   w \circ  (\otimes_i \delta_{B_{n_i}}^{\frac 12}(\varepsilon_i))) = \varepsilon_{Q, w} \mathcal S(\hat{\chi}^{\theta}_U  \phi_{n \alpha s}^{(Q)}) (q^{t_{\pi,w(1)}}, \ldots, q^{t_{\pi,w(n)}}) $$ where $ \varepsilon_{Q, w} = \pm 1$ depends on $Q$ and $w$ (See Proposition 2.2.2). We have then the following proposition:

\begin{proposition}
$$\mathrm{Tr}_{\theta}(C_{\theta c} \phi_{n \alpha s}, \pi)  = \sum\limits_{Q \in \mathcal P^{\theta}, Q = LU}\sum\limits_{w \in G_{P,Q}^{\theta}}\varepsilon_{Q,\theta}  \varepsilon_{Q, w}  \mathcal S(\hat{\chi}^{\theta}_U  \phi_{n \alpha s}^{(Q)}) (q^{t_{\pi,w(1)}}, \ldots, q^{t_{\pi,w(n)}}) $$
\end{proposition}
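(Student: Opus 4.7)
The proof plan is essentially a three-step reduction, each step handled by a proposition already established in the paper. The overall strategy is to first apply the twisted Clozel formula to break the twisted compact trace into a sum over $\theta$-stable standard parabolic subgroups $Q = LU$, then to unfold the Jacquet module $\pi_U$ via the Bernstein--Zelevinsky filtration, and finally to evaluate each surviving term using a twisted van Dijk formula and the Hecke matrix.

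First, I would invoke Proposition 2.2.3 to write
\begin{equation*}
\mathrm{Tr}_{\theta}(C_{\theta c} \phi_{n \alpha s}, \pi A_{\theta}) = \sum_{Q \in \mathcal P^{\theta}, Q = LU}\varepsilon_{Q,\theta}\, \mathrm{Tr}_{\theta}\!\left(\hat{\chi}^{\theta}_U \bar{\phi}_{n \alpha s}^{(Q),\theta}, \pi_U(\delta_{Q}^{-\frac 12}) A_{\theta}\right),
\end{equation*}
and replace $\bar{\phi}_{n \alpha s}^{(Q),\theta}$ by $\phi_{n \alpha s}^{(Q)}$ since $\phi_{n \alpha s}$ is spherical (the averaging over $K$ becomes trivial, as in the proof of Corollary 2.2.1). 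Next, I would apply Proposition 5.1.3 to decompose
\begin{equation*}
J_U\bigl(\mathrm{Ind}_P^G(\otimes_i \mathrm{St}_{GL_{n_i}}(\varepsilon_i))\bigr) = \bigoplus_{w \in G_{P,Q}} \mathrm{Ind}_{L \cap w^{-1}P}^L \bigl(w \circ J_{wQ \cap M}(\otimes_i \mathrm{St}_{GL_{n_i}}(\varepsilon_i))\bigr),
\end{equation*}
and to describe explicitly how $A_\theta$ permutes the summands (sending the summand indexed by $w$ to the one indexed by $w^\theta$).

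The next step is to observe that only a restricted subset of $w$'s contributes. Since $\hat{\chi}^{\theta}_U  \phi_{n \alpha s}^{(Q)}$ is unramified, any summand for which $wQ \cap M$ fails to be the Borel subgroup $M_0$ gives a ramified induced representation and hence contributes zero by Proposition 2.2.1; this pins $w$ down to the subset of $G_{P,Q}$ satisfying $wS_Q \cap S_P = \{\mathrm{id}\}$. Likewise, since $A_\theta$ permutes the summands according to $w \mapsto w^\theta$, only fixed points of this permutation (i.e., $w^\theta = w$) can contribute a nonzero twisted trace on an individual summand. Combining these two restrictions yields exactly the index set $G_{P,Q}^{\theta}$.

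Finally, for each $w \in G_{P,Q}^\theta$ the induced representation $\mathrm{Ind}_{L \cap w^{-1}P}^L(w \circ J_{wQ \cap M}(\otimes_i \mathrm{St}_{GL_{n_i}}(\varepsilon_i)))$ reduces to $\mathrm{Ind}_{M_0}^L(w \circ (\otimes_i \delta_{B_{n_i}}^{1/2}(\varepsilon_i)))$, an unramified principal series. Applying the twisted van Dijk formula (Proposition 2.2.4) collapses the twisted trace against an unramified Hecke operator on this induced representation into a twisted trace on the inducing character, giving a sign $\varepsilon_{Q,w} = \pm 1$ together with the evaluation of the Satake transform $\mathcal{S}(\hat{\chi}^{\theta}_U \phi_{n \alpha s}^{(Q)})$ at the permuted Hecke matrix $(q^{t_{\pi,w(1)}}, \ldots, q^{t_{\pi,w(n)}})$. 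Assembling these pieces produces the claimed formula. The main obstacle, already anticipated and resolved by Proposition 5.1.3, is the explicit bookkeeping of how the intertwiner $A_\theta$ acts on each summand of $\pi_U$; once that combinatorial description is in hand, the remainder is a clean computation with no further analytic input.
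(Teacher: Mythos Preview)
Your proposal is correct and follows essentially the same three-step argument as the paper: apply the twisted Clozel formula (Proposition 2.2.3), decompose $\pi_U$ via Proposition 5.1.3 and restrict to $G_{P,Q}^\theta$ using the unramified/ramified dichotomy together with the $w\mapsto w^\theta$ permutation, then evaluate each surviving summand via the twisted van Dijk formula with the sign $\varepsilon_{Q,w}$ coming from Proposition 2.2.2. The only slip is a citation: the simplification $\bar\phi_{n\alpha s}^{(Q),\theta}=\phi_{n\alpha s}^{(Q)}$ for spherical functions appears in the proof of Corollary 2.2.2, not Corollary 2.2.1.
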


Applying Proposition 4.3.1, we obtain the following theorem (we later remove the sufficiently large condition for $\alpha$ in Proposition 4.3.1): 
\begin{theorem}
\begin{enumerate}
    \item If $n-s \not = s$, then for an arbitrary $\alpha \in \mathbb Z_{> 0}$, we have $$|B(\mathbb F_{q^{\alpha}})| =| \mathrm{ker}^1(\mathbb Q, G)| \sum_{\pi} N_{\pi} \mathrm{Tr}_{\theta}(C_{\theta c} \phi_{n \alpha s}, \pi)$$ 
$$=   | \mathrm{ker}^1(\mathbb Q, G)| \sum_{\pi}  \sum\limits_{Q \in \mathcal P^{\theta}, Q = LU}\sum\limits_{w \in G_{P,Q}^{\theta}}N_{\pi} \varepsilon_{Q,\theta}  \varepsilon_{Q, w}  \mathcal S(\hat{\chi}^{\theta}_U  \phi_{n \alpha s}^{(Q)}) (q^{t_{\pi,w(1)}}, \ldots, q^{t_{\pi,w(n)}}) $$ with $N_{\pi}$ as defined in Proposition 4.3.1, and $\pi $'s are representations of $GL_n(F_p)$ of the form $\mathrm{Ind}_P^G(\otimes_i \mathrm{St}_{GL_{n_i}}(\varepsilon_i))$, such that $\varepsilon_i$ is either the trivial character or the unramified quadratic character (recall that $\varepsilon_{Q,w} = \pm 1$ is introduced before Proposition 5.2.1 and $\varepsilon_{Q, \theta} = \pm 1$ is introduced in Proposition 2.2.3). 

\item If $n-s = s$, then for an arbitrary even $\alpha \in \mathbb Z_{> 0}$, we have $$|B(\mathbb F_{p^{\alpha}})| =| \mathrm{ker}^1(\mathbb Q, G)| \sum_{\pi} N_{\pi} \mathrm{Tr}_{\theta}(C_{\theta c} \phi_{n \frac \alpha2 s}, \pi) $$ 
$$   =   | \mathrm{ker}^1(\mathbb Q, G)| \sum_{\pi}  \sum\limits_{Q \in \mathcal P^{\theta}, Q = LU}\sum\limits_{w \in G_{P,Q}^{\theta}}N_{\pi} \varepsilon_{Q,\theta}  \varepsilon_{Q, w}  \mathcal S(\hat{\chi}^{\theta}_U  \phi_{n \frac{\alpha}2 s}^{(Q)}) (q^{t_{\pi,w(1)}}, \ldots, q^{t_{\pi,w(n)}}) $$ with $N_{\pi}$ as defined in Proposition 4.3.1 and $\pi $'s are representations of $GL_n(F_p)$ of the form $\mathrm{Ind}_P^G(\otimes_i \mathrm{St}_{GL_{n_i}}(\varepsilon_i))$, such that $\varepsilon_i$ is either the trivial character or the unramified quadratic character. 

\item If $n-s = s$, then for an arbitrary odd $\alpha \in \mathbb Z_{> 0}$, we have $$|B(\mathbb F_{p^{\alpha}})| =| \mathrm{ker}^1(\mathbb Q, G)| \sum_{\pi} N_{\pi} p^{-\frac \alpha2 s(n-s)}\mathrm{Tr}_{\theta}(C_{\theta c} \phi_{n  \alpha s}, \pi) $$ 
$$   =   | \mathrm{ker}^1(\mathbb Q, G)| \sum_{\pi}  \sum\limits_{Q \in \mathcal P^{\theta}, Q = LU}\sum\limits_{w \in G_{P,Q}^{\theta}}N_{\pi}  p^{-\frac \alpha2 s(n-s)} \varepsilon_{Q,\theta}  \varepsilon_{Q, w}  \mathcal S(\hat{\chi}^{\theta}_U  \phi_{n \frac{\alpha}2 s}^{(Q)}) (q^{t_{\pi,w(1)}}, \ldots, q^{t_{\pi,w(n)}}) $$ with $N_{\pi}$ as defined in Proposition 4.3.1, and $\pi $'s are representations of $GL_n(F_p)$ of the form $\mathrm{Ind}_P^G(\otimes_i \mathrm{St}_{GL_{n_i}}(\varepsilon_i))$, such that $\varepsilon_i$ is either the trivial character or the unramified quadratic character. 
\end{enumerate} 
\end{theorem}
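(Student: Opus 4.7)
The plan is to deduce Theorem 5.2.1 by combining the automorphic point-counting identity of Proposition 4.3.1 with the explicit twisted-trace computation of Proposition 5.2.1, and then to remove the ``$\alpha$ sufficiently large'' hypothesis inherited from Proposition 4.3.1 via an exponential-polynomial density argument.

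I would begin with case (1), where $n-s\neq s$. Proposition 4.3.1, applied to $\alpha\in\mathbb Z_{>0}$ sufficiently large, yields
\[
|B(\mathbb F_{q^{\alpha}})| = |\mathrm{ker}^1(\mathbb Q,G)|\sum_{\pi}N_\pi\,\mathrm{Tr}_\theta(C_{\theta c}\phi_{n\alpha s},\pi),
\]
where $\pi$ ranges over the finite set of representations of $GL_n(F_p)$ of the form $\mathrm{Ind}_P^G(\otimes_i\mathrm{St}_{GL_{n_i}}(\varepsilon_i))$ with $\varepsilon_i$ trivial or unramified quadratic, and $N_\pi$ is as defined there. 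Substituting the explicit expansion furnished by Proposition 5.2.1,
\[
\mathrm{Tr}_\theta(C_{\theta c}\phi_{n\alpha s},\pi) = \sum_{Q\in\mathcal P^\theta,\,Q=LU}\sum_{w\in G^\theta_{P,Q}}\varepsilon_{Q,\theta}\,\varepsilon_{Q,w}\,\mathcal S(\hat\chi^\theta_U\phi^{(Q)}_{n\alpha s})(q^{t_{\pi,w(1)}},\ldots,q^{t_{\pi,w(n)}}),
\]
then gives the second equality of the theorem. Cases (2) and (3), where $n-s=s$, are handled in exactly the same fashion by substituting the corresponding statements of Proposition 4.3.1 (involving $\phi_{n\frac{\alpha}{2}s}$ for even $\alpha$ and $p^{-\frac{\alpha}{2}s(n-s)}\phi_{n\alpha s}$ for odd $\alpha$).

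The remaining point is to extend the identity from the range of sufficiently large $\alpha$ to every $\alpha\in\mathbb Z_{>0}$ (respectively every even or every odd such $\alpha$). Both sides of the asserted equality are, as functions of $\alpha$, finite linear combinations of the form $\sum_k c_k\beta_k^\alpha$ with $c_k,\beta_k\in\overline{\mathbb Q}_\ell^\times$. On the left, $|B(\mathbb F_{q^\alpha})|=\sum_i(-1)^i\sum_j\alpha_{i,j}^\alpha$ by Grothendieck--Lefschetz, using that $B$ is proper. On the right, each $\mathcal S(\hat\chi^\theta_U\phi^{(Q)}_{n\alpha s})(q^{t_{\pi,w(1)}},\ldots,q^{t_{\pi,w(n)}})$ is, after unwinding Definition 2.4.1 and Proposition 2.6.2, a finite $\mathbb Z$-linear combination of monomials in $q^\alpha$ and in the $q^{\alpha t_{\pi,w(i)}}$, hence an exponential polynomial in $\alpha$; the outer sums over $\pi$, $Q$ and $w$ are all finite, and $N_\pi$ is a finite sum by the standard finiteness of the set of automorphic representations $\chi\otimes\tau$ simultaneously cohomological at infinity and $K^p$-spherical away from $p$. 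Since two exponential polynomials in $\alpha$ that coincide on a cofinite subset of $\mathbb Z_{>0}$ coincide on all of $\mathbb Z_{>0}$ (by linear independence of the multiplicative characters $\alpha\mapsto\beta^\alpha$ for distinct $\beta$, applied after shifting the argument), the identity holds for every $\alpha$. In cases (2) and (3) one applies the same argument on each of the even and odd arithmetic progressions separately: restricting $\alpha=2m$ or $\alpha=2m+1$ converts both sides into exponential polynomials in $m$, and the same linear-independence principle closes the gap.

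The main obstacle, as I see it, is the verification of the exponential-polynomial structure of both sides with only finitely many effective frequencies $\beta_k$; on the cohomological side this is automatic from properness of $B$, but on the automorphic side it rests on the finiteness of the $\pi$-sum and of the $N_\pi$-sum, which must be invoked carefully before the density argument can be applied. Granting this, the only substantive analytic content lies in Propositions 4.3.1 and 5.2.1, both already established, and the theorem follows.
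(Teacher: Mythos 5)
Your proposal is correct and follows essentially the same route as the paper: substitute the expansion of Proposition 5.2.1 into Proposition 4.3.1 for sufficiently large $\alpha$, then remove the largeness hypothesis by observing that both sides are finite exponential sums in $\alpha$ (Grothendieck--Lefschetz and properness of $B$ on the geometric side, the polynomial structure of $\mathcal S(\hat{\chi}^{\theta}_U\phi_{n 1 s}^{(Q)})$ evaluated at $(q^{\alpha t_{\pi,w(i)}})$ on the automorphic side) that agree on a cofinite set, hence everywhere, with the even/odd progressions treated separately when $n-s=s$. Your explicit appeal to linear independence of the characters $\alpha\mapsto\beta^{\alpha}$ merely spells out what the paper leaves implicit.
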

\begin{proof}

We may assume that $ n-s \not = s$, the proof for the case $n-s = s$ is almost identical: 

We use $n_i$ to denote $dim_{\overline{\mathbb Q}_{\ell}}(H_{et}^i(B_{\bar{\mathbb F}_q},\overline{\mathbb Q}_{\ell}))$, and we use $e_{i,0}, \ldots, e_{i,n_i}$ to denote the eigenvalues of $Frob_p$ acting on $H_{et}^i(B_{\bar{\mathbb F}_q},\overline{\mathbb Q}_{\ell})$ respectively. Note that $\mathrm{Gal}(\overline{\mathbb F}_q/\mathbb F_q)$ is an abelian group, then the eigenvalues of $Frob^{\alpha}_p$ acting on $H_{et}^i(B_{\bar{\mathbb F}_q},\overline{\mathbb Q}_{\ell})$ are $e^{\alpha}_{i,0}, \ldots, e^{\alpha}_{i,n_i}$. Then we have $$|B(\mathbb F_{q^{\alpha}})| = \sum\limits_i \sum_{j = 0}^{n_i } (-1)^ie^{\alpha}_{i,j}.$$ By applying Proposition 4.3.1 and Proposition 5.2.1, we have $$|B(\mathbb F_{q^{\alpha}})| =| \mathrm{ker}^1(\mathbb Q, G)| \sum_{\pi}  \sum\limits_{Q \in \mathcal P^{\theta}, Q = LU}\sum\limits_{w \in G_{P,Q}^{\theta}}N_{\pi} \varepsilon_{Q,\theta}  \varepsilon_{Q, w}  \mathcal S(\hat{\chi}^{\theta}_U  \phi_{n \alpha s}^{(Q)}) (q^{t_{\pi,w(1)}}, \ldots, q^{t_{\pi,w(n)}}) $$ for $\alpha$ sufficiently large. Also note that the above equality equals $$| \mathrm{ker}^1(\mathbb Q, G)| \sum_{\pi}  \sum\limits_{Q \in \mathcal P^{\theta}, Q = LU}\sum\limits_{w \in G_{P,Q}^{\theta}}N_{\pi} \varepsilon_{Q,\theta}  \varepsilon_{Q, w}  \mathcal S(\hat{\chi}^{\theta}_U  \phi_{n 1 s}^{(Q)}) (q^{\alpha t_{\pi,w(1)}}, \ldots, q^{\alpha t_{\pi,w(n)}}). $$ Therefore for $\alpha$ sufficiently large, we have $$|B(\mathbb F_{q^{\alpha}})|= | \mathrm{ker}^1(\mathbb Q, G)|\sum\limits_i \sum_{j = 0}^{n_i } (-1)^ie^{\alpha}_{i,j}$$ 
$$ =| \mathrm{ker}^1(\mathbb Q, G)|  \sum_{\pi}  \sum\limits_{Q \in \mathcal P^{\theta}, Q = LU}\sum\limits_{w \in G_{P,Q}^{\theta}}N_{\pi} \varepsilon_{Q,\theta}  \varepsilon_{Q, w}  \mathcal S(\hat{\chi}^{\theta}_U  \phi_{n 1 s}^{(Q)}) (q^{\alpha t_{\pi,w(1)}}, \ldots, q^{\alpha t_{\pi,w(n)}}).$$ Also note that $ \mathcal S(\hat{\chi}^{\theta}_U  \phi_{n 1 s}^{(Q)})$ are polynomial functions, thus, we conclude that the identity holds for all $\alpha \in \mathbb Z_{>0}$. 

\end{proof}

\begin{remark}
If $\alpha s$ is even, then we have that $$\mathrm{Tr}_{\theta}(C_{\theta c} \phi_{n \alpha s}, \mathrm{Ind}_P^G(\otimes_i \mathrm{St}_{GL_{n_i}}(\varepsilon_i))) =\mathrm{Tr}_{\theta}(C_{\theta c} \phi_{n \alpha s}, \mathrm{Ind}_P^G(\otimes_i \mathrm{St}_{GL_{n_i}})), $$
by considering the Hecke matrix of $J_{N_0}(\mathrm{Ind}_P^G(\otimes_i \mathrm{St}_{GL_{n_i}}(\varepsilon_i)))$ and $J_{N_0}(\mathrm{Ind}_P^G(\otimes_i \mathrm{St}_{GL_{n_i}}))$ respectively. 
\end{remark}

\subsection{Combinatorics of double cosets of symmetric groups}
	In last section, we define $G^{\theta}_{P,Q}  \subset S_n$ is defined to be $\{w \in G_{P,Q}  \subset S_n: w^{\theta} = w, w^{-1} S_{Q} w \cap S_P = \{id_{S_n}\}\}$ where $P = MN$ is the standard parabolic subgroup corresponding partition $(\lambda_1, \ldots, \lambda_s)$ and $Q = LU$ is the standard parabolic subgroup corresponding partition $(\mu_1, \ldots, \mu_t)$, $G_{P,Q}  \subset S_n$ is the set of minimal length representatives of $S_P\setminus S_n/ S_Q \subset S_n$ and $w^{\theta}$ is the minimal length representative in $S_P \theta (w) S_Q$ (recall that we define $ \theta (w) $ to be $A_n^{-1} wA_n$ where $A_n$ is the anti-diagonal matrix with entries being $1$).  
	
Thus $G^{\theta}_{P,Q} $ consists of $w \in S_n$ satisfying the following properties:

\begin{enumerate}
 \item $w$ is a minimal length representative in $S_P\setminus S_n/ S_Q$. 
 \item $w^{-1} S_{Q} w\cap S_P = \{id_{S_n}\}$.
 \item $w$ is the minimal length representative in $S_P \theta (w) S_Q$. 
\end{enumerate}
In the following, we give a combinatorics interpretation of the conditions (1),(2), and (3) above.

\begin{remark} If we write a computer program to compute the point-counting formula in Theorem 5.2.1, then the most challenging part is to determine which $w \in S_n$ belongs to $G^{\theta}_{P,Q} $. With our combinatorial interpretation of the conditions (1), (2), and (3) above, we have a more effective method to determine $G^{\theta}_{P,Q}$ algorithmically. 
\end{remark}  

Let $\alpha = (\alpha_1, \ldots, \alpha_s)$ be a partition of $n$. The \textbf{diagram of $\alpha$} is the set $\{(i, j) : 1 \leq i \leq s, 1 \leq j \leq \alpha_i\}$, which we denote by $T^{(\alpha_1, \ldots, \alpha_s)}$,
which we represent by a Young diagram in the usual way. For example $T^{(4,1,3)}$ is represented by
$$\begin{ytableau}
\empty &\empty &\empty&\empty \\
\empty  \\
\empty & \empty &\empty
\end{ytableau}$$ 
We refer to the elements of a Young diagram as \textbf{nodes}
Let $\alpha = (\alpha_1, \ldots, \alpha_s), \beta = (\beta_1, \ldots, \beta_t)$ be two partitions of $n$. \textbf{An $\alpha$-tableau of type $\beta$} is a function from the nodes of $\beta$ to $\mathbb Z_{>0}$ which takes each value $i \in \{1, \ldots , s\}$ exactly $\beta_i$ times, which we denote by $T^{(\alpha_1, \ldots, \alpha_s)}_{(\beta_1,\ldots,\beta_t)}$. We represent tableaux
by drawing the diagram of $\alpha$, and then writing the image of each node inside the corresponding box. For example $T_{(2,4,2)}^{(4,1,3)}$ is represented by
$$\begin{ytableau}
1 &1 &2&2 \\
2  \\
2 & 3 &3
\end{ytableau}$$ 
We label $T^{(\alpha_1, \ldots, \alpha_s)}$ as follows: We map $(i,j)$ to $\sum_{l = 1}^{\alpha_{i-1}} \alpha_l + j$ (we define $\alpha_0 = 0$), for example $T^{(4,1,3)}$ is labelled as follows: 
$$\begin{ytableau}
_1 &_2 &_3&_4 \\
_5  \\
_6 & _7 &_8
\end{ytableau}$$
Then $T^{(\alpha_1, \ldots, \alpha_s)}_{(\beta_1,\ldots, \beta_t)}$ may also be regard as a function from $\{1,2,\ldots,n\}$ to $\{1,\ldots,t\}$. 
We define the action of $S_n$ on $T^{(\alpha_1, \ldots, \alpha_s)}_{(\beta_1,\ldots, \beta_t)}$ by "place permutation", that is, if we regard $T^{(\alpha_1, \ldots, \alpha_s)}_{(\beta_1,\ldots, \beta_t)}$ as a function from $\{1,2,\ldots,n\}$ to $\{1,\ldots,t\}$, then given a $\sigma \in S_n$, $\sigma T^{(\alpha_1, \ldots, \alpha_s)}_{(\beta_1,\ldots, \beta_t)}$ is defined by $$\sigma T^{(\alpha_1, \ldots, \alpha_s)}_{(\beta_1,\ldots, \beta_t)} (k) = T^{(\alpha_1, \ldots, \alpha_s)}_{(\beta_1,\ldots, \beta_t)} (\sigma^{-1}k )$$ For example, $(1,2,3)(6,7) T_{(2,4,2)}^{(4,1,3)}$ is represented by 
$$\begin{ytableau}
2 &1 &1&2 \\
2  \\
3 & 2 &3
\end{ytableau}$$ 

\textbf{Combinatorics interpretation of conditions 1, 2 and 3:} We first provide a combinatorics interpretation of $w$ is a minimal length representative in $S_P\setminus S_n/ S_Q$ using Young tableau, following \cite{Wildon} (recall that $P$ corresponds partition $(\lambda_1,\ldots,\lambda_s)$ and $Q$ corresponds to partition $(\mu_1,\ldots,\mu_t)$). 

\begin{proposition} Let $w$ be an element of $S_n$. Then $w$ is a minimal length representative in $S_P\setminus S_n/ S_Q$ if and only if:
\begin{itemize}
    \item $w T^{(\lambda_1, \ldots, \lambda_s)}_{(\mu_1,\ldots, \mu_t)}$ has weakly increasing rows; i.e., $w T^{(\lambda_1, \ldots, \lambda_s)}_{(\mu_1,\ldots, \mu_t)}(i, j ) \leq w T^{(\lambda_1, \ldots, \lambda_s)}_{(\mu_1,\ldots, \mu_t)}(i , j+1)  $ for all $i,j$, 
    \item if $i < j$ and the $ith$ and $jth$ positions of $w T^{(\lambda_1, \ldots, \lambda_s)}_{(\mu_1,\ldots, \mu_t)}$ are equal, then $w(i) < w(j)$.
\end{itemize}

\end{proposition}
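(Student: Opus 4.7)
The plan is to establish the proposition via the standard dictionary between double cosets in symmetric groups and row-sorted tableaux, combined with the classical characterisation of minimum length coset representatives in Weyl groups. For brevity I abbreviate $T = T^{(\lambda_1, \ldots, \lambda_s)}_{(\mu_1,\ldots, \mu_t)}$.

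The first step is to observe that under the place-permutation action $\sigma T'(k) = T'(\sigma^{-1}k)$, the stabiliser of $T$ in $S_n$ is exactly $S_Q$: a permutation fixes $T$ if and only if it permutes within the level sets of $T$, and these level sets are precisely the blocks corresponding to the factors of $S_Q = S_{\mu_1} \times \cdots \times S_{\mu_t}$. Consequently $w \mapsto wT$ descends to a bijection between right cosets $S_n/S_Q$ and tableaux of shape $\lambda$ and content $\mu$. The residual action of $S_P$ on these tableaux permutes positions within each row of the $\lambda$-shape, so two elements of $S_n$ represent the same double coset if and only if their tableaux differ by a row permutation. Since each $S_P$-orbit contains a unique row-sorted representative, this gives a bijection between $S_P\backslash S_n/S_Q$ and tableaux of shape $\lambda$, content $\mu$, with weakly increasing rows.

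Next I will invoke the standard fact that the minimum length element of a double coset $S_P w S_Q$ in a Weyl group is unique and is characterised by being minimum simultaneously in the right coset $wS_Q$ and in the left coset $S_Pw$. For the symmetric group, minimality in $wS_Q$ is equivalent to $w$ being monotone increasing on each block of $S_Q$, namely on each level set of $T$; minimality in $S_Pw$ is equivalent to $w^{-1}$ being monotone increasing on each block of $S_P$, namely on each row of the $\lambda$-shape. This reduces the proposition to matching these two monotonicity conditions against the tableau-theoretic conditions in the statement.

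The final step carries out that matching. The $w^{-1}$-monotonicity on rows of $\lambda$ yields condition 1 directly: for $a < b$ in the same row of $\lambda$ one has $wT(a) = T(w^{-1}a) \leq T(w^{-1}b) = wT(b)$, using that $T$ is itself non-decreasing in its argument by construction. The $w$-monotonicity on level sets of $T$ translates into condition 2 via the bijection that $w$ induces between level sets of $T$ and level sets of $wT$, after the substitution $p \mapsto w^{-1}p$ identifying ``equal positions of $wT$'' with ``same level set of $T$''. The main place where care is required is in this last index-bookkeeping step; beyond it I do not anticipate any deeper obstacle, since both directions of the equivalence come directly from the two coset-minimality criteria already established.
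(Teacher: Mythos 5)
Your preparatory steps are sound: the stabiliser of $T:=T^{(\lambda_1,\ldots,\lambda_s)}_{(\mu_1,\ldots,\mu_t)}$ under place permutation is $S_Q$, double cosets biject with row-sorted tableaux, the minimal element of $S_PwS_Q$ is the unique element with no left descents in $S_P$ and no right descents in $S_Q$, and these two one-sided minimalities amount to ``$w^{-1}$ increasing on each row'' and ``$w$ increasing on each $\mu$-block''. The gap is exactly at the step you flag as index bookkeeping, and it is not repairable bookkeeping: writing $B_m$ for the $m$-th $\mu$-block, $wT(i)=wT(j)$ means $i,j\in w(B_m)$ for some $m$, and substituting $a=w^{-1}(i)$, $b=w^{-1}(j)$ turns ``$w$ increasing on each $B_m$'' into ``if $i<j$ and $wT(i)=wT(j)$ then $w^{-1}(i)<w^{-1}(j)$'', not into the printed conclusion $w(i)<w(j)$. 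The printed second bullet (equality hypothesis on $wT$, conclusion on $w$) is a hybrid of the two correct formulations (hypothesis on $T$ with conclusion $w(i)<w(j)$, or hypothesis on $wT$ with conclusion $w^{-1}(i)<w^{-1}(j)$), and with the paper's action convention (which matches its displayed example) it is genuinely different: for $\lambda=\mu=(2,2)$ and $w=3142$ in one-line notation, $wT$ has rows $(1,2)$ and $(1,2)$, the equal-entry pairs are $(1,3)$, $(2,4)$, and $w(1)=3<4=w(3)$, $w(2)=1<2=w(4)$, so both bullets hold, yet $\ell(w\cdot(12))=2<3=\ell(w)$ with $(12)\in S_Q$, so $w$ is not minimal in $S_PwS_Q$; conversely, for $\lambda=(1,1,1,1)$, $\mu=(2,1,1)$, $w=2413$ is minimal but violates the printed bullet. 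So no completion of your last step can prove the statement as it stands; what your steps actually prove is the corrected statement with $w^{-1}(i)<w^{-1}(j)$ (equivalently, with the equality hypothesis placed on $T$ rather than on $wT$). Note also that the paper offers no argument here beyond the citation of Theorem 4.1 of \cite{Wildon}, so the discrepancy is a convention/transcription issue with that citation rather than something your method overlooks.

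A secondary point: even after correcting the second bullet, your plan of reading bullet 1 off from left-coset minimality ``directly'' gives only one implication, since bullet 1 is strictly weaker than ``$w^{-1}$ increasing on each row'' (when $\mu=(n)$ every $w$ satisfies bullet 1). The converse must be assembled the way your first two steps already suggest: if $wT$ is row-sorted and $w$ is increasing on each $\mu$-block, then $wT$ is the unique row-sorted tableau in its $S_P$-orbit, hence equals $dT$ for the double-coset minimum $d$; thus $w$ and $d$ lie in the same coset $wS_Q$ and are both minimal there, so $w=d$. With that assembly and the corrected second bullet, your argument becomes a complete, self-contained proof at a point where the paper only defers to the literature.
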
 

\begin{proof}
See Theorem 4.1 of \cite{Wildon}
\end{proof}

Given a finite set $A$, let $S_A$ denote the permutation group of elements of $A$. When $A = \emptyset$, we define $S_A$ to be the trivial group with one element. We define $\Lambda_j= \sum_{l = 1}^{j} \lambda_l $ and $\Gamma_j= \sum_{l = 1}^{j} \mu_l $ (we also define $\Lambda_0  = 0$ and $\Gamma_0 = 0$), then $$S_P \cong S_{\{1,\ldots,\Lambda_1\}} \times S_{\{\Lambda_1 +1,\ldots,\Lambda_2\}} \times \ldots\times S_{\{\Lambda_{s-1} +1,\ldots,\Lambda_s\}}$$ and we have $$w^{-1} S_P w = S_{\{w^{-1}(1),\ldots,w^{-1(}\Lambda_1)\}} \times S_{\{w^{-1}(\Lambda_1 +1),\ldots,w^{-1}(\Lambda_2\})} \times \ldots\times S_{\{w^{-1}(\Lambda_{s-1} +1),\ldots,w^{-1}(\Lambda_s)\}}$$
and $$S_Q \cap w^{-1}S_P w = \prod\limits_{1 \leq i \leq s, 1 \leq j \leq t} S_{\{w^{-1}(\Lambda_{i-1} +1),\ldots,w^{-1}(\Lambda_{i})\}} \cap S_{\{\Delta_{j-1} +1,\ldots,\Delta_{j}\}}$$
Then we have that $S_Q \cap w^{-1}S_P w = \{id_{S_n}\}$ iff $|\{w^{-1}(\Lambda_{i-1} +1),\ldots,w^{-1}(\Lambda_{i})\} \cap \{\Delta_{j-1} +1,\ldots,\Delta_{j}\}|= 0 $ or $1$ for all $i , j$. In terms of Young tableau, we have that $S_Q \cap w^{-1}S_P w = \{id_{S_n}\}$ if and only if $w T^{(\lambda_1, \ldots, \lambda_s)}_{(\mu_1,\ldots, \mu_t)}$ has distinct elements in each rows. Applying Proposition 5.3.1, we have the following proposition:

\begin{corollary} Given $w \in S_n$, we have that $w$ satisfies conditions 1 and 2 (introduced at the beginning of this section) if and only if:

\begin{itemize}
    \item $w T^{(\lambda_1, \ldots, \lambda_s)}_{(\mu_1,\ldots, \mu_t)}$ has strictly increasing rows; i.e., $w T^{(\lambda_1, \ldots, \lambda_s)}_{(\mu_1,\ldots, \mu_t)}(i, j ) < w T^{(\lambda_1, \ldots, \lambda_s)}_{(\mu_1,\ldots, \mu_t)}(i , j+1) $ for all $i,j$,
    \item if $i < j$ and the $ith$ and $jth$ positions of $w T^{(\lambda_1, \ldots, \lambda_s)}_{(\mu_1,\ldots, \mu_t)}$ are equal, then $w(i) < w(j)$.
\end{itemize}
 
\end{corollary}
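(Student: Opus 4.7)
The plan is to deduce the corollary by combining Proposition~5.3.1 with an explicit combinatorial translation of condition~(2). Condition~(1) is already handled by Proposition~5.3.1, so the main task is to interpret the equality $w^{-1}S_Q w \cap S_P = \{\mathrm{id}\}$ in terms of the tableau $w T^{(\lambda_1,\ldots,\lambda_s)}_{(\mu_1,\ldots,\mu_t)}$.

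First I would record that, since conjugating by $w$ is an inner automorphism of $S_n$, the condition $w^{-1}S_Q w \cap S_P = \{\mathrm{id}\}$ is equivalent (after conjugating by $w$) to $S_Q \cap w^{-1}S_P w = \{\mathrm{id}\}$. Using the standard decomposition $S_P \cong \prod_{i=1}^s S_{\{\Lambda_{i-1}+1,\ldots,\Lambda_i\}}$ and $S_Q \cong \prod_{j=1}^t S_{\{\Gamma_{j-1}+1,\ldots,\Gamma_j\}}$, one has $w^{-1} S_P w \cong \prod_i S_{w^{-1}\{\Lambda_{i-1}+1,\ldots,\Lambda_i\}}$. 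Since the intersection of two products of permutation groups on partitions of $\{1,\ldots,n\}$ is itself the product of permutation groups on the pairwise intersections of the parts, one obtains
\[
S_Q \cap w^{-1}S_P w \;\cong\; \prod_{1\le i\le s,\,1\le j\le t} S_{\{w^{-1}(\Lambda_{i-1}+1),\ldots,w^{-1}(\Lambda_i)\} \cap \{\Gamma_{j-1}+1,\ldots,\Gamma_j\}},
\]
which is trivial if and only if each of these intersections has cardinality at most $1$.

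Next I would translate this cardinality condition into tableau language. By definition of $T^{(\lambda_1,\ldots,\lambda_s)}_{(\mu_1,\ldots,\mu_t)}$, the entry in position $k$ equals the unique $j$ with $\Gamma_{j-1} < k \le \Gamma_j$, and the place-permutation convention gives $wT^{(\lambda_1,\ldots,\lambda_s)}_{(\mu_1,\ldots,\mu_t)}(k) = j$ iff $w^{-1}k \in \{\Gamma_{j-1}+1,\ldots,\Gamma_j\}$. Row $i$ of the $\lambda$-diagram consists of the positions $\{\Lambda_{i-1}+1,\ldots,\Lambda_i\}$, so the set above is precisely the set of positions in row $i$ of $wT^{(\lambda_1,\ldots,\lambda_s)}_{(\mu_1,\ldots,\mu_t)}$ whose entry equals $j$. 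Condition~(2) is therefore equivalent to the statement that every row of $wT^{(\lambda_1,\ldots,\lambda_s)}_{(\mu_1,\ldots,\mu_t)}$ has pairwise distinct entries.

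Finally I would combine this with Proposition~5.3.1. Condition~(1) says that $wT^{(\lambda_1,\ldots,\lambda_s)}_{(\mu_1,\ldots,\mu_t)}$ has weakly increasing rows and satisfies the second bullet of Proposition~5.3.1. Adding condition~(2) (distinct entries in each row) upgrades ``weakly increasing'' to ``strictly increasing'' and leaves the second bullet unchanged, yielding exactly the two bullets in the statement of the corollary. I expect no substantial obstacle beyond carefully keeping track of whether $w$ or $w^{-1}$ appears in each identification; the most error-prone step is verifying that the intersection of product groups is indeed the product of the intersections, but this is immediate because elements of $\prod_i S_{A_i} \cap \prod_j S_{B_j}$ must preserve every set $A_i \cap B_j$.
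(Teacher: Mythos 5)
Your proof is essentially the paper's own argument: the same decomposition of $S_P$ and $S_Q$ into symmetric groups on intervals, the same identification of $S_Q\cap w^{-1}S_Pw$ with the product $\prod_{i,j}S_{w^{-1}\{\Lambda_{i-1}+1,\ldots,\Lambda_i\}\cap\{\Gamma_{j-1}+1,\ldots,\Gamma_j\}}$, the same translation of its triviality into ``each row of $wT^{(\lambda_1,\ldots,\lambda_s)}_{(\mu_1,\ldots,\mu_t)}$ has distinct entries'', and the same combination with Proposition 5.3.1. The only point to watch is your opening reduction: conjugating $w^{-1}S_Qw\cap S_P$ by $w$ yields $S_Q\cap wS_Pw^{-1}$, not $S_Q\cap w^{-1}S_Pw$, so that step is really the $w$-versus-$w^{-1}$ normalisation ambiguity already present in the paper itself (condition (2) appears as $wS_Q\cap S_P$ in \S 5.2 and as $w^{-1}S_Qw\cap S_P$ in \S 5.3, while the computation, there as in your write-up, is carried out for $S_Q\cap w^{-1}S_Pw$), rather than a new gap introduced by you.
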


Now we look at condition 3: For $w \in S_n$, we define $R^{(\lambda_1, \ldots, \lambda_s)}_{(\mu_1,\ldots, \mu_t)} (w,i)$ to be the set of elements in the i-th row of $\theta(w)T^{(\lambda_1, \ldots, \lambda_s)}_{(\mu_1,\ldots, \mu_t)} $, that is, $$R^{(\lambda_1, \ldots, \lambda_s)}_{(\mu_1,\ldots, \mu_t)} (w,i) = \{a: a = w T^{(\lambda_1, \ldots, \lambda_s)}_{(\mu_1,\ldots, \mu_t)}(i, j ) \text{ for some j}\}.$$ By the construction of $w T^{(\lambda_1, \ldots, \lambda_s)}_{(\mu_1,\ldots, \mu_t)}$, we have that $w$ is the minimal length representative in $S_P \theta (w) S_Q$ if and only if $T^{(\lambda_1, \ldots, \lambda_s)}_{(\mu_1,\ldots, \mu_t)}$ and $\theta(w)T^{(\lambda_1, \ldots, \lambda_s)}_{(\mu_1,\ldots, \mu_t)} $ have the same elements at each row, i.e., $R^{(\lambda_1, \ldots, \lambda_s)}_{(\mu_1,\ldots, \mu_t)} (w,i)  = R^{(\lambda_1, \ldots, \lambda_s)}_{(\mu_1,\ldots, \mu_t)} (\theta(w),i) $ for all $i$. In summary, we have the following:

\begin{proposition}
Let $w \in S_n$. Then we have that $w \in G^{\theta}_{P,Q} $ if and only if:
\begin{itemize}
\item $w T^{(\lambda_1, \ldots, \lambda_s)}_{(\mu_1,\ldots, \mu_t)}$ has increasing rows, i.e., $w T^{(\lambda_1, \ldots, \lambda_s)}_{(\mu_1,\ldots, \mu_t)}(i, j ) < w T^{(\lambda_1, \ldots, \lambda_s)}_{(\mu_1,\ldots, \mu_t)}(i , j+1)  $ for all $i,j$. 
\item if $i < j$ and the $ith$ and $jth$ positions of $w T^{(\lambda_1, \ldots, \lambda_s)}_{(\mu_1,\ldots, \mu_t)}$ are equal, then $w(i) < w(j)$.
\item $R^{(\lambda_1, \ldots, \lambda_s)}_{(\mu_1,\ldots, \mu_t)} (w,i)  = R^{(\lambda_1, \ldots, \lambda_s)}_{(\mu_1,\ldots, \mu_t)} (\theta(w),i) $ for all $i$.
\end{itemize}
\end{proposition}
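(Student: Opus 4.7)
The plan is to treat the three defining conditions of $G^\theta_{P,Q}$ separately. Corollary 5.3.1 already identifies the first two conditions (that $w$ is the minimal length representative of $S_P\backslash S_n/S_Q$, and that $w^{-1}S_Q w \cap S_P = \{\mathrm{id}\}$) with the first two bullets of the statement, so the remaining task is to reinterpret the third condition $w^\theta = w$, i.e.\ that $w$ is itself the minimal length representative of $S_P\theta(w)S_Q$, as the row-set identity $R(w,i) = R(\theta(w),i)$ for all $i$, under the assumption of the first two conditions.

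First I would record the stability observation underlying the whole argument: the tuple of row sets $(R(w,1),\ldots,R(w,s))$ depends only on the double coset $S_P w S_Q$. Any $\sigma \in S_P$ acts on $\lambda$-shaped tableaux by permuting positions within each row, while any $\tau \in S_Q$ fixes $T$ (since $T$ is constant on each $\mu$-block by construction, so $\tau T = T$); hence $\sigma w\tau T = \sigma wT$ has the same row sets as $wT$. Combined with uniqueness of the minimal length representative in each double coset (Proposition 5.3.2 of the text), this reduces the third condition to the equivalence: $\theta(w) \in S_P w S_Q$ if and only if $R(w,i) = R(\theta(w),i)$ for all $i$. The forward direction is then immediate from the stability observation applied to any factorization $\theta(w) = \sigma w \tau$.

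For the converse, my plan is to introduce $w^\theta$, the minimal length representative of $S_P\theta(w)S_Q$, write $\theta(w) = \sigma w^\theta \tau$ with $\sigma \in S_P$, $\tau \in S_Q$, and invoke the stability observation to conclude $R(w^\theta, i) = R(\theta(w), i) = R(w, i)$. The point is to promote this set-level equality to the positional equality $w^\theta T = wT$. Since $w$ satisfies the first two conditions, every row of $wT$ has $\lambda_i$ distinct entries, so $|R(w, i)| = \lambda_i$; hence $|R(w^\theta, i)| = \lambda_i$ forces row $i$ of $w^\theta T$ to have $\lambda_i$ distinct entries, and combined with the weakly-increasing-rows property of the minimal length representative $w^\theta$ (Proposition 5.3.1), each row of $w^\theta T$ is strictly increasing and therefore equals the corresponding row of $wT$. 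From $wT = w^\theta T$ I pass to $w^{-1}w^\theta \in \mathrm{Stab}(T) = S_Q$, so $w^\theta \in wS_Q \subset S_P w S_Q$; consequently $S_P w S_Q = S_P\theta(w) S_Q$, and uniqueness of minimal length representatives yields $w = w^\theta$.

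The main obstacle I anticipate is precisely the set-versus-multiset distinction in the definition of $R(w,i)$: since $R(w,i)$ is a set, it a priori loses multiplicity information, so the set-level equality $R(w,i) = R(\theta(w),i)$ does not obviously imply any positional comparison of the tableaux. The bridge is the multiplicity-free row structure of $wT$ under conditions (1)--(2) combined with the row-length matching $|R(w,i)| = \lambda_i$; this numerical coincidence forces the rows of $\theta(w)T$ and of $w^\theta T$ to also be multiplicity-free, which is what ultimately enables the uniqueness-of-representative argument to close the loop.
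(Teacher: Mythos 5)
Your proof is correct and takes essentially the same route as the paper: bullets one and two are identified with conditions (1)--(2) via Corollary 5.3.1, and condition (3) is translated into the row-set identity $R(w,i)=R(\theta(w),i)$. The paper justifies that last equivalence with only the phrase ``by the construction of $wT$,'' so your argument---stability of row sets under the $S_P\times S_Q$ action, then recovering $w^{\theta}T=wT$ from strictly increasing, multiplicity-free rows and concluding by uniqueness of minimal length representatives---simply supplies the details the paper leaves implicit, correctly noting that the set-versus-multiset issue is resolved by the first two bullets.
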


\newpage

\appendix

\section{The elliptic part of the geometric side of the trace formula}

In this appendix, we prove a lemma on the elliptic part of the geometric side of the trace formula. 

\subsection{Central elements in some algebraic groups}

we first study central elements in some algebraic groups.

\begin{proposition}
Let $G$ be an algebraic group over a field $F$, and let $H$ be an inner form of $G$, then we have $Z(G) \cong Z(H)$. 
\end{proposition}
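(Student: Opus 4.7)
The plan is to use the defining property of inner forms, namely that the cocycle twisting $G$ into $H$ takes values in inner automorphisms of $G_{\bar F}$, combined with the elementary fact that inner automorphisms act trivially on the center.

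First I would fix a separable closure $\bar F/F$ and an $\bar F$-isomorphism $\phi : G_{\bar F} \xrightarrow{\sim} H_{\bar F}$ realising $H$ as an inner form of $G$. By definition of inner form, the associated $1$-cocycle $\sigma \mapsto c_\sigma := \phi^{-1} \circ \sigma(\phi)$ lies in $Z^1(\mathrm{Gal}(\bar F/F), (G^{\mathrm{ad}})(\bar F))$, i.e.\ each $c_\sigma$ is an inner automorphism of $G_{\bar F}$. Restriction to the center is functorial, so $\phi$ restricts to an $\bar F$-isomorphism $\phi_Z : Z(G)_{\bar F} = Z(G_{\bar F}) \xrightarrow{\sim} Z(H_{\bar F}) = Z(H)_{\bar F}$.

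Next I would check that $\phi_Z$ is defined over $F$ by showing it is Galois-equivariant. For any $\sigma \in \mathrm{Gal}(\bar F/F)$, one has $\sigma(\phi) = \phi \circ c_\sigma$. But $c_\sigma$ is inner, and inner automorphisms act trivially on $Z(G_{\bar F})$; hence $c_\sigma|_{Z(G_{\bar F})} = \mathrm{id}$, so $\sigma(\phi)|_{Z(G_{\bar F})} = \phi|_{Z(G_{\bar F})} = \phi_Z$. Thus $\phi_Z$ is Galois-invariant, and by Galois descent it descends to an $F$-isomorphism $Z(G) \xrightarrow{\sim} Z(H)$.

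The only subtle point is the assertion that inner automorphisms act trivially on the center, which in scheme-theoretic language should be checked on functors of points: an inner automorphism $\mathrm{Int}(g)$ of $G$ sends $z \in Z(G)(R)$ to $gzg^{-1} = z$ by the definition of $Z(G)$ (on $R$-points for every $F$-algebra $R$), so $\mathrm{Int}(g)$ restricts to the identity on $Z(G)$. Applying this over $\bar F$ and on $\bar F$-points of $c_\sigma$ gives the required triviality. No other step poses difficulty; the main (minor) obstacle is just making sure one is working with the correct scheme-theoretic center rather than merely $Z(G)(\bar F)$, but since $Z$ is a closed subgroup scheme preserved by any automorphism of $G_{\bar F}$ and fixed pointwise by inner ones, the descent argument goes through verbatim.
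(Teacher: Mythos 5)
Your proof is correct and follows essentially the same route as the paper: restrict the $\bar F$-isomorphism $\phi$ to the centres, observe that the cocycle $\phi^{-1}\circ\sigma(\phi)$ is inner and hence acts trivially on $Z(G_{\bar F})$, conclude Galois-invariance, and descend to an $F$-isomorphism $Z(G)\cong Z(H)$. Your extra care with the scheme-theoretic centre versus $\bar F$-points is a welcome refinement but does not change the argument.
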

\begin{proof}
Let $\phi: G_{\bar F} \rightarrow H_{\bar F}$ be an isomorphism over $\bar F$, then we have an isomorphism $\phi|_{Z(G_{\bar F})}: Z(G_{\bar F}) \rightarrow Z(H_{\bar F})$ between commutative algebraic groups. For an arbitrary $\sigma \in \mathrm{Gal}(\bar F/F)$, we have that $\phi^{-1} \circ \phi^{\sigma}$ is an inner isomorphism, therefore $(\phi|_{Z(G_{\bar F})})^{-1} \circ (\phi|_{Z(G_{\bar F})})^{\sigma}$ is an identity. Then we have that $\phi|_{Z(G_{\bar F})} = (\phi|_{Z(G_{\bar F})})^{\sigma}$ for all $\sigma \in \mathrm{Gal}(\bar F/F)$. Therefore, the morphism $\phi|_{Z(G_{\bar F})}: Z(G_{\bar F}) \rightarrow Z(H_{\bar F})$ is defined over $F$, and the lemma follows. 
 \end{proof}

We embed $GL_n(\mathbb A_f)$ into $\mathbb A_f^{n^2}$. For $N \in \mathbb Z_{>0}$, we use $GL_n(\hat{\mathbb Z}[\frac 1N])$ to denote $GL_n(\mathbb A_f) \cap (\hat{\mathbb Z}[\frac 1N]) ^{n^2}$. 

\begin{lemma}
Let $G$ be an affine algebraic group over $\mathbb Q$ such that $G(\mathbb R)$ is compact, let $K$ be a compact open subset of $G(\mathbb A)$, then $K \cap G(\mathbb Q ) \subset G(\mathbb Q) \subset G(\mathbb R) $ is discrete in $G(\mathbb R)$. 
\end{lemma}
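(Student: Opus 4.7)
The plan is to reduce the statement to the stronger claim that $K \cap G(\mathbb Q)$ is in fact finite, from which discreteness as a subset of the Hausdorff space $G(\mathbb R)$ is automatic. The key intermediate fact is the standard discreteness of $G(\mathbb Q)$ inside $G(\mathbb A)$ for affine $\mathbb Q$-groups, which I will derive from the classical discreteness of $\mathbb Q$ in $\mathbb A$.

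First, since $G$ is affine over $\mathbb Q$, I will choose a closed immersion $\rho \colon G \hookrightarrow \mathrm{GL}_n$ defined over $\mathbb Q$. Composing with the standard closed embedding $\mathrm{GL}_n \hookrightarrow \mathbb A^{2n^2}_{\mathbb Q}$ given by $g \mapsto (g, g^{-1})$, I will obtain a closed embedding $G \hookrightarrow \mathbb A^{2n^2}_{\mathbb Q}$. Passing to adelic points yields a topological embedding $G(\mathbb A) \hookrightarrow \mathbb A^{2n^2}$ which identifies $G(\mathbb A)$ with a closed subspace, and under this identification $G(\mathbb Q) = G(\mathbb A) \cap \mathbb Q^{2n^2}$. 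Because $\mathbb Q$ is discrete in $\mathbb A$, the subgroup $\mathbb Q^{2n^2}$ is discrete in $\mathbb A^{2n^2}$, and therefore $G(\mathbb Q)$ is discrete as a subspace of $G(\mathbb A)$.

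With $G(\mathbb Q)$ discrete and closed in $G(\mathbb A)$ and $K \subset G(\mathbb A)$ compact, the intersection $K \cap G(\mathbb Q)$ is a compact subset of a discrete space, hence finite. A finite subset of the Hausdorff topological group $G(\mathbb R)$ is trivially discrete, which yields the claim.

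There is essentially no obstacle here beyond invoking the standard facts above; the only mild care required is in the choice of embedding (taking the graph of inversion to ensure that $G(\mathbb A)$ is mapped onto a closed, not merely locally closed, subset of affine adelic space). I remark that the hypothesis that $G(\mathbb R)$ is compact is not actually used in the argument for this lemma, but it is presumably maintained because the lemma is stated alongside other results of the appendix for which this compactness assumption is essential.
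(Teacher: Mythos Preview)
Your proof is correct, and it takes a genuinely different route from the paper's.

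The paper projects $K$ to the finite adeles $G(\mathbb A_f)$, covers the resulting compact set by finitely many cosets $g_i L_i$ of compact open subgroups, enlarges each $L_i$ inside a chosen embedding $G \hookrightarrow GL_n$ to a compact open subgroup $M_i \subset GL_n(\mathbb A_f)$, and then observes that $g_i M_i \subset GL_n(\hat{\mathbb Z}[1/N])$ for suitable $N$, so that $g_i M_i \cap G(\mathbb Q) \subset GL_n(\mathbb Z[1/N])$ is discrete in $GL_n(\mathbb R)$. Compactness of $G(\mathbb R)$ is then invoked to upgrade discreteness of each piece to finiteness, so that the finite union is finite and hence discrete. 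By contrast, you bypass all of this by appealing directly to the discreteness of $G(\mathbb Q)$ in the full adele group $G(\mathbb A)$, obtained from the discreteness of $\mathbb Q$ in $\mathbb A$ via the closed embedding $g \mapsto (g,g^{-1})$ into affine space; since $K$ is already assumed compact in $G(\mathbb A)$, the intersection $K \cap G(\mathbb Q)$ is immediately finite.

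Your argument is shorter and, as you note, does not use the hypothesis that $G(\mathbb R)$ is compact; this matches the paper's own Remark following the lemma. The paper's approach, while more hands-on, has the minor advantage of making the arithmetic structure explicit (everything lives in $GL_n(\mathbb Z[1/N])$), which is sometimes useful elsewhere; but for the purpose of this lemma your route is cleaner.
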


\begin{proof}
Let $K_f$ denote the projection of $K$ onto $G(\mathbb A_f)$, then $K_f$ is a compact open subset of $G(\mathbb A_f)$. Also note that $K \cap \mathbb Q \subset K_f \cap G(\mathbb Q)$, thus, it is enough to prove that  $K_f \cap G(\mathbb Q) \subset G(\mathbb Q) \subset G(\mathbb R)$ is discrete in $G(\mathbb R)$.

 For all $g \in K_f$, there exists a compact open subgroup $K_{f,g} \subset g^{-1}K_f \subset G(\mathbb A_f)$, then $\{gK_{f,g}: g \in K_f\}$ is am open covering of $K_f$. By the compactness of $K_f$, there exists a finite subset of $\{gK_{f,g}: g \in K_f\}$ which is a covering of $K_f$. Let $\{g_1L_1, \ldots, g_sL_s\} \subset \{gK_{f,g}: g \in K_f\}$ be a finite sub-covering of $K_f$.

We fix an embedding $G \hookrightarrow GL_n$ for some $n \in \mathbb Z_{>0}$, then $L_i$ is a compact subgroup of $ GL_n(\mathbb A_f)$. By applying Lemma 2.4.7 of \cite{Loi19}, for each $i$, there exists a compact open subgroup $M_i \subset GL_n(\mathbb A_f)$ containing $L_i$. Then we only need to prove that $(\bigcup_{i=1}^s g_i M_i) \cap G(\mathbb Q) $ is discrete in $G(\mathbb R)$. 

Let $i \in \{1,2, \ldots , s\}$, we first prove that $g_i M_i \cap G(\mathbb Q)$ is discrete in $G(\mathbb R) $: Firstly, we have that $g_i M_i \cap GL_n(\mathbb Q)$ is compact in $GL_n(\mathbb R)$, because there exists an $N \in \mathbb Z_{>0}$ such that $g_i M_i \subset GL_n(\hat{\mathbb Z}[\frac 1N]) $, and $GL_n(\hat{\mathbb Z}[\frac 1N]) \cap GL_n(\mathbb Q) = GL_n(\mathbb Z[\frac 1N])$ is discrete in $GL_n(\mathbb R)$. We can now conclude that  $g_i M_i \cap G(\mathbb Q)$ is discrete in $G(\mathbb R) $ because of the following commutative diagram, where arrows are all continuous:

$$\begin{tikzcd}
g_i M_i \cap G(\mathbb Q) \arrow[rr, hook] \arrow[d, hook] &  & G(\mathbb R) \arrow[d, hook] \\
g_i M_i \cap GL_n(\mathbb Q) \arrow[rr, hook]              &  & GL_n(\mathbb R)             
\end{tikzcd}$$
Because $G(\mathbb R)$ is compact, then we may conclude that $g_i M_i \cap G(\mathbb Q)$ is finite. Therefore $(\bigcup_{i=1}^s g_i M_i) \cap G(\mathbb Q) $ is discrete in $G(\mathbb R)$. 
\end{proof}

\begin{remark}
Let $G$ be an affine algebraic group over $\mathbb Q$, and let $K$ be a compact open subgroup of $G(\mathbb A)$ (we do not assume the compactness of $G(\mathbb R)$). Repeating the above proof, we conclude that $K \cap G(\mathbb Q) $ is discrete in $G(\mathbb R)$. 
\end{remark}

In the following, let $G$ be an algebraic group over $\mathbb Q$, we assume that $Z(G)$ is isogenous over $\mathbb Q$ to a torus of the form $T_s \times T_a$ where $T_s$ splits and $T_a$ is anisotropic over $\mathbb R$. Then we have the following proposition:

\begin{proposition}
Let $K$ be a compact open subset of $G(\mathbb A)$, then $K \cap (Z(G)(\mathbb Q))$ is finite. 
\end{proposition}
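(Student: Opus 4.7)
The plan is to show that $Z(G)(\mathbb Q)$ is a discrete subgroup of $Z(G)(\mathbb A)$, and then to deduce finiteness from the compactness of $K \cap Z(G)(\mathbb A)$ together with the fact that discrete subgroups of Hausdorff topological groups are closed.

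First I would observe that, since $Z(G)$ is a closed subgroup scheme of $G$, the intersection $K' := K \cap Z(G)(\mathbb A)$ is a compact open subset of $Z(G)(\mathbb A)$; it thus suffices to show $K' \cap Z(G)(\mathbb Q)$ is finite. Next, the hypothesis that $Z(G)$ is isogenous over $\mathbb Q$ to the torus $T_s \times T_a$ ensures in particular that $Z(G)$ is affine, so it admits a closed embedding $Z(G) \hookrightarrow GL_m$ for some $m$. Using the closed immersion $GL_m \hookrightarrow \mathbb A^{m^2+1}$ given by $g \mapsto (g, \det(g)^{-1})$ together with the fact that $\mathbb Q$ is discrete in $\mathbb A$, one obtains that $GL_m(\mathbb Q)$ is discrete in $GL_m(\mathbb A)$, and hence $Z(G)(\mathbb Q) = Z(G)(\mathbb A) \cap GL_m(\mathbb Q)$ is discrete in $Z(G)(\mathbb A)$. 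Any discrete subgroup $H$ of a Hausdorff topological group is automatically closed (a convergent sequence $g_n \to g$ in $H$ forces $g_n g_{n+1}^{-1} \to e$ to be eventually $e$, so $(g_n)$ is eventually constant and $g \in H$), so $K' \cap Z(G)(\mathbb Q)$ is the intersection of a compact set with a closed discrete set, hence finite.

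The main obstacle is establishing the discreteness of $Z(G)(\mathbb Q)$ in $Z(G)(\mathbb A)$, for which the affineness of $Z(G)$ is crucial; the structural hypothesis on $Z(G)$ is exactly what provides this affineness. As an alternative, more hands-on route that uses the decomposition more fully and parallels the proof of Lemma A.1.1, one could pull $K'$ back along the isogeny $\phi: T_s \times T_a \to Z(G)$: since $\phi$ is a finite (hence proper) morphism, the preimage $K^* := \phi_{\mathbb A}^{-1}(K')$ is compact in $T_s(\mathbb A) \times T_a(\mathbb A)$, and it would suffice to treat the two factors separately. For $T_a$, finiteness of $K^* \cap T_a(\mathbb Q)$ follows from Lemma A.1.1 since $T_a(\mathbb R)$ is compact; for $T_s \cong \mathbb G_m^r$, a rational coordinate $q \in \mathbb Q^\times$ whose image is in a compact subset of $\mathbb A^\times$ satisfies $|q|_p$ bounded for each $p$ and equal to $1$ for almost all $p$, which forces $q$ into a finite set. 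The finite cokernel of $\phi$ on $\mathbb Q$-points poses no real difficulty here, since we only need a qualitative finiteness statement.
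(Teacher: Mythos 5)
Your primary argument is correct, and it takes a genuinely different route from the paper. The paper first throws away the archimedean component, reducing to $K_f\cap Z(G)(\mathbb A_f)\cap Z(G)(\mathbb Q)$, and only then can it conclude by exploiting the isogeny $\phi:Z(G)\to T_s\times T_a$: the split part is handled by an integrality/boundedness argument and the anisotropic part by Lemma A.1.1 together with compactness of $T_a(\mathbb R)$, so the standing hypothesis on $Z(G)$ is genuinely used there (and is in fact needed once one works only with finite ad\`eles: for a torus that is $\mathbb Q$-anisotropic but $\mathbb R$-split, a compact open subgroup of the finite-adelic points can contain infinitely many rational points). You instead stay with the full ad\`eles: $Z(G)(\mathbb Q)$ is discrete, hence closed, in $Z(G)(\mathbb A)$ because $Z(G)$ is affine and $\mathbb Q$ is discrete in $\mathbb A$ (via a closed embedding into some $GL_m$ and then into $\mathbb A^{m^2+1}$), and a compact set meets a closed discrete set in a finite set. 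This is cleaner and more general -- it never uses the decomposition $T_s\times T_a$ or anisotropy at infinity, only affineness of $Z(G)$ -- and it suffices for the way the proposition is used in Lemma A.2.1. What the paper's route buys, by contrast, is the stronger finite-adelic statement, for which the hypothesis on $Z(G)$ is essential. Your sequence argument for ``discrete implies closed'' implicitly uses first countability, but adelic groups are locally compact and second countable, so this is harmless.

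One caveat on your alternative route: the claim that $\phi$ has ``finite cokernel on $\mathbb Q$-points'' is false in general. With your orientation $\phi: T_s\times T_a\to Z(G)$, the quotient $Z(G)(\mathbb Q)/\phi\bigl((T_s\times T_a)(\mathbb Q)\bigr)$ injects into $H^1(\mathbb Q,\ker\phi)$, which is typically infinite (e.g.\ $\mathbb Q^\times/(\mathbb Q^\times)^2$ when $\ker\phi=\mu_2$), so points of $K'\cap Z(G)(\mathbb Q)$ need not lift to $(T_s\times T_a)(\mathbb Q)$ and the reduction to the two factors breaks down as stated. The paper avoids this by orienting the isogeny the other way, $\phi:Z(G)\to T_s\times T_a$: rational points push forward to rational points, and finiteness of the fibres of an isogeny then transports finiteness back. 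If you want to keep your alternative route, either reverse the isogeny in the same way or pass through a quasi-inverse isogeny; as written, only your first argument is complete.
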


\begin{proof}
Let $\phi: Z(G) \rightarrow T_s \times T_a$ be an isogeny over $\mathbb Q$.  First, note that $$ K \cap  (Z(G)(\mathbb Q)) = K  \cap  (Z(G)(\mathbb A)) \cap  (Z(G)(\mathbb Q)) \subset K_f  \cap  (Z(G)(\mathbb A_f)) \cap  (Z(G)(\mathbb Q)),$$ where $K_f$ is the projection of $K$ onto $G(\mathbb A_f)$. Let $K' $ denote $K_f  \cap  (Z(G)(\mathbb A_f))$, then $K'$ is a compact open subset of $Z(G)(\mathbb A_f)$, and it is enough to prove that $K' \cap Z(G)(\mathbb Q)$ is finite. 

 For all $g \in K'$, there exists a compact open subgroup $K_g \subset g^{-1}K' \subset Z(G)(\mathbb A_f)$, then $\{gK_g: g \in K'\}$ is an open covering of $K'$. By the compactness of $K'$, there exists a finite subset of $\{gK_g: g \in K'\}$, which is a covering of $K'$.

Then we may assume $K' = gL$, where $g \in K' \subset Z(G)(\mathbb A_f)$ and $L$ is a compact open subgroup of $Z(G)(\mathbb A_f)$. Then $\phi(gL ) = \phi(g)\phi(L)$ where $\phi(L)$ is a compact subgroup of $(T_s \times T_a)(\mathbb A_f)$. Note that there exists a compact open subgroup $M \subset (T_s \times T_a)(\mathbb A_f) $ such that $\phi(L) \subset M$ (see Lemma 2.4.7 of \cite{Loi19}). Then we only need to prove that  $\phi(g) M \cap (T_s \times T_a)(\mathbb Q)$ is finite, because $\phi$ is an isogeny and $ \phi(gL \cap (Z(G)(\mathbb Q))) \subset \phi(g)M \cap  ((T_s \times T_a) (\mathbb Q))$.

First, we show that $\phi(g) M \cap T_s(\mathbb Q) $ is finite. We may write $ T_s = \mathbb G_m^r$ for some $r \in \mathbb Z_{\geq 0}$, then $\phi(g) M \cap \mathbb G_m^r (\mathbb A_f)  \subset \mathbb G_{m, \mathbb Z}^r(\hat {\mathbb Z} [\frac 1N]) $ for some $N \in \mathbb Z_{>0}$. Then we have $$\phi(g) M \cap \mathbb G_m^r (\mathbb Q) = (\phi(g) M \cap \mathbb G_m^r (\mathbb A_f)) \cap \mathbb G_m^r (\mathbb Q) \subset  (\mathbb G_{m, \mathbb Z}^r(\hat {\mathbb Z} [\frac 1N]) \cap \mathbb G_m^r (\mathbb Q)). $$
And we have $\mathbb G_{m, \mathbb Z}^r(\hat {\mathbb Z} [\frac 1N]) \cap \mathbb G_m^r (\mathbb Q) = \mathbb G_{m, \mathbb Z}^r( \mathbb Z [\frac 1N])$ is finite, thus $\phi(g) M \cap T_s(\mathbb A) $ is finite.

Now we prove that $\phi(g) M \cap T_a(\mathbb Q) $ is finite. This follows because $\phi(g) M \cap T_a(\mathbb Q) $ is discrete in $T_a(\mathbb R)$ (see Lemma A.1.1), and $T_a(\mathbb R)$ is compact. 
\end{proof}

\subsection{An application to trace formula}

    In this section, we deduce a lemma on trace formula. Let $G$ be a connected reductive group over $\mathbb Q$. We assume the following:
\begin{itemize}
\item The centre $Z(G)$ is isogenous over $\mathbb Q$ to a torus of the form $T_s \times T_a$, where $T_s$ splits and $T_a$ is anisotropic over $\mathbb R$ (the condition before Proposition A.1.2). 
\item For all elliptic semi-simple elements $\gamma \in G(\mathbb Q)$, we assume that $\mathfrak R(G_{\gamma}/\mathbb Q)$ is trivial, where $G_{\gamma}$ is the centraliser of $\gamma$ in $G$ and $\mathfrak R(G_{\gamma}/\mathbb Q)$ is defined in $\S 4$ of \cite{Kot86}. 
\item The derived group $G_{der}$ of $G$ is simply connected, and $G_{der}$ has no $E_8$ factors (then $G_{\lambda}$ is connected for $\lambda \in G(\mathbb Q)$, see \cite{Kot82}; and $\tau(G_{der} ) = 1$, where $\tau(G_{der} )$ is the Tamagawa number of $G_{der}$, see \cite{Kot88}). 
\end{itemize}

Let $G^*$ be the quasi-split inner form of $G$, then we have the following lemma:
 
\begin{lemma}
For an arbitrary $f \in \mathcal H(G(\mathbb A))$, let $f^{G^*} \in \mathcal H(G^*(\mathbb A))$ denote the transfer of $f$. Then we have $T^G_e(f) = ST^{G^*}_e(f^{G^*})$, where $T_e(f) $ is the elliptic part of the geometric side of the trace formula for $G$ (as defined in 9.2 of \cite{Kot86}), and $ST_e(f^{G^*}) $ is the elliptic part of the geometric side of the stable trace formula for $G^*$ (as defined in 9.2 of \cite{Kot86}).
\end{lemma}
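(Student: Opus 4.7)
The plan is to reduce both sides to a sum of Tamagawa-weighted stable orbital integrals over semi-simple elliptic conjugacy classes, and to match the two sums termwise via the Langlands-Shelstad transfer.

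First, I would expand $T^G_e(f)$ using its definition in $\S\S 4, 9$ of \cite{Kot86}: it is a sum over $G(\mathbb Q)$-conjugacy classes of elliptic semi-simple $\gamma$, each weighted by $\tau(G_\gamma)$ and the adelic orbital integral $O_\gamma(f)$. The Tamagawa volume normalisations here are legitimate by Proposition A.1.2, which ensures that $Z(G)(\mathbb Q) \cap K$ is finite for every compact open $K \subset G(\mathbb A)$, hence that the Tamagawa measure on $G_\gamma(\mathbb Q) \backslash G_\gamma(\mathbb A)$ is well defined. Then I would regroup by stable conjugacy classes: for each stable class $\gamma_0$, the set of $G(\mathbb Q)$-classes inside it is identified (using that $G_{\gamma_0}$ is connected, since $G_{der}$ is simply connected, cf.\ \cite{Kot82}) with the Pontryagin dual of $\mathfrak R(G_{\gamma_0}/\mathbb Q)$, so that the inner sum rewrites as a $\kappa$-sum of $\kappa$-orbital integrals.

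The crucial input is the hypothesis $\mathfrak R(G_{\gamma_0}/\mathbb Q) = 1$ for every elliptic semi-simple $\gamma_0$: this collapses the $\kappa$-sum to its $\kappa = 1$ term, so the contribution of each stable class becomes $\tau(G_{\gamma_0}) \cdot SO_{\gamma_0}(f)$. Next, each $\gamma_0$ is identified with a stable class $\gamma_0^*$ in $G^*(\mathbb Q)$ via inner twisting (using Proposition A.1.1 to identify the centres, so that $G_{\gamma_0^*}^*$ is an inner form of $G_{\gamma_0}$), and Langlands-Shelstad transfer gives $SO_{\gamma_0}(f) = SO_{\gamma_0^*}(f^{G^*})$. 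Since $G_{der}$ is simply connected without $E_8$ factors, Kottwitz's computation of Tamagawa numbers in \cite{Kot88} applies equally to $G_{\gamma_0}$ and $G_{\gamma_0^*}^*$, yielding $\tau(G_{\gamma_0}) = \tau(G_{\gamma_0^*}^*)$. Comparison with the stable elliptic formula
$$ST^{G^*}_e(f^{G^*}) = \sum_{\gamma_0^*} \tau(G_{\gamma_0^*}^*) \, SO_{\gamma_0^*}(f^{G^*})$$
then gives the claimed identity termwise.

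The main obstacle will be keeping the bookkeeping of Tamagawa measures, transfer-factor normalisations, and the endoscopic indexing consistent with the conventions of \cite{Kot86}. In particular, the collapse of the $\kappa$-sum under $\mathfrak R(G_{\gamma_0}/\mathbb Q) = 1$ requires Kottwitz's parametrisation of rational classes within a stable class to apply in exactly the form used there, which is precisely why simple-connectedness of $G_{der}$ (to guarantee connectedness of centralisers) and the exclusion of $E_8$ factors (to invoke the Tamagawa number computation) appear as hypotheses in the statement.
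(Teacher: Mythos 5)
Your overall strategy (reduce both sides to stable orbital integrals over elliptic stable classes and match termwise, using triviality of $\mathfrak R$) is the right general idea, but the step where you pass from the sum over $G(\mathbb Q)$-conjugacy classes inside a stable class to $\tau(G_{\gamma_0})\,SO_{\gamma_0}(f)$ is a genuine gap, and it is exactly the hard part of the lemma. First, the set of rational classes inside the stable class of $\gamma_0$ is parametrised by $\ker(H^1(\mathbb Q,G_{\gamma_0})\to H^1(\mathbb Q,G))$, not by the Pontryagin dual of $\mathfrak R(G_{\gamma_0}/\mathbb Q)$; the duality underlying the $\kappa$-expansion is between $\mathfrak R$ and an adelic obstruction group, and exploiting it requires the Langlands--Kottwitz counting argument (Fourier inversion over $\mathfrak R$, the groups $\ker^1$, the Kottwitz signs $e(\gamma)$, and the comparison of Tamagawa factors such as $\tau_1(G)=\tau(G)/\tau(G_{SC})$). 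Second, even after the $\kappa$-sum collapses, the adelic stable orbital integral $SO_{\gamma_0}(f^{G^*})$ runs over \emph{all} $G(\mathbb A)$-conjugacy classes in the stable class, including those containing no rational element, whereas your regrouping of $T^G_e(f)$ only produces the classes with rational representatives; bridging these two is precisely the content of the prestabilisation. The paper does not redo this: it quotes the identity
$$T^G_e(f)-\sum_{\gamma_0\in Z(G)(\mathbb Q)}\tau(G)f(\gamma_0)=\sum_{\gamma_0\in E^*(G^*)}\tau_1(G)\sum_{\gamma}e(\gamma)O_{\gamma}(f)$$
from 9.1 and 9.6.5 of \cite{Kot86} (the inner sum being over adelic classes in the stable class) and then converts the inner sums into $SO_{\gamma_0}(f^{G^*})$ via 9.3.1. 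Your proposal asserts this equality as if it were an immediate regrouping, which it is not.

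There are also bookkeeping problems. Proposition A.1.2 has nothing to do with well-definedness of Tamagawa measures; in the paper it guarantees finiteness of the sums of $f(\gamma_0)$ over central rational elements, and the central terms must be treated separately (this is where Propositions A.1.1 and A.1.2 actually enter, together with the vanishing-of-sign argument $e(\gamma)=1$ coming from connectedness of centralisers); your proposal never isolates them. Finally, in the normalisation of 9.2 of \cite{Kot86} that the lemma cites, the stable elliptic part is $ST^{G^*}_e(f^{G^*})=\sum_{\gamma_0}\tau(G^*)\,SO_{\gamma_0}(f^{G^*})$ with weight $\tau(G^*)$, not $\tau(G^*_{\gamma_0^*})$, so your termwise comparison is not stated in the conventions you are comparing against; what is actually needed from \cite{Kot88} is $\tau(G)=\tau(G^*)$ together with $\tau_1(G)=\tau(G)$ (using that $G_{der}$ is simply connected), rather than an inner-form invariance of $\tau$ for the centralisers.
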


\begin{proof}
Let $\psi: G_{\overline {\mathbb Q}} \rightarrow G_{\overline {\mathbb Q}} ^{*}$ be an inner twisting. For each $\gamma_0 \in G(\mathbb Q)$, we choose $\gamma_0^* \in G^*(\mathbb Q)$ such that $\psi(\gamma_0)$ is stable conjugate to $\gamma_0^*$ (by $\S 6$ of \cite{Kot82}, such a choice is always possible. Then we have that $G^*_{\gamma^*_0}$ is an inner form of $G_{\gamma_0}$ with a canonical inner twisting given by $\psi|_{G_{\gamma_0, \overline{\mathbb Q}}}$ (see Lemma 5.8 of \cite{Kot82}). By applying Proposition A1.1, we have that $\psi|_{Z(G)}$ (resp. $\psi|_{Z(G_{\gamma_0})}$) gives isomorphism between $Z(G)$ and $Z(G^*)$( resp. $Z(G_{\gamma_0})$ and $Z(G^*_{\gamma^*_0})$), then by the definition of $\mathfrak R(G_{\gamma_0}/\mathbb Q)$ (see 4.6 of \cite{Kot86}), we have $\mathfrak R(G^*_{\gamma^*_0}/\mathbb Q) \cong \mathfrak R(G_{\gamma_0}/\mathbb Q) $ is trivial.  

For an arbitrary $\gamma_0 \in Z(G)(\mathbb Q)$, we have that $\psi(\gamma_0)\in Z(G^*)(\mathbb Q)$ (see Proposition A.1.1). By 9.3.1 in \cite{Kot86}, we have $SO_{\psi(\gamma_0)}(f^{G^*}) = e(\gamma_0) O_{\gamma_0} (f)$ with $e(\gamma)$ defined in 9.3 of \cite{Kot86} for $\gamma \in G(\mathbb A)$. For $\gamma \in G(\mathbb Q)$, by assumption, we know that $G_{\gamma}$ is connected, then $G_{\gamma, v}$ is connected for all places $v$ (because $G_{\gamma}$ is connected implies that $G_{\gamma} $ is geometrically connected, see Proposition 1.14 in \cite{Mil17}). Then we have $e(\gamma) = e(G_{\gamma}) = 1$ (see \cite{Kot83} and $e(G_{\gamma})$ is also defined in \cite{Kot83}). Therefore, we have
 $$f^{G}(\gamma_0) = O_{\gamma_0}(f) = SO_{\psi(\gamma_0)}(f^{G^*}) = f^{G^*}(\psi(\gamma_0)).$$ Applying Proposition A.1.1 again, we obtain 

\begin{align}
 \sum_{\gamma_0 \in Z(G^*)(\mathbb Q)} f^{G^*} (\gamma_0) =  \sum_{\psi(\gamma_0) \in Z(G)(\mathbb Q)} f(\psi(\gamma_0))  =   \sum_{\gamma_0 \in Z(G)(\mathbb Q)} f(\gamma_0)
\end{align}
 and by Proposition A.1.2, the sums are all finite sums. 

We use $E(G)$ (or $E(G^*)$) to denote the stable conjugacy class of elliptic semi-simple elements in $G(\mathbb Q)$ (or $G^*(\mathbb Q)$), and we use $E^*(G)$ (or $E^*(G^*)$) to denote the stable conjugacy class of non-central elliptic semi-simple elements in $G(\mathbb Q)$ (or $G^*(\mathbb Q)$). Then we have that $E(G) = E^*(G) \cup (Z(G)(\mathbb Q)), E(G^*) = E^*(G^*) \cup (Z(G^*)(\mathbb Q))$. By 9.1 and 9.6.5 in \cite{Kot86}, we have 
$$T^G_e(f)-  \sum_{\gamma_0 \in Z(G)(\mathbb Q)} \tau(G) f(\gamma_0)  = \sum_{\gamma_0 \in E^*(G^*)} \tau_1(G) \sum_{\gamma} e(\gamma) O_{\gamma}(f)$$ where $\tau_1(G) = \tau(G)/\tau(G_{SC}) = \tau(G)$ and $\gamma$ runs over a set of representatives for the $G(\mathbb A)$-conjugacy classes in $G(\mathbb A)$ contained in the $G(\overline{\mathbb A})$-conjugacy class of $\psi^{-1}(\gamma_0)$. By 9.3.1 in \cite{Kot86}, we have that $\sum_{\gamma} e(\gamma) O_{\gamma}(f)$ in the above formula is equal to $SO_{\gamma_0}(f^{G^*})$ (note that $SO_{\gamma_0}(f^{G^*}) = 0$ if $\gamma_0$ is not associated to any elements in $G(\mathbb Q)$). Therefore, we have $$T^G_e(f) =   \sum_{\gamma_0 \in Z(G)(\mathbb Q)} \tau(G) f(\gamma_0)  +  \sum_{\gamma_0 \in E^*(G^*)} \tau_1(G) \sum_{\gamma} e(\gamma) O_{\gamma}(f)$$
$$ =   \sum_{\gamma_0 \in Z(G^*)(\mathbb Q)} \tau(G^*) f^{G^*}(\gamma_0)  +  \sum_{\gamma_0 \in E^*(G^*)} \tau(G^*) SO_{\gamma_0} (f^{G^*})$$
(we use (1) and $\tau_1(G) = \tau(G)/\tau(G_{SC}) = \tau(G) = \tau(G^*)$, see \cite{Kot88})
$$=  \sum_{\gamma_0 \in Z(G^*)(\mathbb Q)} \tau(G^*) SO_{\gamma_0} (f^{G^*}) +  \sum_{\gamma_0 \in E^*(G^*)} \tau(G^*) SO_{\gamma_0} (f^{G^*})$$
$$ = \sum_{\gamma_0 \in E(G^*)} \tau(G^*) SO_{\gamma_0} (f^{G^*})  = ST_e^{G^*}(f^{G^*})$$ with $ST_e^{G^*}(f^{G^*})$ as defined in 9.2 in \cite{Kot86}.
\end{proof}
\clearpage
\addcontentsline{toc}{section}{References}
\renewcommand\refname{References}
\bibliographystyle{plain}
\bibliography{basic_stratum}

\begin{thebibliography}{10}

\bibitem{Zel77}
I.~N. Bernstein and A.~V. Zelevinsky.
\newblock {Induced representations of reductive ${\mathfrak {p}}$-adic groups. {I}}.
\newblock {\em Annales scientifiques de l'\'Ecole Normale Sup\'erieure}, Ser. 4, 10(4):441--472, 1977.

\bibitem{Clo85}
Delorme-P. Clozel, L.
\newblock {Pseudo-coefficients et cohomologie des groupes de Lie r{\'e}ductifs r{\'e}els.}
\newblock {\em C. R. Acad. Sci. Paris S{\'e}r. I Math.}, 300(12):385--387, 1985.

\bibitem{Clo90}
Laurent Clozel.
\newblock {The fundamental lemma for stable base change}.
\newblock {\em Duke Mathematical Journal}, 61(1):255 -- 302, August 1990.

\bibitem{Del74}
Pierre Deligne.
\newblock La conjecture de {Weil} : {I}.
\newblock {\em Publications Math\'ematiques de l'IH\'ES}, 43:273--307, 1974.

\bibitem{Del79}
Pierre Deligne.
\newblock {Vari{\'e}t{\'e}s de Shimura: interpr{\'e}tation modulaire et techniques de construction de mod{\`e}les canoniques}.
\newblock 1979.

\bibitem{Fox21}
Maria Fox and Naoki Imai.
\newblock {The Supersingular Locus of the Shimura Variety of GU (2, n-2)}.
\newblock {\em arXiv preprint arXiv:2108.03584}, 2021.

\bibitem{Go}
E.~Z. Goren.
\newblock {Hilbert modular varieties in positive characteristic}.
\newblock {\em The Arithmetic and Geometry of Algebraic Cycles. CRM Proc Lecture Notes 24, American Mathematical Society, Providence, RI}, pages 283--303, 02 2000.

\bibitem{Gro66}
Alexander Grothendieck.
\newblock {Formule de {Lefschetz} et rationalit\'e des fonctions $L$}.
\newblock In {\em S\'eminaire Bourbaki : ann\'ees 1964/65 1965/66, expos\'es 277-312}, number~9 in S\'eminaire Bourbaki, pages 41--55. Soci\'et\'e math\'ematique de France, 1966.
\newblock talk:279.

\bibitem{HarrisTaylor01}
Michael Harris, Richard Taylor, and Vladimir~G. Berkovich.
\newblock {\em {The Geometry and Cohomology of Some Simple Shimura Varieties. (AM-151)}}.
\newblock Princeton University Press, 2001.

\bibitem{Ho14}
Benjamin Howard and Georgios Pappas.
\newblock {On the supersingular locus of the GU (2, 2) Shimura variety}.
\newblock {\em Algebra Number Theory}, 8(7):1659--1699, 2014.

\bibitem{KO85}
Toshiyuki Katsura and Frans Oort.
\newblock {Supersingular abelian varieties of dimension two or three and class numbers}.
\newblock In {\em Algebraic Geometry, Sendai, 1985}, volume~10, pages 253--282. Mathematical Society of Japan, 1987.

\bibitem{Kis10}
Mark Kisin.
\newblock {Integral models for Shimura varieties of abelian type}.
\newblock {\em Journal of The American Mathematical Society - J AMER MATH SOC}, 23:967--1012, 10 2010.

\bibitem{SKZ21}
Mark Kisin, Sug Shin, and Yihang Zhu.
\newblock {The stable trace formula for Shimura varieties of abelian type}, 10 2021.

\bibitem{Kob}
Neal Koblitz.
\newblock {$p$-adic variation of the zeta-function over families of varieties defined over finite fields}.
\newblock {\em Compositio Mathematica}, 31(2):119--218, 1975.

\bibitem{Kot80}
Robert~E. Kottwitz.
\newblock {Orbital Integrals on GL3}.
\newblock {\em American Journal of Mathematics}, 102(2):327--384, 1980.

\bibitem{Kot82}
Robert~E. Kottwitz.
\newblock {Rational conjugacy classes in reductive groups}.
\newblock {\em Duke Mathematical Journal}, 49:785--806, 1982.

\bibitem{Kot83}
Robert~E. Kottwitz.
\newblock {Sign Changes in Harmonic Analysis on Reductive Groups}.
\newblock {\em Transactions of the American Mathematical Society}, 278(1):289--297, 1983.

\bibitem{Kot84}
Robert~E. Kottwitz.
\newblock {Shimura varieties and twisted orbital integrals}.
\newblock {\em Mathematische Annalen}, 269(3):287--300, 1984.

\bibitem{Kot85}
Robert~E. Kottwitz.
\newblock {Isocrystals with additional structure}.
\newblock {\em Compositio Mathematica}, 56(2):201--220, 1985.

\bibitem{Kot86unr}
Robert~E. Kottwitz.
\newblock {Base change for unit elements of {Hecke} algebras}.
\newblock {\em Compositio Mathematica}, 60(2):237--250, 1986.

\bibitem{Kot86}
Robert~E Kottwitz.
\newblock {Stable trace formula: elliptic singular terms}.
\newblock {\em Mathematische Annalen}, 275(3):365--399, 1986.

\bibitem{Kot88}
Robert~E. Kottwitz.
\newblock {Tamagawa Numbers}.
\newblock {\em Annals of Mathematics}, 127(3):629--646, 1988.

\bibitem{Kot90}
Robert~E. Kottwitz.
\newblock {Shimura varieties and $\lambda$-adic representations. In: Automorphic Forms, Shimura Varieties, and L-Functions}.
\newblock {\em Perspect. Math.}, 10:161--209, 1990.

\bibitem{Kot92}
Robert~E Kottwitz.
\newblock {On the $\lambda$-adic representations associated to some simple Shimura varieties}.
\newblock {\em Inventiones mathematicae}, 108(1):653--665, 1992.

\bibitem{kot92p}
Robert~E Kottwitz.
\newblock {Points on some Shimura varieties over finite fields}.
\newblock {\em Journal of the American Mathematical Society}, 5(2):373--444, 1992.

\bibitem{Kot97}
Robert~E. Kottwitz.
\newblock {Isocrystals with additional structure. II}.
\newblock {\em Compositio Mathematica}, 109(3):255–339, 1997.

\bibitem{Kret11}
Arno Kret.
\newblock {The basic stratum of some Simple Shimura Varieties}.
\newblock {\em Mathematische Annalen}, 356, 2011.

\bibitem{Kret15}
Arno Kret.
\newblock {Combinatorics of the basic stratum}.
\newblock {\em Documenta Mathematics}, 20(07), 2015.

\bibitem{Kret12}
Arno Kret and Erez Lapid.
\newblock {Jacquet modules of ladder representations}.
\newblock {\em Comptes Rendus Mathematique}, 350(21):937--940, 2012.

\bibitem{Lab99}
Jean-Pierre Labesse.
\newblock {Cohomologie, stabilisation et changement de base}.
\newblock {\em Ast\'erisque}, (257), 1999.

\bibitem{Clo99}
Jean-Pierre Labesse and Laurent Clozel.
\newblock {Changement de base pour les repr{\'e}sentations cohomologiques de certains groupes unitaires}.
\newblock {\em Ast\'erisque}, (257), 1999.

\bibitem{Lan17}
Kai-Wen Lan.
\newblock {An example-based introduction to Shimura varieties}.
\newblock 2017.

\bibitem{BG17}
B.~Lemaire and G.~Henniart.
\newblock {Repr{\'e}sentations des espaces tordus sur un groupe r{\'e}ductif connexe p -adique}.
\newblock {\em AST{\'E}RISQUE}, 2017.

\bibitem{LO98}
Ke-Zheng Li and Frans Oort.
\newblock {Moduli of supersingular abelian varieties}.
\newblock 1998.

\bibitem{Loi19}
Benoit Loisel.
\newblock {On profinite subgroups of an algebraic group over a local field}.
\newblock {\em {Transformation Groups}}, 24(4):1173--1211, December 2019.

\bibitem{Mil17}
J.~S. Milne.
\newblock {\em {Algebraic Groups: The Theory of Group Schemes of Finite Type over a Field}}.
\newblock Cambridge Studies in Advanced Mathematics. Cambridge University Press, 2017.

\bibitem{Minguez11}
Alberto Minguez.
\newblock {Unramified representations of unitary groups}.
\newblock {\em On the stabilization of the trace formula}, 1:389--410, 2011.

\bibitem{Morel08}
Sophie Morel.
\newblock {On the cohomology of certain non-compact Shimura varieties (with an appendix by Robert Kottwitz)}.
\newblock {\em arXiv preprint arXiv:0802.4451}, 2008.

\bibitem{Mustata}
Mircea Mustata.
\newblock {\em {Zeta functions in algebraic geometry}}.

\bibitem{Nguyen04}
Gia-Vuong Nguyen-Chu.
\newblock {\em Int\'egrales orbitales unipotentes stables et leurs~transform\'ees de {Satake}}.
\newblock Number~97 in M\'emoires de la Soci\'et\'e Math\'ematique de France. Soci\'et\'e math\'ematique de France, 2004.

\bibitem{RaRi96}
M.~Rapoport and M.~Richartz.
\newblock {On the classification and specialization of $F$-isocrystals with additional structure}.
\newblock {\em Compositio Mathematica}, 103(2):153--181, 1996.

\bibitem{Ra02}
Michael Rapoport.
\newblock {On the {Newton} stratification}.
\newblock In {\em S\'eminaire Bourbaki : volume 2001/2002, expos\'es 894-908}, number 290 in Ast\'erisque, pages 207--224. Soci\'et\'e math\'ematique de France, 2003.
\newblock talk:903.

\bibitem{Renard10}
David Renard.
\newblock {Repr{\'e}sentations des groupes r{\'e}ductifs p-adiques}.
\newblock 2010.

\bibitem{Rod86}
Fran\c{c}ois Rodier.
\newblock {Sur le caract\`ere de Steinberg}.
\newblock {\em Compositio Mathematica}, 59(2):147--149, 1986.

\bibitem{RK99}
M.~Rapoport S.~Kudla.
\newblock {Arithmetic Hirzebruch Zagier cycles}.
\newblock {\em Journal für die reine und angewandte Mathematik}, 1999(515):155--244, 1999.

\bibitem{Tad86}
Marko Tadi\'c.
\newblock {Classification of unitary representations in irreducible representations of general linear group {(non-Archimedean} case)}.
\newblock {\em Annales scientifiques de l'\'Ecole Normale Sup\'erieure}, Ser. 4, 19(3):335--382, 1986.

\bibitem{Vol10}
Inken Vollaard.
\newblock {The Supersingular Locus of the Shimura Variety for GU(1, s)}.
\newblock {\em Canadian Journal of Mathematics}, 62(3):668–720, 2010.

\bibitem{Vol11}
Inken Vollaard and Torsten Wedhorn.
\newblock {The supersingular locus of the Shimura variety of GU (1, n- 1) II}.
\newblock {\em Inventiones mathematicae}, 184(3):591--627, 2011.

\bibitem{Wildon}
Mark Wildon.
\newblock {A model for the double cosets of Young subgroups}.
\newblock 2010.

\bibitem{Yu03}
Chia-Fu Yu.
\newblock On reduction of {Hilbert-Blumenthal} varieties.
\newblock {\em Annales de l'Institut Fourier}, 53(7):2105--2154, 2003.

\bibitem{Yu034}
Chia-Fu Yu.
\newblock {On the supersingular locus in Hilbert-Blumenthal 4-folds}.
\newblock {\em Journal of Algebraic Geometry, v.12, 653-698 (2003)}, 12, 10 2003.

\bibitem{Zel80}
A.~V. Zelevinsky.
\newblock {Induced representations of reductive ${\mathfrak {p}}$-adic groups. {II.} {On} irreducible representations of ${\rm GL}(n)$}.
\newblock {\em Annales scientifiques de l'\'Ecole Normale Sup\'erieure}, Ser. 4, 13(2):165--210, 1980.

\end{thebibliography}
	
\end{document}